\renewcommand{\emptyset}{\varnothing}
\def\dunderline#1{\underline{\underline{#1}}}
\newcommand{\Erdos}{Erd\H{o}s}
\newcommand{\Renyi}{R\'{e}nyi}
\newcommand{\Aut}{\mathrm{Aut}}
\newcommand{\eAut}{\mathrm{eAut}}
\newcommand{\Frob}{\mathrm{Frob}}
\newcommand{\conn}{\mathrm{conn}}
\newcommand{\Unif}{\mathrm{Unif}}
\newcommand{\Gin}{\mathrm{Gin}}
\newcommand{\chop}{\mathrm{chop}}
\newcommand{\eps}{\epsilon}
\newcommand{\e}{\mathrm{e}}
\newcommand{\id}{\mathds{1}}
\DeclareMathOperator*{\Exp}{\mathbb{E}}
\newcommand{\Var}{\mathop{\mathrm{Var}}}
\newcommand{\tr}{\mathop{\mathrm{tr}}}
\newcommand{\Ex}{\mathop{\mathbb{E}}}
\newcommand{\Varx}{\mathop{\mathrm{Var}}}
\newcommand{\Cat}{\mathrm{Cat}}
\newcommand{\sF}{\mathcal{F}}
\newcommand{\sG}{\mathcal{G}}
\newcommand{\sN}{\mathcal{N}}
\newcommand{\sO}{\mathcal{O}}
\newcommand{\sM}{\mathcal{M}}
\newcommand{\One}{\bm{1}}
\newcommand{\EE}{\mathbb{E}}
\newcommand{\NN}{\mathbb{N}}
\newcommand{\RR}{\mathbb{R}}
\newcommand{\PP}{\mathbb{P}}
\newcommand{\QQ}{\mathbb{Q}}
\renewcommand{\SS}{\mathbb{S}}
\newcommand{\Wg}{\mathrm{Wg}}
\newcommand{\Sym}{\mathrm{Sym}}
\newcommand{\Wig}{\mathrm{Wig}}
\newcommand{\Haar}{\mathrm{Haar}}
\newcommand{\Adv}{\mathrm{Adv}}
\newcommand{\Corr}{\mathrm{Corr}}
\newcommand{\MMSE}{\mathrm{MMSE}}
\newcommand{\switch}{\mathrm{switch}}
\newcommand{\circuit}{\mathrm{circ}}
\newcommand{\cyc}{\mathrm{cyc}}
\newcommand{\cupcap}{\raisebox{-9pt}{\includegraphics[height=24pt]{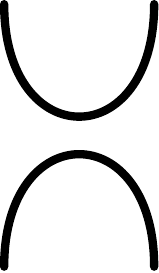}}}
\newcommand{\ccup}
{\raisebox{-3pt}{\includegraphics[height=12pt]{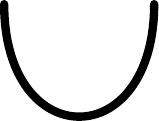}}}
\newcommand{\cupcaploop}{\raisebox{-9pt}{\includegraphics[height=24pt]{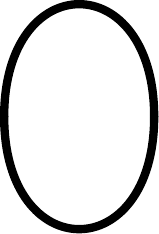}}}
\newcommand{\embed}[1]{\raisebox{-9pt}{\includegraphics[height=24pt]{#1}}}
\newcommand{\what}{\widehat}
\newtheorem{theorem}{Theorem}[section]
\newtheorem{remark}[theorem]{Remark}
\newtheorem{lemma}[theorem]{Lemma}
\newtheorem{definition}[theorem]{Definition}
\newtheorem{example}[theorem]{Example}
\newtheorem{proposition}[theorem]{Proposition}
\newtheorem{corollary}[theorem]{Corollary}
\title{Tensor cumulants for statistical inference on invariant distributions}
\author[1]{Dmitriy Kunisky\thanks{Email: \texttt{dmitriy.kunisky@yale.edu}. Partially supported by ONR Award N00014-20-1-2335 and a Simons Investigator Award to Daniel Spielman.}}
\author[2]{Cristopher Moore\thanks{Email: \texttt{moore@santafe.edu}. Partially supported by the National Science Foundation through grant BIGDATA-1838251.}}
\author[3]{Alexander S.\ Wein\thanks{Email: \texttt{aswein@ucdavis.edu}. Partially supported by an Alfred P.\ Sloan Research Fellowship and NSF CAREER Award CCF-2338091.}}
\affil[1]{Department of Computer Science, Yale University}
\affil[2]{Santa Fe Institute}
\affil[3]{Department of Mathematics, UC Davis}
\date{April 29, 2024}
\begin{document}

\maketitle

\begin{abstract}
Many problems in high-dimensional statistics appear to have a \emph{statistical-computational gap}: a range of values of the signal-to-noise ratio where inference is information-theoretically possible, but (conjecturally) computationally intractable.
A canonical such problem is Tensor PCA, where we observe a tensor $Y$ consisting of a rank-one signal plus Gaussian noise.
Multiple lines of work suggest that Tensor PCA becomes computationally hard at a critical value of the signal's magnitude.
In particular, below this transition, no low-degree polynomial algorithm can detect the signal with high probability; conversely, various spectral algorithms are known to succeed above this transition.
We unify and extend this work by considering \emph{tensor networks}, orthogonally invariant polynomials where multiple copies of $Y$ are ``contracted'' to produce scalars, vectors, matrices, or other tensors.
We define a new set of objects, \emph{tensor cumulants}, which provide an explicit, near-orthogonal basis for invariant polynomials of a given degree.
This basis lets us unify and strengthen previous results on low-degree hardness, giving a combinatorial explanation of the hardness transition and of a continuum of subexponential-time algorithms that work below it, and proving tight lower bounds against low-degree polynomials for recovering rather than just detecting the signal.
It also lets us analyze a new problem of distinguishing between different tensor ensembles, such as Wigner and Wishart tensors, establishing a sharp computational threshold and giving evidence of a new statistical-computational gap in the Central Limit Theorem for random tensors.
Finally, we believe these cumulants are valuable mathematical objects in their own right: they generalize the free cumulants of free probability theory from matrices to tensors, and share many of their properties, including additivity under additive free convolution.
\end{abstract}

\thispagestyle{empty}

\clearpage

\tableofcontents

\thispagestyle{empty}

\clearpage

\setcounter{page}{1}
\pagestyle{plain}

\section{Introduction}

We will study statistical problems formulated over \emph{tensors}.
Here a tensor $T$ is a $p$-dimensional table of real numbers, indexed as $T_{i_1,i_2,\ldots,i_p}$ with $i_a \in \{1, \dots, n\} \equalscolon [n]$. We call $p$ the \emph{arity} of $T$, say that $T$ is a \emph{$p$-ary tensor}, and call $n$ the \emph{dimension} of $T$. A 1-ary tensor is an $n$-dimensional vector. A 2-ary tensor can be viewed as an $n \times n$ matrix or as an $n^2$-dimensional vector, and so on. A 0-ary tensor is a scalar, i.e., a real number.
We say that $T$ is \emph{symmetric} if $T_{i_1,\ldots,i_p} = T_{\sigma(i_1),\ldots,\sigma(i_p)}$ for any permutation $\sigma \in S_p$. In this case, each entry of $T$ is specified by a multiset in $[n]$ of size $p$. The vector space of symmetric tensors of dimension $n$ and arity $p$ is denoted $\Sym^p (\RR^n)$.

Many statistical problems of broad interest are modeled with tensor observations, representing collections of estimates of degree $p$ moments of high-dimensional random variables or $p$-way interaction data such as hypergraphs and generalizations thereof.
Often, whether in
\emph{hypothesis testing} (distinguishing two distributions over tensors) or in \emph{estimation} (recovering some parameter from a noisy tensor observation) problems, the distributions of tensors involved are reasonably assumed to be \emph{orthogonally invariant}, that is, invariant to simultaneous changes of basis of all tensor ``axes.''
\begin{definition}[Change of basis]
    \label{def:change-basis}
    For $T$ a tensor of dimension $n$ and $Q$ a (usually but not necessarily orthogonal) matrix, we define the tensor $Q \cdot T$ to have entries
    \begin{equation}
        (Q \cdot T)_{j_1,\ldots,j_p} =
\sum_{i_1,\ldots,i_p}  T_{i_1,\ldots,i_p}\prod_{t=1}^p Q_{i_t,j_t}.
    \end{equation}
    Alternatively, we may view $Q \cdot T = (Q^{\top})^{\otimes p} \,T$ where $T$ is viewed as a vector in $\RR^{n^p}$.
    This makes the action of the orthogonal group $\sO(n)$ or the general linear group $\mathrm{GL}(n)$ on $(\RR^n)^{\otimes p}$ a \emph{right} group action: $R \cdot (Q \cdot T) = (QR) \cdot T$, which preserves symmetry and thus is also an action on $\Sym^p(\RR^n)$.
\end{definition}
\noindent
For $T$ a matrix one may check that $Q \cdot T = Q^{\top} T Q$ coincides with the usual change of basis.

\begin{definition}[Invariance]
    A function $f: \Sym^p(\RR^n) \to \RR$ is \emph{(orthogonally) invariant} if $f(Q \cdot T) = f(T)$ for all $T \in \Sym^p(\RR^n)$ and all $Q \in \sO(n)$.
    The law of a random $T$ is \emph{(orthogonally) invariant} if $Q \cdot T$ has the same law for all orthogonal $Q \in \sO(n)$.
\end{definition}

\emph{Tensor networks} are a graphical notation that extends linear algebra to tensors, generalizing operations like the inner product of vectors, matrix products, and traces.
This gives a powerful language for discussing quantities that would be quite tricky to express in conventional notation (as we already see above for changes of basis).
We will discuss general tensor networks later in Section~\ref{sec:prelim:tensor-networks}, but we mention now that, aside from changes of basis, the following is an important class of computations that tensor networks can express.

\begin{definition}[Graph moments]
\label{def:moments}
    Let $T \in \Sym^p(\RR^n)$ and $G = (V, E)$ a $p$-regular graph (not necessarily simple).
    Then, we define the \emph{$G$-moment} of $T$ as
    \begin{equation}
        m_G(T) = \sum_{i \in [n]^E} \prod_{v \in V} T_{i(\partial v)} \, .
    \end{equation}
\noindent
where $i(\partial v)$ denotes the multiset of indices $i(e)$ associated with the edges $e$ incident to $v$. If $G$ is the empty graph with $V=E=\emptyset$, then $m_G(T)=1$.
\end{definition}
\noindent
When $T$ is symmetric---as it always is here---we do not need to order the edges incident to a vertex.\footnote{If we considered tensors with complex entries the edges would need to be directed, since the complex inner product has $u^* v \neq v^* u$ in general. For us, $T$'s entries will always be real, so undirected edges suffice.}

\begin{figure}
    \centering
    \includegraphics[width=1.4in]{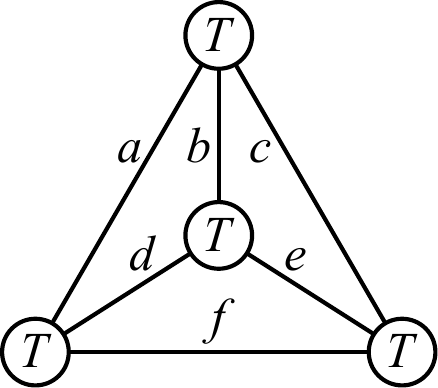}
    \caption{The graph moment $m_G(T)$ where $T$ is a symmetric 3-ary tensor and $G=K_4$. Summing over all six indices, one on each edge, contracts the graph and yields the  scalar~\eqref{eq:k4}.
    }
    \label{fig:k4}
\end{figure}
\begin{example}
Let $G=K_4$ be the complete graph on four vertices, as shown in Figure~\ref{fig:k4}. Then
\begin{equation}
    \label{eq:k4}
m_G(T) = \sum_{a,b,c,d,e,f \in [n]} T_{abc} \,T_{adf} \,T_{bde} \,T_{cef} \, ,
\end{equation}
Note that this is a homogeneous polynomial, of degree 4, of $T$'s entries.
In general, $m_G(T)$ is a homogeneous polynomial, of degree $|V(G)|$, of $T$'s entries.
\end{example}

Linear combinations of the tensor networks $m_G(T)$ are a natural class of scalar functions of a tensor.
In particular, as we will show
in Proposition~\ref{prop:moments-are-invariant}, they are themselves invariant: $m_G(Q \cdot T) = m_G(T)$ for all $Q \in \sO(n)$. Moreover, by classical invariant theory, they span \emph{all} invariant polynomials.
As we will discuss in Section~\ref{sec:prelim:tensor-networks}, these linear combinations should therefore be thought of as the correct ``space of spectral algorithms'' for computing scalar statistics from tensors, analogous to the linear spectral statistics $T \mapsto \tr(f(T))$ of a matrix $T$.

We wish to draw a connection between these algebraic facts and the substantial body of literature that has developed recently around using \emph{low-degree polynomial algorithms} for tasks like hypothesis testing and estimation.
We leave a detailed discussion to Section~\ref{sec:prelim:low-deg}, but simple instances of this approach are to perform hypothesis testing between two distributions of tensors by computing and thresholding a low-degree polynomial of the observed tensor~\cite{HS-bayesian,sos-detect,hopkins-thesis}, or estimation by computing an estimator that consists of a vector of low-degree polynomials of the observation~\cite{SW-2020-LowDegreeEstimation}.
We view the degree of a polynomial as a proxy for the runtime of the associated algorithm (as, assuming we naively evaluate polynomials without taking advantage of any special structure, a degree $D$ polynomial in $n$ variables takes time $O(n^D)$ to evaluate), and so the degree of polynomial required for testing or estimation is taken as a measure of the problem's complexity.

The goal of this paper is to explore the consequences of the following general observation (formalized for low-degree algorithms in our Proposition~\ref{prop:adv-invariant}): for hypothesis testing tasks over invariant tensor distributions, the best low-degree polynomials are invariant themselves, and therefore are linear combinations of graph moments $m_G(T)$. Similarly, the best low-degree estimators are linear combinations of graph moments with a single ``open'' edge, which evaluate to vectors rather than scalars (Proposition~\ref{prop:corr-equivariant}).
In addition to giving us a set of tools for reasoning graphically about low-degree algorithms, this dramatically restricts the space of such algorithms for invariant problems, and therefore can also reduce the analytical difficulties of proving computational lower bounds.
Indeed, the dimension of the space of low-degree invariant polynomials, corresponding for degree $d$ to the number of $p$-regular multigraphs $G$ on $d$ vertices, does not depend on the tensor dimension $n$ at all!
Moreover, the space spanned by the $m_G(T)$ turns out to have a rich algebraic structure with important ramifications for the computational complexity of statistical problems.
For example, we will see that its dimension (the number of $p$-regular multigraphs of a given size) has a direct connection to the tradeoff between signal strength and subexponential runtime of algorithms for hypothesis testing and estimation.

We will develop a general theory about the spaces of invariant polynomials (and \emph{equivariant} vector-valued polynomials, to be used as estimators), extending classical invariant theory to be directly useful for the analysis of low-degree algorithms.
Surprisingly, a central role will be played by a notion of \emph{finite free cumulants} for tensors. These generalize aspects of the theory of free probability for random matrices, and yield an explicit near-orthogonal basis for the space of invariant polynomials. Using these tools, we will give new results for two examples of invariant statistical problems over tensors.

First, we will consider the well-studied problem of \emph{tensor PCA (principal component analysis)}, recovering and unifying previous results on hypothesis testing and giving a new and tight analysis of estimation.
Second, we will study a variation on the newer problem of \emph{hypothesis testing between Wigner and Wishart tensors}, adapted to our focus on invariant distributions.
Through one lens this is a relative of the task of \emph{tensor decomposition}, and through another it is a variation on the question of distinguishing \Erdos-\Renyi\ from geometric random hypergraphs (a hypergraph version of the problem treated for matrices by \cite{BDER-2016-WignerWishartDetection,BBN-2020-PhaseTransitionsLatentGeometry,LMSY-2022-TestingRandomGeometric,BB-2024-FourierRandomGeometricGraphs}). Through yet a third it is a question about a ``computational central limit theorem,'' a class of questions to which our results will apply more generally (a closely related tensor problem was studied by \cite{Mikulincer-2020-CLTWishartTensors}, and similar matrix problems by \cite{BDER-2016-WignerWishartDetection,BBH-2021-DeFinettiWishartMatrices}).

Below we first focus on these concrete applications, and then sketch some of the technical ideas before giving full details in Section~\ref{sec:cumulants}.

\subsection{Main Results}

\paragraph{Tensor PCA}
Our first subject will be a model of random tensors built as follows:
\begin{equation}
\label{eq:spiked-model}
Y = \lambda v^{\otimes p} + W \, .
\end{equation}
Here $\lambda \ge 0$ is a signal-to-noise ratio, the \emph{spike} $v$ is chosen from some prior distribution on $\RR^n$, and $W$ is a tensor of random noise.
This model is known as \emph{tensor PCA} or the \emph{spiked tensor model}~\cite{richard-montanari}.

In this model, hypothesis testing or \emph{detection} entails distinguishing the spiked model above from a null model where $Y = W$ consists only of noise.
Estimation or \emph{reconstruction} entails producing a vector $\what{v} = \what{v}(Y)$ that approximates $v$.
A reasonable request is for $\what{v}$ to be more correlated with $v$ than a random guess, i.e., if $\|v\|^2, \|\what{v}\|^2 \approx n$, then $|\langle \what{v}, v \rangle| \geq \eps n$ for some constant $\eps > 0$.

Recall that we are interested in the case where the distributions involved above are orthogonally invariant.
To adhere to this setting, we will assume that $v$ is chosen uniformly from the sphere $\SS^{n - 1}(\sqrt{n}) = \{v \in \RR^n : \|v\|^2 = n \}$ and that $W$ is a \emph{Wigner random tensor} with Gaussian entries.
To keep $Y$ symmetric, we will symmetrize $W$ as follows.
\begin{definition}
    For $p \geq 1$ and $\sigma^2 > 0$, we write $\Wig(p, n, \sigma^2)$ for the law of the symmetric tensor $W \in \Sym^p(\RR^n)$ that is given entrywise by
    \begin{equation}
    \label{eq:wig-def}
        W_{i_1, \dots, i_p} = \frac{1}{\sqrt{p!}} \sum_{\pi \in S_p} G_{i_{\pi(1)}, \dots, i_{\pi(p)}}
    \end{equation}
    where $G \in (\RR^n)^{\otimes p}$ is an asymmetric tensor whose entries are i.i.d.\ as $G_{i_1, \dots, i_p} \sim \sN(0, \sigma^2)$. We write $\Wig$ for $\Wig(p,n,1)$ when these parameters are clear.
\end{definition}
\noindent
This is the natural tensor analog of the Gaussian orthogonal ensemble (GOE), which is the matrix case $p = 2$.
As there, the law $\Wig(p, n, \sigma^2)$ is orthogonally invariant.
Up to symmetry, the entries are independent but a few have larger variance, just as in the GOE entries on the diagonal have twice the variance of the rest.
See Appendix~\ref{app:wigner} for further details.

Using the tools we develop, we give a new proof of the following result on low-degree polynomial algorithms.
We state this result informally and defer to later the precise notion of ``success'' of low-degree polynomials for distinguishing two distributions; see Definition~\ref{def:adv} and Remark~\ref{rem:adv}.
We call $\QQ = \Wig$ the law of the pure noise model, and $\PP$ the law of $Y = \lambda v^{\otimes p} + W$ with $v \sim \Unif(\SS^{n - 1}(\sqrt{n}))$ and $W \sim \Wig$ independently.

\begin{theorem}[Tensor PCA detection; informal; Theorem 3.3 of \cite{KWB-2022-LowDegreeNotes}]
    \label{thm:tensor-pca-detection}
    Let $D = D(n) \in \NN$ have $D \leq \sqrt{n / 2p^2}$.
    There are constants $a_p, b_p > 0$ such that:
    \begin{enumerate}
    \item If $\lambda \leq a_p n^{-p/4} D^{-(p - 2)/4}$, then polynomials of degree at most $D$ cannot distinguish $\PP$ from $\QQ$.
    \item If $\lambda \geq b_p n^{-p/4} D^{-(p - 2)/4}$ and $D = \omega(1)$, then polynomials of degree at most $D$ can distinguish $\PP$ from $\QQ$.
    \end{enumerate}
\end{theorem}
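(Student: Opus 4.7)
The plan is to use the invariance reduction of Proposition~\ref{prop:adv-invariant} to restrict the low-degree likelihood ratio (LDLR) computation to invariant polynomials, expand in the graph-moment / tensor-cumulant basis, and analyze the resulting combinatorial sum.

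For the reduction, since both $\PP$ and $\QQ = \Wig$ are orthogonally invariant, Proposition~\ref{prop:adv-invariant} lets us restrict to projecting the likelihood ratio $L$ onto the subspace of invariant polynomials of degree at most $D$. By the discussion after Definition~\ref{def:moments}, this subspace is spanned by the graph moments $\{m_G(Y)\}$ for $p$-regular multigraphs $G$ on at most $D$ vertices. Passing to the tensor-cumulant basis (to be developed in Section~\ref{sec:cumulants}) gives a reindexed family that is nearly orthonormal under $\Exp_\QQ$ and supported on \emph{connected} multigraphs, so that $\|L^{\leq D}\|^2$ splits as an approximately diagonal sum over such $G$.

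To compute the projections, under $\QQ$ the Gram matrix of the $m_G$'s is evaluated by Wick's theorem as a sum over pairings of half-edges, and the cumulant construction subtracts off disconnected contributions so that only connected $G$ survive, with $\Exp_\QQ[m_G^2]$ controlled by an automorphism-weighted half-edge count. Under $\PP$, substituting $Y = \lambda v^{\otimes p} + W$ into $m_G(Y)$ and expanding, the projection onto the cumulant indexed by $G$ receives its leading contribution from the pure-signal term $m_G(\lambda v^{\otimes p}) = \lambda^{|V(G)|} \langle v, v\rangle^{|E(G)|} = \lambda^{|V(G)|} n^{p|V(G)|/2}$, while mixed terms contribute lower-order corrections controlled by the hypothesis $D \leq \sqrt{n/2p^2}$. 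Assembling these yields, up to constants,
\begin{equation*}
\|L^{\leq D}\|^2 \;\approx\; \sum_{k=0}^{D} c_k\, \lambda^{2k}\, n^{pk/2},
\end{equation*}
where $c_k$ counts connected $p$-regular multigraphs on $k$ vertices with appropriate automorphism weights. A configuration-model / Stirling estimate gives $c_k \sim (Cp)^k k^{(p-2)k/2}$, so the $k$-th term behaves like $\bigl[C' \lambda^2 n^{p/2} k^{(p-2)/2}\bigr]^k$; this is uniformly bounded in $k \leq D$ precisely when $\lambda \leq a_p\, n^{-p/4} D^{-(p-2)/4}$, proving part~1. For part~2, when $\lambda \geq b_p\, n^{-p/4} D^{-(p-2)/4}$, the same calculation shows that a single graph moment $m_G$ for $G$ a $p$-regular multigraph on $\Theta(D)$ vertices already has $\Exp_\PP[m_G]^2 / \Var_\QQ[m_G] = \omega(1)$, so thresholding $m_G$ is a successful degree-$D$ distinguisher.

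The main obstacle is establishing that the tensor-cumulant basis is genuinely nearly orthonormal under $\Wig$ with error terms controlled uniformly in $D$, together with the careful automorphism bookkeeping needed to turn the formal Wick expansion into the clean expression above. Once those pieces are in place, the asymptotic analysis is routine Stirling-type counting, and both directions of the theorem follow from the same closed-form expression for $\|L^{\leq D}\|^2$.
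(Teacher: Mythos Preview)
Your overall strategy matches the paper's: reduce to invariant polynomials via Proposition~\ref{prop:adv-invariant}, expand in the cumulant basis, obtain $\Adv_{\leq D}^2$ as a combinatorial sum, and control it via the count of $p$-regular multigraphs. Two points, however, deviate from what the paper actually does and would cause trouble if you wrote the argument out in full.

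First, the cumulant basis $\{\kappa_G^c\}$ is indexed by \emph{all} $p$-regular multigraphs $G$, not only connected ones; connectedness is relevant to the simple form of additivity (Theorem~\ref{thm:cumulants}, Claim~6) but the advantage formula in Corollary~\ref{cor:invariant-advantage-bound} sums $\tfrac{1}{|\eAut(G)|}(\EE_\PP \kappa_G^c)^2$ over every $G \in \sG_{d,p}$, and disconnected graphs do contribute. Second, you propose to treat the ``mixed terms'' in $m_G(\lambda v^{\otimes p}+W)$ as lower-order corrections, but the whole point of the free-cumulant machinery is that additivity (Proposition~\ref{prop:general-additivity}) together with $\EE_W \kappa_G^c(W)=0$ (Proposition~\ref{prop:wigner-centered-cumulants-zero}) makes the noise contribution vanish \emph{exactly}: $\EE_{v,W}\,\kappa_G^c(\lambda v^{\otimes p}+W)=\EE_v\,\kappa_G(\lambda v^{\otimes p})$, which is then computed in closed form by Proposition~\ref{prop:all-spike-cumulant}. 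There are no error terms to track, and this is what lets the paper get the sharp dependence on $D$. For part~2, the paper does not threshold a single $m_G$ (cf.\ Section~\ref{sec:glimpse}, where this approach is discussed and found insufficient in general); instead it lower-bounds the same advantage sum using that a positive fraction of $p$-regular multigraphs on $d$ vertices are simple with trivial automorphism group.
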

\noindent
When $p = 2$, there is no dependence on $D$ in the thresholds for $\lambda$, and the result suggests that, on the scale $\lambda = \Theta(n^{-1/2})$, the problem transitions from being tractable in polynomial time (which is indeed achieved by various algorithms) to requiring at least time roughly $\exp(\Omega(\sqrt{n}))$.
When $p \geq 3$, the dependence on $D$ means that there are regimes of $\lambda$ where the problem is tractable but only in \emph{subexponential time} $\exp(O(n^{\delta}))$ for various $\delta \in (0, 1)$.
In fact, these scalings for low-degree upper and lower bounds are known (see again \cite{KWB-2022-LowDegreeNotes}, which sharpened the results of~\cite{sos-detect}) to hold for $D \sim n^{\delta}$ for \emph{any} $\delta \in (0, 1)$.
It is a technical limitation of our results that they are restricted to $D \lesssim \sqrt{n}$, as it is only in this regime that the basis of invariants we propose is approximately orthogonal.

One advantage of our analysis is that it gives an intuitive explanation for the latter subexponential scaling phenomenon.
In particular, the factor of $D^{\frac{p - 2}{4}}$ is directly related to the number of non-isomorphic $p$-regular multigraphs on $D$ vertices, which (see Proposition~\ref{prop:count-multigraphs}) is of the order $D^{\frac{p - 2}{2}D}$ up to smaller multiplicative factors; the analysis turns out to indicate that this quantity must be compared with $\lambda^{2D}$, leading to the factor of $D^{\frac{p - 2}{2}D \cdot \frac{1}{2D}} = D^{\frac{p - 2}{4}}$.
Per our remarks above, this is also the dimension of the space of degree $D$ invariant polynomials of a tensor.

We also give a new result for the related reconstruction problem. Since reconstruction is a more difficult task than detection, it is to be expected that degree $D$ polynomials fail to reconstruct in the same regime of parameters that they fail to detect per Theorem~\ref{thm:tensor-pca-detection}. However, actually proving that low-degree polynomials fail to reconstruct---in a precise sense deferred to Section~\ref{sec:prelim:low-deg}---does not follow from Theorem~\ref{thm:tensor-pca-detection}, and generally such reconstruction lower bounds are more difficult to prove than their detection counterparts. The first tools for low-degree reconstruction lower bounds appeared in~\cite{SW-2020-LowDegreeEstimation}, and this approach can be applied to any additive Gaussian model, which includes tensor PCA. However, to use this machinery, one needs to bound certain recursively-defined quantities, and existing analyses tend to be loose by factors involving $D$. That is to say, we expect the existing approach of~\cite{SW-2020-LowDegreeEstimation} would not obtain the precise dependence between $\lambda$ and $D$ in Theorem~\ref{thm:tensor-pca-detection} (likely the exponent on $D$ would be wrong). By using our new machinery in conjunction with the approach of~\cite{SW-2020-LowDegreeEstimation}, we manage to prove low-degree hardness for reconstruction with the correct relation between $\lambda$ and $D$. To our knowledge, this is the first low-degree reconstruction result (in any model) that achieves the ``correct'' exponent on $D$.

\begin{theorem}[Lower bound for tensor PCA reconstruction; informal]
    \label{thm:tensor-pca-recovery}
    Let $D = D(n) \in \NN$ have $D \leq \sqrt{n / 2p^2}$.
    For all odd $p \geq 3$, there is a constant $c_p > 0$ such that if $\lambda \leq c_p n^{-p/4} D^{-(p - 2) / 4}$, then polynomials of degree at most $D$ cannot reconstruct $x$ from $Y \sim \PP$ with positive correlation.
\end{theorem}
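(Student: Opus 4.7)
The plan is to combine the general additive-Gaussian reconstruction framework of \cite{SW-2020-LowDegreeEstimation} with the near-orthogonal tensor-cumulant basis introduced earlier in the paper. The SW machinery converts a low-degree reconstruction lower bound in any additive Gaussian model into an upper bound on a \emph{reweighted} second moment, morally of the form
$$\Ex_{v, v' \sim \Unif(\SS^{n-1}(\sqrt{n}))}\!\left[\, \tfrac{1}{n}\langle v, v'\rangle \cdot \big(L^{\leq D}(v, v') - 1\big) \,\right],$$
where $L^{\leq D}(v, v')$ is the inner product (against the Wigner measure) of the degree-$\leq D$ projections of the likelihood ratios $dP_v / d\QQ$ and $dP_{v'} / d\QQ$. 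This reweighting by $\langle v, v'\rangle / n$ is what distinguishes reconstruction from detection, where one only has to bound $\Ex[(L^{\leq D}-1)^2]$. The restriction to odd $p$ enters because for even $p$ the signal $v^{\otimes p}$ is invariant under $v \mapsto -v$, so no estimator can have positive correlation with $v$ under any $\lambda$.

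Next, I would compute $L^{\leq D}(v, v')$ using Proposition~\ref{prop:adv-invariant} together with the expansion of invariant polynomials in the tensor-cumulant basis. By orthogonal invariance, $L^{\leq D}(v, v')$ depends on $(v, v')$ only through $\langle v, v'\rangle$ and is a polynomial in it of degree at most $pD$. In the regime $D \lesssim \sqrt{n/2p^2}$, the near-orthogonality of the tensor-cumulant basis---exactly the property exploited in the proof of Theorem~\ref{thm:tensor-pca-detection}---forces its expansion to be diagonal in the graph-moment basis up to lower-order corrections, giving schematically
$$L^{\leq D}(v, v') \;\approx\; \sum_{k = 0}^{D} \sum_{G \in \sG_{p, k}} \frac{\lambda^{2k}}{|\Aut(G)|}\, \langle v, v'\rangle^{pk},$$
where $\sG_{p, k}$ is the set of isomorphism classes of $p$-regular multigraphs on $k$ vertices. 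Multiplying by $\tfrac{1}{n}\langle v, v'\rangle$ and taking the sphere expectation produces Gaussian-like moments of $\langle v, v'\rangle$ whose $n$-dependence cancels the normalization factors of the cumulant basis, just as in the detection computation, with the extra power of $\langle v, v'\rangle/n$ contributing at most a constant per graph.

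I would then bound the resulting sum term by term using the multigraph count of Proposition~\ref{prop:count-multigraphs}. The $k$-th summand reduces to order $\big(\lambda^{2}\, n^{p/2}\, k^{(p-2)/2}\big)^{k}$ up to factors polynomial in $k$. Substituting the hypothesized threshold $\lambda \leq c_p n^{-p/4} D^{-(p-2)/4}$ bounds this by $(c_p^2)^k$ (times polynomial factors), which sums to an arbitrarily small constant when $c_p$ is taken small, yielding vanishing squared correlation and hence no reconstruction with positive margin.

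The main obstacle is avoiding any $\poly(D)$ loss in the final threshold, so that the reconstruction bound matches the detection bound exactly. Existing applications of the SW-2020 framework control the relevant recursively-defined norms by tensor-power inequalities that generically lose a factor of $D$ per unit of degree, producing a strictly weaker threshold for reconstruction. The near-orthogonality of the tensor-cumulant basis is precisely what avoids this loss: cross-terms between distinct multigraphs are suppressed, and the extra factor of $\langle v, v'\rangle/n$ introduced by the reconstruction reduction contributes at most one additional edge or vertex to the combinatorial picture, absorbed by a constant factor rather than a factor of $D$. Making this ``no-$D$-loss'' argument precise---and correctly handling graphs near the boundary $k = D$ where the near-orthogonality begins to degrade---is the technical heart of the argument.
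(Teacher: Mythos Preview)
Your approach diverges substantially from the paper's, and the ``reweighted second moment'' formulation you sketch is not what either \cite{SW-2020-LowDegreeEstimation} or this paper actually uses. There is no reduction of $\Corr_{\leq D}$ to a quantity of the form $\Ex_{v,v'}[\tfrac{1}{n}\langle v,v'\rangle\,(L^{\leq D}(v,v')-1)]$; the SW step is rather to lower-bound the denominator $\Ex_\PP\|f(Y)\|^2$ by Jensen, replacing it with $\Ex_W\|\Ex_v f(\lambda v^{\otimes p}+W)\|^2$, which is a quadratic form in the coefficient vector of $f$. The entire problem then becomes bounding the inverse of a specific upper-triangular ``shift'' matrix. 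Your plan to compute $L^{\leq D}(v,v')$ via the invariant cumulant basis is also off: for \emph{fixed} $v$, the projection $L_v^{\leq D}$ lives in the full degree-$\leq D$ space, not the invariant subspace, so the closed-graph cumulants do not span it and Proposition~\ref{prop:adv-invariant} does not apply. Correspondingly, your displayed expansion of $L^{\leq D}(v,v')$ as a sum over $G\in\sG_{p,k}$ weighted by $1/|\Aut(G)|$ is incorrect; in the Hermite basis one simply gets $\sum_{k\leq D}\lambda^{2k}\langle v,v'\rangle^{pk}/k!$ (up to Wigner-variance corrections), with no multigraph enumeration.

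What the paper actually does is build a parallel theory of \emph{1-open} cumulants $\kappa_{G\to}^c$, which form a near-orthonormal basis for the \emph{equivariant} vector-valued polynomials (Lemma~\ref{lem:kappaGc-1open-gram}, invoked via Proposition~\ref{prop:corr-equivariant} rather than Proposition~\ref{prop:adv-invariant}). Writing $f=\sum_G\alpha_G\,\widehat{\kappa_{G\to}^c}$ and applying the Jensen step, the open-cumulant additivity of Proposition~\ref{prop:additivity-cumulants} gives $\Ex_v f(\lambda v^{\otimes p}+W)=\sum_G(R\alpha)_G\,\widehat{\kappa_{G\to}^c}(W)$ for an explicit upper-triangular matrix $R$ indexed by 1-open multigraphs, with $R_{G,H}\neq 0$ only when $G$ is obtained from $H$ by deleting some connected components. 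One then has $\Corr_{\leq D}^2\leq 2\|R^{-\top}\beta\|^2$ with $\beta_G$ given by Proposition~\ref{prop:reconstruction-cumulant-signal}, and the main work is to bound $\gamma=R^{-\top}\beta$ by back-substitution. The recursion depth is controlled by the number of chains of component-subsets of $H$ (Proposition~\ref{prop:chains}), and the sharp $D$-dependence comes from a careful interplay between $|\eAut(\chop(H))|$ and the open-multigraph counts of Proposition~\ref{prop:count-open-multigraphs}. The 1-open cumulant basis, the matrix $R$, and this back-substitution argument are the heart of the proof; none of them appears in your proposal.
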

\noindent
The restriction to odd $p$ makes estimation of the vector $v$ itself well-defined, since for even $p$, $v^{\otimes p} = (-v)^{\otimes p}$.
For even $p$ one may seek to estimate $v \otimes v$ instead, but this would introduce considerable additional technicalities into our approach.

\paragraph{Computational central limit theorems and Wigner vs.\ Wishart}

The second problem we consider is inspired by the recent work of \cite{Mikulincer-2020-CLTWishartTensors}, who considered a \emph{tensor central limit theorem}: given i.i.d.\ tensors $X_j$, how quickly does the distribution of $\frac{1}{\sqrt{r}}\sum_{j = 1}^r X_j$ converge to a Gaussian tensor as $r$ increases?
That work considered a \emph{Wishart tensor} model, where each $X_j$ is the rank-one tensor $x_j^{\otimes p}$ with $x_j$ a standard Gaussian vector, with ``diagonal'' entries of the tensor---those indexed by tuples $(i_1, \dots, i_p)$ with a repeated entry---zeroed out.\footnote{This ensures that the limit tensor has centered and independent Gaussian entries, while including diagonal entries would introduce correlations.}
The author showed that $r \gg n^{2p - 1}$ suffices for \emph{information-theoretic convergence}: past this number of summands, the Wigner and Wishart laws are close in a suitable distributional distance, and no statistic whatsoever can distinguish them with high probability.

Earlier work \cite{BDER-2016-WignerWishartDetection} considered the matrix case $p = 2$, but considered \emph{computational convergence}: for what scaling of $r$ can a computationally efficient hypothesis test distinguish the Wigner and Wishart laws?
Their answer was $r \ll n^3$, which coincides with the information-theoretic lower bound, $n^{2p - 1} = n^3$.
Here we ask: how does computational convergence behave when $p > 2$?

The distribution studied by \cite{Mikulincer-2020-CLTWishartTensors} is not invariant, and so is not amenable to our tools.
As a substitute, we propose a family of random tensors whose entries are degree $p$ homogeneous polynomials in Gaussian random variables, just like the $x_j^{\otimes p}$ above, which our methods can treat and which we believe should behave similarly to Wishart models.

\begin{definition}
    The \emph{real Ginibre ensemble} $\Gin(n, \sigma^2)$ is the law of an (asymmetric) $n \times n$ random matrix $Z$ whose entries are i.i.d.\ with law $\sN(0, \sigma^2)$.
\end{definition}
\noindent
We write $\Haar = \Haar(n)$ for the Haar probability measure over the orthogonal group $\sO(n)$, omitting the $n$ when it is clear from context.
The basic idea that will be involved below is that the distributions $\Gin(n, 1/n)$ and $\Haar(n)$ behave similarly: both are orthogonally invariant (Proposition~\ref{prop:Gin-invariant}), and the entries of both are typically $O(1/\sqrt{n})$.
Moreover, ``invariantizing'' a tensor by forming $Z \cdot T$ with $Z \sim \Gin(n, 1/n)$ makes its entries homogeneous of degree $p$ in the Gaussian entries of $Z$, giving an object resembling $x_j^{\otimes p}$ from the model of \cite{Mikulincer-2020-CLTWishartTensors}.

\begin{theorem}[Lower bound for Wigner vs.\ Wishart detection; informal]
    \label{thm:wishart-informal}
    Suppose that $\mu_n$ are probability measures on $\Sym^p(\RR^n)$ satisfying the following properties for $A \sim \mu_n$:
    \begin{enumerate}
    \item For all $i \in [n]^p$ having a repeated entry, $A_{i_1, \dots, i_p} = 0$ almost surely.
    \item There is a constant $C > 0$ such that, for all $i \in [n]^p$, $|A_{i_1, \dots, i_p}| \leq C$ almost surely.
    \item $\|A\|_F^2 = n^p$ almost surely.
    \end{enumerate}
    Let $Z_1, \dots, Z_r \sim \Gin(n, 1/n)$ be i.i.d.\ and $A_1, \dots, A_r \sim \mu_n$ be i.i.d., and write $\PP = \PP_{n, r}$ for the law of $r^{-1/2}\sum_{j = 1}^r Z_j \cdot A_j$.
    There is a constant $a_{p, C} > 0$ such that the following holds.
    Suppose that $D = D(n) \leq \sqrt{n / 2p^2}$ is given and $r = r(n)$ satisfies
    \begin{equation}
        r \geq a_{p, C} \cdot \left\{\begin{array}{ll} n^p & \text{if } p \text{ is odd}, \\ n^{3p/2} & \text{if } p \text{ is even}\end{array}\right\}.
    \end{equation}
    Then, polynomials of degree at most $D$ cannot distinguish $\PP_{n, r(n)}$ from $\Wig$.
\end{theorem}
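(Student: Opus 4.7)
The plan is to bound the degree-$D$ squared low-degree advantage $\Adv^{\le D}(\PP,\QQ)^2$ and show it remains $O(1)$ under the stated scaling of $r$. Both $\PP$ and $\QQ = \Wig$ are orthogonally invariant: for $\PP$, if $Q \in \sO(n)$ then $Q \cdot Y = r^{-1/2}\sum_j (Z_j Q) \cdot A_j \stackrel{d}{=} Y$, using invariance of the Ginibre law under right multiplication by an orthogonal matrix. By Proposition~\ref{prop:adv-invariant}, the optimal degree-$D$ distinguisher is itself invariant and, in the regime $D \le \sqrt{n/2p^2}$, can be expanded in the near-orthogonal basis of tensor cumulants $\phi_G$ developed in the paper. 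Under $\Wig$, the cumulants of connected multigraphs with at least three vertices vanish in expectation (the tensor analog of the semicircular-law characterization via vanishing free cumulants), and so the squared advantage reduces, up to constants from near-orthogonality, to
\[
\sum_{G\text{ connected},\ 3 \le |V(G)| \le D} \frac{\bigl(\EE_\PP[\phi_G(Y)]\bigr)^2}{\EE_\Wig[\phi_G(W)^2]}.
\]

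The central algebraic step is to invoke the additivity of tensor cumulants under sums of independent orthogonally invariant tensors, one of the paper's main structural results (the tensor generalization of additivity of free cumulants under additive free convolution). Combined with the homogeneity of $\phi_G$ of degree $|V(G)|$, this yields
\[
\EE_\PP[\phi_G(Y)] \;=\; r^{1 - |V(G)|/2}\, \EE\bigl[\phi_G(Z_1 \cdot A_1)\bigr],
\]
reducing everything to a single-summand cumulant. To estimate $\EE[\phi_G(Z_1 \cdot A_1)]$, I would expand $\phi_G$ in the graph-moment basis, unfold each moment via Definition~\ref{def:change-basis} as a multi-index sum of products of entries of $A$ against products of Ginibre entries of $Z$, and then take the expectation over $Z$ via Wick's theorem. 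Each Wick pair contributes a factor of $1/n$ and forces an identification of the corresponding $A$-indices; the cumulant subtraction eliminates the disconnected contributions, leaving only the connected ones. Using $|A_{i_1,\ldots,i_p}| \le C$, $\|A\|_F^2 = n^p$, and the vanishing of $A$ on diagonal index tuples to control the surviving sums, one aims to prove the key bound $|\EE[\phi_G(Z_1 \cdot A_1)]| \le (C')^{|V(G)|} n^{p|V(G)|/4}$, while $\EE_\Wig[\phi_G^2]$ is of constant order up to a known automorphism factor.

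Plugging these bounds, together with the count of $p$-regular multigraphs on $d$ vertices (of order $d^{(p-2)d/2}$), into the advantage sum produces for each $d$ a contribution of order $n^{pd/2}/r^{d-2}$ times combinatorial factors. The requirement $r^{d-2} \gg n^{pd/2}$, equivalently $r \gg n^{pd/(2(d-2))}$, is a decreasing function of $d \ge 3$, so the binding value of $d$ is the smallest admissible one. Since a $p$-regular multigraph on $d$ vertices requires $pd$ even, the smallest valid $d \ge 3$ is $d = 3$ for even $p$ and $d = 4$ for odd $p$, yielding the thresholds $r \gg n^{3p/2}$ and $r \gg n^{p}$, respectively, which match the theorem. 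The main technical obstacle is the Wick-plus-diagonal-zero bookkeeping that produces the key single-summand bound $n^{p|V(G)|/4}$: the naive Wick bound scales like $n^{p|V(G)|/2}$, and recovering the additional factor of $n^{-p|V(G)|/4}$ (which is exactly what separates the Wigner--Wishart threshold from the trivial variance-matching threshold) requires a careful argument that the diagonal-vanishing hypothesis on $A$ forces the Ginibre pairings to satisfy extra index constraints, suppressing the otherwise-dominant ``short-cycle'' contributions on each connected graph $G$.
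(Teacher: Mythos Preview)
There are two genuine gaps in your plan.

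First, the reduction to connected $G$ is not justified by the reason you give and, as stated, yields only a lower bound on the advantage. Under $\Wig$, the centered cumulants $\kappa_G^c$ vanish in expectation for \emph{every} nonempty $G$, connected or not; this is what makes them a near-orthogonal basis, and the advantage formula (Corollary~\ref{cor:invariant-advantage-bound}) is a sum over \emph{all} $p$-regular multigraphs. The simple additivity $\EE_\PP[\phi_G(Y)] = r^{1-d/2}\,\EE[\phi_G(Z_1\cdot A_1)]$ that you invoke holds only for connected $G$; for disconnected $G$ one must use the full version of Proposition~\ref{prop:additivity}, which sums over all ways to distribute the components of $G$ among the $r$ summands and gives $|\EE_\PP\,\kappa_G^c(Y)| \lesssim C^d n^{pd/2} r^{-d/2 + |\conn(G)|}$. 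In the paper's proof the dominant contributions at a given $d$ in fact come from graphs with the \emph{maximal} number of components, namely $d/\xi$ copies of a size-$\xi$ connected graph; summing those recovers the same threshold as your connected $d=\xi$ term, but this has to be argued, not assumed away.

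Second, you misidentify the role of the hypotheses on $A$. The single-summand bound $|\EE[\hat\kappa_G^c(Z\cdot A)]| \lesssim C^d n^{pd/4}$ holds for every $G$ with no self-loops and no Frobenius components by the exact identity $\EE_{Z\sim\Gin}\,\kappa_G(Z\cdot A) = (n^{\underline{b}}/n^{b})\, m_G(A)$ of Corollary~\ref{cor:cumulants-after-Gin} together with the trivial estimate $|m_G(A)| \le C^{|V(G)|} n^{|E(G)|}$; there is no ``short-cycle suppression'' to recover and no extra factor of $n^{-pd/4}$ to fight for. The actual purpose of hypotheses~1 and~3 is to force $\EE_\PP\,\kappa_G^c(Y)=0$ whenever $G$ has a self-loop (via $m_G(A)=0$) or a Frobenius component (via $\|A\|_F^2 - n^p = 0$), which is exactly what rules out components of size~$1$ or~$2$. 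Your parity argument alone does not do this: for either parity of $p$, the Frobenius pair is a valid connected $p$-regular multigraph on two vertices, and without hypothesis~3 its contribution would dominate and destroy the threshold.

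Finally, the paper avoids the Wick bookkeeping you anticipate entirely. Proposition~\ref{prop:Gin-wick} shows that $\EE_Z\,Z^{\otimes\ell}$ is just the diagonal (in matchings) part of the Weingarten expansion, so the Ginibre average exactly undoes the cumulant-to-moment transform and returns the ordinary moment $m_G(A)$. What you describe as the ``main technical obstacle'' collapses to a one-line identity.
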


We emphasize one simple example: consider $A$ a deterministic tensor, with
\begin{equation}
    \label{eq:A-all-ones}
    A_{i_1, \dots, i_p} = c\cdot \One\{\text{no entry is repeated in } i\},
\end{equation}
where $c = c(p, n)$ is chosen (close but not equal to 1) so that $\|A\|_F^2 = n^p$.
If $A$ were simply the all-ones tensor $1^{\otimes p}$, then $Z \cdot A$ would have the law of $x_j^{\otimes p}$ from the above Wishart model.
Thus, we believe it is reasonable to think of $A$ as an invariant surrogate for the law of $x_j^{\otimes p}$ with positions having repeated indices zeroed out.
Note also that, as in the latter model, in our model we have $\EE Z_j \cdot A_j = 0$ even when $p$ is even, so there is no need to center these terms.

The result suggests that computational central limit theorem convergence (i.e., with respect to polynomial-time algorithms) in this Wishart-like model occurs once $r \gg n^{(2 + \One\{p \text{ even}\})p / 2}$.
For the special case of the above choice of $A$, we may bolster this proposal with a matching upper bound.
\begin{theorem}[Upper bound for Wigner vs.\ Wishart detection; informal]
    \label{thm:wishart-informal-upper}
    Let $A$ be as in \eqref{eq:A-all-ones}, and $\mu_n$ be the Dirac delta mass on $A$.
    Then, in the setting of Theorem~\ref{thm:wishart-informal},
    if $r = r(n)$ satisfies
    \begin{equation}
        r  \ll \left\{\begin{array}{ll} n^p & \text{if } p \text{ is odd}, \\ n^{3p/2} & \text{if } p \text{ is even}\end{array}\right\},
    \end{equation}
    then there is a polynomial of degree $D = 3$ if $p$ is even and $D = 4$ if $p$ is odd that can distinguish $\PP_{n, r(n)}$ from $\Wig$.
\end{theorem}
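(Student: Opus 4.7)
The plan is to exhibit a specific graph moment $m_G(Y)$ of degree $D$ that achieves high advantage, for a carefully chosen $p$-regular multigraph $G$: for $p$ even, $G$ is the triangle on $D=3$ vertices with every edge of multiplicity $p/2$; for $p$ odd, $G$ is a $p$-regular multigraph on $D=4$ vertices (for instance $K_4$ when $p=3$). In each case $m_G(Y)$ is an invariant polynomial in $Y$ of degree exactly $D$. The goal is to lower-bound the squared advantage $(\EE_\PP m_G - \EE_\QQ m_G)^2 / \Var_\QQ m_G$ and show it diverges in the claimed regime of $r$.

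\textbf{Null-model moments.} Under $\QQ = \Wig$, Wick's theorem gives $\EE_\QQ m_G = 0$ when $D=3$ (the $p$-even case, with an odd number of tensor factors), and a sum over perfect matchings of $V(G)$ of positive-power-of-$n$ terms when $D=4$. Expanding $\EE_\QQ m_G^2$ as a Wick sum over pairings of the $2D$ tensor factors in $m_G^2$, the ``copy-diagonal'' pairing (matching each vertex in the first copy with its counterpart in the second, subject to multiset-equality of incident labels) contributes $\Theta(n^{|E|})$, and index-counting confirms that no other pairing exceeds this order. One concludes $\Var_\QQ m_G = \Theta(n^{|E|})$, where in the $D=4$ case this uses that the non-product cross-copy pairings survive the subtraction of $(\EE_\QQ m_G)^2$ at the same order $n^{|E|}$.

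\textbf{Alternative-model moments.} Expanding
\[
m_G(Y) = r^{-D/2} \sum_{(j_1, \ldots, j_D) \in [r]^D} \sum_{k: E(G) \to [n]} \prod_v (Z_{j_v} \cdot A)_{k(\partial v)},
\]
I would group the outer sum by the set partition $\pi$ of $[D]$ recording coincidences among the $j_v$'s. Independence of the $Z_j$'s across blocks gives
\[
\EE_\PP m_G = r^{-D/2} \sum_\pi r(r-1)\cdots(r-|\pi|+1) \sum_k \prod_{B \in \pi} \EE\Big[\prod_{v \in B}(Z\cdot A)_{k(\partial v)}\Big].
\]
The zero-diagonal assumption on $A$ forces every nonzero Wick contraction of the $Z$-entries within a block to pair $Z$'s across \emph{distinct} tensor factors, since within-factor pairings produce repeated indices in $A$; in particular any block of size $1$ contributes $0$, so partitions containing a singleton vanish outright. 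Two partition types survive. First, the all-pairs partition (available when $D$ is even): a direct Wick computation yields $\EE[(Z\cdot A)_\alpha (Z \cdot A)_\beta] = \sum_{\tau \in S_p} \prod_t \delta_{\alpha_t, \beta_{\tau(t)}}$ (each cross-pairing contributes $\|A\|_F^2 \cdot n^{-p} = 1$), which equals $\EE_\QQ[W_\alpha W_\beta]$ exactly, so the total all-pairs contribution reproduces $\EE_\QQ m_G$ up to a multiplicative $1 - O(1/r)$ factor. Second, the single-block partition $\pi = \{[D]\}$, contributing $r^{1-D/2}\, \EE m_G(Z\cdot A)$: applying Wick once more, the natural pairing that contracts the two $Z$-factors on each edge of $G$ reconstructs $m_G(A)$ at leading order, with non-natural pairings suppressed by at least one power of $n$. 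For the prescribed $A$, $m_G(A) = c^{|V|}$ times the number of maps $k : E(G) \to [n]$ for which $k(\partial v)$ has distinct entries at every vertex, which by inclusion-exclusion gives $m_G(A) = (1+O(1/n))\, n^{|E|}$.

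\textbf{Combining and conclusion.} Subtracting yields $\EE_\PP m_G - \EE_\QQ m_G = (1+o(1))\, r^{1-D/2}\, n^{|E|}$, so the squared advantage is of order $r^{2-D}\, n^{|E|}$. This diverges precisely when $r \ll n^{|E|/(D-2)}$, giving $r \ll n^{3p/2}$ in the $p$-even, $D=3$ case (where $|E|=3p/2$) and $r \ll n^{p}$ in the $p$-odd, $D=4$ case (where $|E|=2p$), exactly matching the theorem.

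\textbf{Main obstacle.} The subtlest step is the variance lower bound in the $D=4$ case: both $\EE_\QQ m_G^2$ and $(\EE_\QQ m_G)^2$ scale as $n^{|E|}$, so one must confirm that the non-product Wick pairings—principally the copy-diagonal pairing and its graph-automorphism conjugates—survive the subtraction at order $n^{|E|}$ with a positive coefficient. Simultaneously one must control the $1-O(1/r)$ corrections in the partition expansion, the $O(n^{|E|-1})$ corrections from non-natural Wick pairings in $\EE m_G(Z\cdot A)$, and the falling-factorial error in the $(r)_{|\pi|}$ factors; each is a combinatorial computation within the graph-moment calculus developed earlier in the paper, but keeping all three error sources simultaneously below the ``excess'' $r^{1-D/2} n^{|E|}$ is where the bookkeeping is most delicate.
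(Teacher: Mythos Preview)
Your approach is correct in outline and reaches the right threshold, but it takes a genuinely different route from the paper and leaves unfinished precisely the step that the paper's machinery was built to avoid.

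\textbf{What the paper does.} The paper does not work with $m_G$ directly. It invokes Corollary~\ref{cor:invariant-advantage-bound} (the advantage formula in the cumulant basis), keeps only the single term corresponding to a connected $p$-regular $G$ on $\xi$ vertices, and computes $\EE_\PP\kappa_G^c(Y)$ using additivity (Proposition~\ref{prop:general-additivity}) together with the Ginibre identity $\EE_Z\kappa_G^c(Z\cdot A)=\tfrac{n^{\underline{b}}}{n^b}m_{G^{(0)}}(A)(\|A\|_F^2-n^p)^{|\Frob(G)|}$ (Corollary~\ref{cor:cumulants-after-Gin}). This yields $\EE_\PP\kappa_G^c\asymp n^{\underline{p\xi/2}}\,r^{1-\xi/2}$ in two lines, and the near-orthonormality of the $\widehat{\kappa_G^c}$ (Lemma~\ref{lem:kappaGc-gram}) supplies the denominator for free: $\Adv^2\gtrsim n^{-p\xi/2}(\EE_\PP\kappa_G^c)^2\asymp r^{2-\xi}n^{p\xi/2}$.

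\textbf{What you do differently and what each buys.} Your partition expansion of $\EE_\PP m_G$ is correct and is essentially the ``un-cumulant'' version of the same calculation: the singleton-block vanishing, the identity $\EE[(Z\cdot A)_\alpha(Z\cdot A)_\beta]=\EE[W_\alpha W_\beta]$, and the leading-order identification $\EE_Z m_G(Z\cdot A)=m_G(A)(1+O(1/n))$ are all right. The paper's cumulant route packages these same facts into Proposition~\ref{prop:general-additivity} and Corollary~\ref{cor:cumulants-after-Gin}, so the numerator computations are morally equivalent. The real divergence is in the denominator. You need $\Var_\QQ m_G=O(n^{|E|})$, whereas the paper gets $\EE_\QQ(\kappa_G^c)^2\asymp n^{|E|}$ automatically from Lemma~\ref{lem:kappaGc-gram}. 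Your route is more elementary (no Weingarten calculus, no basis construction) but pays for it with a graph-specific variance computation; the paper's route front-loads the work into the general basis theorem and then cashes it in here for free.

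\textbf{The obstacle you flag is real.} For $D=3$ your variance argument is clean since $\EE_\QQ m_G=0$. For $D=4$ it is not: take $p=3$, $G=K_4$. Then $c_{\max}(K_4)=4$, so both $\EE_\QQ m_G^2$ and $(\EE_\QQ m_G)^2$ are $\Theta(n^{8})$, while $|E|=6$. You must show the $n^8$ and $n^7$ terms cancel exactly---i.e., that every even coloring of $K_4\sqcup K_4$ with more than $6$ colors is a ``product'' coloring already accounted for in $(\EE_\QQ m_G)^2$, with matching falling-factorial corrections. This is true and checkable for fixed $G$, but it is a nontrivial combinatorial verification that you have not carried out; asserting ``the non-product Wick pairings survive at order $n^{|E|}$ with a positive coefficient'' is restating the claim, not proving it. The paper's choice of $\kappa_G^c$ as the test statistic instead of $m_G$ is exactly what makes this cancellation issue disappear.
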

\noindent
As in the case of tensor PCA, our diagrammatic approach gives a clear intuitive explanation of the structure of this threshold.
Here, the unusual-seeming dependence on the parity of $p$ arises because the parity of $p$ controls the smallest possible size of a $p$-regular multigraph on more than two vertices, which is 3 when $p$ is even but 4 when $p$ is odd.
Our calculations will demonstrate that in this setting the smallest graphs (or disjoint unions thereof) correspond to the most powerful polynomials for hypothesis testing, so the above phenomenon accounts for detection being proportionally easier (as reflected in a larger threshold for $r$) when $p$ is even.

We note also that the computational threshold $r \gg n^{(2 + \One\{p \text{ even}\})p / 2}$ we establish coincides with the information-theoretic lower bound $r \gg n^{2p - 1}$ of \cite{Mikulincer-2020-CLTWishartTensors} when $p = 2$, but is strictly lower by a polynomial factor once $p \geq 3$.
In this sense, our result gives initial evidence that testing Wigner vs.\ Wishart tensors may exhibit a statistical-computational gap not present in the matrix case (provided that the information-theoretic lower bound of \cite{Mikulincer-2020-CLTWishartTensors} can be matched by an upper bound, which to the best of our knowledge is not yet established).

\subsection{Main Proof Technique: Tensorial Finite Free Cumulants}

The following remarkable objects are at the heart of our approach to analyzing low-degree algorithms for invariant problems.

\begin{theorem}[Finite free cumulants of a tensor]
    \label{thm:cumulants}
    For any $0 \leq D \leq \sqrt{n / 2p^2}$, there is a collection of polynomials $\kappa_G(T)$ of degree at most $D$ in the indices of a tensor $T$, indexed by (non-isomorphic) $p$-regular multigraphs on at most $D$ vertices, such that the following hold:
    \begin{enumerate}
    \item (Empty Graph) $\kappa_{\emptyset}(T) = 1$.
    \item (Invariance) The $\kappa_G$ are invariant polynomials: $\kappa_G(Q \cdot T) = \kappa_G(T)$ for all $Q \in \sO(n)$.
    \item (Basis) The $\kappa_G$ are a basis for the invariant polynomials of degree at most $D$.
    \item (Approximate Orthonormality) The Gram matrix $M_{G, H} \colonequals \EE_{T \sim \Wig}[\kappa_G(T) \kappa_H(T)]$, restricted to those $G$ and $H$ none of whose connected components consist of two vertices connected by $p$ parallel edges, satisfies
    \begin{equation}
        \frac{1}{2} < \lambda_{\min}(M) \leq \lambda_{\max}(M) \leq 2.
    \end{equation}
    \item[5.] (Wigner Expectations) For any $G$ that is not empty and not a single two-vertex connected component of the kind described above,
    \begin{equation}
        \Ex_{T \sim \Wig} \kappa_G(T) = 0.
    \end{equation}
    \item[6.] (Additive Free Convolution) For any $S, T \in \Sym^p(\RR^n)$ and $G$ connected,
    \begin{equation}
        \Ex_{Q \sim \Haar} \kappa_G(S + Q \cdot T) = \kappa_G(S) + \kappa_G(T).
    \end{equation}
    \end{enumerate}
\end{theorem}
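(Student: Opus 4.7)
My plan is to define $\kappa_G(T)$ by a Möbius-type formula from the graph moments $m_G(T)$, analogous to the moment-to-cumulant transform in classical and free probability. Specifically, I would write
\begin{equation*}
\kappa_G(T) \colonequals \sum_{\pi} c(\pi)\, \prod_{B \in \pi} m_{G[B]}(T),
\end{equation*}
where $\pi$ ranges over partitions of $V(G)$ into blocks that induce suitable sub-multigraphs $G[B]$, and the coefficients $c(\pi)$ are fixed inductively (from smaller graphs upward) to enforce property~5. I would also impose multiplicativity on connected components, $\kappa_G = \prod_{C}\kappa_{C}$, so that it suffices to define $\kappa_G$ for connected $G$. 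Property~1 (empty graph) is by convention, property~2 (invariance) is immediate since each $m_G$ is invariant by Proposition~\ref{prop:moments-are-invariant}, and property~3 (basis) holds because the transition matrix between $\{m_G\}$ and $\{\kappa_G\}$ is unitriangular with respect to any linear extension of the partial order refining blocks, so it is invertible. The $m_G$ already span all invariants of degree $\le D$ by classical invariant theory, so the $\kappa_G$ do as well.

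\textbf{Wigner expectations (property 5).} For $T \sim \Wig$ and any graph $G$, $\EE\, m_G(T)$ expands by Wick/Isserlis into a sum over perfect matchings of the half-edges at each vertex of $G$, giving a graph $\tilde G$ on fewer vertices whose contribution scales as $n^{c(\tilde G)}$, where $c$ counts connected components of $\tilde G$ after contraction. The dominant matchings pair up vertices in ``dipoles'' (two vertices connected by $p$ parallel edges), and I would solve for the $c(\pi)$ so that precisely these leading contributions cancel, leaving $\EE\,\kappa_G(T) = 0$ whenever $G$ is non-empty and has no dipole component. This is a standard combinatorial inversion once the Wick expansion is written down.

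\textbf{Approximate orthonormality (property 4) --- the main obstacle.} This is the hardest step. Expanding $\EE\,\kappa_G(T)\kappa_H(T)$ via Wick gives a sum over matchings of the combined half-edges of $G \sqcup H$, indexed by graphs obtained by gluings across $G$ and $H$. The calibration from property~5 kills all matchings within $G$-only or $H$-only pieces that do not contribute to the norm; on the diagonal $G=H$, the surviving matchings correspond to automorphisms of $G$ and produce $M_{G,G} = 1 + O(D^2/n)$, while off-diagonal entries come from ``defect'' matchings whose topology costs a factor of $n^{-1}$ per missing dipole. The total error in the Gram matrix is then controlled by the number of $p$-regular multigraphs on $\le D$ vertices --- a quantity of order $D^{(p-2)D/2}$ --- times the per-entry error. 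The assumption $D \le \sqrt{n/2p^2}$ is exactly what makes this error $o(1)$, and a Gershgorin/Schur bound then yields $\tfrac{1}{2} < \lambda_{\min}(M) \le \lambda_{\max}(M) < 2$. The delicate bookkeeping here --- tracking which gluings produce which powers of $1/n$ and summing over isomorphism classes of contracted graphs --- is where I expect to spend most of the effort.

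\textbf{Additive free convolution (property 6).} For connected $G$, I would expand $m_{G[B]}(S + Q\cdot T)$ by multilinearity across the vertices of each block $B$, assigning each vertex to either $S$ or $Q\cdot T$. After averaging over $Q \sim \Haar$, Weingarten calculus (or more precisely its implications for $\sO(n)$-invariant integrals of products of tensor entries) shows that any monomial with vertices of both types in the same connected Wick component produces only subleading contributions unless the pattern cleanly factorizes, while the leading surviving contributions are sums of products $\kappa_{G'}(S)\kappa_{G''}(T)$ over decompositions of $V(G)$ into an $S$-part and a $T$-part. Because $G$ is connected, any such decomposition with both parts non-empty must cut at least one edge, and the Möbius coefficients $c(\pi)$ have been chosen precisely so that these mixed terms cancel. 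Only the two pure terms $\kappa_G(S)$ and $\kappa_G(T)$ survive, establishing additivity. This step reuses the combinatorial structure set up for properties 4 and 5 rather than introducing genuinely new estimates, so I expect it to be routine modulo the Weingarten input.
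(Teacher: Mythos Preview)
Your approach via M\"obius inversion is a natural first guess but differs substantially from the paper's construction, and it contains a real gap in the additivity argument. The paper does \emph{not} define $\kappa_G$ by inverting a moment--cumulant relation over partitions of $V(G)$; instead it sets $\kappa_G(T) \colonequals \EE_{Q \sim \Haar}\, m_G^!(Q \cdot T)$, where $m_G^!$ is the graph moment restricted to \emph{distinct} edge indices. The point of distinct indices is that additivity (property~6) becomes exact and nearly automatic: expanding $m_G^!(A + Q \cdot B)$ multilinearly and applying $\Pi_{pd} = \EE_Q Q^{\otimes pd}$ produces matching vectors, and any cross-term with both $A$- and $B$-vertices in the same connected component picks up an inner product $\langle e_i, e_j \rangle = 0$ between distinct basis vectors across a cut edge. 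No cancellation of coefficients is involved. In your proposal, by contrast, you assert that the M\"obius coefficients $c(\pi)$ --- fixed inductively to enforce property~5 --- will \emph{also} make the mixed terms in property~6 cancel. This is a non-sequitur: enforcing $\EE_{\Wig}\kappa_G = 0$ is one linear condition per $G$ on the $c(\pi)$, and it has nothing to do with the algebraic identity needed for exact additivity under Haar averaging. (In matrix free probability, additivity of free cumulants comes from the specific \emph{non-crossing} structure of the partitions, not from calibrating coefficients against a reference measure; you have not identified an analogous combinatorial structure here.)

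There is also a definitional problem: for an arbitrary block $B \subset V(G)$, the induced object $G[B]$ is not a $p$-regular multigraph, so $m_{G[B]}$ is not defined in your framework. The paper's actual expansion of $\kappa_G$ in graph moments is $\kappa_G(T) = c_G \sum_H \Wg_{G,H}\, m_H(T)$, a sum over \emph{all} $p$-regular multigraphs $H$ on $|V(G)|$ vertices weighted by the graph Weingarten function --- not a M\"obius sum over vertex partitions. This expansion makes property~4 fall out directly from the conditioning of the Weingarten matrix (a Ger\v{s}gorin bound on the Gram matrix of matching vectors, requiring exactly $pd \lesssim \sqrt{n}$), rather than the genus/defect bookkeeping you sketch. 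Finally, your imposition of exact multiplicativity $\kappa_G = \prod_C \kappa_C$ over connected components is incompatible with the paper's $\kappa_G$, which carries falling-factorial correction factors coupling the components (see the disconnected case of the additivity proposition).
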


\begin{remark}
    When $D > \sqrt{n / 2p^2}$, the finite free cumulants exist and satisfy Conditions 1--3 and 5--6, but Condition 4 breaks down.
    More dramatically, once $D \gg \sqrt{n}$ the Gram matrix of the finite free cumulants no longer has a bounded condition number, so they are no longer approximately orthogonal.
    See Appendix~\ref{app:weingarten} for further details.
\end{remark}

\noindent
Of course, by basic linear algebra over the space of polynomials endowed with inner product $\langle f, g \rangle = \EE_{T \sim \Wig}[f(T)g(T)]$, there must exist polynomials satisfying Claims 1--5, and indeed if this were all we were interested in we could demand exact orthonormality in Claim 4.
What is surprising is, first, that we can identify such polynomials explicitly (analogous to, say, the Boolean Fourier basis or the Hermite basis, as opposed to arbitrary abstract collections of orthogonal polynomials), and second, that those same polynomials happen to satisfy the exact algebraic Condition 6.

We will see that these objects are useful tools for the analysis of low-degree polynomial algorithms in models having both invariance and additive structure.
Consider the case of hypothesis testing. The analysis of low-degree polynomials, given an approximate basis of this kind, boils down to evaluating expectations of $\kappa_G$ under the alternative hypothesis---the distribution of $Y = \lambda v^{\otimes p} + W$ in tensor PCA, or of $r^{-1/2} \sum_{i = 1}^r Z_i \cdot A_i$ in a computational central limit theorem.
But, Claim 6 above gives a powerful tool to handle such situations, as is especially apparent for tensor PCA: combining Claims 5 and 6, we see that
\begin{equation}
\Ex_{v, W} \kappa_G(\lambda v^{\otimes p} + W) = \Ex_{v, W, Q} \kappa_G(\lambda v^{\otimes p} + Q \cdot W) = \Ex_v \kappa_G(\lambda v^{\otimes p}) + \Ex_W \kappa_G(W) = \Ex_v \kappa_G(\lambda v^{\otimes p}),
\end{equation}
and the last expression is simple to compute in closed form (Proposition~\ref{prop:all-spike-cumulant}).
To give some intuition, the result is equal to leading order to $\EE_v m_G(\lambda v^{\otimes p})$ (generally, the $\kappa_G$ are built by a series of ``adjustments'' to the corresponding $m_G$), which, even inside the expectation, is easily computed by hand as $\lambda^{|V(G)|} \|v\|^{2|E(G)|}$ (Proposition~\ref{prop:all-spike}).

As we have mentioned, there is a deeper mathematical meaning to the $\kappa_G$: as Claim 6 hints at, the $\kappa_G$ are really the correct generalization to tensors of \emph{finite free cumulants} from the free probability theory of random matrices.
In this regard, our $\kappa_G$ are the third step in a chain of generalizations in the literature, which we review below.

First, the classical cumulants are polynomials $\kappa_t^{(1)}$ in the moments of a random variable with the property that, if $A$ and $B$ are independent scalar random variables, then the cumulant of their convolution $A + B$ is the sum of their cumulants: $\kappa(A+B)=\kappa(A)+\kappa(B)$. For instance, the first cumulant is the expectation, and the second cumulant is the variance, for which these additivity rules are familiar.

Second, the theory of \emph{free probability} considers the less straightforward question of how addition affects the spectra of \emph{matrix-valued} random variables $A$ and $B$.
To make this as tractable as the scalar case, it is not enough to assume that $A$ and $B$ are independent---the typical spectrum of $A + B$ still depends on the extent to which $A$ and $B$ commute.
Free probability restricts its attention to \emph{freely independent} pairs $A, B$ whose frames of eigenvectors are ``maximally uncorrelated'' with each other. For our purposes, we may take this to mean that we are interested in the typical spectrum of $A + Q^{\top}BQ = A + Q \cdot B$ for $Q \sim \Haar$ drawn independently of $A$ and $B$.
The \emph{finite free cumulants} $\kappa_t^{(2)}$ then satisfy this additivity property in expectation, $\EE_Q \kappa_t^{(2)}(A + Q \cdot B) = \kappa_t^{(2)}(A) + \kappa_t^{(2)}(B)$, just as in Claim 6.
They are a relatively recent innovation in free probability \cite{AP-2018-CumulantsFiniteFreeConvolution,AGVP-2023-FiniteFreeCumulants}, but a notion of their limit as $n \to \infty$, called just the \emph{free cumulants}, was crucial to the earlier origins of the theory (see, e.g., the reference \cite{mingo-speicher}).

Given that they satisfy Claim 6, it should now be clear that our $\kappa_G$ generalize this notion of finite free cumulant to tensors.
Even for matrices, it seems not to have been noticed previously that finite free cumulants lead two parallel lives: on the one hand, they are statistics of random matrices that behave well under freely independent summations; on the other, as our work shows, they yield a natural, explicit, near-orthogonal basis of invariant functions under the inner product induced by the Wigner distribution, which for matrices is the Gaussian orthogonal ensemble.
Also interesting is that there is no clear notion of limiting free cumulants for tensors, because there is also no clear notion of eigenvalues or empirical spectral distribution.
Still, \emph{finite} free cumulants are sensible and useful objects.

The astute reader will notice that Claim~6 is restricted to connected graphs---a simple issue that we address in Proposition~\ref{prop:additivity-cumulants}---and that Claims~4 and~5 make exceptions for a particular type of connected component, namely a two-vertex multigraph, which we call a \emph{Frobenius pair} (see Section~\ref{sec:frobenii}).
The associated cumulant may be viewed as a formal analog of the variance (the second classical cumulant), and like the variance is always non-negative.
To actually use the finite free cumulants as a basis, we will have to define and work carefully with a suitable re-centering that accounts for this, which will be one of our main tasks in Section~\ref{sec:cumulants}.

We also have not said anything about the application of these ideas to estimation, which requires vector-valued functions of a tensor.
This turns out to admit a parallel theory concerning \emph{equivariant functions}, ones $f: \Sym^p(\RR^n) \to \RR^n$ satisfying $f(Q \cdot T) = Q \cdot f(T)$, which are spanned by constructions like $m_G(T)$ but where $G$ has an ``open'' edge corresponding to the vector output.
These modified graph moments also are spanned by a basis of objects similar to the $\kappa_G$. Our lower bound for reconstruction is obtained by using this basis in conjunction with the existing strategy of~\cite{SW-2020-LowDegreeEstimation}.

\subsection{Related Work}
\label{sec:related}

\paragraph{Eigenvalues of tensors}
There are several reasonable different definitions of eigenvalues and eigenvalue-eigenvector pairs for tensors~\cite{Qi-2005-EigenvaluesRealSupersymmetricTensor,Qi-2007-EigenvaluesInvariantsTensors,QCC-2018-TensorEigenvalues}.
It is non-trivial to compute these eigenvalues~\cite{CDN-2014-AllRealEigenvaluesSymmetricTensors}, and, under the common notion of ``E-eigenvalues,'' typical tensors have exponentially many eigenvalues in their dimension, per the analysis of the critical points of spherical spin glass models~\cite{ABAC-2013-RandomMatricesComplexity,Subag-2017-ComplexitySecondMoment}.
Most importantly, there seems to be no natural analog for tensors of the spectral decomposition for matrices, making it seemingly impossible in general to reduce the evaluation of tensor invariants to computations with eigenvalues.
This is in contrast to the matrix setting where the invariants are just power sums of the eigenvalues, $\tr(T^\ell) = \sum_{i=1}^n \lambda_i^\ell$, and polynomials thereof.

\paragraph{Random tensor theory}
Aspects of random matrix theory that have been generalized to the tensor setting\footnote{Some works, such as \cite{AHH-2012-RandomTensors}, also use the term ``random tensor theory'' to describe their results, though they really concern random matrices built out of random tensors, say by taking sample covariances of vectorizations of these tensors.} include the derivation of the maximum eigenvalue of a Gaussian random tensor by a ``tensorial power method'' \cite{Evnin-2021-MelonicDominanceLargestEigenvalueTensor}, a definition of a resolvent, a spectral density, and Wigner's semicircle law for Gaussian random symmetric tensors~\cite{Gurau-2020-WignerSemicircleLawTensors}, universality of this generalization of Wigner's law~\cite{Gurau-2014-UniversalityRandomTensors} (with applications to spherical spin glass models~\cite{BGS-2013-UniversalityPSpinGlass}), and Harish-Chandra--Itzykson--Zuber integrals \cite{CGL-2023-TensorHCIZIntegral1,CGL-2023-TensorHCIZIntegral2}.
See also the book \cite{Gurau-2017-RandomTensors} for many details.
We will both elaborate on some of these results (see, e.g., our Theorem~\ref{thm:wigner-moments-hard} on the scaling of graph moments of Gaussian random tensors) and will show generalizations to the tensor setting of aspects of free probability theory, including the notion of free cumulants, properties of additive free convolution, and Voiculescu's theorem on the asymptotic freeness of matrices under random rotation of their eigenvectors.

Very recently, and independently of our work, another approach to free probability for tensors based on Gurau's tensor resolvent was proposed in~\cite{bonnin2024universality}.
This line of work differs from our results in focusing on a coarser collection of invariants which is, in our notation, the sum of the moments $m_G(T)$ over all connected $G$ on a given number of vertices.

\paragraph{Tensor PCA}

Starting with the work of~\cite{richard-montanari}, the tensor PCA problem is now well-studied. A variety of different polynomial-time algorithms are known to achieve the scaling $\lambda \sim n^{-p/4}$~\cite{richard-montanari,pmlr-v40-Hopkins15,HSSS,homotopy,hastings,iron-landscapes}. While information-theoretically it is possible to succeed for smaller $\lambda$ \cite{JLM-2020-StatisticalTensorPCA}, we expect that no poly-time algorithm can achieve this due to lower bounds against low-degree polynomials~\cite{sos-detect} and a reduction from a variant of the planted clique problem~\cite{BB-2020-ReducibilityStatCompGaps}. Furthermore, various algorithms are known to achieve a particular tradeoff between the SNR $\lambda$ and the (super-polynomial but sub-exponential) runtime, when $\lambda \ll n^{-p/4}$~\cite{RSS,BhattiproluGGLT16,BhattiproluGL17,wein-elalaoui-moore}.\footnote{Notably, unlike in the matrix case, once $p \geq 3$ the tensor analog of power iteration---which may be viewed as computing a tree-shaped tensor network---performs suboptimally \cite{richard-montanari,WZ-2024-PowerIterationTensorPCA}.}
The low-degree result of~\cite{KWB-2022-LowDegreeNotes} that we recalled in Theorem~\ref{thm:tensor-pca-detection} suggests that this tradeoff cannot be improved.
Most closely related to our results, there is a line of work considering the application of tensor invariants to tensor PCA, using the ``random tensor theory'' machinery mentioned above~\cite{Gurau-2020-WignerSemicircleLawTensors,Ouerfelli-Tamaazousti-Rivasseau-2022, OR-2020-TensorPCATraceInvariants, Ouerfelli-2022-TensorPCA}.

\paragraph{Algorithms using tensor networks and distinct indices}

\begin{figure}
    \centering
    \includegraphics[width=1.8in]{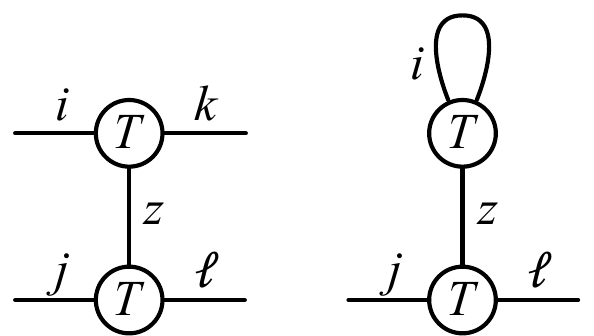}
    \caption{On the left, a tensor network defining a 4-index tensor from two copies of a 3-ary tensor; we sum over all $n$ values of the internal index $z$. It can be viewed, among other things, as an $n^2$-dimensional matrix $M_{(i,j),(k,\ell)}$. This matrix is used in a spectral algorithm for tensor PCA in~\cite{pmlr-v40-Hopkins15}. On the right, a partial trace where we also sum over $i$, giving an $n$-dimensional matrix $M_{j\ell}$ used by~\cite{HSSS} for another spectral algorithm.}
    \label{fig:H}
\end{figure}

Our notion of ``tensorial free cumulants'' arose from seeking an invariant version of the ``distinct-index'' tensor networks used in the recent work~\cite{Wein-2022-LowDegreeTensorDecomposition} on tensor decomposition.
In the matrix setting, if $A$ is the adjacency matrix of a graph, the distinct-index trace
\[
A^{(\ell)} = \sum_{\substack{i_1,i_2,\ldots,i_\ell \\ \textrm{distinct}}} A_{i_1,i_2} A_{i_2,i_3} \cdots A_{i_{\ell-1},i_\ell}
\]
counts self-avoiding walks. This was used previously by~\cite{Massoulie-2014-CommunityDetectionWeakRamanujan} in the context of community detection in the stochastic block model, and by~\cite{DHS-2020-SpikedMatrixHeavyTailed} as a variance reduction technique for estimators and test statistics in matrix PCA.
Other previous uses of tensor networks in tensor PCA include~\cite{pmlr-v40-Hopkins15}, which uses the $n^2$-dimensional matrix $M_{(i,j),(k,\ell)} = \sum_z T_{ikz} T_{j\ell z}$ shown in Figure~\ref{fig:H} on the left, and~\cite{HSSS} which uses its partial trace on the right. Algorithmic applications of more elaborate tensor networks can be found in~\cite{MW-2019-SpectralTensorNetworks,DOLTS-2022-Fast3TensorDecomposition}.
The recent work of \cite{semerjian2024matrix} uses similar ideas of optimizing over invariant and equivariant polynomials as well, but, working in the matrix setting, does not use the graphical tensor network notation.

The \emph{graph matrices} widely used to analyze the \emph{pseudocalibration} construction in lower bounds against the sum-of-squares hierarchy also feature a restriction of similar summations to distinct indices, and through this give an orthogonal basis for the polynomials invariant under a group action (in that case the action of the symmetric group).
See \cite{sos-clique,GJJPR-2020-SK,PR-2022-SOSPCA,CPRX-2023-SOSDensestSubgraph} for applications of pseudocalibration, and Remark 2.3 of \cite{AMP-2016-GraphMatrices} for discussion of the orthogonality properties of graph matrices.
Finally, the combinatorial structure of a nearly-orthogonal basis for a special class of tensor networks arising in \cite{GJJPR-2020-SK} was studied by \cite{JP-2021-BasesInnerProductPolynomials}.

\paragraph{Cumulants in low-degree lower bounds}
The work of \cite{SW-2020-LowDegreeEstimation} also treats reconstruction using low-degree polynomials, and also encounters a quantity that is referred to as a ``cumulant.'' Those quantities are simply the classical (joint) cumulants of scalar random variables. In contrast, our cumulants are firstly the \emph{free} cumulants of free probability, and secondly are polynomial functions of a matrix or tensor, rather than statistics of an entire probability distribution.

\section{Notation}

We write $\id$ for the identity matrix, $J$ for the all-ones matrix, and 1 for the all-ones vector (when it is used in linear-algebraic context).
We write $n^{\underline{b}} \colonequals n! / (n - b)!$ for the falling factorial, and use the less conventional $n^{\dunderline{b}} \colonequals n!! / (n - 2b)!!$ for the falling double factorial.

\section{Preliminaries}

\subsection{Closed Tensor Networks as Invariant Polynomials}
\label{sec:prelim:tensor-networks}

First, let us motivate the family of linear combinations of tensor networks as a natural class of algorithms.
For computations over symmetric matrices $T$, an important role is played by \emph{linear spectral statistics}, the quantities $F(T) = \sum_{i = 1}^n f(\lambda_i(T))$ for some function $f: \RR \to \RR$.

The space of all linear spectral statistics is a natural limit of the same space where $f$ are polynomials.
That space admits a more algebraic interpretation: it is the same as the space of all symmetric polynomials of the eigenvalues of $T$, which is also the space of all invariant polynomials of a matrix $T$: if $F(Q T Q^{\top}) = F(T)$ for all $Q \in \sO(n)$ for a polynomial $F$, then $F$ must be a symmetric polynomial of $(\lambda_1(T), \dots, \lambda_n(T))$.

In the matrix case, when $f(\lambda)$ is a polynomial, then we have $F(T) = \tr(f(T))$.
These are linear combinations of $\tr(T^k)$, and one may check that $\tr(T^k) = m_{C_k}(T)$, where $C_k$ is the $k$-cycle.
In summary, then, the linear spectral statistics for $f$ a polynomial---a natural class of ``spectral algorithms'' over matrices---have two coinciding interpretations: on the one hand, they are all invariant functions of a matrix $T$; on the other, they are linear combinations of the $m_G(T)$ where $T$ is a 2-regular multigraph.

These interpretations do not depend on the notion of ``eigenvalue.''
While there is no one clear definition of eigenvalues for tensors, this algebraic interpretation of linear spectral statistics does admit a tensorial generalization.
In particular, we have:
\begin{definition}[$p$-regular multigraphs]
    Write $\sG_{d, p}$ for the set of (non-isomorphic) $p$-regular multigraphs on $d$ unlabelled vertices.
\end{definition}
\begin{theorem}
    \label{thm:tensor-invariant}
    Let $R$ be the ring of invariant polynomials $f: \Sym^p(\RR^n) \to \RR$.
    Then $R$ is generated as a vector space by the polynomials $m_G(T)$ over $G \in \bigsqcup_{d \geq 0} \sG_{d, p}$.
    In particular, any invariant homogeneous polynomial $f(T)$ of degree $d$ is a linear combination $\sum_i \alpha_i \,m_{G_i}(T)$ for $G_i \in \sG_{d, p}$.
\end{theorem}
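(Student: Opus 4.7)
The plan is to reduce to multilinear invariants and then invoke the First Fundamental Theorem (FFT) of classical invariant theory for the orthogonal group $\sO(n)$. First I would split $R$ into its homogeneous components and fix a degree $d$, so it suffices to show that every invariant homogeneous polynomial $f$ of degree $d$ lies in the span of $\{m_G : G \in \sG_{d,p}\}$. By polarization (available since we work in characteristic zero), $f$ is determined by and determines a symmetric multilinear map $\tilde{f} : \Sym^p(\RR^n)^{\times d} \to \RR$ via $f(T) = \tilde{f}(T,\dots,T)$, and the $\sO(n)$-invariance of $f$ is equivalent to the diagonal $\sO(n)$-invariance of $\tilde{f}$. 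Extending by symmetrization to an invariant multilinear map on $((\RR^n)^{\otimes p})^{\times d}$ loses no information when we later evaluate on symmetric inputs, so we are reduced to classifying $\sO(n)$-invariant multilinear forms on $(\RR^n)^{\otimes pd}$.

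Next I would use the standard inner product on $\RR^n$ to identify such an invariant multilinear form with an invariant element of $(\RR^n)^{\otimes pd}$. This is the setting in which the FFT for $\sO(n)$ applies directly: the space of $\sO(n)$-invariants in $(\RR^n)^{\otimes k}$ is zero when $k$ is odd, and when $k = 2m$ is even it is spanned by the tensors $\Delta_\sigma \colonequals \sum_{i_1,\dots,i_k \in [n]} \prod_{a=1}^{m} \delta_{i_{\sigma(2a-1)}, i_{\sigma(2a)}} \, e_{i_1}\otimes \cdots \otimes e_{i_k}$ indexed by perfect matchings $\sigma$ on $\{1,\dots,k\}$ (Weyl; see e.g.\ Goodman--Wallach). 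Thus if $pd$ is odd there are no nonzero invariants (consistent with $\sG_{d,p}$ being empty); otherwise every invariant multilinear form is a linear combination of the ``contraction'' forms associated to perfect matchings on $pd$ labeled slots.

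The final step is the combinatorial translation. Partition the $pd$ slots into $d$ blocks of size $p$, one block per input $T_i$. A perfect matching $\sigma$ then defines a multigraph $G_\sigma$ on the $d$ labeled vertices in which each matched pair becomes an edge; by construction every vertex has degree $p$, so $G_\sigma$ is a $p$-regular multigraph. Evaluating the corresponding contraction form on $(T,\dots,T)$ yields, by unwinding the definitions, precisely $m_{G_\sigma}(T)$. Because $T$ is symmetric, permuting the $p$ slots within a vertex does not change the value, so the value depends only on the underlying multigraph; because $\tilde f$ is a symmetric multilinear form, we may symmetrize over the $d!$ vertex relabelings, so only the isomorphism class of $G_\sigma$ matters, yielding an element of $\sG_{d,p}$. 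Summing over matchings and pushing the symmetrizations into the coefficients expresses $f$ as a linear combination of the $m_G$ with $G \in \sG_{d,p}$, as desired. Invariance of each $m_G$ (Proposition~\ref{prop:moments-are-invariant}) gives the reverse containment.

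The main obstacle is the clean application of the FFT: one must verify that the symmetrization/desymmetrization used to pass between invariants on $(\RR^n)^{\otimes p d}$ and on $\Sym^p(\RR^n)^{\otimes d}$ is lossless on symmetric inputs, and that the identification of matching-contractions with graph moments respects both the symmetry of $T$ (collapsing the $(p!)^d$ within-vertex reorderings) and the symmetry of $\tilde f$ (collapsing the $d!$ vertex relabelings). Everything else is bookkeeping, and no dimension bound on $n$ is required for the spanning statement (only for linear independence, which is not claimed here).
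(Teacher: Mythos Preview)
Your proposal is correct and follows essentially the same route as the paper: both arguments reduce the question to the First Fundamental Theorem for $\sO(n)$ (invariants in $(\RR^n)^{\otimes pd}$ are spanned by matching vectors) and then translate matchings into $p$-regular multigraphs whose contraction against $T^{\otimes d}$ gives $m_G(T)$. The only difference is presentational: where you polarize $f$ to a multilinear form and then identify it with an invariant tensor via the inner product, the paper writes $f(T)=\langle T^{\otimes d},C\rangle$ directly and applies the Haar-averaging projector $\Pi_{dp}=\Exp_Q Q^{\otimes dp}$ to $C$, invoking Brauer's result that the image is spanned by the $w(\mu)$---but this is the same content in diagrammatic dress.
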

\noindent
In this sense, we propose that the space of linear combinations of $m_G(T)$ should be viewed as the ``right'' tensorial generalization of linear spectral statistics.
(The analogy between $m_G(T)$ for $T$ a tensor and $m_{C_k}(T) = \tr(T^k)$ for $T$ a matrix, the latter of which is, up to renormalization, the $k$th moment of the empirical spectral distribution of $T$, is also why we call the $m_G(T)$ ``moments.'')

Theorem~\ref{thm:tensor-invariant} is a classical result in invariant theory, namely the ``First Fundamental Theorem'' for the orthogonal group~\cite{Weyl46}; see \cite[\S 4.3]{GoodmanWallach} for a modern treatment, following~\cite[pp.\ 285--6]{Atiyah73}.\footnote{These results are usually stated for the action of $\sO(n)$ on all ``axes'' of a general tensor $T \in (\RR_n)^{\otimes p}$, but by Proposition~\ref{prop:sym-invariant-extension} they also immediately apply to invariants of symmetric tensors.}
However, these proofs may be opaque to some readers, and lack graphical panache.
We provide a graphical proof in the style of our other calculations in Appendix~\ref{app:invariants} for the reader's appreciation.

We can also consider mixed moments of multiple tensors, possibly with different arities:

\begin{definition}
\label{def:mixed-moments}
Let $\mathcal{T}$ be a set of symmetric tensors, and let $G=(V,E)$ be a multigraph associated with a function $T:V \to \mathcal{T}$ that labels each vertex $v \in V$ with a tensor $T(v) \in \mathcal{T}$ of arity $\deg v$. Then  define the mixed moment $m_G(\mathcal{T})$ as the contraction
\begin{equation}
\label{eq:mixed-moment}
m_G(\mathcal{T})
= \sum_{i \in [n]^E} \prod_{v \in V} T(v)_{i(\partial v)} \, .
\end{equation}
\end{definition}

\begin{figure}
    \centering \includegraphics[width=4in]{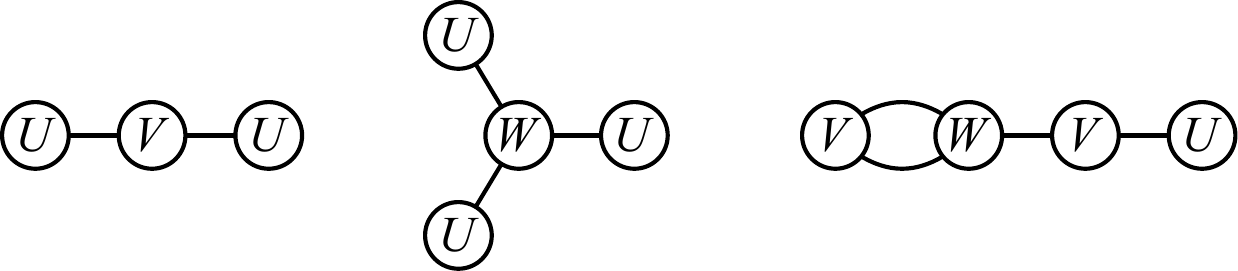}
    \caption{Three examples of mixed moments of tensors. Here $U$, $V$, and $W$ have arity 1, 2, and 3 respectively. From left to right, these yield the bilinear form $\langle U,VU \rangle$, the trilinear form $\langle W, U^{\otimes 3}\rangle$, and the quantity in~\eqref{eq:mixed-example}.}
    \label{fig:mixed-example}
\end{figure}
\begin{example}
Let $U$, $V$, and $W$ be tensors of arity $1$, $2$, and $3$ respectively. The rightmost multigraph $G$ in Figure~\ref{fig:mixed-example} yields
\begin{equation}
\label{eq:mixed-example}
m_G(U,V,W)
= \sum_{ijk\ell} V_{ij} \,W_{ijk} \,V_{k\ell} \,U_\ell \, ,
\end{equation}
which is quadratic in $V$ and multilinear in $W$ and $U$.
\end{example}
\begin{example}
    Let $U$ and $V$ be matrices of the same dimension.
    Then, the mixed connected graph moments of these matrices are the mixed traces, $\tr(U^{k_1}V^{\ell_1} \cdots U^{k_m}V^{k_m})$ for $k_i, \ell_j \geq 1$.
    These are the quantities to which the concept of \emph{freeness} in free probability theory pertains; we will see later that graph moments of tensors admit a generalization of parts of this theory.
\end{example}
\noindent
A version of Theorem~\ref{thm:tensor-invariant} also holds for this setting:
\begin{theorem}
    \label{thm:tensor-invariant-multi}
    Let $R$ be the ring of invariant polynomials $f: \Sym^{p_1}(\RR^n) \times \cdots \times \Sym^{p_m}(\RR^n)$, in the sense that $f(T_1, \dots, T_m) = f(Q \cdot T_1, \dots, Q \cdot T_m)$ for all $Q \in \sO(n)$.
    Then, $R$ is generated as a vector space by the $m_G(T_1, \dots, T_m)$: any invariant homogeneous polynomial $f(T_1, \dots, T_m)$ of degree $d_i$ in $T_i$ for each $i \in [m]$ is a linear combination of mixed moments $m_G(T_1, \dots, T_m)$ where each $T_i$ corresponds to $d_i$ vertices of $G$.
\end{theorem}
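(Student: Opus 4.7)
The plan is to reduce Theorem~\ref{thm:tensor-invariant-multi} to the classical First Fundamental Theorem (FFT) for $\sO(n)$ by polarization, following the same template as the proof of Theorem~\ref{thm:tensor-invariant} sketched in Appendix~\ref{app:invariants}. First, polarize in each tensor variable separately: any invariant polynomial $f(T_1, \dots, T_m)$ of multi-degree $(d_1, \dots, d_m)$ is determined by its multilinearization $\tilde f(T_1^{(1)}, \dots, T_m^{(d_m)})$, obtained as the coefficient of $\prod_{i,j} s_{ij}$ in $f\bigl(\sum_j s_{1j} T_1^{(j)}, \dots, \sum_j s_{mj} T_m^{(j)}\bigr)$ and recoverable from $\tilde f$ by the specialization $T_i^{(j)} \mapsto T_i$. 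Invariance of $f$ transfers to invariance of $\tilde f$ under the diagonal $\sO(n)$-action on all $N = d_1 + \cdots + d_m$ arguments, so it suffices to exhibit every invariant multilinear form on $\prod_i \Sym^{p_i}(\RR^n)^{d_i}$ as a linear combination of mixed moments of the polarized tensors.

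Next, view such a form as an $\sO(n)$-invariant linear functional on the ambient space $(\RR^n)^{\otimes P}$ with $P = \sum_i p_i d_i$; since $\sO(n)$ is compact, any invariant functional on the symmetric subspace extends to one on the full tensor power by Haar-averaging, so nothing is lost in enlarging the domain. The classical FFT for $\sO(n)$ (\cite[\S 4.3]{GoodmanWallach}, already invoked for Theorem~\ref{thm:tensor-invariant}) states that such invariant functionals are spanned by \emph{complete pairings} of the $P$ index slots, where each pair of slots is contracted with the Kronecker delta $\delta_{ij}$. If $P$ is odd there are no invariants, matching the non-existence of multigraphs of the required shape. Otherwise, associating to each polarized copy $T_i^{(j)}$ a vertex $v_{i,j}$ of degree $p_i$ turns a pairing into a multigraph $G$ equipped with the labeling $T(v_{i,j}) = T_i^{(j)}$, and the corresponding functional is precisely $m_G$ in the sense of Definition~\ref{def:mixed-moments}. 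Specializing $T_i^{(j)} = T_i$ then yields the mixed moments in the statement, each $T_i$ appearing at the required $d_i$ vertices.

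The main non-routine step, and where I expect the bookkeeping to concentrate, is the passage from pairings of labeled index slots to isomorphism classes of vertex-labeled multigraphs. Two pairings related by permuting the slots within a single symmetric tensor yield identical functionals, matching the convention of Definition~\ref{def:moments} that edges incident to a vertex are unordered; after specialization, pairings related by permuting the $d_i$ copies of $T_i$ among themselves also coincide. Tracking these redundancies carefully, ideally through the same graphical calculus used in Appendix~\ref{app:invariants} so that the argument reads as a natural generalization rather than a second proof, establishes that the mixed moments with the prescribed vertex multiplicities span all invariants of multi-degree $(d_1, \dots, d_m)$.
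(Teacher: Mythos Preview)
Your proposal is correct and follows essentially the same route as the paper. The paper's proof in Appendix~\ref{app:invariants} writes $f(T_1,\dots,T_m) = \langle \bigotimes_i T_i^{\otimes d_i}, C\rangle$ for a coefficient tensor $C$ of arity $\ell = \sum_i p_i d_i$, symmetrizes $C$ via the projector $\Pi_\ell = \Exp_Q Q^{\otimes \ell}$, and uses that the image of $\Pi_\ell$ is spanned by matching vectors $w(\mu)$, each of which contracts with $\bigotimes_i T_i^{\otimes d_i}$ to give a mixed moment $m_G$; this is exactly your reduction to the FFT for $\sO(n)$, just with the polarization step absorbed into the single identity $f = \langle \bigotimes_i T_i^{\otimes d_i}, C\rangle$ rather than made explicit. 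Your final paragraph's concern about tracking redundancies between pairings and isomorphism classes is unnecessary: the theorem only asserts that the $m_G$ span, so repetitions in the spanning set are harmless and the paper does not address them either.
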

\noindent
We again give a proof in Appendix~\ref{app:invariants}, which is essentially identical to that of Theorem~\ref{thm:tensor-invariant}.
This is a non-trivial generalization even in the case $d_i = 2$ of matrices, due to \cite{Procesi-1976-InvariantTheoryMatrices}, and we give a simple and unified proof for either setting.

\subsection{Open Tensor Networks as Equivariant Polynomials}
\label{sec:prelim:open-tensor-networks}

We can also consider vector-, matrix-, or tensor-valued functions $f(T)$ whose outputs transform properly when $T$ is transformed. This will include, for instance, an optimal algorithm for reconstructing the spike in tensor PCA.

\begin{definition} Let $f$ be a function from $p$-ary tensors to $\ell$-ary tensors. We say that $f$ is \emph{equivariant} if for any $T$ and any orthogonal matrix $Q \in \sO(n)$,
\[
f(Q \cdot T) = Q \cdot f(T) \, ,
\]
and similarly if $f$ is a function from multiple tensors to $\ell$-ary tensors.
\end{definition}

\begin{example}
Let $Q$ be a matrix. If $v$ is a left eigenvector of $T$, then $Q \cdot v = vQ$ is a left eigenvector of $Q \cdot T = Q^\top T Q$. Thus the output of a spectral algorithm that returns the dominant eigenvector of a matrix is equivariant.
\end{example}

Given the machinery we have set up, it is easy to generalize Theorem~\ref{thm:tensor-invariant} to show that the equivariant polynomials are linear combinations of ``open graph'' moments where the graph has $\ell$ open edges; see~\cite[p.64]{Weyl46} and~\cite[p.286]{Atiyah73}. These are also known as partial contractions.

\begin{definition}
An \emph{$t$-open multigraph} $G$ is a triple $(V,E,E')$ where each edge $e \in E \cup E'$ is a  multiset $e \subseteq V$ with $|e|=2$ if $e \in E$ and $|e|=1$ if $e \in E^{\prime}$, and where $|E^{\prime}| = t$. We say $E$ and $E^{\prime}$ are the sets of closed and open edges respectively. The degree of a vertex $v \in V$ is the total number of times it appears in $E \cup E^{\prime}$, including self-loops in $E$ where it occurs twice. We say $G$ is $p$-regular if every vertex has degree $p$.
\end{definition}

\begin{definition}
\label{def:open-moments}
Let $T \in \Sym^p(\RR^n)$ and let $G=(V,E,E')$ be a $p$-regular open multigraph with $|E'|=\ell$ open edges. Let $i \in [n]^{E \cup E'}$ denote a labeling of the edges with indices in $[n]$. Let $i(E)$ and $i(E')$ denote the labels of the closed and open edges respectively, and as in Definition~\ref{def:moments} let $i(\partial v)$ denote the multiset of indices $i(e)$ associated with the edges $e$ incident to $v$. Then, the associated \emph{open graph moment} $m_G(T)$ is the following $\ell$-ary tensor,
\[
m_G(T)_{i(E')}
= \sum_{i(E) \in [n]^E} \prod_{v \in V} T_{i(\partial v)} \, .
\]
For a set of tensors $\mathcal{T}$, we similarly define
\[
m_G(\mathcal{T})_{i(E')}
= \sum_{i(E) \in [n]^E} \prod_{v \in V} T(v)_{i(\partial v)} \, .
\]
where for each $v \in V$, $T(v) \in \mathcal{T}$ has arity $\deg v$.
\end{definition}

\noindent In words, these partial contractions are graph moments where we sum only over the indices on closed edges, and leave the indices on open edges as unbound variables.

\begin{figure}
    \centering
    \includegraphics[width=0.5 \columnwidth]{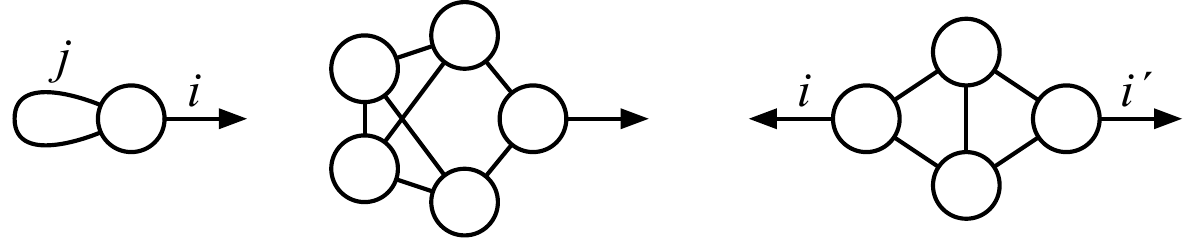}
    \caption{Three open multigraphs whose partial contractions $m_G(T)$ yield vectors or matrices, i.e., 1-ary or 2-ary tensors. The first is the vector $m_G(T)_i = \sum_j T_{ijj}$. The third is the matrix $m_G(T)_{ii'}$ given by~\eqref{eq:open-k4}.}
    \label{fig:open-graphs}
\end{figure}

\begin{example}
We show three 3-regular open multigraphs in Figure~\ref{fig:open-graphs}. The first two have one open edge each, and the third has two open edges. Thus their partial contractions $m_G(T)_{i(E')}$ result in a vector or matrix. For the first graph we have the partial trace $m_G(T)_i = \sum_j T_{ijj}$, and for the third one we have the matrix
\begin{equation}
\label{eq:open-k4}
m_G(T)_{ii'}
= \sum_{jk\ell mn} T_{ijk} \,T_{j \ell m} \,T_{i' \ell n} \,T_{kmn} \, .
\end{equation}
This is the graph moment $G_{K_4}(T)$ given by~\eqref{eq:k4} except that $i$ and $i'$ are open indices rather than being set equal and summed over. Taking the trace of this matrix closes this loop and yields $G_{K_4}(T)$.
\end{example}

We may already start to glean some of the benefits of this graphical language.
For example, it lets us understand more clearly the change of basis from Definition~\ref{def:change-basis}.

\begin{remark}
As shown in Figure~\ref{fig:conjugation}, $Q \cdot T$ consists of placing a copy of $Q$ on each ``outward'' edge of $T$. Since $Q$ is not symmetric, we place arrows on the edges to indicate which of its indices is contracted with an index of $T$; reversing this arrow swaps $Q$'s indices and thus converts it to its transpose $Q^\top$.
In the matrix case $p=2$, $Q \cdot T$ corresponds to standard matrix conjugation, since $Q^\top T Q$ can also be written $T \cdot Q^{\otimes 2}$:
\[
\left(Q^\top T Q \right)_{jj'}
= \sum_{i,i'} Q^\top_{ji} \,T_{ii'} \,Q_{i'j'}
= \sum_{ii'} T_{ii'} \,Q_{ij} \,Q_{i'j'}
= \left( T \cdot Q^{\otimes 2} \right)_{jj'} \, .
\]
\end{remark}

\begin{figure}
    \centering
    \includegraphics[width=3.3in]{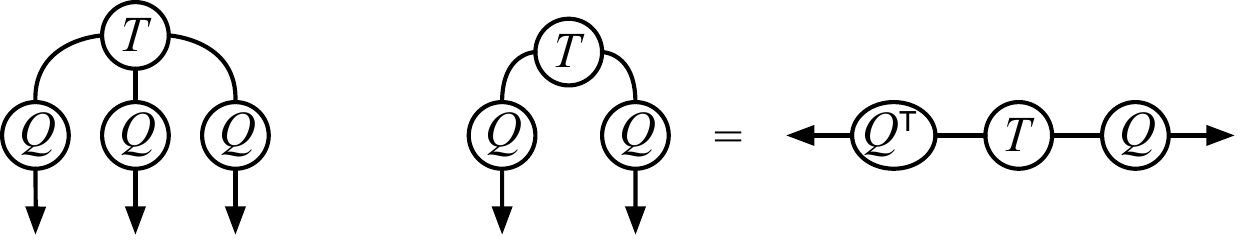}
    \caption{On the left, conjugating a 3-ary tensor $T$ by an orthogonal matrix $Q$. Each index of $T$ undergoes the same orthogonal basis change. On the right, for a 2-ary tensor, i.e., a matrix, this coincides with the usual notion of conjugation $Q^\top T Q$. Arrows indicate that $T$'s indices are contracted with the left index $i$ of $Q_{ij}$. Reversing an arrow converts $Q$ to its transpose $Q^\top$.
    }
\label{fig:conjugation}
\end{figure}
\noindent
Using this, the proof that graph moments are invariant functions becomes straightforward.\begin{proposition}
\label{prop:moments-are-invariant}
    For any $T \in \Sym^p(\RR^n)$, any multigraph $G$, and any $Q \in \sO(n)$,  $m_G(Q \cdot T) = m_G(T)$. The same holds for mixed moments $m_G(\mathcal{T})$ as in Definition~\ref{def:mixed-moments}.
\end{proposition}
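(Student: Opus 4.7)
The plan is to prove this by directly expanding $m_G(Q \cdot T)$ using the definitions and exploiting the orthogonality relation $QQ^\top = I$ in the graphical picture suggested by Figure~\ref{fig:conjugation}. Intuitively, $Q \cdot T$ places a copy of $Q$ at each half-edge incident to each vertex of $G$; since each closed edge has two half-edges, the two $Q$'s along it meet and cancel because $Q$ is orthogonal.

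To make this precise, I would begin by introducing, for each vertex $v \in V$ and each edge $e$ incident to $v$, an internal index $i_{v,e} \in [n]$, and for each edge $e \in E$, an index $j_e \in [n]$. By Definition~\ref{def:moments} applied to the tensor $Q \cdot T$,
\begin{equation}
m_G(Q \cdot T) = \sum_{j \in [n]^E} \prod_{v \in V} (Q \cdot T)_{j(\partial v)},
\end{equation}
and by Definition~\ref{def:change-basis},
\begin{equation}
(Q \cdot T)_{j(\partial v)} = \sum_{\{i_{v,e}\}_{e \ni v}} T_{\{i_{v,e}\}_{e \ni v}} \prod_{e \ni v} Q_{i_{v,e},\, j_e}.
\end{equation}
Substituting and interchanging sums (all summations are finite), I would obtain
\begin{equation}
m_G(Q \cdot T) = \sum_{\{i_{v,e}\}} \left(\prod_{v \in V} T_{\{i_{v,e}\}_{e \ni v}}\right) \prod_{e \in E} \sum_{j_e \in [n]} \prod_{v \ni e} Q_{i_{v,e},\, j_e}.
\end{equation}

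The next step is to carry out the inner sum over $j_e$ edge-by-edge. Each edge $e = \{u,v\}$ (including the case $u=v$ of a self-loop, which contributes both of its half-ends at the same vertex) contributes
\begin{equation}
\sum_{j_e \in [n]} Q_{i_{u,e},\, j_e}\, Q_{i_{v,e},\, j_e} = (QQ^\top)_{i_{u,e},\, i_{v,e}} = \delta_{i_{u,e},\, i_{v,e}},
\end{equation}
using orthogonality of $Q$. This delta identifies the two half-edge indices $i_{u,e}$ and $i_{v,e}$ associated to edge $e$, so we may reintroduce a single index $i_e$ per edge and replace each pair $(i_{u,e}, i_{v,e})$ by $i_e$. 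The product $\prod_v T_{\{i_{v,e}\}_{e \ni v}}$ then becomes $\prod_v T_{i(\partial v)}$, and summing over $i \in [n]^E$ yields $m_G(T)$ exactly.

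For mixed moments $m_G(\mathcal{T})$ as in Definition~\ref{def:mixed-moments}, the argument is unchanged: the tensor at vertex $v$ is some $T(v) \in \mathcal{T}$ of arity $\deg v = p_v$, each of its axes gets a factor of $Q$ under $Q \cdot T(v)$, and the cancellation $\sum_{j_e} Q Q = \delta$ happens per edge as before. The only minor bookkeeping subtlety is the handling of self-loops and parallel edges, but since we have consistently indexed by half-edges rather than by endpoints, neither causes any trouble; the main potential obstacle is simply keeping the three layers of indices ($i_{v,e}$, $j_e$, and the eventually-identified $i_e$) clearly distinguished, which the graphical picture makes transparent.
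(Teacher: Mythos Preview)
Your proof is correct and takes essentially the same approach as the paper's: both exploit that placing a copy of $Q$ on each half-edge incident to a vertex creates a pair $QQ^\top = \id$ on every closed edge. The paper states this in one sentence using the graphical language, whereas you have written out the same cancellation explicitly in index notation.
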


\begin{proof}
Since reversing the direction of an edge changes $Q$ to $Q^\top$, placing outgoing $Q$'s on the edges of each vertex creates a pair $QQ^\top$ on each edge. Since $Q$ is orthogonal, $Q^\top = Q^{-1}$ and $QQ^\top = \id$.
\end{proof}

Finally, analogous results hold for open graph moments as did for closed ones.
First, just as closed graph moments are invariant, open graph moments are equivariant:

\begin{proposition}
\label{prop:open-moments-are-equivariant}
    For any $T \in \Sym^p(\RR^n)$, any open multigraph $G$, and any $Q \in \sO(n)$, $m_G(Q \cdot T) = Q \cdot m_G(T)$. The same holds for mixed moments $m_G(\mathcal{T})$ as in Definition~\ref{def:open-moments}.
\end{proposition}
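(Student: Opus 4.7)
The plan is to mimic the proof of Proposition~\ref{prop:moments-are-invariant}, keeping careful track of what happens on open edges. Writing out $m_G(Q \cdot T)_{i(E')}$ directly from Definitions~\ref{def:change-basis} and~\ref{def:open-moments}, each vertex $v \in V$ contributes a factor
\begin{equation}
(Q \cdot T)_{i(\partial v)} = \sum_{j(\partial v)} T_{j(\partial v)} \prod_{e \ni v} Q_{j(e),\, i(e)},
\end{equation}
so each edge-endpoint incident to $v$ introduces a fresh summation variable $j(e)$ and a copy of $Q$ linking the old index $j(e)$ to the new index $i(e)$.

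Now group the $Q$ factors by edge. Each closed edge $e \in E$ is incident to exactly two endpoints (possibly the same vertex, in the case of a self-loop), so it contributes a product $\sum_{i(e)} Q_{j(e),\, i(e)} Q_{j'(e),\, i(e)} = (QQ^{\top})_{j(e),\, j'(e)} = \delta_{j(e),\, j'(e)}$ using $Q \in \sO(n)$. By contrast, each open edge $e \in E'$ is incident to only one endpoint, so it contributes a single surviving factor $Q_{j(e),\, i(e)}$ with the free index $i(e)$ still present. Summing the delta functions identifies the $j$-indices along each closed edge with a single summation variable, which recovers exactly the sum $\sum_{j(E)} \prod_v T_{j(\partial v)} = m_G(T)_{j(E')}$, with the surviving $Q$'s acting on the open indices. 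Therefore
\begin{equation}
m_G(Q \cdot T)_{i(E')} = \sum_{j(E')} m_G(T)_{j(E')} \prod_{e \in E'} Q_{j(e),\, i(e)} = (Q \cdot m_G(T))_{i(E')},
\end{equation}
which is the claim. The mixed case is identical: the calculation above never uses that all vertices carry the same tensor, only that each closed edge accrues a $QQ^{\top} = \id$ and each open edge accrues a single $Q$.

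The only real subtlety is bookkeeping around self-loops and open edges at the same vertex, which is handled cleanly by the convention that the degree of $v$ counts every incidence with multiplicity (a self-loop twice). Graphically this is the observation used in Proposition~\ref{prop:moments-are-invariant} and in Figure~\ref{fig:conjugation}: placing an outgoing $Q$ at each endpoint of every edge produces $QQ^{\top} = \id$ on every closed edge, while each of the $\ell = |E'|$ open edges retains a single $Q$, which is precisely the diagrammatic depiction of the right action of $Q$ on an $\ell$-ary tensor.
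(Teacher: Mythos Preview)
Your proof is correct and takes essentially the same approach as the paper: both argue that placing a $Q$ on each half-edge yields a canceling $QQ^\top=\id$ on every closed edge and a surviving $Q$ on every open edge, which is exactly the action of $Q$ on the output tensor. Your version simply spells out in index notation what the paper states diagrammatically in one sentence.
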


\begin{proof}
As in Proposition~\ref{prop:moments-are-invariant}, placing an outgoing $Q$ on the edges of each vertex causes a canceling pair $QQ^\top = \id$ on each closed edge, and an outgoing $Q$ on each open edge. The latter transform $m_G(T)$ to $Q \cdot m_G(T)$.
\end{proof}

And, as for closed graph moments and invariant polynomials, the open graph moments span all equivariant polynomials.
\begin{theorem}
\label{thm:tensor-equivariant}
Let $p$ be an equivariant function from $p$-ary tensors to $\ell$-ary tensors. If $f$ is a homogeneous polynomial of degree $d$, then $f(T)$ is a linear combination $\sum_i \alpha_i m_{G_i}(T)$ where the $G_i$ are $p$-regular open multigraphs with $d$ vertices and $\ell$ open edges.

More generally, let $f(T_1,\ldots,T_m)$ be an equivariant function that yields $\ell$-ary tensors. If $p$ is a homogeneous polynomial  of degree $d_i$ in each $T_i$, then is a linear combination of mixed moments $m_G(T_1,\ldots,T_m)$ where $G$ has $\ell$ open edges and each $T_i$ corresponds to $d_i$ vertices of $G$.
\end{theorem}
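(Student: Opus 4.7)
The plan is to reduce the equivariant case to the already-established invariant case (Theorem~\ref{thm:tensor-invariant-multi}) by the standard polarization trick of contracting the output against $\ell$ auxiliary vectors. Specifically, introduce fresh variables $u_1, \ldots, u_\ell \in \RR^n$ and define the scalar polynomial
\[
F(T, u_1, \ldots, u_\ell) \colonequals \sum_{i_1, \ldots, i_\ell \in [n]} f(T)_{i_1, \ldots, i_\ell}\,(u_1)_{i_1} \cdots (u_\ell)_{i_\ell} = \langle f(T),\, u_1 \otimes \cdots \otimes u_\ell \rangle.
\]
This is homogeneous of degree $d$ in $T$ and multilinear (degree $1$ in each $u_t$). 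The first step is to check that $F$ is an invariant polynomial in the tuple $(T, u_1, \ldots, u_\ell)$: because $Q \in \sO(n)$ acts on $(\RR^n)^{\otimes \ell}$ by the orthogonal transformation $(Q^\top)^{\otimes \ell}$, the standard inner product satisfies $\langle Q \cdot S,\, Q \cdot U\rangle = \langle S, U\rangle$, so equivariance of $f$ gives
\[
F(Q\cdot T, Q\cdot u_1, \ldots, Q\cdot u_\ell) = \langle Q\cdot f(T),\, Q\cdot(u_1 \otimes \cdots \otimes u_\ell)\rangle = F(T, u_1, \ldots, u_\ell).
\]

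Next I would apply Theorem~\ref{thm:tensor-invariant-multi} to the tuple of arities $(p, 1, \ldots, 1)$: this writes
\[
F(T, u_1, \ldots, u_\ell) = \sum_H \alpha_H\, m_H(T, u_1, \ldots, u_\ell),
\]
where each multigraph $H$ has $d$ vertices of degree $p$ labeled by $T$, and, by multilinearity in the $u_t$, exactly one vertex of degree $1$ labeled by each $u_t$. Now I would observe the tautology that such a mixed moment is precisely the complete contraction of an open graph moment against $u_1 \otimes \cdots \otimes u_\ell$: deleting the $\ell$ degree-$1$ vertices of $H$ and turning each of their incident edges into an open edge (labeled by the index of the corresponding $u_t$) produces a $p$-regular open multigraph $G$ on $d$ vertices with $\ell$ open edges, and
\[
m_H(T, u_1, \ldots, u_\ell) = \sum_{i_1, \ldots, i_\ell} m_G(T)_{i_1, \ldots, i_\ell}\, (u_1)_{i_1} \cdots (u_\ell)_{i_\ell}.
\]

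Finally, extracting the coefficient of $(u_1)_{i_1} \cdots (u_\ell)_{i_\ell}$ on both sides of the expansion of $F$ (equivalently, evaluating at $u_t = e_{i_t}$) yields $f(T)_{i_1, \ldots, i_\ell} = \sum_G \alpha_G\, m_G(T)_{i_1, \ldots, i_\ell}$, which is the desired expression. The multi-tensor version is identical: form $F(T_1, \ldots, T_m, u_1, \ldots, u_\ell) = \langle f(T_1, \ldots, T_m), u_1 \otimes \cdots \otimes u_\ell\rangle$ and apply Theorem~\ref{thm:tensor-invariant-multi} to the extended tuple, where each $T_i$-vertex has degree $p_i$ and each $u_t$-vertex has degree $1$.

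The proof is routine once the polarization device is in place; there is no real obstacle, only the bookkeeping step of checking that the degree-$1$ vertices of $H$ unambiguously correspond to open edges of $G$ with the correct labelling by the output coordinates of $f(T)$. This is immediate from the multilinearity of $F$ in the $u_t$'s together with the fact that each $u_t$-vertex contributes a single edge, which becomes a single open edge with a fixed position when $u_t$ is removed.
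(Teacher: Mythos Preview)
Your proof is correct and follows essentially the same reduction-to-the-invariant-case as the paper. The only difference is cosmetic: the paper contracts $f(T)$ against a single $\ell$-ary tensor $W$ of indeterminates (yielding one degree-$\ell$ vertex in each $H$, whose removal leaves the $\ell$ open edges), whereas you contract against $\ell$ separate vectors $u_1,\dots,u_\ell$ (yielding $\ell$ degree-$1$ vertices). Your variant has the mild advantage that each $u_t$ is trivially symmetric and the labelling of the open edges by output coordinates is explicit; otherwise the two arguments are interchangeable.
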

\noindent
Once again, we defer a proof to Appendix~\ref{app:invariants}.

\subsection{Invariant Tensors, Brauer Space, and the Weingarten Function}
\label{sec:prelim:weingarten}

An important role in our calculations will be played by the subspace of vectors in $(\RR^n)^{\otimes \ell}$ that is invariant under the action of $\sO(n)$ on this space, i.e., the vectors $w$ such that $Q^{\otimes \ell} w = w$ for all $Q \in \sO(n)$ (not to be confused with our other discussion of invariant polynomials).
In our case we will have $\ell = pd$ for working with $G \in \sG_{d, p}$.
This is sometimes called the \emph{Brauer space} after Brauer's early work on its structure.

In particular, his classic results in the representation theory of Lie groups~\cite{brauer-37} show that this subspace is spanned by vectors of the following form. Assume $\ell$ is even; otherwise, since $-\id \in \sO(n)$, only the zero vector is invariant. Then, for each perfect matching $\mu = \{(t_1,t_1'),\ldots,(t_{\ell/2},t'_{\ell/2})\}$ of $[\ell]$,
define
\begin{equation}
\label{eq:matching-vector}
w(\mu)_{i_1,\ldots,i_\ell}
= \prod_{(t,t') \in \mu} \delta_{i_t,i_{t'}}
\end{equation}
where $\delta$ is the Kronecker delta $\delta_{i,i'}=1$ if $i=i'$ and $0$ otherwise.
In tensor network notation, we can regard these \emph{matching vectors} as $\ell$-ary tensors consisting of a set of ``cups,'' each of which ensures that two indices are identical.
Alternatively, one may view them as tensor powers of the identity matrix, though viewed as a tensor rather than a matrix, i.e.\ $(\sum_{i = 1}^n e_i \otimes e_i)^{\otimes \ell / 2}$, subject to various permutations of the vector axes.
Note in particular that the $w(\mu)$, unlike the tensors discussed earlier, are \emph{not} symmetric tensors; indeed, their symmetrizations are all the same, not depending on the matching $\mu$.

For instance, if $\ell=6$ and $\mu=\{(1,3),(2,5),(4,6)\}$ then $w(\mu) = {\raisebox{-4pt}{\includegraphics[height=12pt]{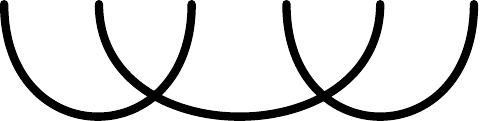}}}$. These vectors are invariant since, if we apply an orthogonal matrix $Q$ to both indices of a cup, $Q$ and $Q^\top$ meet in the middle and cancel (just as we will discuss in Proposition~\ref{prop:moments-are-invariant}).
Brauer's result cited above is that these are \emph{all} of the invariant tensors.

\begin{definition}
    For a finite set $X$, write $\sM(X)$ for the set of perfect matchings of $X$.
\end{definition}
\begin{proposition}
    The subspace of $\sO(n)$-invariant tensors in $(\RR^n)^{\otimes \ell}$ is spanned by the $w(\mu)$ over $\mu \in \sM([\ell])$.
\end{proposition}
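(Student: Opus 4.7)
The plan is to derive this from the First Fundamental Theorem of invariant theory---specifically from Theorem~\ref{thm:tensor-invariant-multi} applied with all arities equal to one---by passing through the standard duality between tensors in $V = (\RR^n)^{\otimes \ell}$ and multilinear forms on $(\RR^n)^{\ell}$. The forward direction, that each $w(\mu)$ is $\sO(n)$-invariant, I would dispatch by direct calculation: applying $Q^{\otimes \ell}$ to $w(\mu)$ contributes a pair of $Q$ factors to each cup of $\mu$, and summing the shared index collapses that pair to $QQ^{\top} = \id$, leaving the cup unchanged. Graphically this is the same cup-cancellation already used in Proposition~\ref{prop:moments-are-invariant}.

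For the reverse inclusion, the key tool is the linear isomorphism $w \mapsto f_w$ between $V$ and the space of multilinear forms on $(\RR^n)^{\ell}$, given by $f_w(v_1, \dots, v_{\ell}) = \langle w, v_1 \otimes \cdots \otimes v_{\ell} \rangle$. This isomorphism intertwines the $\sO(n)$-actions (with $\sO(n)$ acting diagonally on $(\RR^n)^{\ell}$), so $V^{\sO(n)}$ is identified with the space of $\sO(n)$-invariant multilinear forms on $(\RR^n)^{\ell}$. I would then apply Theorem~\ref{thm:tensor-invariant-multi} with $m = \ell$ and $p_1 = \cdots = p_{\ell} = 1$, and restrict to the multidegree $(1, \dots, 1)$ component: every invariant multilinear form is a linear combination of mixed moments $m_G(v_1, \dots, v_{\ell})$ in which each $v_i$ labels exactly one vertex of $G$. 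Since each $v_i$ is $1$-ary, every such vertex has degree $1$, so $G$ is a perfect matching $\mu$ of $[\ell]$, and the associated moment is $\prod_{(s,t) \in \mu} \langle v_s, v_t \rangle$.

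A short direct expansion then shows $f_{w(\mu)}(v_1, \dots, v_{\ell}) = \prod_{(t,t') \in \mu} \langle v_t, v_{t'} \rangle$, so the matching vectors $w(\mu)$ correspond under the duality precisely to the matching-product forms, completing the spanning claim. For odd $\ell$, $\sM([\ell]) = \emptyset$, and consistently $-\id \in \sO(n)$ acts on $V$ as $(-1)^{\ell} = -1$, forcing $V^{\sO(n)} = \{0\}$. The main obstacle is the invocation of Theorem~\ref{thm:tensor-invariant-multi}, which is the genuinely nontrivial classical input; given that theorem, the only remaining subtlety is noting that the $\sO(n)$-action preserves multidegree, so the multilinear component of the invariant ring really is the invariant subspace of the multilinear forms.
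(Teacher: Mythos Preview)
Your duality argument is mathematically sound---the Brauer space characterization and the First Fundamental Theorem are indeed two faces of the same coin, and each can be derived from the other exactly as you describe. The problem is circularity within this paper's logical structure: the paper's proof of Theorem~\ref{thm:tensor-invariant-multi} in Appendix~\ref{app:invariants} explicitly relies on the present Proposition. That proof symmetrizes the coefficient tensor $C$ by applying $\Pi_{\ell} = \Exp_Q Q^{\otimes \ell}$ and then invokes that ``the image of $\Pi_\ell$ is spanned by the $w(\mu)$,'' which is precisely the statement you are trying to establish. So appealing to Theorem~\ref{thm:tensor-invariant-multi} here closes a loop.

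The paper itself does not give an independent proof of this Proposition; it simply cites Brauer's classical result and takes it as input. Your route would be legitimate if you pointed instead to an independent proof of the FFT (e.g., Weyl's original argument or the treatment in Goodman--Wallach that the paper cites), rather than to the paper's own Theorem~\ref{thm:tensor-invariant-multi}. As written, though, the proposal is circular.
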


We will also need to work with the orthogonal projection to this invariant subspace.
On the one hand, by general representation theory, this object is just the averaging operator over all rotations,
\begin{equation}
    \Pi_{\ell} \colonequals \Ex_{Q} Q^{\otimes \ell},
\end{equation}
viewed as an operator on $(\RR^n)^{\otimes \ell}$ (equivalently, as a large matrix formed as the Kronecker power).
In particular, $\Pi_{\ell}$ contains as its entries the moments of the Haar measure over $\sO(n)$.
This gives an elegant connection between the probability theory of the Haar measure and the representation theory of $\sO(n)$.

On the other hand, by elementary linear algebra, $\Pi_{\ell}$ is given by some linear combination of rank one matrices,
\begin{equation}
    \Pi_{\ell} = \sum_{\mu, \nu} \Wg_{\mu, \nu} w(\mu) \otimes w(\nu).
\end{equation}
Indeed, the matrix $\Wg$ can be taken to be the Moore--Penrose pseudoinverse of the Gram matrix of the $w(\mu)$.
Its entries as a function of $\mu, \nu$ are known as the \emph{(orthogonal) Weingarten function}.

In particular, $\Pi_{\ell}$ therefore admits a diagrammatic description, as a linear combination of operator ``switching'' one matching to another (the outer products of two matching vectors).
For instance, $\Pi_2$ is the ``cupcap''
\[
\Pi_2
= \Exp_Q Q \otimes Q = \frac{1}{n}\, \cupcap \, ,
\]
namely the outer product of $w(\{(1,2)\}) = \ccup$ with itself. The normalization $1/n$ comes from the fact that
\[
\Big| \ccup \Big|^2
= \tr \cupcap
= \cupcaploop
= n \, ,
\]
corresponding to the trace of the identity matrix equaling $n$.
$\Pi_4$ is already more complicated:
\[
\arraycolsep=1.4pt
\def\arraystretch{3}
\label{eq:pi4}
\begin{array}{rrr}
\Pi_4 =
{\displaystyle \frac{1}{(n+2)n(n-1)}}
\left[
(n+1)\; \embed{m11}
\right. &
\;-\; \embed{m12} &
\;-\; \embed{m13}
\;\phantom{\Bigg]}\\
\;-\; \embed{m21} &
\;+ \;(n+1)\; \embed{m22} &
\;-\; \embed{m23}
\;\phantom{\Bigg]} \\
\;-\; \embed{m31} &
\;-\; \embed{m32} &
\left. \;+ \;(n+1)\; \embed{m33} \;\; \right]
\end{array}
\]
Fortunately, there is a combinatorial description of the Weingarten function that aids in computing these coefficients, as we discuss in Appendix~\ref{app:weingarten}.

\subsection{Low-Degree Polynomial Algorithms}
\label{sec:prelim:low-deg}

The idea to consider polynomials as algorithms for statistical problems with degree as a measure of complexity first arose from~\cite{sos-clique,HS-bayesian,sos-detect} and the framework has been subsequently refined over the years, e.g., by~\cite{hopkins-thesis,KWB-2022-LowDegreeNotes}; see, for instance,~\cite{fp} for a modern account. Originally this line of work considered simple ``planted versus null'' hypothesis testing problems, but later extensions treated other styles of questions such as estimation~\cite{SW-2020-LowDegreeEstimation}, optimization~\cite{ld-opt}, and refutation~\cite{coloring-clique}, as well as more complex testing problems~\cite{count-communities}. The low-degree polynomial framework has by now found numerous applications and is a leading approach to understand computational complexity of statistical problems.

We will next specify what it means for a polynomial to solve a detection or reconstruction problem.

\begin{definition}[Low-degree advantage]
    \label{def:adv}
    \begin{equation}
        \Adv_{\leq D}(\QQ, \PP) \colonequals \sup_{\substack{f \in \RR[Y]_{\leq D} \\ \EE_{Y \sim \QQ} f(Y)^2 \neq 0}} \frac{\EE_{Y \sim \PP} f(Y)}{\sqrt{\EE_{Y \sim \QQ} f(Y)^2}}.
    \end{equation}
\end{definition}

\begin{remark}\label{rem:adv}
This notion has been standard since the work of~\cite{HS-bayesian,sos-detect,hopkins-thesis}, where its boundedness, i.e., $\Adv_{\leq D}(\QQ, \PP) = O(1)$, indicates hardness at degree $D$. The value $\Adv$ is also called the \emph{norm of the low-degree likelihood ratio}~\cite{hopkins-thesis} but the terminology $\Adv$ avoids defining likelihood ratios and low-degree projections. While boundedness of $\Adv$ does not completely rule out low-degree polynomials as a test statistic, it means that we cannot use Chebyshev's inequality to show that one succeeds. If $\Adv = \omega(1)$, this may suggest success of low-degree tests, but a number of recent examples show that this is not necessarily the case~\cite{fp,grp-test,subhypergraph,graph-matching}.

In light of this, one should really define ``success'' for low-degree tests not in terms of $\Adv$ but by the notion of ``strong separation'' first coined in~\cite{fp}. This way, ``success'' legitimately implies a high-probability distinguisher, and furthermore ``success'' can be precluded by showing $\Adv = O(1)$ (or even by bounding a conditional variant of $\Adv$). There is also a related notion of ``weak separation''; again, see~\cite{fp}.

Our Theorem~\ref{thm:tensor-pca-detection} (or rather, the formal version Theorem~\ref{thm:tensor-pca-detection-formal}) will focus on showing $\Adv$ is either $O(1)$ or $\omega(1)$, but we expect that our proof of $\Adv = \omega(1)$ could be strengthened to a proof of strong separation.
\end{remark}

For reconstruction tasks, success is naturally measured in terms of mean squared error.

\begin{definition}[Low-degree minimum mean squared error~\cite{SW-2020-LowDegreeEstimation}]
\begin{equation}
    \MMSE_{\leq D}(\PP) \colonequals \inf_{f \in \RR[Y]_{\leq D}^n} \Ex_{(x, Y) \sim \PP} \|f(Y) - x\|^2.
\end{equation}
\end{definition}

Similar to Fact~1.1 of~\cite{SW-2020-LowDegreeEstimation}, this can be equivalently formulated in terms of ``correlation'':
\begin{equation}
\MMSE_{\leq D}(\PP) = \EE\|x\|^2 - \Corr_{\leq D}(\PP)^2
\end{equation}
where $\Corr$ is defined below.

\begin{definition}[Low-degree correlation]
    \begin{equation}
        \Corr_{\leq D}(\PP) \colonequals \sup_{\substack{f \in \RR[Y]_{\leq D}^n \\ \EE_{Y \sim \PP} \|f(Y)\|^2 \neq 0}} \frac{\EE_{(x, Y) \sim \PP} \langle x, f(Y) \rangle}{\sqrt{\EE_{Y \sim \PP} \|f(Y)\|^2}}.
    \end{equation}
\end{definition}

\noindent
In tensor PCA we have $\|x\|^2 = n$, so to rule out non-trivial reconstruction we will aim to show $\Corr_{\leq D}(\PP)^2 = o(n)$.

Finally, to relate low-degree polynomial algorithms to our study of invariant polynomials, we show that both the advantage and correlation may be restricted to invariant polynomials without changing their value, provided the underlying distributions have certain invariance properties.
\begin{proposition}
    \label{prop:adv-invariant}
    Suppose that both $\QQ$ and $\PP$ are invariant distributions.
    Then,
    \begin{equation}
    \Adv_{\leq D}(\QQ, \PP) = \sup_{\substack{f \in \RR[Y]_{\leq D} \\ f \text{ invariant} \\ \EE_{Y \sim \QQ} f(Y)^2 \neq 0}} \frac{\EE_{Y \sim \PP} f(Y)}{\sqrt{\EE_{Y \sim \QQ} f(Y)^2}}.
    \end{equation}
\end{proposition}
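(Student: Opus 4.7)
The natural strategy is a Haar-averaging (symmetrization) argument. Given any polynomial $f \in \RR[Y]_{\leq D}$, define its symmetrization
\[
\bar f(Y) \colonequals \Ex_{Q \sim \Haar} f(Q \cdot Y),
\]
and record three easy facts: since $Y \mapsto Q \cdot Y$ is linear in $Y$ for each fixed $Q$ (Definition~\ref{def:change-basis}), $f(Q \cdot Y)$ is a polynomial in $Y$ of degree at most $D$, and this degree bound is preserved by averaging over $Q$; by right-invariance of Haar measure on $\sO(n)$, $\bar f(R \cdot Y) = \Ex_Q f((QR) \cdot Y) = \Ex_Q f(Q \cdot Y) = \bar f(Y)$ for every $R \in \sO(n)$, so $\bar f$ is invariant; and $f \mapsto \bar f$ is the identity on invariant polynomials.

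The core of the plan is to compare the numerator and denominator of the advantage ratio for $f$ and for $\bar f$. Using the invariance of $\PP$ together with Fubini,
\[
\Ex_{Y \sim \PP} \bar f(Y) = \Ex_Q \Ex_{Y \sim \PP} f(Q \cdot Y) = \Ex_Q \Ex_{Y \sim \PP} f(Y) = \Ex_{Y \sim \PP} f(Y),
\]
so the numerator is unchanged. For the denominator, Jensen's inequality applied to the inner average over $Q$, combined with the invariance of $\QQ$, gives
\[
\Ex_{Y \sim \QQ} \bar f(Y)^2 = \Ex_{Y \sim \QQ} \bigl(\Ex_Q f(Q \cdot Y)\bigr)^2 \leq \Ex_Q \Ex_{Y \sim \QQ} f(Q \cdot Y)^2 = \Ex_{Y \sim \QQ} f(Y)^2,
\]
so the denominator can only decrease. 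Hence $\bar f$ is an invariant polynomial of degree $\leq D$ whose advantage ratio is at least as large as that of $f$. Combined with the trivial inequality (the sup over invariant $f$ is $\leq$ the sup over all $f$), this yields the claimed equality.

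The only point requiring a bit of care---and which I would view as the main (minor) obstacle---is the degenerate case $\Ex_{Y \sim \QQ} \bar f(Y)^2 = 0$, in which $\bar f$ is excluded from the supremum by the nondegeneracy constraint. In the settings relevant to this paper the null distribution $\QQ$ (e.g.\ $\QQ = \Wig$) has a positive smooth density on $\Sym^p(\RR^n)$, so $\bar f = 0$ almost surely forces $\bar f \equiv 0$ as a polynomial; then the numerator identity above also gives $\Ex_{Y \sim \PP} f(Y) = 0$, and such an $f$ contributes nothing to the supremum and can be ignored. With this edge case disposed of, the construction $f \mapsto \bar f$ realizes the desired reduction and finishes the proof.
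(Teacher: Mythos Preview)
Your proposal is correct and follows the same Haar-averaging approach as the paper. You are in fact slightly more explicit than the paper on two points: the paper compresses the denominator step to ``the same holds for $\QQ$ with $f$ replaced by $f^2$,'' whereas you correctly spell out that what is actually used is the Jensen inequality $\Ex_{Y\sim\QQ}\bar f(Y)^2 \le \Ex_{Y\sim\QQ} f(Y)^2$; and you address the degenerate case $\Ex_{Y\sim\QQ}\bar f(Y)^2 = 0$, which the paper's proof does not mention.
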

\begin{proof}
    Consider $\bar{f}(Y) \colonequals \EE_{Q \sim \Haar(n)} f(Q \cdot Y)$.
    Then, $\bar{f}$ is invariant.
    And, we have
    \begin{align}
        \Ex_{Y \sim \PP} \bar{f}(Y) = \Ex_{Q \sim \Haar(n)} \Ex_{Y \sim \PP} f(Q \cdot Y) = \Ex_{Y \sim \PP} f(Y),
    \end{align}
    by the invariance of $\PP$.
    The same holds for $\QQ$ with $f$ replaced by $f^2$, and the result follows.
\end{proof}

\begin{remark}
    By the same operations on $f$, one may also show that, outside of the low-degree framework but under the same assumptions on $\PP$ and $\QQ$, if there is a test statistic achieving given Type~I and II error probabilities, then there is an invariant test statistic achieving the same.
\end{remark}

\begin{proposition}
    \label{prop:corr-equivariant}
Suppose that $\PP$ is a distribution over $\RR^n \times \Sym^p(\RR^n)$ such that $(x, Y) \sim \PP$ has the same law as $(Qx, Q \cdot Y)$ for any $Q \in \sO(n)$ (that is, so that $\PP$ viewed as a distribution over $x \otimes Y$ is invariant).
    Then,
    \begin{equation}
        \Corr_{\leq D}(\PP) = \sup_{\substack{f \in \RR[Y]_{\leq D}^n \\ f \text{ equivariant} \\ \EE_{Y \sim \PP} \|f(Y)\|^2 \neq 0}} \frac{\EE_{(x, Y) \sim \PP} \langle x, f(Y) \rangle}{\sqrt{\EE_{Y \sim \PP} \|f(Y)\|^2}}.
    \end{equation}
\end{proposition}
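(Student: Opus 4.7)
The plan is to mirror the proof of Proposition~\ref{prop:adv-invariant}: given any candidate $f \in \RR[Y]_{\leq D}^n$, I produce an equivariant $\bar f$ of degree at most $D$ with at least as large a ratio, from which the nontrivial direction follows (the reverse containment is immediate since equivariant polynomials are a subset). Whereas in the scalar case one merely averages $f$ over the group action on the input, here one must also ``undo'' the rotation on the output in order to land in the equivariant subspace. The natural choice is
\[
\bar{f}(Y) \;\colonequals\; \Ex_{Q \sim \Haar(n)} Q^{-1} \cdot f(Q \cdot Y),
\]
where $Q^{-1} \cdot v$ denotes the tensor action on a vector (equivalently, multiplication by the matrix $Q$, since $Q^{-1} \cdot v = (Q^{-1})^\top v = Q v$).

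To see that $\bar f$ is equivariant, substitute $Q' = RQ$ in the Haar integral and apply the right-action identity $R \cdot (Q \cdot T) = (QR) \cdot T$ twice: once to rewrite $Q \cdot R \cdot Y = (RQ) \cdot Y$, and once more to factor $(Q'^{-1} R) \cdot v = R \cdot (Q'^{-1} \cdot v)$. This yields $\bar f(R \cdot Y) = R \cdot \bar f(Y)$. That $\bar f \in \RR[Y]_{\leq D}^n$ is immediate, since for each fixed $Q$ the entries of $Q^{-1} \cdot f(Q \cdot Y)$ are $\RR$-linear combinations of the entries of $f(Q \cdot Y)$ with coefficients independent of $Y$, and averaging over $Q$ preserves this degree bound.

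For the numerator, using $\langle x, Q^{-1} \cdot v \rangle = \langle Q \cdot x, v \rangle$ and the hypothesized joint invariance of $\PP$ under the diagonal action $(x, Y) \mapsto (Q \cdot x, Q \cdot Y)$,
\[
\Ex_{(x,Y)} \langle x, \bar f(Y) \rangle = \Ex_Q \Ex_{(x,Y)} \langle Q \cdot x, f(Q \cdot Y) \rangle = \Ex_Q \Ex_{(x,Y)} \langle x, f(Y) \rangle = \Ex_{(x,Y)} \langle x, f(Y) \rangle.
\]
For the denominator, Jensen's inequality combined with the isometry $\|Q^{-1} \cdot v\| = \|v\|$ gives
\[
\Ex_Y \|\bar f(Y)\|^2 \leq \Ex_Q \Ex_Y \|Q^{-1} \cdot f(Q \cdot Y)\|^2 = \Ex_Q \Ex_Y \|f(Q \cdot Y)\|^2 = \Ex_Y \|f(Y)\|^2,
\]
where the last equality uses that the marginal law of $Y$ under $\PP$ is itself orthogonally invariant, an immediate consequence of the joint invariance. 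Combining these two displays shows that the ratio $\Ex \langle x, f(Y) \rangle / \sqrt{\Ex \|f(Y)\|^2}$ can only grow when $f$ is replaced by $\bar f$.

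I do not anticipate any serious obstacle; this is essentially a routine parallel to the scalar argument. The only care required is disentangling the two ways that $\sO(n)$ acts on $\RR^n$---the tensorial $Q \cdot x = Q^\top x$ (which matches the invariance of $x \otimes Y$) versus plain matrix multiplication---but once the ``equivariantization'' is set up with the inverse action on the output, the calculation reduces to an invocation of the invariance of $\PP$ on the numerator and a Jensen bound on the denominator.
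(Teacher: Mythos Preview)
Your proposal is correct and follows essentially the same approach as the paper: define the symmetrized $\bar f$, verify equivariance, show the numerator is preserved by the joint invariance of $\PP$, and bound the denominator via Jensen together with the invariance of the marginal on $Y$. The only differences are cosmetic---you write the output correction as the tensor action $Q^{-1}\cdot$ while the paper writes it as matrix multiplication by $Q^\top$---and you are arguably more careful than the paper in tracking the right-action convention.
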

\begin{proof}
    Consider $\bar{f}(Y) \colonequals \EE_{Q \sim \Haar(n)} Q^{\top} f(Q \cdot Y)$.
    $\bar{f}$ is equivariant, since
    \begin{equation}
        \bar{f}(R \cdot Y) = \Ex_{Q \sim \Haar(n)} Q^{\top} f(QR \cdot Y) = \Ex_{Q} RQ^{\top}  f(Q \cdot Y) = R \bar{f}(Y).
    \end{equation}
    And we have
    \begin{align*}
        \Ex_{(x, Y) \sim \PP} \langle x, \bar{f}(Y) \rangle
        &= \Ex_{Q \sim \Haar(n)} \Ex_{(x, Y) \sim \PP} \langle x, Q^{\top} f(Q \cdot Y) \rangle \\
        &= \Ex_{Q} \Ex_{(x, Y) \sim \PP} \langle Qx, f(Q \cdot Y) \rangle \\
        &= \Ex_{(x, Y) \sim \PP} \langle x, f(Y) \rangle,
    \end{align*}
    and similarly for the denominator,
    \begin{align*}
        \Ex_{Y \sim \PP} \|\bar{f}(Y)\|^2
        &= \Ex_{Y \sim \PP} \| \Ex_{Q \sim \Haar(n)} Q^{\top} f(Q \cdot Y) \|^2 \\
        &\leq  \Ex_{Q \sim \Haar(n)} \Ex_{Y \sim \PP} \| Q^{\top} f(Q \cdot Y) \|^2 \\
        &= \Ex_{Q \sim \Haar(n)} \Ex_{Y \sim \PP} \| f(Q \cdot Y) \|^2 \\
        &= \Ex_{Y \sim \PP} \| f(Y) \|^2
    \end{align*}
    where we have used Jensen's inequality followed by the invariance of the marginal distribution of $\PP$ on $Y$.
    The result then follows.
    We note here that, because of the inequality in the last calculation, $\bar{f}$ can actually have \emph{strictly larger} objective function in the correlation than $f$; on the other hand, the equivariant polynomials are included in the original maximization for the correlation, so the stated result is in fact an exact formula and not just a bound for $\Corr_{\leq D}(\PP)$.
\end{proof}

\begin{remark} When $p$ is even, the reconstruction problem has an ambiguity since $v$ is only identifiable up to a sign. In this case we can define $g(Y)$ as returning $v \otimes v$, or the entire  rank-1 tensor $v^{\otimes p}$. More generally if we define the accuracy of $g$ as
$\langle g(Y), v^{\otimes p} \rangle$, the same equivariance argument applies.
\end{remark}

\section{Tensorial Finite Free Cumulants and Invariant Bases}
\label{sec:cumulants}

\subsection{Frobenius Pairs}
\label{sec:frobenii}

A special role in our techniques will be played by the following multigraph, which is called the ``melon'' in high energy physics.\footnote{A related recursive structure gives the class of ``melonic graphs'' (the melon among them) that play an important role in other diagrammatic computations; see, e.g., \cite{Evnin-2021-MelonicDominanceLargestEigenvalueTensor} or Chapter 4 of \cite{Gurau-2017-RandomTensors}.}

\begin{definition}
    The \emph{Frobenius pair} $F$ of degree $p$ is the $p$-regular multigraph on two vertices with $p$ parallel edges connecting the vertices.
\end{definition}
\noindent
We call this the Frobenius pair because the associated graph moment is the Frobenius norm:
\begin{equation}
    m_F(T) = \|T\|_F^2 = \sum_{i_1,\ldots,i_p
    \in [n]} T_{i_1,\ldots,i_p}^2.
\end{equation}
For instance, for a 3-ary tensor we have
\[ {\raisebox{-16pt}{\includegraphics[height=36pt]{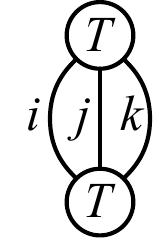}}} = \sum_{i,j,k} T_{ijk}^2 \, . \]
For 2-ary tensors $F$ is a 2-cycle, and we recover the matrix case
\[
{\raisebox{-16pt}{\includegraphics[height=36pt]{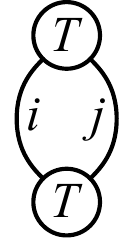}}} = \|T\|_F^2 = \tr (T^\top T) \, .
\]
For multigraphs that include multiple Frobenius pairs as connected components, we use ``Frobenii'' as a plural and urge others to do the same.

As we have mentioned in the Introduction, the issue that Frobenii raise is that they are the only connected graph moment whose terms are not centered for generic (or, say, Gaussian as in the Wigner law) tensors $T$.
This will create an obstruction to orthogonality for our initial definition of finite free cumulants, which we will need to circumvent with an additional centering operation.

\subsection{Distinct-Index Graph Moments}

In order to define tensor cumulants, we first follow recent work~\cite{Wein-2022-LowDegreeTensorDecomposition} that modifies graph moments by requiring that the indices on the edges be distinct.

\begin{definition}
\label{def:distinct-uncentered}
Given a $p$-ary symmetric tensor $T$ and a $p$-regular multigraph $G$ with $|E|=b$ edges, define
\begin{equation}
\label{eq:distinct-uncentered}
m^!_G(T) =
\sum_{\substack{i \in [n]^E \\ \textrm{$i_1,\ldots,i_b$ distinct}}}
\prod_{v \in V}
T_{i(\partial v)} \, ,
\end{equation}
where the ! connotes distinctness.
\end{definition}
\noindent
Clearly $m^!_G(T)=0$ unless $n \ge b$. As with ordinary graph moments, we define $m^!_G(T)=1$ if $G$ is empty.

\begin{remark}
If $T$ is a random tensor whose law is invariant under conjugation by orthogonal matrices, or even just conjugation by permutation matrices, then every term in the sum arising from $\Exp m^!_G(T)$ is equal.
In that case, $\Exp m^!_G(T)$ is just the number of distinct tuples of indices, i.e.,
$n^{\underline{b}} = n(n-1)(n-2)\cdots(n-b+1)$, times the expectation of any one of them.
\end{remark}

Requiring distinct indices has an important consequence: except in a Frobenius pair, vertices cannot have matching edge neighborhoods. To deal with Frobenii we also define a centered version of $m^!_G$ as follows.

\begin{definition}
\label{def:distinct-centered}
Let $T$ be a $p$-ary symmetric tensor and let $G=(V,E)$ be a $p$-regular multigraph with $|E|=b$ edges. Let $U$ be the set of  vertices forming Frobenius pairs, and for a given pair $\pi = \{u,v\} \subseteq U$ let $i(\pi) = i(\partial u) = i(\partial v)$ be the $p$ indices appearing on the edges of $\pi$. Then define
\begin{equation}
\label{eq:distinct-centered}
m^c_G(T) =
\sum_{\substack{i \in [n]^E \\ \textrm{$i_1,\ldots,i_b$ distinct}}}
\left[ \prod_{v \in V \setminus U}
T_{i(\partial v)}
\prod_{\pi \subseteq U}
\left( T_{i(\pi)}^2 - 1 \right)
\right] \, .
\end{equation}
\end{definition}

\noindent
This definition centers the Frobenii so that $m^c_G$ has zero expectation in the Wigner model for all $G$.  Recall that we abbreviate $\Wig(p,n,\sigma^2)$ as $\Wig$ when $\sigma^2=1$ and $p$ and $n$ are clear.

\begin{proposition}
\label{prop:distinct-zero}
Let $G$ be a nonempty $p$-regular multigraph, and let $W \sim \Wig$. Then
\[
\Exp_{W} m^c_G(W) = 0 \, .
\]
\end{proposition}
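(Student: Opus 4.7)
The plan is to exploit the fact that, once all edge indices are distinct, the Gaussian entries of $W$ appearing in the expansion of $m^c_G(W)$ split into mutually independent pieces, one per non-Frobenius vertex and one per Frobenius pair, each of which individually has vanishing expectation. Fix $i \in [n]^E$ with $i_1,\ldots,i_b$ distinct and consider the inner expectation
\[
\Ex_W\left[\prod_{v \in V\setminus U} W_{i(\partial v)} \prod_{\pi \subseteq U} \bigl(W_{i(\pi)}^2 - 1\bigr)\right].
\]
The first ingredient is a pairwise covariance calculation: the symmetrization defining $W$ gives $\Ex[W_a W_b] = \mathrm{Var}(W_a) \cdot \One\{\{a_1,\ldots,a_p\} = \{b_1,\ldots,b_p\} \text{ as multisets}\}$, and the normalization $1/\sqrt{p!}$ is chosen precisely so that $\mathrm{Var}(W_{i(\pi)}) = 1$ when the indices are distinct. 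Since two jointly Gaussian variables with vanishing covariance are independent, the main point reduces to showing that all multisets of indices appearing in the product are pairwise distinct.

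The second, combinatorial ingredient is the claim that, under the distinct-label assumption, $i(\partial u) = i(\partial v)$ as multisets if and only if $\{u,v\}$ is a Frobenius pair. Indeed, equality forces $\partial u = \partial v$ as edge sets (any self-loop would contribute a repeated label, violating distinctness), which in turn forces all $p$ edges at $u$ to run between $u$ and $v$. Applying this to pairs of vertices in $V\setminus U$, to a vertex in $V\setminus U$ against a Frobenius pair $\pi$, and to two distinct Frobenius pairs $\pi \ne \pi'$, shows that the multisets in the collection $\{i(\partial v)\}_{v \in V \setminus U} \cup \{i(\pi)\}_{\pi \subseteq U}$ are pairwise distinct. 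Hence the corresponding $W$-values are pairwise uncorrelated and jointly Gaussian, so they are mutually independent; the same then holds after replacing any of them by its square.

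The inner expectation therefore factorizes as $\prod_{v \in V\setminus U}\Ex[W_{i(\partial v)}] \cdot \prod_{\pi \subseteq U}\Ex[W_{i(\pi)}^2 - 1]$. If $V \setminus U \neq \emptyset$, at least one factor $\Ex[W_{i(\partial v)}] = 0$ is present and the product vanishes; if $V \setminus U = \emptyset$ and $G \neq \emptyset$, then at least one Frobenius factor is present and each contributes $\mathrm{Var}(W_{i(\pi)}) - 1 = 0$. Either way the inner expectation is zero for every $i$, so summing over $i$ yields $\Ex_W m^c_G(W) = 0$. The main delicacy in executing the plan lies in the multiset-equality-implies-Frobenius-pair step, which requires some care with self-loops, but the distinct-label assumption rules these out cleanly on both sides of any comparison.
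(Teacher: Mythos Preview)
Your proof is correct and follows essentially the same strategy as the paper's: fix a distinct index tuple, argue that the relevant $W$-entries are mutually independent because their index multisets are pairwise distinct, factorize, and observe that each factor vanishes. You are in fact more careful than the paper in spelling out the key combinatorial step (that $i(\partial u)=i(\partial v)$ forces a Frobenius pair under the distinct-label constraint) and in invoking joint Gaussianity to pass from zero covariance to full independence, where the paper simply asserts these.
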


\begin{proof}
We will show that the summand in~\eqref{eq:distinct-centered} has zero expectation for each fixed set of indices $i \in [n]^E$. If $G$ has a connected component which is not a Frobenius pair, then the $W_{i(\partial v)}$ in the first product are all distinct and independent for different $v \in V \setminus W$. They are also independent from the $W_{i(\pi)}$, which are independent from each other for different Frobenii $p$.

Thus the expectation is a product of expectations. For any $i \in [n]^p$ we have $\Exp W_i = 0$, so the first product has zero expectation if $V \setminus U$ is nonempty. For the second products, the $p$ indices in $i(\pi)$ are distinct, and therefore $\Exp W_{i(\pi)}^2 = 1$. Thus the second product has zero expectation if $G$ has any Frobenii.
\end{proof}

Note what has happened here. As in
Proposition~\ref{prop:even-colorings}, the only nonzero contributions to $\Exp_W m_G(W)$ correspond to even colorings of $G$'s edges, where vertex neighborhoods $i(\partial v)$ are repeated. By requiring all indices to be distinct, we force the vertex neighborhoods to be distinct, except on Frobenius pairs where we center the normal distribution.

More generally, in planted models $Y=T+W$ where $W$ is Wigner noise, the expectation of $m^c_G(Y)$ is simply the distinct-index moment of the signal $T$. This addresses one of the drawbacks of ordinary graph moments, namely that their expectations in the spiked model involves mixed terms with the signal at some vertices and the noise at others.

\begin{proposition}
\label{prop:spike-distinct}
For any $p$-regular multigraph $G$ and $T \in \Sym^p(\RR^n)$ we have
\begin{equation}
\label{eq:spike-distinct}
\Ex_{W \sim \Wig} m^c_G(T + W) = m^!_G(T) \, .
\end{equation}
\end{proposition}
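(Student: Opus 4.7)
The plan is to exchange the expectation with the sum over distinct index tuples $i \in [n]^E$ and argue termwise: for each such $i$, I will show that the expected summand in the expansion of $m^c_G(T+W)$ equals the corresponding summand $\prod_{v \in V} T_{i(\partial v)}$ of $m^!_G(T)$.

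First I would establish a combinatorial independence lemma: when the edge-indices $i_1,\ldots,i_b$ are all distinct, the multisets $i(\partial v)$ appearing across the product in \eqref{eq:distinct-centered} are all distinct. The key observation is that two vertices $u,v$ have the same multiset $i(\partial u) = i(\partial v)$ only if the same $p$ edges are incident to both, which forces $u$ and $v$ to form a Frobenius pair. Hence for a non-Frobenius $v$, its neighborhood multiset is shared with no other vertex, and for a Frobenius pair $\pi = \{u,v\}$ the common multiset $i(\pi)$ is disjoint from all others. Since entries of $W$ at two tuples coincide exactly when the tuples agree as multisets, this means the collection of random variables $\{W_{i(\partial v)} : v \in V \setminus U\} \cup \{W_{i(\pi)} : \pi \subseteq U\}$ is mutually independent. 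I would also record that for any tuple of distinct indices, $W$ at that position is $\mathcal{N}(0,1)$: this follows from the Wigner definition \eqref{eq:wig-def}, since the $p!$ permuted $G$-entries summed to form such a $W$-entry are distinct and independent (so variance $= \tfrac{1}{p!} \cdot p! = 1$).

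Next I would factor the expectation. Writing each non-Frobenius factor as $(T+W)_{i(\partial v)} = T_{i(\partial v)} + W_{i(\partial v)}$ and each Frobenius factor as $(T_{i(\pi)} + W_{i(\pi)})^2 - 1$, the independence just established gives
\begin{align*}
\Ex \prod_{v \in V \setminus U} (T+W)_{i(\partial v)} \prod_{\pi \subseteq U} \!\bigl((T+W)_{i(\pi)}^2 - 1\bigr)
&= \prod_{v \in V \setminus U}\!\Ex (T+W)_{i(\partial v)} \prod_{\pi \subseteq U}\!\Ex\bigl((T+W)_{i(\pi)}^2 - 1\bigr).
\end{align*}
Using $\Ex W_{i(\partial v)} = 0$, the non-Frobenius factor reduces to $T_{i(\partial v)}$. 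Using $\Ex W_{i(\pi)} = 0$ and $\Ex W_{i(\pi)}^2 = 1$ from the variance computation above, the Frobenius factor reduces to $T_{i(\pi)}^2 + 1 - 1 = T_{i(\pi)}^2 = T_{i(\partial u)} T_{i(\partial v)}$ where $\pi = \{u,v\}$. Multiplying everything together, the summand becomes $\prod_{v \in V} T_{i(\partial v)}$.

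Summing over distinct $i$ yields $m^!_G(T)$. The only step that requires real care is the independence claim---verifying that distinct edge-indices force the vertex-neighborhood multisets to be distinct outside Frobenius pairs, and hence identifying the factors of the expectation with entries of an independent family---so I would state this as an explicit combinatorial sublemma and carry out the rest as a short calculation.
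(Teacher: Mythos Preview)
Your proposal is correct and follows essentially the same approach as the paper's proof: fix a distinct index tuple, use independence of the relevant $W$-entries to factor the expectation, and compute each factor directly. Your explicit combinatorial sublemma (that distinct edge-indices force distinct neighborhood multisets outside of Frobenius pairs) spells out what the paper handles by a brief reference back to Proposition~\ref{prop:distinct-zero}, but the substance is identical.
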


\begin{proof}
Fix $i \in [n]^E$. Then as in Proposition~\ref{prop:distinct-zero}, the entries $Y_{i(\partial v)}$ at distinct vertices $v$ which are not part of Frobenius pairs are independent, and have expectation $T_{i(\partial v)}$. Similarly, the terms $Y_{i(\pi)}^2-1$ on distinct Frobenius pairs are independent, and have expectation
\[
\Exp[(T+W)_{i(\pi)}^2 - 1]
= T_{i(\pi)}^2 + 2 T_{i(\pi)} \Exp W_{i(\pi)} + \Exp[W_{i(\pi)}^2] - 1
= T_{i(\pi)}^2 \, ,
\]
which for a pair $\pi=(u,v)$ is the same as $T_{i(\partial u)} T_{i(\partial v)}$. Since the expectation of a product of independent variables is the product of their expectations, we are left with $\prod_{v \in V} T_{i(\partial v)} = m^!_G(T)$.
\end{proof}

Helpfully, these centered distinct-index moments have zero covariance in the Wigner model if their graphs are nonisomorphic. That is, they are orthogonal with respect to the inner product $\langle f, g \rangle = \Exp_{T \sim \Wig} f(T) \,g(T)$.

\begin{proposition}
\label{prop:centered-covariance}
Let $G, H$ be $p$-regular multigraphs. If they are not isomorphic, then
\[
\Ex_{W \sim \Wig} m^c_G(W) m^c_H(W) = 0 \, .
\]
\end{proposition}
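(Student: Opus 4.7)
The plan is to expand $\Ex_W m^c_G(W) m^c_H(W)$ directly from Definition~\ref{def:distinct-centered} as a double sum over distinct-index labelings $i : E(G) \to [n]$ and $j : E(H) \to [n]$, then show that each $(i, j)$-summand has zero expectation. In each summand, the integrand is a product of two types of factors: $W_{i(\partial v)}$ (or $W_{j(\partial v)}$) for each non-Frobenius vertex $v$ of $G$ (or $H$), and $W_{i(\pi)}^2 - 1$ (or $W_{j(\pi)}^2 - 1$) for each Frobenius pair $\pi$ of $G$ (or $H$).

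The first structural step is to observe that, within $G$ alone, the multi-indices $\{i(\partial v) : v \in V(G) \setminus U_G\} \cup \{i(\pi) : \pi \subseteq U_G\}$ are pairwise distinct as elements of $[n]^p / S_p$. Indeed, two distinct non-Frobenius vertices can share a neighborhood multiset only if all $p$ of their edges coincide, which puts them in a Frobenius pair; a non-Frobenius vertex cannot share with a Frobenius pair, whose $p$ edges only touch the two pair-vertices; and distinct Frobenius pairs use disjoint sets of edge labels. The same holds within $H$. Since the Wigner entries $W_\alpha$ are independent Gaussians across distinct multi-indices $\alpha$, the $(i, j)$-summand factorizes over multi-indices, and each multi-index $\alpha$ is referenced at most once from $G$ (as $W_\alpha$, as $W_\alpha^2 - 1$, or not at all) and at most once from $H$.

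A short case analysis on the four non-trivial $(G, H)$-reference patterns then pins down which $(i, j)$ can give a non-zero contribution. If $\alpha$ is referenced from only one side, the local factor is a centered polynomial in the single Gaussian $W_\alpha$ and kills the term; if $\alpha$ appears as a non-Frobenius vertex on one side and as a Frobenius pair on the other, the factor is $W_\alpha(W_\alpha^2 - 1) = W_\alpha^3 - W_\alpha$, whose Gaussian mean vanishes by parity; only the symmetric pairings non-Frobenius-to-non-Frobenius (factor $W_\alpha^2$) and Frobenius-to-Frobenius (factor $(W_\alpha^2 - 1)^2$) give non-zero constants. Self-loops that make $i(\partial v)$ non-standard do not change this dichotomy, since only positivity of $\Ex W_\alpha^2$ and vanishing of odd Gaussian moments are needed. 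Consequently, any non-vanishing summand forces bijections $\phi : V(G) \setminus U_G \to V(H) \setminus U_H$ with $i(\partial v) = j(\partial \phi(v))$ and $\psi$ between the Frobenius pairs of $G$ and those of $H$ with $i(\pi) = j(\psi(\pi))$.

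The final step is to deduce $G \cong H$ from such data $(i, j, \phi, \psi)$, contradicting the hypothesis. Summing the multiset equalities yields $i(E(G)) = j(E(H))$ as subsets of $[n]$, so $\tau(e) := j^{-1}(i(e))$ is a well-defined bijection of edge sets. Extending $\phi$ and $\psi$ to a vertex bijection $V(G) \to V(H)$ by any choice within each matched Frobenius pair, the equalities $i(\partial v) = j(\partial \phi(v))$ at every vertex force $\tau$ to preserve incidences, send self-loops to self-loops, and preserve multi-edge multiplicities, yielding the claimed isomorphism. I expect this last combinatorial verification --- reading off an incidence-preserving edge bijection from multiset equalities in the presence of self-loops and multi-edges --- to be the main technical point, though the injectivity of $i$ and $j$ on edges keeps the bookkeeping routine.
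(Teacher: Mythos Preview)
Your proof is correct and follows essentially the same approach as the paper: fix a pair of distinct-index labelings, factorize the expectation over distinct multi-indices using independence of the Wigner entries, and argue that any nonvanishing term forces a vertex-matching (and Frobenius-matching) that assembles into an isomorphism $G \cong H$. You are somewhat more explicit than the paper on two points---the cross-case where a non-Frobenius vertex of one graph shares a multi-index with a Frobenius pair of the other (handled by $\Ex[W(W^2-1)]=0$), and the actual construction of the graph isomorphism from the edge bijection $\tau = j^{-1}\circ i$---but these are elaborations of the same argument rather than a different route.
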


\begin{proof}
With the restriction that the edge indices in each graph are distinct, we have a sum of products like that in~\eqref{eq:distinct-centered}, but where $i(G)$ and $i(H)$ are each tuples of distinct indices. (This is not to be confused with $m^c_{G \sqcup H}$, in which case all $|E_G| + |E_H|$ indices would be distinct.)

Fix $i \in [n]^{E_G \sqcup E_H}$. We will show that if $i$ makes a nonzero contribution to $\EE m^c_G(W) m^c_H(W)$, then $G$ and $H$ are isomorphic.
First, if $v \in V_G$ is not part of a Frobenius pair, then $T_{i(\partial v)}$ appears exactly once in $G$, and similarly if $v \in V_H$ is not in a Frobenius pair. Thus each such vertex in $G$ must be matched with one in $H$, with corresponding indices on their edges. This establishes an isomorphism between the components of $G$ with those of $H$ that do not consist of Frobenii.

It remains to prove that $G$ and $H$ have the same number of Frobenii and are hence isomorphic. Suppose without loss of generality that $G$ has more Frobenii than $H$ does. Then for some Frobenius pair $\pi=(u,v)$ in $G$, the indices $i(\pi)$ do not appear in $H$. Since the entries of $T$ are independent, this contributes $\Exp[T^2_{i(\pi)}-1] = 0$ to the product, in which case $i$'s contribution to $\EE m^c_G(W) m^c_H(W)$ is zero.

Thus if any $i$ makes a nonzero contribution to the inner product, $G \cong H$.
\end{proof}

We can also compute the variances of the centered moments by counting a slightly nontraditional kind of graph automorphism. These automorphisms will appear again in the inner products of cumulants.

\begin{definition}
    \label{def:eAut}
    Let $G=(V,E)$ be a multigraph. Give each edge $e \in E$ an arbitrary direction, and write it as an ordered pair $(e_1,e_2)$. Then let an \emph{edge automorphism} be a one-to-one mapping $\phi:E \to E$ such that, for all $e, e' \in E$, and for each $j,j' \in \{1,2\}$, $\phi(e)_j = \phi(e')_{j'}$ if and only if $e_j = e'_{j'}$. In other words, $\phi$ preserves whether $e$ and $e'$ share their ``heads'' or their ``tails'', or whether one's head is the other's tail. Note that $\phi(e)=e$ means that $\phi(e)$ is either $(e_1,e_2)$ or $(e_2,e_1)$, where the latter reverses $e$'s direction. Finally,
    let $\eAut(G)$ be the group of such mappings.
\end{definition}
\noindent

Clearly an edge automorphism determines a traditional vertex automorphism, i.e., a mapping $\psi:V \to V$ such that $(\psi(u),\psi(v)) \in E$ if and only if $(u,v) \in E$. For simple graphs, the equivalence between the two types of automorphism is one-to-one. This more elaborate definition includes permuting parallel edges in a multigraph, as well as reversing the direction of a self-loop without moving its endpoint. In terms of the group $\Aut(G)$ of vertex automorphisms, we have
\begin{equation}
\label{eq:mult}
|\eAut(G)|
= 2^{\#\{\text{self-loops in $G$} \}}
\; \times \prod_{\substack{\textrm{bundles of $t$}\\ \textrm{parallel edges}}}
\hspace{-10pt} t! \;
\times \; |\Aut(G)| \, .
\end{equation}

\begin{proposition}
\label{prop:distinct-variance}
Let $G$ be a $p$-regular multigraph with
$b$ edges. Then
\begin{equation}
\label{eq:distinct-variance}
\Ex_{W \sim \Wig} {m^c_G(W)}^2
= n^{\underline{b}}
\, |\eAut(G)| \, .
\end{equation}
\end{proposition}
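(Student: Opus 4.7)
\emph{Proof plan.} The strategy is to expand the square and interpret each pair of distinct-index labelings as a matching between two copies of $G$, with edge automorphisms of $G$ parametrizing the nonzero matchings.

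First, expand
\[
m^c_G(W)^2 = \sum_{\substack{i_1, i_2 : E \to [n] \\ \text{injective}}} P(i_1, W)\,P(i_2, W),
\]
where $P(i, W) = \prod_{v \in V \setminus U} W_{i(\partial v)} \prod_{\pi \subseteq U}(W_{i(\pi)}^2 - 1)$ is the summand of~\eqref{eq:distinct-centered}. Passing the expectation inside and using that the entries $W_a$ are independent across distinct multisets $a \in [n]^p$, each $\Ex[P(i_1, W) P(i_2, W)]$ factorizes over the multisets that appear in the product.

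Next, arguing along the lines of Proposition~\ref{prop:centered-covariance}, $\Ex[P(i_1, W) P(i_2, W)] \ne 0$ forces $i_1$ and $i_2$ to have the same image in $[n]$, giving a bijection $\phi : E \to E$ via $i_2 \circ \phi = i_1$, and moreover $\phi$ must be an edge automorphism in the sense of Definition~\ref{def:eAut}. At non-Frobenius vertices, two distinct vertices of $G$ can share the same edge-index multiset only if they are connected by $p$ parallel edges (forming a Frobenius pair), which is excluded, so $\phi$ must preserve edge-vertex incidence there; at Frobenii, the centered factor $(W^2 - 1)$ depends only on the multiset of indices on the pair, permitting the extra flexibility that is built into Definition~\ref{def:eAut}.

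Finally, count contributions by matching each $(i_1, i_2)$ against the set of edge automorphisms $\phi$ realizing it via $i_2 = i_1 \circ \phi^{-1}$. There are $n^{\underline{b}} \cdot |\eAut(G)|$ such pairs $(i_1, \phi)$; the subtle point is that distinct $\phi$'s may give rise to the same $(i_1, i_2)$, specifically under the ``orientation-only'' subgroup generated by reversing a self-loop or swapping the two vertices of a Frobenius pair. This redundancy, a factor of $2^{\#\text{self-loops}} \cdot 2^{\#\text{Frobenii}}$ in $|\eAut(G)|$ beyond the number of distinct $(i_1, i_2)$ per $i_1$, is matched exactly by the Gaussian variance factors $\Var(W_{i(\partial v)}) = 2^{k}$ at a vertex with $k$ self-loops (arising from the symmetrization in~\eqref{eq:wig-def} applied to a multiset with $k$ doubly-repeated indices) and $\Ex[(W_{i(\pi)}^2 - 1)^2] = 2$ at each Frobenius pair. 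After this cancellation, each pair (injective $i_1$, $\phi \in \eAut(G)$) contributes exactly one, yielding $n^{\underline b} \cdot |\eAut(G)|$. The main obstacle is this final bookkeeping: pinning down the correspondence between orientation automorphisms and variance factors uniformly across self-loops, Frobenii, and parallel-edge bundles. A clean unified route is to rewrite $P(i, W)$ in terms of multivariate Hermite polynomials in the underlying i.i.d.\ Gaussian entries of the unsymmetrized tensor and invoke Hermite orthogonality, which identifies all variance factors with the combinatorial multiplicities appearing in~\eqref{eq:mult}.
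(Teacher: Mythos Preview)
Your proposal is correct and follows essentially the same approach as the paper: expand the square, show that nonzero contributions force a matching of vertex neighborhoods between the two copies of $G$ and hence an edge automorphism, and then account for self-loops and Frobenii via the variance factors $\Var(W_{i(\partial v)}) = 2^{\#\text{self-loops at }v}$ and $\Ex[(W_{i(\pi)}^2-1)^2]=2$. The only difference is organizational---the paper handles the simple case, self-loops, and Frobenii in three successive paragraphs, whereas you package them into a single ``redundancy in $\eAut$ equals variance factor'' cancellation; the Hermite reformulation you suggest at the end is a valid alternative bookkeeping device but is not needed.
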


\begin{proof}
First suppose that $G$ has no Frobenii or self-loops. The nonzero contributions to the second moment consist of even colorings of the disjoint union of two copies of $G$, but now where the colors within each copy of $G$ are distinct. The only such colorings are those like Figure~\ref{fig:single-variance}, where each edge $e$ on the left copy of $G$ has a counterpart $\phi(e)$ on the right with the same index, and where each vertex $v$ on the left has a counterpart $\psi(v)$ on the right with the same neighborhood. This describes a vertex automorphism and a permutation of each bundle of parallel edges. Along with the $n!/(n-b)!$ distinct labelings of the left copy, we obtain
\[
\Ex_{W \sim \Wig} {m^c_G(W)}^2
= n^{\underline{b}}
\prod_{\substack{\textrm{bundles of $t$}\\ \textrm{parallel edges}}}
\hspace{-10pt} t! \;
\times \; |\Aut(G)| \, .
\]

Now suppose that some vertex $v$ has $t$ self-loops. Their indices can be permuted, but we already have the factor $t!$ since these count as a bundle of parallel edges. The other effect is that the entry $T_{i(v)}$ has variance larger than $1$ since some indices are repeated. Specifically, if $i(v)$ contains $t$ pairs of repeated indices, by Proposition~\ref{prop:wig-variance} we have $\Exp T_{i(v)}^2 = 2^t$. Taking the product over all vertices gives the factor $2^{\#\{\text{self-loops in $G$} \}}$.

Finally we consider the possibility that $G$ contains one or more Frobenii. As in Proposition~\ref{prop:centered-covariance}, for each Frobenius pair $\pi$ in the left copy of $G$, its indices $i(\pi)$ must appear in a counterpart pair on the right; otherwise $T_{i(\pi)}$ is independent of the other entries, and centering gives a factor of $\Exp[T_{i(\pi)}^2-1]=0$. The $p$ parallel edges of $\pi$ and its counterpart can be matched in $p!$ ways, and for each matching they contribute a factor of
\[
\Exp\!\left[ \left(T_{i(\pi)}^2-1 \right)^{\!2} \right] = 2 \, ,
\]
where we use the fact that $T_{i(\pi)} \sim \sN(0,1)$. Finally, if we have $t$ Frobenius pairs, they can be matched with each other in $t!$ ways, and their total contribution is $(2 p!)^t t!$. But this is also the number of edge automorphisms of the disjoint union of $t$ Frobenii, so~\eqref{eq:distinct-variance} continues to hold.
\end{proof}

\subsection{Finite Free Cumulants and Additive Free Convolution}

The orthogonality of the distinct-index graph moments suggests that they are a good basis for the space of polynomial functions of $T$ with respect to the Gaussian inner product. However, they are not invariant with respect to orthogonal basis changes. This motivates the following definition, where we symmetrize them with random orthogonal rotations.

\begin{definition}
Let $T$ be a symmetric $p$-ary tensor and let $G$ be a $p$-regular multigraph. Define the \emph{free cumulant} $\kappa_G(T)$ as
\begin{equation}
\label{eq:graph-cumulant}
    \kappa_G(T) = \Exp_Q \left[ m^!_G(Q \cdot T) \right] \, ,
\end{equation}
and the \emph{centered free cumulant} $\kappa^c_G(T)$ as
\begin{equation}
\label{eq:centered-graph-cumulant}
    \kappa^c_G(T) = \Exp_Q \left[ m^c_G(Q \cdot T) \right] \, ,
\end{equation}
where $Q$ is a Haar-random orthogonal matrix. If $G$ is the empty graph then $\kappa_G=\kappa^c_G=1$.
\end{definition}
\noindent

\begin{remark}
\label{rem:counting-tuples}
Recall that $m^!_G(T)$ and $m^c_G(T)$ are both summations over assignments of distinct indices to the edges of $G$. If there are $b$ edges, then there are $n^{\underline{b}}$ such assignments. After symmetrizing $T$ to $Q \cdot T$ and taking the expectation over $Q \sim \Haar(n)$, by symmetry each of these terms will be equal.
If we view the original summation as splitting each edge in half and connecting either end to the matrix $e_i \otimes e_i$, where $i$ is the label that that edge gets in that term of the summation, then the symmetrization yields an expectation where the $e_1, \dots, e_n$ are replaced by a Haar-random orthonormal basis of $\RR^n$.
In notation, we may fix $i \in [n]^E$ an arbitrary distinct index labelling of the edges, in which case we have
\begin{align}
    \kappa_G(T) &= n^{\underline{b}} \Ex_{Q \sim \Haar} \prod_{v \in V} (Q \cdot T)_{i(\partial v)}, \\
    \kappa_G^c(T) &= n^{\underline{b}} \Ex_{Q \sim \Haar}  \prod_{v \in V(G \setminus \Frob(G))} (Q \cdot T)_{i(\partial v)} \prod_{F \in \Frob(G)} ((Q \cdot T)^2_{i(F)} - 1).
\end{align}
\end{remark}

These functions $\kappa_G$ are manifestly invariant, and $\kappa_G$ coincides with $\kappa^c_G$ unless $G$ includes one or more Frobenii. We call them free cumulants because, if $p=2$ and $G$ is the cycle of size $t$, they coincide---up to an appropriate scaling factor---with the classic free cumulant $\kappa_t$ of free probability in the limit $n \to \infty$ (e.g.~\cite[Eq.27]{Maillard_2019}). If $G$ is a disjoint union of two or more cycles, the $n \to \infty$ limit yields the so-called \emph{higher-order free cumulants}~\cite{collins-mingo-sniady-speicher} which~\cite{semerjian2024matrix} used recently for orthogonally-invariant denoising problems in matrices.

One of the most useful properties of the free cumulants is additivity~\cite{Novak2011WhatIF}.
In classical statistics, cumulants are polynomials $\kappa$ in the moments of a random variable with the property that, if $A$ and $B$ are independent random variables, then the cumulant of their convolution $A+B$ is the sum of their cumulants: $\kappa(A+B)=\kappa(A)+\kappa(B)$. For instance, the first, second, and third cumulants are the expectation $\kappa_1(A) = \Exp[A]$, the variance $\kappa_2(A) = \Exp[A^2]-\Exp[A]^2$, and $\kappa_3(A) = \Exp[A^3] - 3 \Exp[A^2] \Exp[A] + 2 \Exp[A]^3$. The $t$th cumulant $\kappa_t$ is homogeneous in the sense that it is a linear combination of products of moments with total order $t$.

In the nonabelian setting of free probability (e.g.~\cite{mingo-speicher}), $A$ and $B$ are matrix-valued random variables in $\RR^{n \times n}$, and a central question is to determine the typical spectrum of $A + Q^\top B Q$ where $Q \in \sO(n)$ is a Haar-random orthogonal matrix. (One can also study complex-valued matrices where $R$ is unitary.) While $A + Q^\top B Q$ is a random matrix even for fixed $A$ and $B$, in the limit $n \to \infty$ its empirical spectral distribution converges to a particular probability measure under mild conditions provided the same convergence holds for $A$ and $B$.
Moreover, this limiting measure is a function of the limiting spectral distributions of $A$ and $B$ individually. If we call these distributions $\alpha$ and $\beta$ respectively, then the limiting law of the spectrum of $A + Q^{\top}BQ$ is written $\alpha \boxplus \beta$ and called the \emph{free convolution} of $\alpha$ and $\beta$. The free cumulants are like the classical cumulants in that they are polynomials in the moments of a measure, but they are additive with respect to this free convolution operation rather than ordinary convolution (which is the operation on distributions induced by addition of independent scalar random variables), i.e.,
\begin{equation}
\label{eq:free-additivity}
\kappa_t(\alpha \boxplus \beta) = \kappa_t(\alpha)+\kappa_t(\beta) \, .
\end{equation}
The analogy with classical cumulants is that, since conjugating by $Q$ randomizes $B$'s eigenvectors, $A$ and $Q^\top B Q$ are as independent as two matrices can be given their spectra. We say that they are \emph{freely independent}.

We call the $\kappa_G$ free cumulants of a tensor both because they coincide with matrix free cumulants in the limit $n \to \infty$ when $G$ is a cycle, and because they satisfy an exact additivity property even for finite $n$.\footnote{While it seems technical to verify, it is natural to conjecture that in the case $p = 2$, our $\kappa_{C_k}$ for $C_k$ the $k$-cycle should be closely related to the finite free cumulants defined by \cite{AP-2018-CumulantsFiniteFreeConvolution} in terms of characteristic polynomials.} The latter is closely related to the additivity property for distinct index moments we gave in Proposition~\ref{prop:spike-distinct}.

\begin{proposition}
\label{prop:additivity}
Let $G=(V,E)$ be a connected $p$-regular graph with $b=|E|$ edges, and let $A$ and $B$ be symmetric $p$-ary tensors. Let $Q \in \sO(n)$ be a Haar-random orthogonal matrix. Then
\begin{equation}
\label{eq:additivity}
\Exp_Q \kappa_G(A + Q \cdot B) = \kappa_G(A) + \kappa_G(B) \, .
\end{equation}
More generally, if $G$ is not necessarily connected,
\begin{align}
\Exp_Q \kappa_G(A + Q \cdot B)
&= \sum_{G_A \sqcup G_B = G} \frac{n^{\underline{b}}}{n^{\underline{b_A}}n^{\underline{b_B}}}
 \,\kappa_{G_A}(A) \, \kappa_{G_B}(B). \label{eq:additivity-components}
\end{align}
Here the sum is over pairs of graphs $G_A=(V_A,E_A)$ and $G_B=(V_B,E_B)$ such that $G=G_A \sqcup G_B$, and where $b_A=|E_A|$ and $b_B=|E_B|$ with $b_A+b_B=b$. In particular, $G_A$ and $G_B$ are disjoint unions of connected components of $G$.
\end{proposition}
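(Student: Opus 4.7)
The approach is to represent $\kappa_G$ as a Haar average of a single distinct-index term via Remark~\ref{rem:counting-tuples}, symmetrize over two independent Haar matrices so that $A$ and $B$ decouple, and then use Brauer's classification of $\sO(n)$-invariants together with the distinct-index constraint to force only partitions respecting the connected-component structure of $G$ to survive.

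Fix once and for all a tuple $i \in [n]^E$ of distinct edge labels. By Remark~\ref{rem:counting-tuples},
\begin{equation*}
\kappa_G(T) = n^{\underline{b}} \,\Ex_{Q \sim \Haar} \prod_{v \in V} (Q \cdot T)_{i(\partial v)}.
\end{equation*}
Applied to $T = A + Q_0 \cdot B$ and using the composition rule $Q \cdot (Q_0 \cdot B) = (Q_0 Q) \cdot B$, the change of variables $(Q_A, Q_B) \colonequals (Q, Q_0 Q)$ turns $(Q_0, Q)$ into a pair of independent Haar matrices by bi-invariance of Haar measure. Expanding the product over $v$ by distributivity gives
\begin{equation*}
\Ex_{Q_0} \kappa_G(A + Q_0 \cdot B) = n^{\underline{b}} \sum_{V_A \sqcup V_B = V} \Ex_{Q_A} \prod_{v \in V_A} (Q_A \cdot A)_{i(\partial v)} \cdot \Ex_{Q_B} \prod_{v \in V_B} (Q_B \cdot B)_{i(\partial v)}.
\end{equation*}

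The crux is to show that a bipartition $(V_A, V_B)$ contributes only when $V_A$ is a union of connected components of $G$. Using the identity $Q \cdot A = (Q^{\top})^{\otimes p} A$ and Section~\ref{sec:prelim:weingarten}, each factor can be rewritten as a pairing with a Brauer projection,
\begin{equation*}
\Ex_{Q_A} \prod_{v \in V_A} (Q_A \cdot A)_{i(\partial v)} = \left\langle A^{\otimes |V_A|},\; \Pi_{p|V_A|} \bigotimes_{v \in V_A} e_{i(\partial v)} \right\rangle,
\end{equation*}
where $\Pi_{p|V_A|}$ is the orthogonal projection onto the $\sO(n)$-invariant subspace of $(\RR^n)^{\otimes p|V_A|}$. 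Expanding $\Pi_{p|V_A|}$ in the basis of matching vectors $w(\mu)$ over $\mu \in \sM([p|V_A|])$ indexed by pairings of the $p|V_A|$ half-edge ports incident to $V_A$, the coefficient $\langle w(\mu), \bigotimes_v e_{i(\partial v)}\rangle = \prod_{(t,t') \in \mu} \delta_{i_{e(t)}, i_{e(t')}}$ vanishes unless every pair in $\mu$ joins two ports belonging to the same edge of $G$, since the $i_e$ are distinct. This is feasible exactly when no edge has one endpoint in each part, i.e.\ when $V_A$ is a union of connected components.

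For each such valid bipartition, writing the resulting decomposition as $G = G_A \sqcup G_B$ with $b_A = |E(G_A)|$ and $b_B = |E(G_B)|$, the restricted labels $\{i_e : e \in E(G_A)\}$ form a distinct labelling of $G_A$'s edges, so applying Remark~\ref{rem:counting-tuples} in reverse yields $\Ex_{Q_A} \prod_{v \in V_A} (Q_A \cdot A)_{i(\partial v)} = \kappa_{G_A}(A) / n^{\underline{b_A}}$, and similarly for $B$. Substituting recovers~\eqref{eq:additivity-components}. The connected case~\eqref{eq:additivity} is the special case where only $V_A \in \{\emptyset, V\}$ contribute, each with coefficient $n^{\underline{b}}/(1 \cdot n^{\underline{b}}) = 1$, giving $\kappa_G(B) + \kappa_G(A)$. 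The principal obstacle is the Brauer analysis of the third paragraph: it is precisely the distinctness built into $m^!_G$ (as opposed to $m_G$) that annihilates every Weingarten matching which would otherwise couple ports of unrelated edges, so that only component-respecting partitions survive and the clean additivity emerges.
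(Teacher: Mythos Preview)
Your argument is correct and follows essentially the same approach as the paper's proof: both hinge on the Brauer matching-vector expansion of the Haar projector together with the distinct-index constraint to kill any bipartition $(V_A,V_B)$ that is not a union of connected components. The only organizational difference is that the paper first proves the ``half-symmetrized'' identity $\Exp_Q m^!_G(A+Q\cdot B)=m^!_G(A)+\kappa_G(B)$ and then symmetrizes $A$ by a second Haar matrix, whereas you introduce both Haar matrices at once via the change of variables $(Q_A,Q_B)=(Q,Q_0 Q)$ and treat $A$ and $B$ symmetrically from the outset; the core Brauer/distinctness argument is identical.
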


\begin{proof}
We start with connected graphs. We will prove a partial additivity for the distinct-index moments,
\begin{equation}
\label{eq:half-add}
\Exp_Q m^!_G(A + Q \cdot B) = m^!_G(A) + \kappa_G(B) \, .
\end{equation}
Symmetrizing and replacing $A$ with $R \cdot A$ for a Haar-random $R$, and using the fact that $QR$ is also Haar-random, yields~\eqref{eq:additivity}.

\begin{figure}
    \centering
    \includegraphics[width=3.5in]{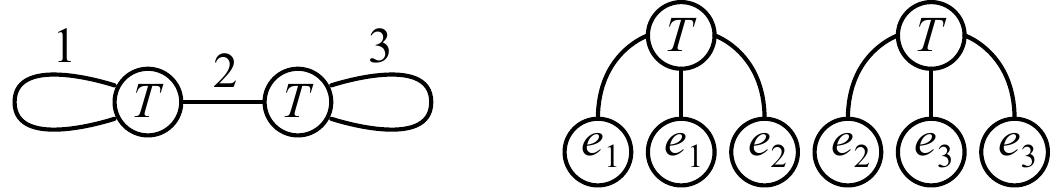}
    \caption{A given term in~\eqref{eq:distinct-uncentered} corresponds to placing basis vectors $e_i$ on each edge $i$, and contracting them with $i$'s endpoints. That is, $m!_G(T)$ is the sum of the diagram on the right over all distinct tuples  $e_1, e_2, e_3$ in the orthogonal basis.}
    \label{fig:distinct-indices}
\end{figure}

First we give a more diagrammatic picture of $m^!_G$. Extracting a given product of entries in~\eqref{eq:distinct-uncentered} corresponds to placing a basis vector $e_i$ on each edge $i$---or rather two copies of $e_i$, pointing in each direction---and contracting these vectors with the tensors at their endpoints as in Figure~\ref{fig:distinct-indices}.

Expanding $m^!_G(A+B)$ gives $2^{|V|}$ terms, where $A$ appears on some vertices and $B$ appears on the others. (Except for the distinct-index requirement, these cross-terms are mixed graph moments as in Definition~\ref{def:mixed-moments}.) Call these two sets of vertices $V_A$ and $V_B$ respectively.

Now consider the effect of replacing $B$ with $Q \cdot B$. This applies the projection operator $\Pi_\ell = \Exp Q^{\otimes \ell}$ defined in~\eqref{eq:pi-ell} with $\ell=d|V_B|$, which is a linear combination of outer products of matching vectors. However, unlike in the proof of Theorem~\ref{thm:tensor-invariant}, these matchings only apply to the half-edges incident to vertices in $V_B$.
Since $G$ is connected, if $V_A$ and $V_B$ are both nonempty, there is at least one edge $i$ in the cut between them.
But in each matching contributing to $\Pi_\ell$, the copy of $e_i$ incident to $V_B$ is matched with $e_j$ for some other edge $j$ incident to $V_B$ as shown in Figure~\ref{fig:additivity}. Since $\langle e_i, e_j \rangle = 0$, any such term is zero.

This implies that the only nonzero contributions to $m^!_G(A+Q \cdot B)$ come from the terms where $V_A$ or $V_B$ is empty, i.e., where the vertices are either all labeled with $A$ or all labeled with $Q \cdot B$. But these terms are exactly $m^!_G(A)$ and $\kappa_G(B)$ as stated.

For graphs with multiple connected components, the same argument implies that each component is labeled entirely with $A$ or with $B$. Call the unions of these components $G_A$ and $G_B$ respectively; then each such partition contributes $\kappa_{G_A}(A) \,\kappa_{G_B}(B)$. Since all the indices in $G$ are distinct, not just those within $G_A$ or $G_B$, we need to divide the prefactor $n^{\underline{b}}$
appearing in $\kappa_G$ (see Remark~\ref{rem:counting-tuples}) by the analogous prefactors
$n^{\underline{b_A}}$ and $n^{\underline{b_B}}$
for $\kappa_{G_A}$ and $\kappa_{G_B}$. This yields~\eqref{eq:additivity-components}.
\end{proof}

\begin{figure}
    \centering
    \includegraphics[width=6in]{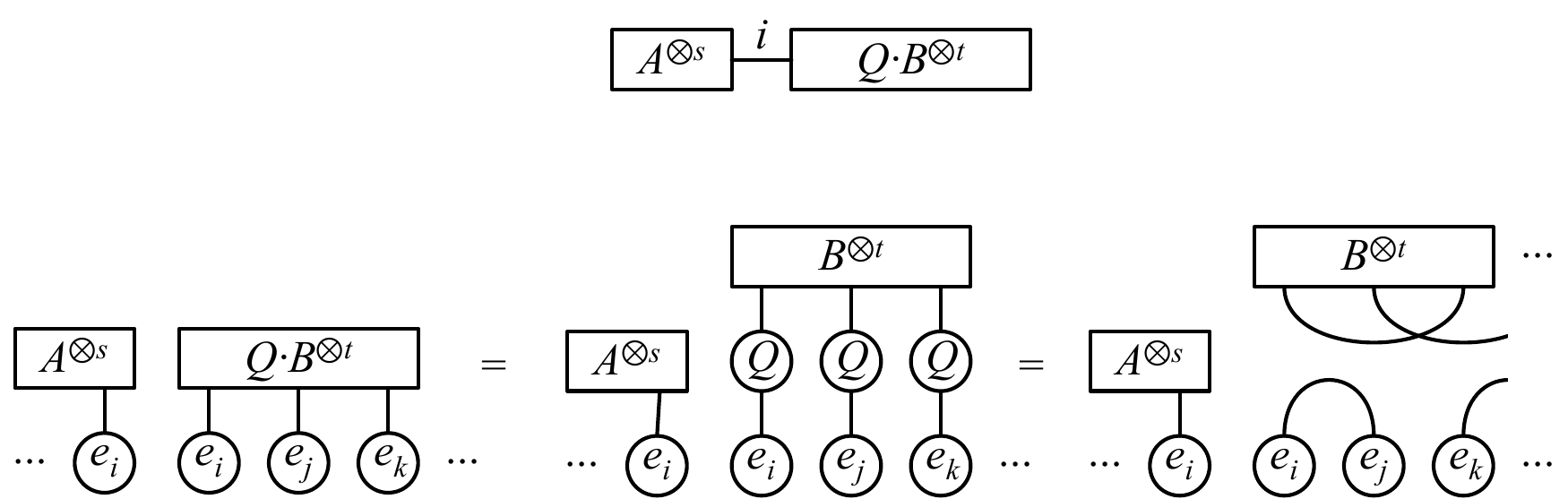}
    \caption{Illustrating the proof of Proposition~\ref{prop:additivity}. As in~\eqref{eq:half-add}, we consider $m^!_G(A + Q \cdot B)$. Expanding the tensor product creates $2^|V|$ cross-terms, where vertices in $V_A$ and $V_B$ are labeled with $A$ and $Q \cdot B$ respectively. Applying $Q$ to the half-edges of $V_B$ creates a linear combination of matchings. But if any edge $i$ crosses from $V_A$ to $V_B$, this matching creates an inner product between its vector $e_i$ with some other $e_j$, contributing $\langle e_i, e_j \rangle$. Thus only partitions where $V_A$ and $V_B$ are unions of connected components of $G$ contribute to $m^!_G(A + Q \cdot B)$. In particular, if $G$ is connected then either $V_A=V$ and $V_B=\emptyset$ or vice versa.}
    \label{fig:additivity}
\end{figure}

In fact, this additivity phenomenon is more general and a version of it is enjoyed both by the centered $\kappa_G^c$ and further generalizations thereof.
To present this version, it is convenient to work over a more general family of centered cumulants.
\begin{definition}
    Write $\conn(G)$ for the set of connected components of $G$.
    Given $x \in \RR^{\conn(G)}$, define
    \begin{align*}
        m_G^c(T; x) &\colonequals \sum_{\substack{i \in [n]^E \\ i_1, \dots, i_b \text{ distinct}}} \prod_{C \in \conn(G)}\left(\prod_{v \in C} T_{i(\partial v)} - x_C\right), \\
        \kappa_G^c(T; x) &\colonequals \Ex_Q m_G^c(Q \cdot T; x).
    \end{align*}
\end{definition}
\noindent
Our definition from earlier is $m^c_G(T) = m^c_G(T; x)$ where $x_C = -\One\{C \text{ is a Frobenius pair}\} \colonequals -\One_{\Frob}$.

\begin{proposition}[General additivity after centering]
    \label{prop:general-additivity}
    Suppose $x = x_A + x_B$.
    Then,
    \begin{equation}
        \Ex_{Q} \kappa_G^c(A + Q \cdot B; x) = \sum_{G_A \sqcup G_B = G} \frac{n^{\underline{b}}}{n^{\underline{b_A}} n^{\underline{b_B}}} \kappa_{G_A}^c(A; x_A) \kappa_{G_B}^c(B; x_B).
    \end{equation}
\end{proposition}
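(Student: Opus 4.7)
The plan is to mimic the proof of Proposition~\ref{prop:additivity}, adapted to accommodate the centering. First, I would establish a ``half-rotated'' partial additivity identity for $m^c_G$, then symmetrize in $A$ by Haar averaging to promote it to the fully-rotated $\kappa^c_G$ version.

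For the partial additivity, I expand $m^c_G(A + Q \cdot B; x)$ by distributing between $A$ and $Q \cdot B$ at each vertex and splitting $x_C = x_A^C + x_B^C$. Each connected component $C \in \conn(G)$ then contributes the factor
\begin{equation*}
\sum_{S_C \subseteq V(C)} \prod_{v \in S_C} A_{i(\partial v)} \prod_{v \in V(C) \setminus S_C} (Q \cdot B)_{i(\partial v)} \;-\; x_A^C - x_B^C.
\end{equation*}
Taking $\Ex_Q$, I invoke the matching argument from Proposition~\ref{prop:additivity}: since the indices $i$ are all distinct across $E$, any straddling edge (one endpoint in $S_C$ and the other in $V(C) \setminus S_C$) forces the Haar projection on the $B$-labeled half-edges to pair two distinct-index half-edges, contributing $0$. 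Connectivity of $C$ thus collapses each $S_C$-term to either $S_C = V(C)$ (``pure A'') or $S_C = \emptyset$ (``pure B''), while the centering terms $-x_A^C$ and $-x_B^C$ are $Q$-independent and survive unchanged. So each component has four surviving label types: pure A, pure B, A-centered, B-centered.

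Next I organize the surviving contributions by a partition $G = G_A \sqcup G_B$ into unions of connected components, where $G_A$ collects the components labeled ``pure A'' or ``A-centered''. The two A-side choices per $C \subseteq G_A$ recombine into the single factor $\prod_v A_{i(\partial v)} - x_A^C$, exactly the per-component integrand of $m^c_{G_A}(A; x_A)$; analogously for $G_B$. The contribution of the partition is then
\begin{equation*}
\sum_{\substack{i \in [n]^E \\ \mathrm{distinct}}}
\prod_{C \in \conn(G_A)} \Bigl(\prod_{v} A_{i(\partial v)} - x_A^C\Bigr)
\cdot \Ex_Q \prod_{C \in \conn(G_B)} \Bigl(\prod_{v} (Q \cdot B)_{i(\partial v)} - x_B^C\Bigr).
\end{equation*}
The inner $\Ex_Q$ depends on $i|_{E(G_B)}$ only through its distinctness pattern (by right-invariance of Haar under permutation matrices, as in Remark~\ref{rem:counting-tuples}), and therefore equals $\kappa^c_{G_B}(B; x_B)/n^{\underline{b_B}}$ for any fixed distinct $i|_{E(G_B)}$. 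Summing over the $(n - b_A)^{\underline{b_B}} = n^{\underline{b}}/n^{\underline{b_A}}$ distinct choices of $i|_{E(G_B)}$ disjoint from any fixed distinct $i|_{E(G_A)}$, the partition's contribution becomes $\frac{n^{\underline{b}}}{n^{\underline{b_A}} n^{\underline{b_B}}} \, m^c_{G_A}(A; x_A) \cdot \kappa^c_{G_B}(B; x_B)$, which upon summation over partitions yields the partial additivity identity.

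Finally, I would replace $A$ by $R \cdot A$ for an independent Haar-random $R$ and take $\Ex_R$. On the right, $\Ex_R m^c_{G_A}(R \cdot A; x_A) = \kappa^c_{G_A}(A; x_A)$ by definition, giving the desired formula. On the left, the substitution $Q = SR$ preserves the Haar measure and, combined with linearity of the action and the composition rule $R \cdot (S \cdot B) = (SR) \cdot B$, yields $\Ex_R \Ex_Q m^c_G(R \cdot A + Q \cdot B; x) = \Ex_S \Ex_R m^c_G(R \cdot (A + S \cdot B); x) = \Ex_S \kappa^c_G(A + S \cdot B; x)$. The main obstacle is the second paragraph: one must carefully verify that the ``pure-components-only'' vanishing of Proposition~\ref{prop:additivity} transfers unchanged to the centered setting (the centering terms, being $Q$-independent constants, do not disturb the matching argument) and then track the distinct-index combinatorics to recover the precise prefactor $n^{\underline{b}}/(n^{\underline{b_A}} n^{\underline{b_B}})$ after invoking $B$-side Haar symmetry.
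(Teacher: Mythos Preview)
Your proposal is correct and follows essentially the same approach as the paper, which simply states that the proof is ``essentially identical to that of Proposition~\ref{prop:additivity}.'' You have carefully filled in the details the paper omits---in particular, the observation that the centering constants $-x_A^C$ and $-x_B^C$ are $Q$-independent and so pass harmlessly through the matching argument, and the falling-factorial bookkeeping via Haar invariance under permutation matrices---and these are exactly the points one needs to check.
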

\noindent
The proof is essentially identical to that of Proposition~\ref{prop:additivity}.

Proposition~\ref{prop:additivity} is recovered as the special case $x = x_A = x_B = 0$, which is formally pleasing but not useful for analyzing low-degree polynomials, because one of the free cumulants is the symmetrized distinct index Frobenius pair $\sum_{i \text{ distinct}} (Q \cdot T)_{i}^2$, which is non-zero for any non-zero tensor.
But, by choosing $x, x_A,$ and $x_B$ prudently, we may arrange for even the expected centered free cumulants with $G$ including Frobenii to vanish exactly.

In particular, when we are using this with $B \sim \Wig$, then it is most natural to take $x_B = x = -\One_{\Frob}$ and $x_A = 0$, because of the following.
\begin{proposition}
    \label{prop:wigner-centered-cumulants-zero}
    For any $G$, $\EE_{W \sim \Wig} \kappa_G^c(W) = 0$.
\end{proposition}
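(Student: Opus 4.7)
The plan is to reduce the claim directly to Proposition~\ref{prop:distinct-zero}, which already says that $\EE_{W \sim \Wig} m^c_G(W) = 0$ for every nonempty $p$-regular multigraph $G$. (As stated, the result must implicitly exclude the empty graph, for which $\kappa^c_\emptyset \equiv 1$; I will treat $G$ as nonempty throughout.) All the work of the ``centering'' was already done there, and the passage to $\kappa^c_G$ only adds an outer Haar average.

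\textbf{Step 1: unfold the definition and swap expectations.} By the definition of the centered free cumulant,
\begin{equation}
\EE_{W \sim \Wig} \kappa^c_G(W) \;=\; \EE_{W \sim \Wig} \EE_{Q \sim \Haar} m^c_G(Q \cdot W) \;=\; \EE_{Q \sim \Haar} \EE_{W \sim \Wig} m^c_G(Q \cdot W).
\end{equation}
The swap is justified by Fubini (or more concretely, since $m^c_G$ is a polynomial in $T$ and both $W$ and $Q$ have all moments), so no care is needed here.

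\textbf{Step 2: use orthogonal invariance of $\Wig$.} For each fixed $Q \in \sO(n)$, the law $\Wig$ is orthogonally invariant, so $Q \cdot W \stackrel{d}{=} W$. Hence
\begin{equation}
\EE_{W \sim \Wig} m^c_G(Q \cdot W) \;=\; \EE_{W \sim \Wig} m^c_G(W).
\end{equation}
This step is the heart of the argument: the Haar-averaging in the definition of $\kappa^c_G$ becomes trivial when composed with an orthogonally invariant distribution.

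\textbf{Step 3: apply the centered-moment vanishing.} Substituting Step 2 into Step 1 gives
\begin{equation}
\EE_{W \sim \Wig} \kappa^c_G(W) \;=\; \EE_{Q \sim \Haar} \EE_{W \sim \Wig} m^c_G(W) \;=\; \EE_{W \sim \Wig} m^c_G(W) \;=\; 0,
\end{equation}
where the last equality is exactly Proposition~\ref{prop:distinct-zero}. No step poses any obstacle; the only thing to verify is that the exceptional treatment of the empty graph is consistent with the convention in the statement.
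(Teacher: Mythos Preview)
Your proof is correct and is essentially the same as the paper's: the paper also just observes that orthogonal invariance of $\Wig$ gives $\EE_{W \sim \Wig} \kappa_G^c(W) = \EE_{W \sim \Wig} m_G^c(W)$ and then invokes Proposition~\ref{prop:distinct-zero}. Your remark about the empty-graph convention is apt; the paper's statement implicitly assumes $G$ is nonempty.
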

\begin{proof}
    By orthogonal invariance of the law $\Wig$, we have $\EE_{W \sim \Wig} \kappa_G^c(W) = \EE_{W \sim \Wig} m_G^c(W)$, and the result then follows from Proposition~\ref{prop:distinct-zero}.
\end{proof}

However, for other applications other ways of ``distributing'' the correction $x$ are also useful, as we will see in Section~\ref{sec:clt} on computational central limit theorems and distinguishing Wigner from Wishart tensors.

\subsection{Graph Moment Expansion of Finite Free Cumulants}

Since the $\kappa_G$ are invariant, by Theorem~\ref{thm:tensor-invariant} they must admit an expansion in the graph moments $m_H$.
Below we give an explicit such expansion, which will also prove useful below.
\begin{definition}[Graph Weingarten function]
    \label{def:graph-weingarten}
    Say that a perfect matching $\mu$ of $[dp]$ \emph{realizes} a $p$-regular multigraph $G$ on $d$ unlabelled vertices if, when $\mu$ is viewed as a graph on $[dp]$ and the sets of vertices $\{1, \dots, p\}, \{p + 1, \dots, 2p\}$ and so forth are each identified into a single vertex, then the resulting graph is isomorphic to $G$.
    For two $p$-regular graphs $G, H$ on $a$ vertices, we write
    \begin{equation}
        \Wg_{G, H} \colonequals \sum_{\substack{\mu \text{ realizes } G \\ \nu \text{ realizes } H}} \Wg_{\mu, \nu}.
    \end{equation}
    As a convention, if $G$ and $H$ have different numbers of vertices, we set $\Wg_{G, H} \colonequals 0$.
\end{definition}

To attend to the normalizations that will appear in the calculations below, the following will be useful.
\begin{proposition}
    \label{prop:number-realizing-matchings}
    The number of matchings realizing $G \in \sG_{d, p}$ is $p!^d d! / |\eAut(G)|$.
\end{proposition}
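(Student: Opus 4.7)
The plan is to apply the orbit--stabilizer theorem to a natural action of the wreath product $W \colonequals S_p \wr S_d$ on the set of perfect matchings of $[dp]$. The group $W$, of order $p!^d \cdot d!$, acts on $[dp]$ by first permuting within each of the $d$ blocks $V_1 = \{1, \dots, p\}, V_2 = \{p+1, \dots, 2p\}, \dots$ via some $(\pi_1, \dots, \pi_d) \in S_p^d$, and then permuting the blocks via some $\sigma \in S_d$. This induces an action of $W$ on the set of perfect matchings of $[dp]$.

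The first key observation is that two matchings realize isomorphic multigraphs in $\sG_{d,p}$ if and only if they lie in the same $W$-orbit: the $S_d$ factor corresponds to relabeling the $d$ vertices of the quotient multigraph, and the $S_p^d$ factor corresponds to permuting the half-edges at each vertex, and together these account for all ways of exhibiting an isomorphism between realizations. Hence the set of matchings realizing a given isomorphism class $G$ is exactly one $W$-orbit, and its cardinality is $|W|/|\mathrm{Stab}_W(\mu)|$ for any matching $\mu$ in the orbit.

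The second step is to identify $\mathrm{Stab}_W(\mu)$ with $\eAut(G)$. An element $(\sigma, (\pi_i)) \in W$ fixes $\mu$ iff the induced permutation on half-edges preserves the pairing $\mu$. Such a symmetry is precisely an edge automorphism in the sense of Definition~\ref{def:eAut}: the block-level component $\sigma$ must send each vertex-neighborhood to an identical vertex-neighborhood, giving a vertex automorphism in $\Aut(G)$; for each bundle of $t$ parallel edges, the internal $S_p$-actions at the two endpoints can freely permute these $t$ edges, contributing a factor of $t!$; and for each self-loop at a vertex $v$, the two half-edges bound by the loop can be swapped within $\pi_v$, contributing a factor of $2$. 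Comparing with the formula~\eqref{eq:mult}, this gives $|\mathrm{Stab}_W(\mu)| = |\eAut(G)|$. The conclusion $p!^d \cdot d! / |\eAut(G)|$ then follows from orbit--stabilizer.

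The only step requiring genuine care is the stabilizer computation: one must check both that every edge automorphism can be realized by some element of $W$ (which is straightforward by taking $\sigma$ to be the induced vertex automorphism and using $\pi_i$ to realize the edge permutation at each vertex), and conversely that every $W$-element fixing $\mu$ decomposes into exactly the vertex-automorphism, parallel-edge-permutation, and self-loop-reversal data captured by $\eAut(G)$. This is a matter of bookkeeping rather than a conceptual obstacle.
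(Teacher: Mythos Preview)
Your proof is correct and takes essentially the same approach as the paper: both are orbit--stabilizer counts for the action of $S_p \wr S_d$ on matchings of $[dp]$, with the stabilizer identified as $\eAut(G)$. The paper phrases this slightly differently, factoring the count as $(d!/|\Aut(G)|) \times (p!^d / (2^{\#\text{self-loops}} \prod t!))$ via formula~\eqref{eq:mult} and interpreting the two factors as counting vertex-relabelings and half-edge-relabelings separately, but this is just the same wreath-product computation decomposed along the semidirect product structure.
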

\begin{proof}
    We may expand the claimed quantity as
    \begin{equation}
        \frac{p!^d d!}{|\eAut(G)|} = \frac{d!}{|\Aut(G)|} \cdot \frac{p!^d}{2^{\#\{\text{self-loops in } G\}} \prod_{\text{bundles of } t \text{ parallel edges in } G} t!}.
    \end{equation}
    Here the first term is the number of permutations of vertices that map any matching realizing $G$ to another different matching realizing $G$, and the second term is the same for permutations of half-edges around each vertex.
\end{proof}

\begin{lemma}
    \label{lem:kappa-expansion}
    For any $p$-regular multigraph $G$ with $d$ vertices and $b$ edges,
    \begin{equation}
        \kappa_G(T) = \frac{n^{\underline{b}}}{p!^{d} d!} \cdot  |\eAut(G)| \cdot \sum_H \Wg_{G, H} m_H(T).
    \end{equation}
    The sum may be taken over all $p$-regular multigraphs $H$, but only those with $d$ vertices will contribute.
\end{lemma}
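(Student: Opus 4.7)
The plan is to expand $\kappa_G(T)$ directly using the diagrammatic/Weingarten description of the Haar integral over $Q$, and then clean up the result by symmetrizing over the choice of distinguished matching realizing $G$.

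First I would start from the compact form in Remark~\ref{rem:counting-tuples}: fix any distinct index labeling $i \in [n]^E$ and write $\kappa_G(T) = n^{\underline{b}} \, \Ex_Q \prod_{v \in V} (Q \cdot T)_{i(\partial v)}$. Relabel the $dp$ half-edges of $G$ as $[dp]$, grouping them in blocks of $p$ by vertex, and let $\pi \in \sM([dp])$ be the canonical matching realizing $G$ under this labeling, so that the product becomes $\sum_{j \in [n]^{dp}} T^{\otimes d}_j \prod_{a=1}^{dp} Q_{j_a,\, i(\pi)_a}$, where $i(\pi)_a$ is the edge label of the edge of $G$ containing half-edge $a$.

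Next I would apply the Weingarten expansion $\Ex_Q \prod_a Q_{j_a,k_a} = \sum_{\mu,\nu \in \sM([dp])} \Wg_{\mu,\nu}\, w(\mu)_j\, w(\nu)_k$. The $j$-contraction gives $\sum_j T^{\otimes d}_j w(\mu)_j = m_{G_\mu}(T)$, where $G_\mu$ is the graph realized by $\mu$, so we get exactly a moment of the tensor. The $k$-contraction becomes $w(\nu)_{i(\pi)} = \prod_{(a,a') \in \nu} \delta_{i(\pi)_a, i(\pi)_{a'}}$. Here is the key step: since the edge labels $i$ are \emph{distinct}, $i(\pi)_a = i(\pi)_{a'}$ forces $a$ and $a'$ to lie in the same edge under $\pi$, i.e.\ $(a,a') \in \pi$. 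Hence $w(\nu)_{i(\pi)}$ vanishes unless $\nu = \pi$, collapsing the expansion to $\kappa_G(T) = n^{\underline{b}} \sum_\mu \Wg_{\mu,\pi}\, m_{G_\mu}(T)$.

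Finally I would symmetrize. Since the left-hand side does not depend on the choice of $\pi$ realizing $G$, I average the right-hand side over all such $\pi$:
\begin{equation}
\kappa_G(T) = \frac{n^{\underline{b}}}{N_G} \sum_{\pi \text{ realizes } G} \sum_{\mu} \Wg_{\mu,\pi}\, m_{G_\mu}(T),
\end{equation}
where $N_G$ denotes the number of matchings realizing $G$. Grouping $\mu$ by the graph $H = G_\mu$ it realizes, the inner double sum becomes $\sum_H m_H(T) \sum_{\mu \text{ realizes } H,\, \pi \text{ realizes } G} \Wg_{\mu,\pi} = \sum_H \Wg_{G,H} \, m_H(T)$, by Definition~\ref{def:graph-weingarten}. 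Substituting $N_G = p!^d d!/|\eAut(G)|$ from Proposition~\ref{prop:number-realizing-matchings} gives the stated formula.

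The only slightly subtle point is the distinct-index vanishing that forces $\nu = \pi$ in step two; once that is in place, the rest is bookkeeping. The symmetrization in the last step is what converts the asymmetric-looking $\Wg_{\mu,\pi}$ into the symmetric graph-indexed object $\Wg_{G,H}$ and produces the $|\eAut(G)|/(p!^d d!)$ factor. Symmetry $\Wg_{\mu,\nu} = \Wg_{\nu,\mu}$, needed to identify $\Wg_{H,G}$ with $\Wg_{G,H}$, follows from the self-adjointness of the projector $\Pi_{dp}$.
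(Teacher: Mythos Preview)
Your proof is correct and follows essentially the same route as the paper: both arguments reduce $\kappa_G(T)$ to $n^{\underline{b}} \sum_\mu \Wg_{\mu,\pi}\, m_{G_\mu}(T)$ via the key observation that the distinct edge indices force $w(\nu)_{i(\pi)} = \mathbb{1}\{\nu = \pi\}$, and then convert the single-$\pi$ expression into the graph-indexed $\Wg_{G,H}$ by dividing by the number of realizations $p!^d d!/|\eAut(G)|$. The only cosmetic difference is that the paper explicitly invokes the permutation invariance $\Wg_{\mu,\nu} = \Wg_{\sigma\mu,\sigma\nu}$ to argue the inner sum is independent of $\pi$ before dividing, whereas you simply average over all $\pi$ realizing $G$; both are equivalent and equally valid.
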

\begin{proof}
    The graph moments may be defined in terms of matching vectors: if $\mu$ is a matching realizing $G$, then $m_G(T) = \langle T^{\otimes d}, w(\mu) \rangle$.
    To be more explicit, for $i \in [n]^{pd}$, call $i$ \emph{compatible with} $\mu$ if $i$ is constant on each pair matched under $\mu$.
    Then, $w(\mu) = \sum_{i \text{ compatible with } \mu} e_i$, which recovers Definition~\ref{def:moments} of $m_G(T)$.

    Let us describe a variant of this for the distinct index moments.
    We say that $i$ is \emph{distinctly compatible with $\mu$} if $i$ is compatible with $\mu$ and if $i$ is distinct on distinct pairs in $\mu$: that is, the level sets of $i$ are exactly the pairs in $\mu$.
    Then if we define
\[
w^!(\mu) \colonequals \sum_{\substack{\text{$i$  distinctly} \\ \text{compatible with $\mu$}}} e_i \, ,
\]
then by the same token we have
\[ m^!_G(T) = \langle T^{\otimes d}, w^!(\mu) \rangle. \]

Moreover, in the symmetrization we may take the adjoint of the application of $Q$ and find
\begin{align*}
\kappa_G(T)
&= \Ex_Q m^!_G(Q \cdot T) \\
&= \Ex_Q \langle Q \cdot T^{\otimes d}, w^!(\mu) \rangle \\
&= \Ex_Q \langle T^{\otimes d}, Q^{\top} \cdot w^!(\mu) \rangle \\
&= \Ex_Q \langle T^{\otimes d}, Q^{\otimes dp} w^!(\mu)\rangle \\
&= \sum_{\substack{\text{$i$  distinctly} \\ \text{compatible with $\mu$}}} \Ex_Q \langle T^{\otimes d}, Q^{\otimes dp} e_i \rangle
\intertext{and using that every term is equal and does not depend on $i$ (as may be seen by multiplying $Q$ by a permutation matrix),}
&= n^{\underline{b}} \, \Ex_Q \langle T^{\otimes d}, Q^{\otimes dp} e_{\mu} \rangle \\
&= n^{\underline{b}} \, \Ex_Q \langle T^{\otimes d}, \Pi_d e_{\mu} \rangle
\intertext{where we write $e_{\mu} = e_i$ for some choice of $i = i(\mu)$ compatible with $\mu$. Next, substituting in the Weingarten function expression for $\Pi_d$ (see Appendix~\ref{app:weingarten}) gives}
&= n^{\underline{b}} \, \sum_{\rho, \nu} \Wg_{\rho, \nu} \, \langle w(\rho), e_{\mu} \rangle \, \langle T^{\otimes d}, w(\nu) \rangle
\intertext{and we have $\langle w(\rho), e_{\mu} \rangle = \One\{\rho = \mu\}$, so this reduces to}
&= n^{\underline{b}} \, \sum_{\nu} \Wg_{\mu, \nu} \, \langle T^{\otimes d}, w(\nu) \rangle \\
&= n^{\underline{b}} \, \sum_{\nu} \Wg_{\mu, \nu} \, m_{G(\nu)}(T),
\intertext{where, as before, $G(\nu)$ is the graph resulting from identifying groups of $p$ vertices suitably in $\nu$. We notice that a given $m_H(T)$ appears many times in this formula, once for each $\nu$ such that $H = G(\nu)$, i.e., for each $\nu$ that realizes $H$. We can give a proper expansion of $\kappa_G(T)$ in graph moments by combining these terms,}
&= n^{\underline{b}} \, \sum_{H} \left(\sum_{\nu \text{ realizes } H} \Wg_{\mu, \nu}\right) \, m_H(T) \, .
\end{align*}

This expansion does not depend on the choice of $\mu$ realizing $G$.
Indeed, this independence even holds over the inner summations giving the coefficient of each $m_H(T)$: choosing a different $\mu$ does not affect the terms in this summation, since $\Wg_{\mu, \nu} = \Wg_{\pi \mu, \pi \nu}$ for a permutation $\pi$ of the indices being matched.
By Proposition~\ref{prop:number-realizing-matchings}, we may then rewrite
\begin{align*}
\kappa_G(T)
&= \frac{n^{\underline{b}}}{\#\{\mu: \mu \text{ realizes } G\}}\sum_H \Wg_{G, H} m_H(T) \\
&= \frac{n^{\underline{b}}}{p!^d d!} \cdot |\eAut(G)| \cdot \sum_H \Wg_{G, H} m_H(T),
\end{align*}
as claimed.
\end{proof}

Lastly, let us derive the simple relationship between the $\kappa^c_G$ and the $\kappa_G$.
\begin{proposition}
    \label{prop:kappa-centered-expansion}
    For any $p$-regular multigraph $G$ on $d$ vertices and $b$ edges,
    \begin{equation}
        \kappa^c_G(T) = \sum_{S \subseteq \Frob(G)} (-1)^{|S|} (n - b + p|S|)^{\underline{p|S|}} \, \kappa_{G \setminus S}(T).
    \end{equation}
\end{proposition}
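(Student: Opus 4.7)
The plan is to first establish the corresponding identity at the level of the distinct-index moments and then symmetrize by averaging over $Q \sim \Haar$.

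The starting point is the definition
\begin{equation}
m^c_G(T) = \sum_{\substack{i \in [n]^E \\ i \text{ distinct}}} \prod_{v \in V \setminus U} T_{i(\partial v)} \prod_{F \in \Frob(G)} \bigl(T_{i(F)}^2 - 1\bigr),
\end{equation}
where $U = V(\Frob(G))$. I would expand the product over Frobenii using inclusion--exclusion:
\begin{equation}
\prod_{F \in \Frob(G)} \bigl(T_{i(F)}^2 - 1\bigr) = \sum_{S \subseteq \Frob(G)} (-1)^{|S|} \prod_{F \in \Frob(G) \setminus S} T_{i(F)}^2.
\end{equation}
For each Frobenius pair $F = \{u,v\}$ with $F \notin S$, the two endpoints share the same index multiset, so $T_{i(F)}^2 = T_{i(\partial u)} T_{i(\partial v)}$. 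Substituting, the product over $V \setminus U$ combines with the kept Frobenii to yield exactly $\prod_{v \in V(G \setminus S)} T_{i(\partial v)}$.

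Next, I would split the summation over distinct $i \in [n]^E$ into summations over edge-labels of $G \setminus S$ and edge-labels of $S$. The key combinatorial observation is that, because distinctness is required globally, fixing a distinct labelling of the $b - p|S|$ edges of $G \setminus S$ first, the remaining $p|S|$ edges in $S$ must receive $p|S|$ indices from $[n]$ that are distinct among themselves and distinct from the first batch. There are $(n - b + p|S|)^{\underline{p|S|}}$ such extensions, independently of $T$. Since the summand factors across $S$ and $G \setminus S$ after the expansion above, one obtains
\begin{equation}
m^c_G(T) = \sum_{S \subseteq \Frob(G)} (-1)^{|S|} (n - b + p|S|)^{\underline{p|S|}} \, m^!_{G \setminus S}(T).
\end{equation}

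Finally, I would apply $\Ex_{Q \sim \Haar}$ to both sides with $T$ replaced by $Q \cdot T$; the combinatorial prefactors are constants and pass through the expectation, and by definition $\Ex_Q m^c_G(Q \cdot T) = \kappa^c_G(T)$ and $\Ex_Q m^!_{G \setminus S}(Q \cdot T) = \kappa_{G \setminus S}(T)$. This gives the claimed identity.

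The only non-routine step is the inclusion--exclusion count of distinct labellings: one must verify that the ``global distinctness'' constraint decouples cleanly once the product is expanded, and in particular that the extension count depends only on $|S|$ and not on which Frobenii were chosen. Both follow because $S$ consists of entire connected components (Frobenius pairs), so the edges of $S$ are edge-disjoint from those of $G \setminus S$ and involve no shared index labels before the distinctness constraint is imposed.
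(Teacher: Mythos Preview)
Your proposal is correct and follows exactly the same approach as the paper: expand the product over Frobenii by inclusion--exclusion, count the number of distinct extensions of the index labelling to the removed edges to obtain the falling-factorial factor, and then symmetrize over $Q \sim \Haar$. Your writeup is in fact slightly more careful than the paper's, which writes $m_{G\setminus S}$ in the intermediate identity where it should (as you have) be $m^!_{G\setminus S}$.
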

\begin{proof}
    Expanding the product in the definition of $m_G^c(T)$, we have
    \begin{equation}
        m_G^c(T) = \sum_{S \subseteq \Frob(G)} (-1)^{|S|} (n - b + p|S|)^{\underline{p|S|}} \, m_{G \setminus S}(T),
    \end{equation}
    since there are $p|S|$ indices in the summation forced to be distinct from the others but otherwise playing no rule in the summands.
    Projecting each side to invariant polynomials (by averaging over $Q$ the value applied to $Q \cdot T$) gives the result.
\end{proof}

\subsection{Inner Products and Approximate Orthogonality}

We now turn to computing the inner products $\EE_{W \sim \Wig} \kappa_G^c(W)\kappa_H^c(W)$, which will show that the $\kappa_G^c$ form an approximately orthogonal basis for the invariant polynomials.
We first establish a simple preliminary.
\begin{proposition}
    \label{prop:mGc-mG-inner}
    Suppose $|V(G)| \geq |V(H)|$.
    Then, $\EE_{W \sim \Wig} m_G^c(W) m_H(W) = 0$ unless $G$ and $H$ are isomorphic, and
    \begin{equation}
        \Ex_{W \sim \Wig} m_G^c(W) m_G(W)= \Ex_{W \sim \Wig} m_G^c(W)^2 = n^{\underline{b}} \cdot |\eAut(G)|.
    \end{equation}
\end{proposition}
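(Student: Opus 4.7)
I will follow the template of Propositions~\ref{prop:centered-covariance} and~\ref{prop:distinct-variance}, now with the second centered moment replaced by the uncentered $m_H$. Expand
\begin{equation*}
\Ex_{W} m_G^c(W)\, m_H(W) = \sum_{\substack{i \in [n]^{E_G}\\ i \text{ distinct}}} \sum_{j \in [n]^{E_H}} \Ex_W\!\left[\prod_{v \in V_G \setminus U} \!\!\! W_{i(\partial v)} \!\!\prod_{\pi \in \Frob(G)}\!\!\! (W_{i(\pi)}^2 - 1) \prod_{u \in V_H}\!\!\! W_{j(\partial u)}\right]
\end{equation*}
and analyze each summand via Wick's theorem.

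\textbf{Step 1 (off-diagonal vanishing).} Fix $(i,j)$. Since $i$ is injective on $E_G$, all multi-indices $i(\partial v)$ (non-Frobenius $v$) and $i(\pi)$ ($\pi \in \Frob(G)$) are pairwise distinct, so the corresponding Wigner entries on the $G$-side are mutually independent. The centering identity $\Ex[(W_a^2 - 1)X] = 0$ (for $X$ independent of $W_a$ and using $\Ex W_a^2 = 1$ for distinct-entry $a$) forces each $i(\pi)$ to match some $j(\partial u)$; each $W_{i(\partial v)}$ with non-Frobenius $v$ must Wick-pair with some $W_{j(\partial u)} = W_{i(\partial v)}$; and the two $W_{i(\pi)}$ factors of a Frobenius pair cannot pair with each other, since the $-1$ cancels that Wick term exactly. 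Thus each non-Frobenius $v \in V_G$ consumes one $u \in V_H$ and each $\pi \in \Frob(G)$ consumes two $u_1, u_2 \in V_H$, so the $H$-side uses at least $|V(G)|$ vertices. Combined with the hypothesis $|V(G)| \geq |V(H)|$, equality holds and the minimum is attained.

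To upgrade the vertex matching to a graph isomorphism: since $i$ is injective, each label $a \in i(E_G)$ has total multiplicity exactly $2$ in $\{i(\partial v)\}_v$, and the matching forces the same multiplicity in $\{j(\partial u)\}_u$, meaning exactly one $H$-edge carries label $a$. Because $i(\pi)$ consists of $p$ distinct labels each appearing once in both $j(\partial u_1)$ and $j(\partial u_2)$ (with no repeats---ruling out self-loops at $u_1$ or $u_2$ carrying those labels), the $p$ unique $H$-edges so labeled all join $u_1$ to $u_2$---so $\{u_1, u_2\}$ is a Frobenius pair of $H$. The vertex bijection together with the edge bijection $e_G \mapsto$ (unique $e_H$ with $j(e_H) = i(e_G)$) respects incidences and gives an isomorphism $G \cong H$. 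Hence the expectation vanishes whenever $G \not\cong H$.

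\textbf{Step 2 (diagonal value).} When $H \cong G$, the preceding analysis identifies each contributing $(i, j)$ with a choice of $i$ ($n^{\underline{b}}$ options) and an edge isomorphism of $G$ (modulo the redundancies---namely Frobenius-pair vertex swaps and self-loop orientation flips---that leave $j$ unchanged). The Wick contribution per valid $(i,j)$ equals $\prod_v 2^{\#\text{self-loops at }v}$ (from $\Ex W_{i(\partial v)}^2 = 2^{\#\text{self-loops at }v}$) times $2^{|\Frob(G)|}$ (from $\Ex[(W_{i(\pi)}^2 - 1) W_{i(\pi)}^2] = 2$ per Frobenius pair, which plays exactly the role of $\Ex[(W_{i(\pi)}^2 - 1)^2] = 2$ in the proof of Proposition~\ref{prop:distinct-variance}). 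Assembling these via~\eqref{eq:mult} produces $n^{\underline{b}} \cdot |\eAut(G)|$, matching $\Ex m_G^c(W)^2$ exactly, so
$\Ex m_G^c(W) m_G(W) = \Ex m_G^c(W)^2 = n^{\underline{b}} |\eAut(G)|$.

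\textbf{Main obstacle.} The subtlest step is Step 1's promotion of the vertex-matching to a genuine graph isomorphism: verifying that the two $H$-vertices matched to each $G$-Frobenius pair are in fact connected by $p$ parallel edges in $H$, forming a Frobenius pair there, rather than having the $p$ labels of $i(\pi)$ distributed across other $H$-vertices. This rests on both the injectivity of $i$ (providing the multiplicity-2 label accounting) and the distinctness of the $p$ entries of $i(\pi)$ (ruling out absorbing self-loops at $u_1$ or $u_2$).
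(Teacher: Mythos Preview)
Your proof is correct and follows the same route the paper intends: the paper's own argument is simply a one-line deferral to Propositions~\ref{prop:centered-covariance} and~\ref{prop:distinct-variance}, noting that replacing $m_H^c$ by $m_H$ leaves the analysis unchanged, and you have spelled out exactly those details. In particular, your Step~1 correctly uses the centering $\Ex[(W_{i(\pi)}^2-1)W_{i(\pi)}^k]=0$ for $k\in\{0,1\}$ to force each Frobenius index to appear at least twice on the $H$-side, and your multiplicity-2 label argument cleanly promotes the vertex matching to an isomorphism; in Step~2 the key observation that $\Ex[(W^2-1)W^2]=2=\Ex[(W^2-1)^2]$ makes the diagonal value coincide with $\Ex m_G^c(W)^2$ exactly as the paper asserts.
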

\noindent
The proof is a simple extension of the argument for Propositions~\ref{prop:centered-covariance} and \ref{prop:distinct-variance} on the values of $\EE_{W \sim \Wig} m_G^c(W) m_H^c(W)$, as changing $m_H^c$ to $m_H$ does not change the value of these inner products.

The following is the main calculation of the inner products we are interested in.
In fact, these inner products are very close to the values of the graph Weingarten function.
\begin{lemma}
    \label{lem:kappaGc-inner}
    For $G, H \in \sG_{d, p}$, if $|V(G)| \neq |V(H)|$, then $\EE_{W \sim \Wig} \kappa_G^c(W)\kappa_H^c(W) = 0$.
    If $|V(G)| = |V(H)| = d$, then
    \begin{equation}
        \Ex_{W \sim \Wig} \kappa_G^c(W)\kappa_H^c(W) = \frac{(n^{\underline{b}})^2}{p!^d d!} \cdot |\eAut(G)| \cdot |\eAut(H)| \cdot \Wg_{G, H}.
    \end{equation}
\end{lemma}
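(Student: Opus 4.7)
The plan is to combine three ingredients: the orthogonal invariance of $\Wig$, the graph-moment expansion of $\kappa^c_H$ provided by Proposition~\ref{prop:kappa-centered-expansion} and Lemma~\ref{lem:kappa-expansion}, and the near-orthogonality of $m^c_G$ against ordinary graph moments from Proposition~\ref{prop:mGc-mG-inner}.

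The first step is a symmetrization. Since $\Wig$ is orthogonally invariant and $\kappa^c_H$ is an invariant polynomial, for any fixed $Q \in \sO(n)$ the change of variables $W \mapsto Q^{-1} \cdot W$ preserves the Wigner integral and gives $\EE_W m^c_G(Q \cdot W) \kappa^c_H(W) = \EE_W m^c_G(W) \kappa^c_H(W)$. Averaging over $Q \sim \Haar$ collapses the outer $\kappa^c_G$ on the left-hand side to $m^c_G$, yielding
\begin{equation}
\Ex_{W \sim \Wig} \kappa^c_G(W) \kappa^c_H(W) = \Ex_W m^c_G(W) \kappa^c_H(W),
\end{equation}
and the same manipulation with the roles of $G$ and $H$ swapped also expresses this as $\EE_W \kappa^c_G(W) m^c_H(W)$. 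Without loss of generality I may assume $|V(G)| \geq |V(H)|$ and work with the first form (otherwise apply the argument to the second form).

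Next I would expand $\kappa^c_H(W)$ explicitly. Proposition~\ref{prop:kappa-centered-expansion} writes $\kappa^c_H$ as a signed sum over $S \subseteq \Frob(H)$ of multiples of $\kappa_{H \setminus S}$, and Lemma~\ref{lem:kappa-expansion} then writes each $\kappa_{H \setminus S}$ as a linear combination of ordinary graph moments $m_K$ with coefficient proportional to the graph Weingarten entry $\Wg_{H \setminus S, K}$. Substituting into $\EE_W m^c_G(W) \kappa^c_H(W)$ leaves a double sum over $S$ and $K$ of terms proportional to $\EE_W m^c_G(W) m_K(W)$. The Weingarten factor forces $|V(K)| = |V(H)| - 2|S|$, so $|V(K)| \leq |V(H)| \leq |V(G)|$ and Proposition~\ref{prop:mGc-mG-inner} applies; it annihilates every term except those with $K \cong G$. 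Isomorphism then requires $|V(G)| = |V(H)| - 2|S|$, which combined with $|V(G)| \geq |V(H)|$ pins $|V(G)| = |V(H)| =: d$ and $S = \emptyset$. This already proves the first claim that the inner product vanishes whenever $|V(G)| \neq |V(H)|$.

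Finally, when $|V(G)| = |V(H)| = d$, only the term $S = \emptyset$, $K = G$ survives. The outer falling factorial from Proposition~\ref{prop:kappa-centered-expansion} is $1$ at $S = \emptyset$, so the surviving coefficient is $(n^{\underline{b}} / p!^d d!) \cdot |\eAut(H)| \cdot \Wg_{H, G}$ from Lemma~\ref{lem:kappa-expansion}, while Proposition~\ref{prop:mGc-mG-inner} supplies $\EE_W m^c_G(W) m_G(W) = n^{\underline{b}} \cdot |\eAut(G)|$. Multiplying these and using the symmetry $\Wg_{H, G} = \Wg_{G, H}$ inherited from $\Wg_{\mu, \nu} = \Wg_{\nu, \mu}$ produces the claimed formula. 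The only genuine obstacle is the combinatorial bookkeeping: one must verify that the falling factorials, the $p!^d d!/|\eAut|$ prefactors, and the Weingarten entries from Proposition~\ref{prop:kappa-centered-expansion} and Lemma~\ref{lem:kappa-expansion} telescope cleanly down to the advertised expression, with no spurious factors from the vanishing-$S$ terms being dropped.
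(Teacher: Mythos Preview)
Your proposal is correct and follows essentially the same approach as the paper: reduce $\kappa^c_G$ to $m^c_G$ via the orthogonal invariance of $\Wig$, expand $\kappa^c_H$ through Proposition~\ref{prop:kappa-centered-expansion} and Lemma~\ref{lem:kappa-expansion}, and then invoke Proposition~\ref{prop:mGc-mG-inner} to kill every term except $S=\emptyset$, $K\cong G$. The bookkeeping you flag as the ``only genuine obstacle'' is indeed routine---at $S=\emptyset$ the falling factorial is $1$ and Lemma~\ref{lem:kappa-expansion} applied to $H$ (with its full $d$ vertices and $b$ edges) contributes exactly $\frac{n^{\underline{b}}}{p!^d d!}\,|\eAut(H)|\,\Wg_{H,G}$, which multiplied by $n^{\underline{b}}\,|\eAut(G)|$ from Proposition~\ref{prop:mGc-mG-inner} gives the result.
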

\begin{proof}
    Suppose without loss of generality that $|V(G)| \geq |V(H)|$.
    We have
    \begin{align*}
        &\hspace{-0.5cm}\Ex_{W \sim \Wig} \kappa_G^c(W)\kappa_H^c(W) \\
        &= \Ex_{\substack{W \sim \Wig \\ U, V \sim \Haar}} m_G^c(U \cdot W) m_H^c (V \cdot W) \\
        &= \Ex_{\substack{W \sim \Wig \\ V \sim \Haar}} m_G^c(W) m_H^c (V \cdot W) \\
        &= \Ex_{W \sim \Wig} m_G^c(W)\kappa_H^c(W)
        \intertext{and using Proposition~\ref{prop:kappa-centered-expansion} followed by Lemma~\ref{lem:kappa-expansion}, we find}
        &= \sum_{S \subseteq \Frob(H)} (-1)^{|S|} (n - b + p|S|)^{\underline{p|S|}}\Ex_{W \sim \Wig} m_G^c(W)\kappa_{H \setminus S}^c(W) \\
        &= \sum_{S \subseteq \Frob(G)} (-1)^{|S|} (n - b + p|S|)^{\underline{p|S|}} \frac{n^{\underline{b}}}{p!^d d!} |\eAut(H \setminus S)| \sum_{K} \Wg_{H \setminus S, K} \Ex_{W \sim \Wig} m_G^c(W)m_K(W)
        \intertext{By Proposition~\ref{prop:mGc-mG-inner}, we immediately find that all terms are zero unless $|V(G)| = |V(H)|$. If this holds, then all terms with $S \neq \emptyset$ are zero, and among those the only non-zero term has $K = G$, and we are left with simply}
        &= \frac{n^{\underline{b}}}{p!^d d!} |\eAut(H)| \sum_{K} \Wg_{G, H} \Ex_{W \sim \Wig} m_G^c(W)m_G(W) \\
        &= \frac{(n^{\underline{b}})^2}{p!^d d!} \cdot |\eAut(G)| \cdot |\eAut(H)| \cdot \Wg_{G, H},
    \end{align*}
    completing the proof.
\end{proof}

Finally, we may construct a nearly \emph{orthonormal} basis by normalizing these polynomials as follows.
\begin{definition}
    \label{def:hat-kappaGc}
    For each $G$ a $p$-regular multigraph, define
    \begin{equation}
        \what{\kappa_G^c}(T) \colonequals \frac{n^{b/2}}{n^{\underline{b}} \sqrt{|\eAut(G)|}} \kappa_G^c(T).
    \end{equation}
\end{definition}
\noindent
For the sake of simplicity, we do not try to exactly normalize these polynomials to have norm 1.
On the other hand, they obey the following conditioning property.
\begin{lemma}
    \label{lem:kappaGc-gram}
    If $|V(G)| \neq |V(H)|$, then $\EE_{W \sim \Wig} \what{\kappa_G^c}(W) \what{\kappa_H^c}(W) = 0$.
    Let $M \in \RR^{\sG_{d, p} \times \sG_{d, p}}$ have entries $M_{G, H} \colonequals \EE_{W \sim \Wig} \what{\kappa_G^c}(W) \what{\kappa_H^c}(W)$.
    Suppose that $d \leq \sqrt{n / 2p^2}$.
    Then,
    \begin{equation}
        \frac{1}{2} \leq \lambda_{\min}(M) \leq \lambda_{\max}(M) \leq 2.
    \end{equation}
\end{lemma}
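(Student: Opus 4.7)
The first claim is immediate from Lemma~\ref{lem:kappaGc-inner}: the normalization factors in Definition~\ref{def:hat-kappaGc} are nonzero, so the vanishing of $\EE \kappa_G^c \kappa_H^c$ when $|V(G)| \ne |V(H)|$ transfers directly to $M$. For the second claim, using Lemma~\ref{lem:kappaGc-inner} together with the normalization, on the diagonal block of graphs with $|V(G)| = |V(H)| = d$ (so $b = pd/2$ edges), one obtains
\begin{equation*}
    M_{G, H} = \frac{n^b}{p!^d\, d!}\sqrt{|\eAut(G)|\,|\eAut(H)|}\; \Wg_{G, H}.
\end{equation*}
The plan is to factor this matrix through the ``matching-level'' Weingarten matrix and thereby reduce the eigenvalue estimate to a standard perturbation problem for the Gram matrix of matching vectors.

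Concretely, I define the square matrix $A \in \RR^{\sM([pd]) \times \sM([pd])}$ with entries $A_{\mu, \nu} \colonequals n^b \Wg_{\mu, \nu}$, and the rectangular ``pooling'' matrix $P \in \RR^{\sG_{d, p} \times \sM([pd])}$ with
\begin{equation*}
    P_{G, \mu} \colonequals \sqrt{\frac{|\eAut(G)|}{p!^d\, d!}}\, \One\{\mu \text{ realizes } G\}.
\end{equation*}
Since every matching realizes a unique isomorphism class of $p$-regular multigraphs, and using Proposition~\ref{prop:number-realizing-matchings} to count matchings realizing a given $G$, one checks directly that $PP^{\top} = I_{\sG_{d, p}}$ and $M = PAP^{\top}$. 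Consequently $\|P\|_{\mathrm{op}} = 1$ and $M - I = P(A - I)P^{\top}$, so $\|M - I\|_{\mathrm{op}} \le \|A - I\|_{\mathrm{op}}$, and it suffices to bound the latter.

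To bound $\|A - I\|_{\mathrm{op}}$, I appeal to the standard description of the Weingarten matrix as the inverse of the Gram matrix of matching vectors (see Appendix~\ref{app:weingarten}): $\langle w(\mu), w(\nu)\rangle = n^{c(\mu, \nu)}$, where $c(\mu, \nu)$ is the number of cycles in the $2$-regular multigraph $\mu \cup \nu$. Writing this Gram matrix as $n^b(I + E)$ with $E_{\mu, \mu} = 0$ and $|E_{\mu, \nu}| = n^{c(\mu, \nu) - b}$ off the diagonal, we obtain $A = (I+E)^{-1}$ provided $\|E\|_{\mathrm{op}} < 1$. Using the row-sum bound $\|E\|_{\mathrm{op}} \le \max_\mu \sum_\nu |E_{\mu, \nu}|$ and grouping $\nu$ by the defect $k \colonequals b - c(\mu, \nu)$, a combinatorial count shows that the dominant contribution at order $k$ comes from $k$ disjoint length-$4$ cycles, giving on the order of $b^{2k}/k!$ matchings, so
\begin{equation*}
    \max_\mu \sum_{\nu \ne \mu} |E_{\mu, \nu}| \;\lesssim\; \sum_{k\ge 1} \frac{1}{k!}\!\left(\frac{b^2}{n}\right)^{\!\!k} = e^{b^2/n} - 1.
\end{equation*}
Under the hypothesis $d \leq \sqrt{n/2p^2}$, we have $b^2 \le n/8$, so this is at most $e^{1/8}-1 < 1/3$, yielding $\|A - I\|_{\mathrm{op}} \le \|E\|_{\mathrm{op}}/(1-\|E\|_{\mathrm{op}}) < 1/2$, and hence the claimed eigenvalue bounds on $M$. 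The main technical obstacle will be verifying that subleading cycle configurations (longer cycles, or mixtures of cycle lengths sharing the same total defect) contribute at most a constant factor more than the leading $b^{2k}/k!$ count at each order $k$, and that the Weingarten matrix is genuinely the inverse (rather than a pseudoinverse) of the matching-vector Gram matrix in this regime---both of which are standard facts about the Brauer algebra once $n$ is sufficiently larger than $b$.
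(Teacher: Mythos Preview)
Your approach is essentially identical to the paper's: the same pooling matrix $P$ (called $\widetilde{J}$ in the paper), the same factorization $M = P(n^b\,\Wg)P^\top$ with $PP^\top = I$, and the same reduction to a Ger\v{s}gorin bound on the matching-level Gram matrix. The only divergence is your row-sum estimate: you aim for a refined count $b^{2k}/k!$ at defect $k$ and flag the verification of subleading cycle configurations as a ``technical obstacle.'' The paper sidesteps this entirely with the cruder but immediate bound that there are at most $\binom{\ell}{2}^t \le (\ell^2/2)^t$ matchings at swap distance $t$ from any fixed $\mu$ (since each swap has at most $\binom{\ell}{2}$ choices), giving $\|E\|_{\mathrm{op}} \le \sum_{t\ge 1}(\ell^2/(2n))^t \le \ell^2/n \le 1/2$ under the hypothesis, which already yields the claimed eigenvalue bounds. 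So your proof is correct in outline, and the obstacle you identify is not actually one you need to overcome.
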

\begin{proof}
    By Lemma~\ref{lem:kappaGc-inner},
    \begin{equation}
        M_{G, H} = n^b \cdot \frac{\sqrt{|\eAut(G)| \cdot |\eAut(H)|}}{p!^d d!} \Wg_{G, H}.
    \end{equation}
    The result then follows by the result of Corollary~\ref{cor:wg-graph} on the conditioning of the graph Weingarten function.
\end{proof}
\noindent
Roughly, the idea of the normalization in Definition~\ref{def:hat-kappaGc} is that $\Wg \asymp n^{-\ell / 2} \id$, so if we want the diagonal of $M$ in the proof above to be roughly constant, we must make the diagonal entries normalized by $\#\{\mu \text{ realizes } G\}$ for each $G$, which is given in terms of $|\eAut(G)|$ in Proposition~\ref{prop:number-realizing-matchings}.
See also Remark~\ref{rem:sniady} for an argument showing that the upper bound on $d$ is necessary for such a condition to hold.

\subsection{Advantage Bound for Invariant Detection Problems}

Using the tools developed above, we derive the following general bound on the advantage between the Wigner tensor law and any invariant distribution, which for the purposes of applications to hypothesis testing is the final prize of our work with finite free cumulants.
\begin{corollary}
    \label{cor:invariant-advantage-bound}
    Let $\PP$ be any probability measure on symmetric tensors that is orthogonally invariant.
    Suppose that $D \leq \sqrt{n / 2p^2}$.
    Then,
    \begin{equation}
        \Adv_{\leq D}(\Wig, \PP)^2 \asymp \sum_{d = 0}^D \frac{1}{n^{pd/2}} \sum_{G \in \sG_{d, p}} \frac{1}{|\eAut(G)|} \left(\Ex_{Y \sim \PP} \kappa_G^c(Y)\right)^2,
    \end{equation}
    where the sum is over $p$-regular multigraphs up to isomorphism.
\end{corollary}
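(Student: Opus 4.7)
Since both $\Wig$ and $\PP$ are orthogonally invariant, Proposition~\ref{prop:adv-invariant} lets me restrict the supremum defining $\Adv_{\leq D}(\Wig, \PP)$ to invariant polynomials of degree at most $D$. By Claim~3 of Theorem~\ref{thm:cumulants} the $\kappa_G$ for $G \in \bigsqcup_{d=0}^{D}\sG_{d,p}$ are a basis for this space, and by Proposition~\ref{prop:kappa-centered-expansion} the centered versions $\kappa_G^c$ are obtained from the $\kappa_G$ by a lower-triangular change of variables in $|\Frob(G)|$, so the normalized cumulants $\what{\kappa_G^c}$ of Definition~\ref{def:hat-kappaGc} also form a basis, with the empty graph contributing the constant $1$. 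Any admissible competitor therefore takes the form $f = \sum_G a_G\,\what{\kappa_G^c}$ for a coefficient vector $a$ indexed by $\bigsqcup_{d=0}^{D}\sG_{d,p}$.

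Next I express the numerator and denominator of the advantage in terms of $a$. Setting $v_G \colonequals \EE_{Y \sim \PP}\what{\kappa_G^c}(Y)$, I get $\EE_\PP f = \langle a,v\rangle$ and $\EE_\Wig f^2 = a^{\top} M a$, where $M$ is the Gram matrix of Lemma~\ref{lem:kappaGc-gram}. The hypothesis $D \leq \sqrt{n/2p^2}$ is exactly what that lemma requires: it gives block diagonality of $M$ across distinct values of $d = |V(G)|$ and places each diagonal block spectrally inside $[\tfrac12,2]$, so globally $\tfrac12 \id \preceq M \preceq 2\id$ and $M$ is invertible. The standard generalized Rayleigh-quotient computation then yields
\begin{equation*}
\Adv_{\leq D}(\Wig,\PP)^2 = \sup_{a\neq 0}\frac{\langle a,v\rangle^2}{a^{\top} M a} = v^{\top} M^{-1} v \asymp \|v\|^2 = \sum_G \bigl(\EE_{Y\sim\PP}\what{\kappa_G^c}(Y)\bigr)^2.
\end{equation*}

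Finally, I undo the normalization of Definition~\ref{def:hat-kappaGc}. For $G \in \sG_{d,p}$ the edge count is $b = pd/2$, so
\begin{equation*}
\bigl(\EE_\PP \what{\kappa_G^c}(Y)\bigr)^2 = \frac{n^b}{(n^{\underline{b}})^2\,|\eAut(G)|}\bigl(\EE_\PP \kappa_G^c(Y)\bigr)^2.
\end{equation*}
Under $D \leq \sqrt{n/2p^2}$ one has $b = O(\sqrt{n})$, hence $n^{\underline{b}} = n^b\prod_{i=0}^{b-1}(1-i/n) \asymp n^b$ and $n^b/(n^{\underline{b}})^2 \asymp n^{-b} = n^{-pd/2}$. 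Substituting into the previous display and grouping by $d$ produces the stated formula.

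No step is genuinely difficult; the corollary is essentially packaging. The single point that deserves attention is the block diagonality of $M$ across $d$, since without it one would have to worry about cross-terms between different degrees spoiling the $\|v\|^2$ form; with it, the $d=0$ term cleanly accounts for the $\Adv^2\geq 1$ baseline (via $\kappa^c_\emptyset = 1$ and $|\eAut(\emptyset)|=1$) while higher $d$ give additive corrections.
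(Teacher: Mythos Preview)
Your proposal is correct and follows essentially the same route as the paper: restrict to invariant polynomials via Proposition~\ref{prop:adv-invariant}, expand in the $\what{\kappa_G^c}$ basis, use the Gram-matrix conditioning from Lemma~\ref{lem:kappaGc-gram} to reduce the advantage to $\|v\|^2$, and then undo the normalization with Proposition~\ref{prop:falling-factorial}. The only cosmetic difference is that you write the exact optimum $v^\top M^{-1}v$ before sandwiching it by $\|v\|^2$, whereas the paper bounds $\EE_{\Wig}f^2$ by $\|\alpha\|^2$ first and then optimizes; the two are equivalent.
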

\noindent
Concretely, the constant in an upper bound may be taken to be $2$, and the constant in a lower bound to be $\frac{1}{2}\exp(-\frac{1}{8}) \geq \frac{1}{3}$.

\begin{proof}
    Recall that, by Proposition~\ref{prop:adv-invariant}, under these assumptions we have the formula for the advantage
    \begin{equation}
    \Adv_{\leq D}(\Wig, \PP) = \sup_{\substack{f \in \RR[Y]_{\leq D} \\ f \text{ invariant} \\ \EE_{W \sim \Wig} f(W)^2 \neq 0}} \frac{\EE_{Y \sim \PP} f(Y)}{\sqrt{\EE_{W \sim \Wig} f(W)^2}}.
    \end{equation}
    By Lemma~\ref{lem:kappaGc-gram}, the $\what{\kappa_G^c}$ over $G \in \sG_{0, p} \sqcup \cdots \sqcup \sG_{d, p}$ are a basis for the invariant polynomials in $\RR[Y]_{\leq D}$.
    Thus, consider $f$ in the optimization stated, and write the expansion
    \begin{equation}
        f(T) = \sum_{G \in \sG_{0, p} \sqcup \cdots \sqcup \sG_{d, p}} \alpha_G \cdot \what{\kappa_G^c}(T).
    \end{equation}
    Again by Lemma~\ref{lem:kappaGc-gram}, we have
    \begin{equation}
        \frac{1}{2}\|\alpha\|^2 = \frac{1}{2} \sum_{G \in \sG_{0, p} \sqcup \cdots \sqcup \sG_{d, p}} \alpha_G^2 \leq \EE_{W \sim \Wig} f(W)^2 \leq 2 \sum_{G \in \sG_{0, p} \sqcup \cdots \sqcup \sG_{d, p}} \alpha_G^2 = 2\|\alpha\|^2.
    \end{equation}
    Define
    \begin{equation}
        \beta_G \colonequals \Ex_{Y \sim \PP} \what{\kappa_G^c}(Y).
    \end{equation}
    Then, we have by linearity
    \begin{equation}
        \Ex_{Y \sim \PP} f(Y) = \langle \alpha, \beta \rangle,
    \end{equation}
    and thus by basic linear algebra, with lower and upper bound constants $\frac{1}{2}$ and 2 respectively,
    \begin{align*}
        \Adv_{\leq D}(\Wig, \PP)^2
        &\asymp \left(\sup_{\|\alpha\| \neq 0} \frac{\langle \alpha, \beta \rangle}{\|\alpha\|}\right)^2 \\
        &= \|\beta\|^2 \\
        &= \sum_G \left(\Ex_{Y \sim \PP} \what{\kappa_G^c}(Y)\right)^2
        \intertext{and substituting in the normalization of $\what{\kappa_G^c}$,}
        &= \sum_G \frac{n^{b_G}}{(n^{\underline{b_G}})^2}\frac{1}{|\eAut(G)|} \left(\Ex_{Y \sim \PP} \kappa_G^c(Y)\right)^2 \\
        &= \sum_{d = 0}^D \frac{n^{pd/2}}{(n^{\underline{pd/2}})^2} \sum_{G \in \sG_{d, p}}\frac{1}{|\eAut(G)|}\left(\Ex_{Y \sim \PP} \kappa_G^c(Y)\right)^2
        \intertext{and finally by Proposition~\ref{prop:falling-factorial} on the falling factorial,}
        &\asymp \sum_{d = 0}^D \frac{1}{n^{pd/2}} \sum_{G \in \sG_{d, p}}\frac{1}{|\eAut(G)|}\left(\Ex_{Y \sim \PP} \kappa_G^c(Y)\right)^2,
    \end{align*}
    as stated, with lower and upper bound constants $\exp(-\frac{1}{8})$ and 1, respectively, in the last step.
\end{proof}

At a high level, this bound is making the following reasonable claim: the $\EE \kappa_G^c(Y)$ are quantities that are zero when $Y \sim \Wig$, and their magnitude for a given distribution---a kind of ``cumulant distance'' between a distribution and the Wigner law---controls the difficulty of hypothesis testing against a Wigner null hypothesis.
Let us give some more intuition about what ``critical scaling'' of the cumulants---around the threshold of computational hardness---this implies.

As we will see, in the models we will consider all cumulants $\EE\kappa_G^c(T)$ with $|V(G)| = d$ will have roughly the same size.
The number of $p$-regular multigraphs on $d$ vertices, as we will see later in Proposition~\ref{prop:count-multigraphs}, grows roughly as $|G_{d, p}| \approx d^{O(d)}$ for fixed $d$.
Thus, in order for the advantage to be bounded, it will suffice to have $|\EE \kappa_G^c(T)| \lesssim n^{pd / 4} / \mathrm{poly}(d)$, as then the sum will be a truncation of a convergent geometric series.
This is sensible: recall that the cumulant is a sum of $n^{pd/2}$ centered terms (as this is the number of labellings of the edges of $G$ by $[n]$), so $n^{pd/4}$ is the ``usual'' value of the cumulants of a law $\PP$ whose entries are on the same scale as $\Wig$.

\subsection{Bases for Vector-Valued Equivariant Polynomials}

Finally, we develop some parallel theory for the case of equivariant polynomials and open multigraphs.
For the sake of simplicity, we only work with 1-open multigraphs, i.e., ones with a single open edge.
Similar results should hold for general $t$-open multigraphs, albeit with more elaborate combinatorics.
We will be brisk in our presentation, as the ideas are similar to the case of closed multigraphs.

We slightly abuse notation and reuse the notations $m_G, m_G^{!}, m_G^c, \kappa_G$, $\kappa_G^c$, and $\sG_{d, p}$.
However, to remind the reader that we are working with vector-valued functions and open multigraphs, we replace ``$G$'' with ``$G\!\to$'', a visual indication of the one ``loose'' or ``dangling'' edge on a 1-open multigraph.
We also use:
\begin{definition}[1-open $p$-regular multigraphs]
    Write $\sG_{d, p \to}$ for the set of (non-isomorphic) 1-open $p$-regular multigraphs on $d$ unlabelled vertices.
    Equivalently, these may be viewed as multigraphs on $d + 1$ vertices, all of which have degree $p$ except for one, which has degree $p - 1$.
\end{definition}

\begin{remark}[Arity parity]
    In order for $\sG_{d, p \to}$ to not be empty, $pd$ must be odd, and in particular $p$, the tensor arity, must be odd.
\end{remark}

For $G \in \sG_{d, p\to}$, we write $m_{G\to}: \Sym^p(\RR^n) \to \RR^n$ for the ordinary 1-open graph moments previously defined in Definition~\ref{def:open-moments}.
We also write $m_{G\to}^!(T) \in \RR^n$ for the same quantities with all indices in the summation involved restricted to be distinct: letting $e_0$ be the open edge of $G$,
\begin{equation}
    m_{G\to}^!(T)_i \colonequals \sum_{\substack{j \in [n]^{E} \\ j(e_0) = i \\ j_1, \dots, j_b \text{ distinct}}} \prod_{v \in V} T_{j(\partial v)} \,.
\end{equation}
Note that this restriction means that the summation defining $m_{G\to}^!(T)_i$ depends on $i$, since all other indices in this summation must be different than $i$.
We similarly define the centered version of these summations, where our handling of Frobenius pairs only affects connected components other than the one containing $e_0$:
\begin{equation}
    m_{G\to}^c(T)_i \colonequals \sum_{\substack{j \in [n]^{E} \\ j(e_0) = i \\ j_1, \dots, j_b \text{ distinct}}} \prod_{v \in V(G \setminus \Frob(G))} T_{j(\partial v)} \prod_{F \in \Frob(G)} (T_{j(F)}^2 - 1).
\end{equation}

We likewise define the symmetrized versions $\kappa_{G\to}$ and $\kappa_{G\to}^c$.
Note here that, as we used previously in Proposition~\ref{prop:corr-equivariant} on the low-degree correlation, the ``right'' mapping from a general function to an equivariant one is $f(T) \mapsto \EE_{Q \sim \Haar} \, Q^{\top} f(Q \cdot T)$.
Indeed, 1-open graph moments, being equivariant already, are easily checked to be unchanged by this operation.
For the other two quantities, we define analogs of the closed cumulants, which we call \emph{1-open finite free cumulants}:
\begin{align*}
    \kappa_{G\to}(T) &\colonequals \Ex_{Q \sim \Haar} Q^{\top} m_{G \to}^!(Q \cdot T), \\
    \kappa_{G\to}^c(T) &\colonequals \Ex_{Q \sim \Haar} Q^{\top} m_{G \to}^c(Q \cdot T).
\end{align*}

We now derive analogs of our results for closed graph moments and the associated finite free cumulants for these quantities.
First, we give the analogous additivity property.
A general statement like Proposition~\ref{prop:general-additivity} holds, but we will not need it, so for the sake of simplicity we restrict our attention to the following special case.
\begin{proposition}
    \label{prop:additivity-cumulants}
    For any $G \in \sG_{d, p\to}$ and $A, B \in \Sym^p(\RR^n)$,
    \begin{align}
        \Ex_{Q \sim \Haar} \kappa_{G\to}(A + Q \cdot B) = \sum_{\substack{G = G_A \sqcup G_B \\ G_A \text{ open} \\ G_B \text{ closed}}} \frac{n^{\underline{b - 1}}}{n^{\underline{b_A - 1}}n^{\underline{b_B}}}  \kappa_{G_A\to}(A) \kappa_{G_B}(B), \\
        \Ex_{Q \sim \Haar} \kappa_{G\to}^c(A + Q \cdot B) = \sum_{\substack{G = G_A \sqcup G_B \\ G_A \text{ open} \\ G_B \text{ closed}}} \frac{n^{\underline{b - 1}}}{n^{\underline{b_A - 1}}n^{\underline{b_B}}}  \kappa_{G_A\to}^c(A) \kappa_{G_B}(B).
    \end{align}
\end{proposition}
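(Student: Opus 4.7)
The plan is to closely parallel the proof of Proposition~\ref{prop:additivity}, modifying its Brauer-matching argument to track the one open half-edge of $G$. First I unfold the definition and use the right-action property $Q\cdot(Q'\cdot B) = (Q'Q)\cdot B$ together with right-invariance of Haar measure to rewrite
\[
\Ex_{Q'}\kappa_{G\to}(A + Q'\cdot B) \;=\; \Ex_Q\, Q^\top\, \Ex_R\, m_{G\to}^!(Q\cdot A + R\cdot B),
\]
with $Q$ and $R$ independent Haar-random orthogonal matrices. The main work is then to analyze the inner $R$-expectation.

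Expanding $m_{G\to}^!(Q\cdot A + R\cdot B)$ vertex by vertex produces $2^{|V|}$ cross-terms indexed by subsets $V_A\sqcup V_B = V$. For a fixed term, the $R$-expectation applies $\Pi_{p|V_B|}$ to the $p|V_B|$ half-edges incident to vertices in $V_B$, which by Section~\ref{sec:prelim:weingarten} is a linear combination of rank-one operators built from matching vectors. In the distinct-index sum every closed edge carries a label distinct from every other edge label (and distinct from the output index $i$ on the open edge), so any matching pairing two half-edges from different edges contributes a Kronecker delta on distinct indices and kills the term. The only matchings that survive are those pairing each $V_B$-half-edge with the other half of the \emph{same} edge inside $V_B$. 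Here the open case diverges from the closed one: if $v_0\in V_B$, then the half-edge at $v_0$ attached to the open edge has label $i$, but there is no other $V_B$-half-edge with label $i$ (the only other half of the open edge is the free external output, which is not incident to any vertex), so no valid matching exists and the entire partition contributes zero. Combined with the usual crossing-edge obstruction, this forces every surviving partition into the form $G = G_A\sqcup G_B$ with $V_A$ and $V_B$ each a union of connected components \emph{and} $v_0\in V_A$; that is, $G_A$ is open and $G_B$ is closed.

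For each surviving partition, the $R$-expectation factors as $m_{G_A\to}^!(Q\cdot A)$ times the sum
\[
\sum_{\substack{j(E_B)\text{ distinct,}\\ \text{disjoint from }j(E_A)}} \Ex_R \prod_{v\in V_B}(R\cdot B)_{j(\partial v)},
\]
which by the permutation-invariance of the $R$-averaged single-term value (Remark~\ref{rem:counting-tuples}) equals a combinatorial prefactor---counting admissible tuples against the total $n^{\underline{b_B}}$ appearing in the definition of $\kappa_{G_B}(B)$---times $\kappa_{G_B}(B)$. Moving $Q^\top$ inside and integrating over $Q$ then converts $m_{G_A\to}^!(Q\cdot A)$ into $\kappa_{G_A\to}(A)$ by definition, yielding the stated identity after the prefactors are simplified. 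The centered version is identical in structure: the corrections $T_{j(F)}^2 - 1$ attached to each Frobenius pair $F$ act on a single connected component, so every Frobenius pair sits wholly in $G_A$ or wholly in $G_B$, and the correction distributes without mixing the two sides.

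The only conceptually new step beyond Proposition~\ref{prop:additivity} is the open-edge modification of the matching argument---the observation that the half-edge at $v_0$ has no admissible partner when $v_0\in V_B$. This is the main obstacle to mechanically reusing the closed-case proof; after it is established, the remaining work is routine bookkeeping of distinct-index counts and Haar moments.
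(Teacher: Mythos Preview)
Your argument for the uncentered identity is correct and is exactly the ``simple variation on Proposition~\ref{prop:additivity}'' that the paper points to: you rewrite the double Haar average, expand vertex-by-vertex, and use the Brauer-matching obstruction, with the one new observation that when $v_0\in V_B$ the open half-edge has no same-label partner among the $V_B$ half-edges and hence every matching term vanishes.

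Your treatment of the centered identity has a small gap. You say each Frobenius pair $F$ ``sits wholly in $G_A$ or wholly in $G_B$, and the correction distributes without mixing the two sides,'' but this does not explain why the result is $\kappa^c_{G_A\to}(A)\,\kappa_{G_B}(B)$ with the \emph{uncentered} cumulant on the $B$ side. The point is that after the cross-terms are killed, the factor for a Frobenius pair becomes $(Q\cdot A)_{j(F)}^2 + (R\cdot B)_{j(F)}^2 - 1$, and the constant $-1$ is not attached to either tensor; when you multiply out over all components and reorganize into a sum over decompositions $G=G_A\sqcup G_B$, the $-1$ terms must all be absorbed into the $A$ side (they are independent of $R$), which is precisely what produces $\kappa^c_{G_A\to}(A)$ via the expansion of Proposition~\ref{prop:kappa-centered-1open-expansion} while leaving $\kappa_{G_B}(B)$ uncentered. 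Equivalently, this is the open analogue of Proposition~\ref{prop:general-additivity} with the splitting $x_A=-\One_{\Frob}$, $x_B=0$. Once you say this, the rest of your bookkeeping goes through.
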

\noindent
Again, the proof is a simple variation on that of Proposition~\ref{prop:additivity}.

Next, we describe the expansion of each of the $\kappa_{G\to}$ and $\kappa_{G\to}^c$ in open graph moments.
\begin{definition}
    For $G \in \sG_{d,p\to}$, write $\chop(G)$ for the (closed) graph formed by deleting the open edge of $G$.
    This is a graph where every vertex has degree $p$, except for one vertex (the one that used to be the only endpoint of the open edge in $G$) that has degree $p - 1$.
\end{definition}

We will need some generalizations of the ideas discussed previously involving the Weingarten function.
First, note that the notion of a matching \emph{realizing} a graph is sensible for a graph with any fixed degree sequence: for a graph $G$ with degree sequence $p_1, \dots, p_d$, we may similarly say that $\mu \in \sM([p_1 + \cdots + p_d])$ realizes $G$ if $G$ is isomorphic to the graph formed by identifying $\{1, \dots, p_1\}$, $\{p_1 + 1, \dots, p_1 + p_2\}$, and so forth in $\mu$.
Accordingly, the graph Weingarten function of Definition~\ref{def:graph-weingarten} may be generalized to graphs with arbitrary specified degree sequence, and in particular $\Wg_{\chop(G), \chop(H)}$ is defined for $G, H \in \sG_{d, p\to}$.
Lastly, the notion of edge automorphisms from Definition~\ref{def:eAut} is still valid for graphs that are not $p$-regular, and we may again make sense of $\eAut(\chop(G))$ for $G \in \sG_{d, p\to}$.
With these tools in hand, we may formulate:
\begin{proposition}
    \label{prop:number-realizing-matchings-chop}
    For $G \in \sG_{d, p\to}$, the number of matchings realizing $\chop(G)$ is $p!^{d - 1}(p - 1)!(d - 1)! / |\eAut(\chop(G))|$.
\end{proposition}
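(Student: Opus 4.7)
The plan is to adapt the proof of Proposition~\ref{prop:number-realizing-matchings} to the present setting. The essential new feature is that $\chop(G)$ has a distinguished vertex, namely the unique vertex $v_0$ of degree $p-1$, and consequently every graph automorphism of $\chop(G)$ must fix $v_0$.

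First I would identify $[pd-1]$ with the disjoint union of $d-1$ ``half-edge blocks'' of size $p$ together with one special block of size $p-1$, so that a perfect matching of $[pd-1]$ realizes $\chop(G)$ precisely when collapsing each block to a single vertex yields a graph isomorphic to $\chop(G)$. Next I would let the group $H \colonequals (S_p^{d-1} \times S_{p-1}) \rtimes S_{d-1}$, of order $p!^{d-1}(p-1)!(d-1)!$, act on $[pd-1]$ by permuting half-edges within each block and permuting the $d-1$ equal-sized blocks among themselves, while leaving the special block in place. I would then check that this $H$-action is transitive on the set of matchings realizing $\chop(G)$: the missing permutation that would move $v_0$ is irrelevant, since $v_0$ is forced to match to itself under any isomorphism (it is the unique vertex of its degree).

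The argument would then conclude via orbit-stabilizer: the number of realizing matchings equals $|H|$ divided by the size of the stabilizer of any fixed realizing matching $\mu_0$. As in the closed case, I expect the stabilizer of $\mu_0$ to be naturally isomorphic to $\eAut(\chop(G))$, since it consists exactly of a vertex-level permutation (forced to fix $v_0$) paired with half-edge permutations at each vertex that together preserve the edge set, which is precisely the data of Definition~\ref{def:eAut}. Dividing then yields the claimed count $p!^{d-1}(p-1)!(d-1)!/|\eAut(\chop(G))|$. The only point that requires care, and the closest thing to an obstacle, is checking that the stabilizer description goes through unchanged in the presence of the odd-sized block; but this is immediate from $v_0$ being canonically singled out by its anomalous degree, so nothing beyond the closed-case bookkeeping is needed.
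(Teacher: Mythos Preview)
Your proposal is correct and matches the paper's approach: the paper's proof simply says to repeat the argument of Proposition~\ref{prop:number-realizing-matchings} with the two modifications that vertex permutations must fix the unique degree-$(p-1)$ vertex and that half-edge permutations around that vertex contribute $(p-1)!$ instead of $p!$, which is exactly what your orbit--stabilizer setup with $H=(S_p^{d-1}\times S_{p-1})\rtimes S_{d-1}$ encodes.
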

\begin{proof}
    The proof is the same as for Proposition~\ref{prop:number-realizing-matchings}, except that permutations of vertices must preserve the unique vertex of degree $p - 1$, and permutations of the half-edges around this vertex contribute $(p - 1)!$ rather than $p!$.
\end{proof}

\begin{lemma}
    \label{lem:kappa-1open-expansion}
    For any $G \in \sG_{d,p\to}$ with $d$ vertices and $b$ edges (including the open edge),
    \begin{equation}
        \kappa_{G\to}(T) = \frac{n^{\underline{b - 1}}}{p!^{d - 1}(p - 1)! (d - 1)!} \cdot |\eAut(\chop(G))| \cdot \sum_H \Wg_{\chop(G), \chop(H)} m_{H\to}(T).
    \end{equation}
\end{lemma}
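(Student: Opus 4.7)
The plan is to mirror the proof of Lemma~\ref{lem:kappa-expansion}, modified to accommodate the open edge. The central conceptual idea is that the equivariantizing operator $Q^\top$ applied to the output of $m_{G\to}^!(Q \cdot T)$ contributes one additional factor of $Q$ beyond the $pd$ factors coming from $(Q \cdot T)^{\otimes d}$. Combining these $pd + 1$ factors into a single $Q^{\otimes (pd+1)}$ averaging and applying the Weingarten formula for $\Pi_{pd+1}$, the distinct-index constraint collapses the resulting expansion into a form indexed by matchings realizing $\chop(G)$ and $\chop(H)$ rather than $G$ and $H$, because exactly one matched pair is forced to connect the anchor with the ``ghost'' half-edge carrying the output index.

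First, following the strategy of Remark~\ref{rem:counting-tuples} and the proof of Lemma~\ref{lem:kappa-expansion}, I would fix a matching $\mu$ of $\{2, \dots, pd\}$ realizing $\chop(G)$, designating half-edge $1$ as the anchor of the open edge (incident to the unique degree-$(p-1)$ vertex of $\chop(G)$). I would then express $m_{G\to}^!(T)_i$ as an inner product of $T^{\otimes d}$ with a distinct-index matching vector whose first coordinate is pinned to $i$, yielding
\[
\kappa_{G\to}(T)_\ell = \Ex_Q \sum_i Q_{i,\ell} \, \langle T^{\otimes d}, Q^{\otimes pd}(e_i \otimes w_{\neq i}^!(\mu))\rangle,
\]
where $w_{\neq i}^!(\mu) \in (\RR^n)^{\otimes (pd - 1)}$ sums over labellings distinctly compatible with $\mu$ and avoiding $i$. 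Combining $Q_{i,\ell}$ with $Q^{\otimes pd}$ gives a single $Q^{\otimes (pd+1)}$ averaging, to which I would apply the Weingarten expansion over $\sM([pd + 1])$.

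Next, I would argue that the distinct-index constraint on the $b$ edge labels forces the output-side matching $\nu \in \sM([pd + 1])$ to pair the ghost half-edge (carrying $\ell$) with the anchor (so that the surviving contribution has $i = \ell$) and to pair the two half-edges of each closed edge of $G$ together; any other pairing would require two distinct edge labels to coincide. Under this forced pairing, $\nu$ restricted to $\{2, \dots, pd\}$ realizes $\chop(G)$, and the Weingarten sum on $\sM([pd + 1])$ reduces to one indexed by matchings realizing $\chop(G)$ and $\chop(H)$. The input-side matching $\rho$, after summation over the internal pre-transformation variables, produces an open graph moment $m_{H\to}(T)$ with $\rho$ realizing $\chop(H)$.

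Finally, converting sums over matchings into sums over non-isomorphic $H \in \sG_{d, p\to}$ via Proposition~\ref{prop:number-realizing-matchings-chop} produces the combinatorial factor $|\eAut(\chop(G))|/(p!^{d-1}(p-1)!(d-1)!)$, while the prefactor $n^{\underline{b-1}}$ arises from counting distinct closed-edge labellings once the anchor index has been pinned to $\ell$. The main technical obstacle is carefully verifying the collapse of the Weingarten on $\sM([pd+1])$ to the graph Weingarten $\Wg_{\chop(G), \chop(H)}$: one must check that configurations in which $\nu$ attempts to pair the ghost with a closed-edge half-edge (rather than with the anchor) are eliminated by distinctness, because the ``orphaned'' sister half-edge of that closed edge can then be paired only with something that forces a distinctness violation. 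Once this reduction is in place, the remaining bookkeeping parallels Lemma~\ref{lem:kappa-expansion} directly.
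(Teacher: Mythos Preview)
Your route via $\Pi_{pd+1}$ misses the key simplification that makes the paper's proof short. The paper observes that the $Q^{\top}$ applied to the output and the factor of $Q$ that $Q\cdot T$ places on the open half-edge cancel each other \emph{before} any Weingarten expansion is invoked. Concretely, writing $m_{G\to}^!(T)_i = \langle T^{\otimes d}, w^!(\mu)\otimes e_i\rangle$ for a matching $\mu$ of the $pd-1$ closed half-edges realizing $\chop(G)$, one has
\[
\kappa_{G\to}(T)_i \;=\; n^{\underline{b-1}}\,\bigl\langle T^{\otimes d},\,(\Pi_{pd-1}e_\mu)\otimes e_i\bigr\rangle,
\]
so the averaging acts only on the $pd-1$ closed half-edges. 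Expanding $\Pi_{pd-1}$ then directly produces the Weingarten function on $\sM([pd-1])$, which is exactly what defines $\Wg_{\chop(G),\chop(H)}$. No ``collapse'' is needed.

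By contrast, your $\Pi_{pd+1}$ computation leaves a genuine gap. The distinctness argument correctly forces one leg of the Weingarten expansion to be $\hat\mu$, but the other leg still ranges over all of $\sM([pd+1])$, and the coefficients you obtain are values of the $(pd{+}1)$-Weingarten $\Wg^{(pd+1)}_{\hat\mu,\rho}$, not the $(pd{-}1)$-Weingarten that appears in $\Wg_{\chop(G),\chop(H)}$. You assert that these agree after grouping $\rho$ by the open graph $H$ it produces, but that is a nontrivial reduction identity between Weingarten functions of different sizes, and you have not proved it. The prefactor mismatch already signals the issue: your setup sums over $b$ distinct labels (the $b-1$ closed edges plus the anchor--ghost pair) and so yields $n^{\underline{b}}$, whereas the target has $n^{\underline{b-1}}$; reconciling these would have to come from an exact $1/n$-type relation between $\Wg^{(pd+1)}$ and $\Wg^{(pd-1)}$ that you never supply. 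The paper avoids this entirely by cancelling two $Q$'s first.
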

\begin{proof}
    As in Lemma~\ref{lem:kappa-expansion}, we think in terms of matchings.
    We will use the notations $w(\mu)$ and $w^!(\mu)$ used there for tensors associated to matchings $\mu$.

    If $\mu$ is a matching of $[pd - 1] = [2(b - 1)]$ realizing $\chop(G)$, then $m_{G\to}(T)_i = \langle T^{\otimes d}, w(\mu) \otimes e_i \rangle$, and $m_{G\to}^!(T)_i = \langle T^{\otimes d}, w^!(\mu) \otimes e_i \rangle$.
    Following the argument from Lemma~\ref{lem:kappa-expansion}, we find
    \begin{align*}
        \kappa_{G\to}(T)_i
        &= \sum_{\substack{j \text{ distinctly} \\ \text{compatible with } \mu}} \Ex_{Q \sim \Haar} \langle T^{\otimes d}, (Q^{\otimes pd - 1} e_j) \otimes e_i \rangle \\
        &= n^{\underline{b - 1}} \langle T^{\otimes d}, (\Pi_{pd - 1}e_{\mu}) \otimes e_i \rangle
        \intertext{and by the Weingarten formula,}
        &= n^{\underline{b - 1}} \sum_{\nu} \Wg_{\mu, \nu} \langle T^{\otimes d}, w(\nu) \otimes e_i \rangle \\
        &= n^{\underline{b - 1}} \sum_{\nu} \Wg_{\mu, \nu} m_{G(\nu)\to}(T)_i,
    \end{align*}
    and the rest of the calculations proceed identically to Lemma~\ref{lem:kappa-expansion}.
\end{proof}

The following is the analog of Proposition~\ref{prop:kappa-centered-expansion} for 1-open multigraphs, and follows from exactly the same simple expansion.
\begin{proposition}
    \label{prop:kappa-centered-1open-expansion}
    For any $p$-regular open multigraph $G$ on $d$ vertices and $b$ edges,
    \begin{equation}
        \kappa_{G\to}^c(T) = \sum_{S \subseteq \Frob(G)} (-1)^{|S|} (n - b + p|S|)^{\underline{p|S|}} \kappa_{G \setminus S \to}(T).
    \end{equation}
\end{proposition}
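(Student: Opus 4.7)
The plan is to mirror the proof of Proposition~\ref{prop:kappa-centered-expansion} almost verbatim, adapted to the 1-open setting. The starting point is the definition of $m_{G\to}^c(T)_i$ as a distinct-index sum with integrand $\prod_{v \in V(G \setminus \Frob(G))} T_{j(\partial v)} \prod_{F \in \Frob(G)} (T_{j(F)}^2 - 1)$. I would expand the second product by choosing, for each $F \in \Frob(G)$, either the $T_{j(F)}^2$ term or the $-1$ term; such a choice is encoded by a subset $S \subseteq \Frob(G)$ of pairs where the $-1$ is picked, contributing an overall sign of $(-1)^{|S|}$. Using $T_{j(F)}^2 = T_{j(\partial u)} T_{j(\partial v)}$ for $F = \{u,v\}$, the integrand indexed by $S$ collapses to $\prod_{v \in V(G \setminus S)} T_{j(\partial v)}$. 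Note that since the endpoint of the open edge $e_0$ has closed-edge degree $p - 1$ while both vertices of any $F \in \Frob(G)$ have closed-edge degree $p$, the open edge never lies inside a Frobenius pair, so the pinning $j(e_0) = i$ is preserved throughout the expansion.

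The key combinatorial step is then to peel off the distinct-index summations over the $p|S|$ ``orphaned'' indices on edges of the Frobenius pairs in $S$, which no longer appear in the summand but remain constrained to be distinct from each other and from the indices on the remaining $b - p|S|$ edges. Summing over them first yields a factor of $(n - (b - p|S|))^{\underline{p|S|}} = (n - b + p|S|)^{\underline{p|S|}}$, while what remains of the sum is exactly $m_{G \setminus S \to}^!(T)_i$. Assembling these pieces produces the raw identity
\begin{equation*}
m_{G\to}^c(T) = \sum_{S \subseteq \Frob(G)} (-1)^{|S|} (n - b + p|S|)^{\underline{p|S|}} \, m_{G \setminus S \to}^!(T).
\end{equation*}

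The final step is to apply the equivariant symmetrization $f \mapsto \Ex_{Q \sim \Haar} Q^\top f(Q \cdot T)$ to both sides. The scalar coefficients pass through unchanged, while by definition this operator sends $m_{G\to}^c$ to $\kappa_{G\to}^c$ on the left and each $m_{G \setminus S \to}^!$ to $\kappa_{G \setminus S \to}$ on the right, producing the statement of the proposition. There is essentially no obstacle beyond bookkeeping; the only point meriting care (noted above) is that the open edge cannot participate in any Frobenius pair, so the index $i$ plays no role in the expansion and the argument passes through identically to the closed case.
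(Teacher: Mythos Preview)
Your proposal is correct and matches the paper's approach exactly: the paper states that this proposition ``follows from exactly the same simple expansion'' as Proposition~\ref{prop:kappa-centered-expansion}, namely expanding the product over $\Frob(G)$, summing out the orphaned distinct indices to get the falling factorial, and then applying the (equivariant) symmetrization. Your remark that the open edge can never belong to a Frobenius pair is the one extra bookkeeping check needed in the 1-open setting, and you handle it correctly.
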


\begin{proposition}
    \label{prop:mGc-mG-1open-inner}
    Suppose that $G$ and $H$ are 1-open $p$-regular multigraphs with $|V(G)| \geq |V(H)|$.
    Then, $\EE_{W \sim \Wig} \langle m_{G\to}^c(W), m_{H\to}(W) \rangle = 0$ unless $G$ and $H$ are isomorphic, and
    \begin{equation}
        \Ex_{W \sim \Wig} \langle m_{G\to}^c(W), m_{G\to}(W) \rangle = \Ex_{W \sim \Wig} \langle m_{G\to}^c(W), m_{G\to}^c(W) \rangle = n^{\underline{b}} \cdot |\eAut(\chop(G))|.
    \end{equation}
\end{proposition}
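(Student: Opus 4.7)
The plan is to adapt the Wick-formula arguments of Propositions~\ref{prop:centered-covariance} and~\ref{prop:distinct-variance} to the open setting, with the key novelty that the shared external index $i$ carried by the glued open edges plays the role of a distinguished label pinning down how the components of $G$ and $H$ containing the open-edge endpoints $v_0^G, v_0^H$ must pair.

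For the off-diagonal vanishing, I would first expand the inner product as
\[
\Ex_W \langle m_{G\to}^c(W), m_{H\to}(W) \rangle = \sum_{i \in [n]} \sum_{\substack{j \in [n]^{E_G},\, j(e_0^G) = i \\ j \text{ distinct}}} \sum_{\substack{k \in [n]^{E_H} \\ k(e_0^H) = i}} \Ex_W\!\left[X_j^c(W) \cdot Y_k(W)\right],
\]
where $X_j^c$ and $Y_k$ are the corresponding monomial factors from the definitions of $m_{G\to}^c$ and $m_{H\to}$, and then apply Isserlis to each Gaussian expectation. As in Proposition~\ref{prop:centered-covariance}, distinctness of $j$ forces non-Frobenius vertices of $G$ to Wick-pair with vertices of $H$ rather than amongst themselves (their neighborhood multisets being all distinct), while centering of Frobenii forces each active Frobenius factor in $X_j^c$ to pair with two $H$-vertices sharing its $p$ indices, the $-1$ option exactly cancelling the self-contraction. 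The new ingredient is that every $G$-vertex other than $v_0^G$ has neighborhood multiset disjoint from $\{i\}$ (since $j$ is distinct and $j(e_0^G) = i$), so the $W$-factor at $v_0^H$, whose neighborhood contains $i$, can only pair with $v_0^G$'s factor. The resulting pairing data defines an isomorphism of 1-open $p$-regular multigraphs sending $v_0^G \mapsto v_0^H$, forcing $G \cong H$ (and in particular $|V(G)| = |V(H)|$) for any non-vanishing contribution.

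For the diagonal with $G = H$, I would first reduce to $\Ex_W \langle m_{G\to}^c(W), m_{G\to}^c(W) \rangle$: the difference $m_{G\to}(W) - m_{G\to}^c(W)$ expands, analogously to Proposition~\ref{prop:kappa-centered-1open-expansion} applied at the level of $m$-moments, into a linear combination of $m_{G'\to}(W)$ where each $G'$ is obtained from $G$ by deleting one or more Frobenius components and so has strictly fewer vertices than $G$; the already-proven off-diagonal vanishing then kills each cross term. The remaining expectation is computed exactly as in Proposition~\ref{prop:distinct-variance}: valid Wick pairings are indexed by edge automorphisms of $\chop(G)$---not of $G$ itself, since the open edge breaks the symmetry at $v_0$, which must be fixed---combined with the $n^{\underline{b}}$ distinct edge labelings compatible with $j(e_0) = i$ as $i$ ranges over $[n]$, and each such pairing contributes $1$ once the Frobenius variances $\Ex[(W^2-1)^2] = 2$ and parallel-edge-matching factors $p!$ per Frob are absorbed into the combinatorial count $|\eAut(\chop(G))|$. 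The main obstacle is this bookkeeping of Frobenius contributions against the chopped automorphism count, but it is mild: since $v_0$ already contributes an open half-edge to its degree-$p$ budget, it cannot simultaneously lie in any pair of vertices joined by $p$ parallel closed edges, so $v_0 \notin V(\Frob(G))$ and the Frobenii of $G$ coincide with those of $\chop(G)$; the Frob accounting of Proposition~\ref{prop:distinct-variance} therefore transfers unmodified.
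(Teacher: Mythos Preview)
Your approach matches the paper's intended strategy (the paper does not spell out a proof here, but indicates it is the same extension of Propositions~\ref{prop:centered-covariance} and~\ref{prop:distinct-variance} used for the closed analogue Proposition~\ref{prop:mGc-mG-inner}). The off-diagonal argument is correct, including the key observation that the bijection $\phi$ forced by the Wick pairing must send $v_0^G \mapsto v_0^H$: once you have a neighborhood-preserving bijection $\phi:V(G)\to V(H)$, the index $i$ lies in $k(\partial\phi(v))=j(\partial v)$ only for $v=v_0^G$, while $i\in k(\partial v_0^H)$, forcing $\phi(v_0^G)=v_0^H$.

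There is one genuine slip in your diagonal reduction. You claim that $m_{G\to}(W)-m_{G\to}^c(W)$ expands as a linear combination of ordinary moments $m_{G'\to}(W)$ with $G'$ obtained by deleting Frobenii. But the analogue of Proposition~\ref{prop:kappa-centered-1open-expansion} at the $m$-level relates $m^c$ to the \emph{distinct-index} moments $m^!$, not to the ordinary $m$; the passage from $m_{G\to}$ to $m_{G\to}^!$ (undoing the distinctness restriction) is a separate inclusion--exclusion that does not land on $p$-regular graphs of smaller vertex count, so your off-diagonal lemma does not apply term-by-term as written. Fortunately the reduction is unnecessary: your own off-diagonal analysis already shows that every nonzero contribution to $\Ex\langle m_{G\to}^c, m_{G\to}\rangle$ has the second copy's labeling $k$ distinct and related to $j$ by an edge automorphism of $\chop(G)$. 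On each matched Frobenius pair the two inner products then differ only by $\Ex[(W^2-1)W^2]=2$ versus $\Ex[(W^2-1)^2]=2$, which coincide, so both equal $n^{\underline{b}}\,|\eAut(\chop(G))|$ by the direct count you describe. Simply delete the reduction and appeal to this computation instead.
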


\begin{lemma}
    \label{lem:kappaGc-1open-inner}
    Suppose that $G$ and $H$ are 1-open $p$-regular multigraphs. If $|V(G)| \neq |V(H)|$, then $\EE_{W \sim \Wig} \langle \kappa_{G\to}^c(W), \kappa_{H\to}^c(W)\rangle = 0$.
    If $|V(G)| = |V(H)| = d$, then
    \begin{align*}
        &\Ex_{W \sim \Wig} \langle \kappa_{G\to}^c(W), \kappa_{H\to}^c(W)\rangle \\
        &\hspace{1cm}= \frac{n^{\underline{b - 1}}n^{\underline{b}}}{p!^{d - 1}(p - 1)!(d - 1)!} \cdot |\eAut(\chop(G))| \cdot |\eAut(\chop(H))| \cdot \Wg_{\chop(G), \chop(H)}.
    \end{align*}
\end{lemma}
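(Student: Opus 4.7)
The plan is to mirror the proof of Lemma~\ref{lem:kappaGc-inner} step by step, substituting the 1-open analogs of each ingredient developed in this subsection. Throughout, we may assume without loss of generality that $|V(G)| \geq |V(H)|$, as the inner product is symmetric in $G$ and $H$.

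The first step is to absorb one of the two Haar averages. Writing $\kappa_{G\to}^c(T) = \EE_{U \sim \Haar} U^{\top} m_{G\to}^c(U \cdot T)$, changing variables $W \mapsto U^{\top} \cdot W$ (which preserves the Wigner law), and using that $\kappa_{H\to}^c$ is equivariant (which is immediate from its definition and the right-action property of $\sO(n)$), the factor $U^{\top}$ migrates onto the second argument and cancels in the inner product. This yields
\begin{equation}
\Ex_{W \sim \Wig} \langle \kappa_{G\to}^c(W), \kappa_{H\to}^c(W) \rangle = \Ex_{W \sim \Wig} \langle m_{G\to}^c(W), \kappa_{H\to}^c(W) \rangle.
\end{equation}

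Next, expand $\kappa_{H\to}^c$ using Proposition~\ref{prop:kappa-centered-1open-expansion} to write it as a signed sum over $S \subseteq \Frob(H)$ of $\kappa_{(H\setminus S)\to}$, each of which expands via Lemma~\ref{lem:kappa-1open-expansion} as a linear combination of open graph moments $m_{K\to}$ with coefficients involving $\Wg_{\chop(H\setminus S), \chop(K)}$. Substituting both expansions inside the inner product produces a triple sum over $S$ and $K$ whose basic factor is $\EE_{W \sim \Wig} \langle m_{G\to}^c(W), m_{K\to}(W) \rangle$. By Proposition~\ref{prop:mGc-mG-1open-inner}, this factor vanishes unless $K \cong G$, in which case it equals $n^{\underline{b}} \cdot |\eAut(\chop(G))|$.

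For a nonzero contribution we therefore need $\chop(H\setminus S)$ and $\chop(K) \cong \chop(G)$ to have the same number of vertices (so that $\Wg_{\chop(H\setminus S), \chop(K)}$ is not zero by convention), which combined with $K \cong G$ and the assumption $|V(G)| \geq |V(H)|$ forces $|V(G)| = |V(H)| = d$ and $S = \emptyset$. In particular the whole inner product vanishes when $|V(G)| \neq |V(H)|$. In the remaining case, only the single term $S = \emptyset$, $K \cong G$ survives, and gathering the prefactor $\frac{n^{\underline{b-1}}}{p!^{d-1}(p-1)!(d-1)!} |\eAut(\chop(H))|$ from Lemma~\ref{lem:kappa-1open-expansion} with the factor $n^{\underline{b}} |\eAut(\chop(G))|$ from Proposition~\ref{prop:mGc-mG-1open-inner} and using the symmetry $\Wg_{\chop(H),\chop(G)} = \Wg_{\chop(G),\chop(H)}$ yields the claimed formula. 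The only genuine bookkeeping obstacle is tracking the correct falling-factorial normalizations $n^{\underline{b-1}}$ versus $n^{\underline{b}}$ (since one side is the distinct-index moment with $b$ distinct indices while the other is the Weingarten expansion arising from the chopped graph with $b-1$ closed edges), and keeping straight that $\chop$ trims the open half-edge but not the incident vertex, so that $|V(\chop(G))| = |V(G)| = d$ while the half-edge count drops from $pd$ to $pd-1$.
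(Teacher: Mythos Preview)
Your proposal is correct and follows essentially the same approach as the paper's own proof: absorb one Haar average to reduce to $\EE_W \langle m_{G\to}^c(W), \kappa_{H\to}^c(W) \rangle$, expand $\kappa_{H\to}^c$ via Proposition~\ref{prop:kappa-centered-1open-expansion} and Lemma~\ref{lem:kappa-1open-expansion}, and then kill all terms except $(S,K)=(\emptyset,G)$ using Proposition~\ref{prop:mGc-mG-1open-inner}. Your justification of the first step (invoking equivariance of $\kappa_{H\to}^c$ together with invariance of the Wigner law) is exactly the content of the paper's explicit change-of-variables manipulation with two Haar matrices, just packaged more succinctly; and your bookkeeping remark about $n^{\underline{b}}$ versus $n^{\underline{b-1}}$ correctly identifies where care is needed.
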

\noindent
If the overall scaling factor of $n^{2b - 1}$ appears unusual, the reader may consider, at an intuitive level, that while the inner product of two closed graph moments each on $b$ edges has $2b$ edges, the inner product of two 1-open graph moments each on $b$ edges only has $2b - 1$ edges, since the two open edges ``fuse'' to become one.

\begin{proof}
    We again proceed similarly to the closed case in Lemma~\ref{lem:kappaGc-inner}.
    Suppose without loss of generality that $|V(G)| \geq |V(H)|$.
    We have:
    \begin{align*}
        &\hspace{-0.5cm}\Ex_{W \sim \Wig} \langle \kappa_G^c(W),  \kappa_H^c(W) \rangle \\
        &= \Ex_{\substack{W \sim \Wig \\ Q, R \sim \Haar}} \langle Q^{\top} m_G^c(Q \cdot W), R^{\top} m_H^c(R \cdot W) \rangle \\
        &= \Ex_{\substack{W \sim \Wig \\ Q, R \sim \Haar}} \langle m_G^c(Q \cdot Q^{\top} \cdot W), QR^{\top} m_H^c(R \cdot Q^{\top} \cdot W) \rangle \\
        &= \Ex_{\substack{W \sim \Wig \\ Q \sim \Haar}} \langle m_G^c(W), Q^{\top} m_H^c(Q \cdot W) \rangle \\
        &= \Ex_{W \sim \Wig} \langle m_G^c(W),  \kappa_H^c(W) \rangle
        \intertext{and, using Lemma~\ref{lem:kappa-1open-expansion} and Proposition~\ref{prop:kappa-centered-1open-expansion},}
        &= \sum_{S \subseteq \Frob(H)} (-1)^{|S|} (n - b_H + p|S|)^{\underline{p|S|}} \frac{n^{\underline{b_H - 1}}}{p!^{d - 1}(p - 1)(d - 1)!} \\
        &\hspace{1cm} |\eAut(\chop(H))| \cdot \sum_K \Wg_{\chop(H \setminus S), \chop(K)}  \Ex_{W \sim \Wig} \langle m_{G\to}^c(W),  m_{K\to}(W) \rangle
        \intertext{Here, by Proposition~\ref{prop:mGc-mG-1open-inner} all terms are zero unless $|V(G)| = |V(H)|$.
    In that case, only the term $K = G$ and $S = \emptyset$ contributes, and we are left with}
        &= \frac{n^{\underline{b - 1}}n^{\underline{b}}}{p!^{d - 1}(p - 1)!(d - 1)!} \cdot |\eAut(\chop(G))| \cdot |\eAut(\chop(H))| \cdot \Wg_{\chop(G), \chop(H)},
    \end{align*}
    completing the calculation.
\end{proof}

Lastly, as in the closed case, we construct a nearly orthonormal basis with respect to this inner product.
\begin{definition}
    For each $G$ a $p$-regular 1-open multigraph having $b$ edges, define
    \begin{equation}
        \what{\kappa_{G\to}^c}(T) \colonequals \frac{n^{b/2}}{n^{\underline{b}} \cdot \sqrt{|\eAut(\chop(G))|}} \kappa_{G\to}^c(T).
    \end{equation}
\end{definition}

\begin{lemma}
    \label{lem:kappaGc-1open-gram}
    If $|V(G)| \neq |V(H)|$, then $\EE_{W \sim \Wig} \langle \what{\kappa_{G\to}^c}(W), \what{\kappa_{H\to}^c}(W)\rangle = 0$.
    Let $M \in \RR^{\sG_{d, p\to} \times \sG_{d, p\to}}$ have entries
    \begin{equation}
        M_{G, H} \colonequals \EE_{W \sim \Wig} \langle \what{\kappa_{G\to}^c}(W), \what{\kappa_{H\to}^c}(W)\rangle.
    \end{equation}
    Suppose that $d \leq \sqrt{n / 2p^2}$.
    Then,
    \begin{equation}
        \frac{1}{2} \leq \lambda_{\min}(M) \leq \lambda_{\max}(M) \leq 2.
    \end{equation}
    In particular, the $\kappa_{G\to}^c(T)$ are linearly independent as vectors of polynomials.
\end{lemma}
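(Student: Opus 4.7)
The plan is to mirror the proof of Lemma~\ref{lem:kappaGc-gram}, transferred to the 1-open setting via Lemma~\ref{lem:kappaGc-1open-inner}. The vanishing of $M_{G,H}$ when $|V(G)| \neq |V(H)|$ is immediate from that inner product computation, so it suffices to control the block of $M$ indexed by a fixed $\sG_{d,p\to}$ for each $d$.

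Substituting the normalization from the definition of $\what{\kappa_{G\to}^c}$ into Lemma~\ref{lem:kappaGc-1open-inner} and cancelling yields
\[
M_{G,H} = \frac{n^{b} \cdot n^{\underline{b-1}}}{n^{\underline{b}}} \cdot \frac{\sqrt{|\eAut(\chop(G))| \cdot |\eAut(\chop(H))|}}{p!^{d-1}(p-1)!(d-1)!} \cdot \Wg_{\chop(G), \chop(H)},
\]
where $b = (pd+1)/2$. By Proposition~\ref{prop:number-realizing-matchings-chop}, the rescaling $\sqrt{|\eAut(\chop(G))| \cdot |\eAut(\chop(H))|}/(p!^{d-1}(p-1)!(d-1)!)$ is exactly the geometric mean of the reciprocals of the numbers of matchings of $[pd-1]$ realizing $\chop(G)$ and $\chop(H)$, so $M$ takes the same structural form as the rescaled graph Weingarten matrix from the closed case, but now for the degree sequence of $\chop(G)$: one vertex of degree $p-1$ together with $d-1$ vertices of degree $p$.

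The remaining step is to invoke the natural extension of Corollary~\ref{cor:wg-graph} to graphs with this non-regular degree sequence. The scalar prefactor $n^{b} \cdot n^{\underline{b-1}}/n^{\underline{b}} = n^b/(n-b+1) \sim n^{b-1}$ is designed to cancel the $n^{-(b-1)} = n^{-(pd-1)/2}$ scale carried by the graph Weingarten matrix on $2(b-1) = pd-1$ half-edges, so the product sits in a bounded interval around $1$. The hypothesis $d \leq \sqrt{n/2p^2}$ gives $pd - 1 < pd \leq \sqrt{n/2}$, which keeps us inside the regime where the Weingarten conditioning bound applies and delivers $1/2 \leq \lambda_{\min}(M) \leq \lambda_{\max}(M) \leq 2$. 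Linear independence of the $\kappa_{G\to}^c(T)$ then follows immediately from $\lambda_{\min}(M) > 0$.

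The main (and essentially only) obstacle is a bookkeeping one: verifying that the appendix's conditioning argument for the graph Weingarten function, stated in the text for $p$-regular multigraphs, transfers without loss to the degree sequence of $\chop(G)$. This should be routine because that argument only uses the total number of half-edges and the combinatorics of matchings realizing a given graph, neither of which makes essential use of $p$-regularity; aside from this verification the proof is a mechanical adaptation of the closed case.
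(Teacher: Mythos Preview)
Your proposal is correct and follows essentially the same route as the paper: compute $M_{G,H}$ from Lemma~\ref{lem:kappaGc-1open-inner} and the normalization of $\what{\kappa_{G\to}^c}$, then reduce to a conditioning bound on the graph Weingarten matrix for the degree sequence of $\chop(G)$. The extension of Corollary~\ref{cor:wg-graph} that you anticipate is in fact stated in the paper as Corollary~\ref{cor:wg-graph-chop}, proved by exactly the argument you sketch (the Ger\v{s}gorin bound only uses the total number of half-edges and the count of realizing matchings, not regularity). One small point: your prefactor $n^{b}\,n^{\underline{b-1}}/n^{\underline{b}} = n^{b}/(n-b+1)$ is the exact value, whereas the paper writes $n^{b-1}$; these differ by the scalar $n/(n-b+1)$, which under $d \le \sqrt{n/2p^2}$ is $1+O(n^{-1/2})$ and is absorbed into the stated constants.
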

\begin{proof}
    By Lemma~\ref{lem:kappaGc-1open-inner}, we have
    \begin{equation}
        M_{G, H} = n^{b - 1} \cdot \frac{\sqrt{|\eAut(\chop(G))| \cdot |\eAut(\chop(H))|}}{p!^{d - 1}(p - 1)!(d - 1)!} \Wg_{\chop(G), \chop(H)}.
    \end{equation}
    The result then follows by Corollary~\ref{cor:wg-graph-chop}, where we observe that $b = (pd + 1) / 2$ for $G \in \sG_{d, p\to}$.
\end{proof}

\section{Application 1: Tensor PCA}

\subsection{Warmup: Detection with Individual Graph Moments}
\label{sec:glimpse}

Before proceeding to the analysis of general low-degree polynomials, let us consider the simpler question of whether individual graph moments $m_G(T)$ can distinguish between the two distributions associated with tensor PCA.
In order to do this, we will need to understand how these moments behave under both the null and planted distributions.

The question for the null model is perhaps of independent interest, and we will see that it holds some surprises already.
Recall that, for $k = 2$ and $G = C_\ell$ a cycle of length $\ell$, classical random matrix theory tells us that
\begin{equation}
    \Ex_{T \sim \Wig(2, n, 1)}[m_G(T)] = \Ex_{T \sim \mathrm{GOE}(n)}[\tr(T^\ell)] = \left(1 + O\left(\frac{1}{n}\right)\right) \One\{\ell \text{ even}\} \, \Cat(\ell/2) \, n^{\ell/2 + 1} \, ,
\end{equation}
where $\Cat(t) = \frac{1}{t+1} \binom{2t}{t}$ is the $t$'th Catalan number.
What is the tensor analog of this calculation?

\begin{definition}
\label{def:even-colorings}
    Let $G = (V, E)$ be a $p$-regular graph.
    For $C$ a finite set of colors, let $\sigma: E \to C$ be an edge coloring.
    We call $\sigma$ an \emph{even} coloring if, among the $\sigma(\partial v)$ over $v \in V$, viewed as multisets of colors, each multiset occurs an even number of times.
    We write $c_{\max}(G)$ for the largest possible number of colors in an even edge coloring of $G$.
    For $\sigma$ an even edge coloring, suppose that the distinct colorings of neighborhoods (i.e., the multisets $\sigma(\partial v)$ over vertices $v$) occurring in $\sigma$ are $N_1, \dots, N_m$, where $N_i$ occurs $f_i \in 2\NN$ times.
    Also, write $c_j(N_i)$ for the number of times that $j \in C$ occurs in $N_i$.
    We call the \emph{weight} of $\sigma$ the quantity
    \begin{equation}
        w(\sigma) \colonequals \prod_{i = 1}^m (f_i - 1)!! \prod_{j \in C} c_j(N_i)!^{f_i / 2}.
    \end{equation}
\end{definition}

\begin{proposition}
\label{prop:even-colorings}
    Define
    \begin{equation}
        w_{\max}(G) \colonequals \sum_{\substack{\sigma \text{ even edge coloring of } G \\ |\sigma(E)| = c_{\max}(G)}} w(\sigma),
    \end{equation}
    where the sum is over non-isomorphic edge colorings with the given number of colors.
    Then,
    \begin{equation}
        \Ex_{T \sim \Wig(p, n, 1)}[m_G(T)] = \left(1 + O_G\left(\frac{1}{n}\right)\right) \,  w_{\max}(G) \, n^{c_{\max}(G)}.
    \end{equation}
\end{proposition}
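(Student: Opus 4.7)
The plan is to expand $m_G(T)$ into its defining sum over edge labelings and apply Wick's theorem (Isserlis' theorem) to each resulting Gaussian moment. Concretely, I would write
\begin{equation}
\Ex_{T \sim \Wig} m_G(T) = \sum_{i: E \to [n]} \Ex \prod_{v \in V} T_{i(\partial v)},
\end{equation}
then observe that the $T_{i(\partial v)}$ are jointly centered Gaussians (since $\Wig$ is built from an i.i.d.\ Gaussian tensor by symmetrization), so the inner expectation is a sum over perfect matchings of $V$ of products of pairwise covariances. This forces $|V|$ to be even and forces each paired pair of vertices to have $i(\partial v) = i(\partial v')$ as multisets, which is exactly the condition that the edge labeling $\sigma = i$ be an \emph{even coloring} in the sense of Definition~\ref{def:even-colorings}.

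The next step is to pin down the covariances themselves. From the definition $W_{i_1,\ldots,i_p} = \frac{1}{\sqrt{p!}}\sum_{\pi} G_{i_{\pi(1)},\ldots,i_{\pi(p)}}$, a short calculation shows that if $N$ is a multiset of indices with multiplicities $m_1,\ldots,m_k$ (summing to $p$), then $\Ex W_N^2 = m_1! \cdots m_k! = \prod_{j \in C} c_j(N)!$; and $\Ex W_N W_{N'} = 0$ when $N \neq N'$. Plugging this into Wick and grouping vertices by their neighborhood multiset, if the distinct neighborhoods $N_1,\ldots,N_m$ occur with multiplicities $f_1,\ldots,f_m$, the number of valid pairings is $\prod_i (f_i-1)!!$ and each pair contributes $\prod_j c_j(N_i)!$, yielding precisely the weight $w(\sigma)$ of Definition~\ref{def:even-colorings}.

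Now I would repackage the sum over $i: E \to [n]$ by first fixing the induced partition of $E$ into color classes (i.e., an isomorphism class of coloring). If the coloring uses $c$ colors, the number of labelings $i$ realizing that partition is the falling factorial $n^{\underline{c}} = n^c\,(1 + O_G(1/n))$, since we must assign $c$ distinct values in $[n]$ to the $c$ color classes. Combining,
\begin{equation}
\Ex_{T \sim \Wig} m_G(T) = \sum_{c \ge 0}\; n^{\underline{c}}  \sum_{\substack{\sigma \text{ even} \\ |\sigma(E)|=c}} w(\sigma).
\end{equation}
Since $G$ is fixed, the inner sums are $O_G(1)$ and the outer sum is finite, so the largest power of $n$ dominates: it comes from $c = c_{\max}(G)$, contributing $w_{\max}(G)\, n^{c_{\max}(G)}(1 + O_G(1/n))$, while each smaller $c$ contributes $O_G(n^{c_{\max}(G) - 1})$. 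This yields the claimed asymptotic.

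The calculations are all routine; the only mildly subtle points are verifying the covariance formula $\Ex W_N^2 = \prod_j c_j(N)!$ from the symmetrization definition (straightforward by counting how many permutations $\pi \in S_p$ give the same underlying entry of the i.i.d.\ tensor $G$), and making sure the notion of ``non-isomorphic coloring'' in the statement of $w_{\max}(G)$ agrees with ``partition of $E$,'' so that the multiplicity $n^{\underline{c}}$ in the labeled sum matches the unlabeled enumeration in $w_{\max}(G)$ without any extra symmetry factors.
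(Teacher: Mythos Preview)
Your proposal is correct and follows essentially the same approach as the paper: expand the moment, recognize that the expectation of each term equals $w(\sigma)$ for the induced even coloring $\sigma$ (and zero otherwise), group labelings by their coloring to get the factor $n^{\underline{|\sigma(E)|}}$, and extract the leading power of $n$. The paper's proof is terser and does not spell out the Wick/Isserlis step or the covariance computation $\Ex W_N^2 = \prod_j c_j(N)!$ that you (correctly) outline, but the structure is identical.
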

\noindent
Actually, as we show in Propositions~\ref{prop:max-coloring-unique-within-nbd} and \ref{prop:max-coloring-unique-across-nbd}, any maximum even edge coloring $\sigma$ has $w(\sigma) = 1$ (that is, no vertex is adjacent to several edges of the same color, and every colored neighborhood occurs exactly twice).
Therefore, $w_{\max}(G)$ is purely a counting problem of a particular type of coloring.

\begin{proof}
    We expand the definition of the tensor moment and use linearity:
    \begin{align}
        \Ex_{T \sim \Wig(p, n, 1)}m_G(T)
        &= \sum_{i \in [n]^E} \EE \prod_{v \in V} T_{i(\partial v)} \\
        &= \sum_{\sigma \text{ even edge coloring of } G} n^{\underline{|\sigma(E)|}} \, w(\sigma) \\
        &= \left(1 + O_G\left(\frac{1}{n}\right)\right)\sum_{\sigma \text{ even edge coloring of } G} n^{|\sigma(E)|} w(\sigma),
    \end{align}
    and extracting the leading order contribution completes the proof.
    Here we sum over all non-isomorphic even edge colorings $\sigma$; labeling the colors of such colorings by distinct values in $[n]$ enumerates all $i \in [n]^E$ that contribute to the sum.
\end{proof}

\begin{figure}
\centering
\includegraphics[width=\columnwidth]{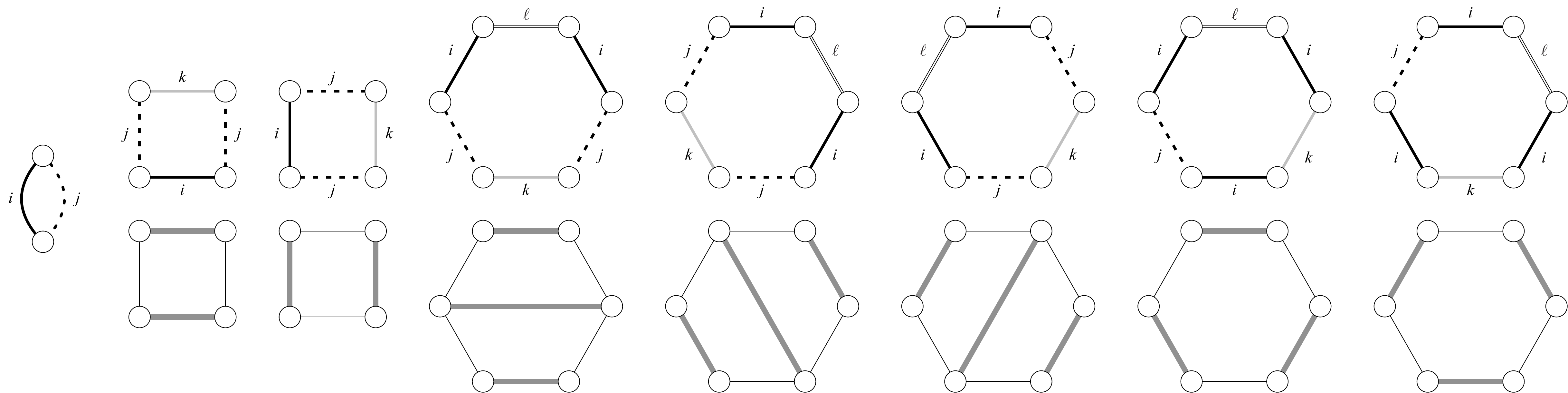}
\caption{Examples of legal edge colorings for the matrix case $p=2$. In the cycle of length $2$ there is one legal coloring with two distinct colors (up to permutations of the colors) recovering the fact that $\tr A^2 = \sum_{ij} A_{ij} A_{ji} = n^2$. There are two legal colorings of the cycle of length $4$: the one on the left, for instance, corresponds to the term $\sum_{ijk} A_{ij} A_{jk} A_{kj} A_{ji}$. There are five legal colorings of the cycle of length $6$. Each coloring corresponds to the matching of vertices shown below it, where we show the pairs of vertices with the same neighborhoods: these matchings are the non-crossing partitions familiar in combinatorial free probability \cite{NS-2006-LecturesCombinatoricsFreeProbability}. This recovers the leading behavior $\tr A^{\ell} = \Cat(\ell/2) n^{\ell/2+1}$ where $\Cat(t)=1, 2, 5, \ldots$ is the $t$th Catalan number.}
\label{fig:matrix-examples}
\end{figure}

As an aside, we mention the following ancillary result.
Despite this characterization, unlike in the matrix case, and perhaps surprisingly, it is difficult in general to compute even the scaling of a given moment of a Wigner tensor for large $n$.

\begin{theorem}
    \label{thm:wigner-moments-hard}
    It is NP-hard to decide whether $c_{\max}(G) \geq c$ given $c \geq 0$ and a regular graph $G$.
\end{theorem}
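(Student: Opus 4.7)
The decision problem is in $\NP$: an even edge coloring $\sigma$ is a polynomial-sized certificate, and one verifies in polynomial time that for every neighborhood multiset $M$ the count $|\{v : \sigma(\partial v) = M\}|$ is even and that $|\sigma(E)| \geq c$. For the hardness direction, the plan is to reduce from a classical combinatorial NP-complete problem. A natural choice is a matching/covering problem such as exact cover by 3-sets (X3C) or 3-dimensional matching, since the structural requirement that colored vertex neighborhoods pair up strongly resembles a matching condition.

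Concretely, from an X3C instance with ground set $X$ and triple family $T$, I would construct a $p$-regular graph $G_I$ as the union (glued along shared vertices) of a \emph{set gadget} for each triple $t \in T$ and an \emph{element gadget} for each element $x \in X$. Each set gadget is designed to admit two locally optimal even colorings, a ``selected'' state using one extra color and an ``unselected'' state; each element gadget enforces the local parity condition that exactly one of the incident set gadgets is selected in any even coloring. Choosing the target $c^\ast = c_0 + |X|$, with $c_0$ a uniform baseline color count, would yield $c_{\max}(G_I) \geq c^\ast$ iff the instance admits an exact cover. Global $p$-regularity can be arranged by taking $p$ large and attaching disjoint, highly symmetric padding (e.g., pairs of identical cycles or copies of $K_{p+1}$) that contribute a fixed, easily computed color count.

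The main obstacle is that evenness is a \emph{global} parity condition on the profile $\{\sigma(\partial v) : v \in V\}$ and does not automatically localize. Each gadget must therefore be built around interior ``twin'' pairs of vertices whose neighborhood multisets are forced by the local edge structure to coincide in every even coloring, effectively sealing off the gadget from the rest of $G_I$ and making the color count additive across gadgets. The hardest step will be to rule out ``cheating'' colorings that reach the target $c^\ast$ by pairing up neighborhoods across gadget boundaries in unintended ways; concretely, one needs to prove that the gadgets are \emph{rigid}, in the sense that any even coloring of $G_I$ restricted to a gadget coincides, up to relabeling of colors, with one of the two designed local states, so that optimizing $c_{\max}$ genuinely forces a consistent selection of triples.
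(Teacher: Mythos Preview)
Your proposal is at best a plan, not a proof: you describe what the gadgets should accomplish (two local states per set gadget, parity enforcement per element gadget, rigidity against cross-gadget pairings) but you do not actually construct any gadget, fix $p$, or prove any of the claimed properties. The rigidity step you flag as ``the hardest'' is genuinely hard, and nothing in your outline indicates how to carry it out. In particular, the evenness condition concerns the \emph{multiset of edge-colors} around each vertex, not the vertex adjacency structure, so ``twin'' vertices in the usual graph-theoretic sense do not automatically receive the same colored neighborhood in an arbitrary even coloring; you would need a separate argument for why your padding and gadget interiors cannot be recolored to pair neighborhoods across gadget boundaries, and you have not supplied one.

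The paper takes a completely different and much cleaner route. It first proves the structural identity $c_{\max}(G) = |E(G)| - d_{\switch}(G,\sF)$, where $d_{\switch}$ is the edge-switching distance and $\sF$ is the set of disjoint unions of Frobenii (Lemma~\ref{prop:cmax-dswitch}). This identity comes from the observation that a maximum even coloring always pairs vertices into a perfect matching with identical colored neighborhoods, and the number of colors is then governed by cycle counts in the union of two matchings of half-edges, which is exactly what the switching distance measures. With this in hand, the paper reduces from the known NP-hardness of computing $d_{\switch}(G,H)$ for simple graphs (Will 1999): given $G,H$ with the same degree sequence, it builds a single regular graph $J$ containing disjoint copies of $G$ and $H$ joined by a very heavy multi-edge matching $i\leftrightarrow i'$, so heavy that the closest Frobenius graph to $J$ is forced to be the one on that matching, and then shows $d_{\switch}(J,\sF)=d_{\switch}(G,H)+e$. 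No gadget rigidity is needed; the heavy matching does all the forcing by a simple counting argument. If you want to salvage a direct X3C reduction, you would essentially need to rediscover the matching-on-vertices structure of optimal even colorings to make your rigidity claims go through, at which point the switching-distance formulation is the natural language anyway.
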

\noindent
We give a proof in Appendix~\ref{app:wigner}.

In particular, we may understand the scaling of \emph{variances} of $m_G(T)$ as follows.
\begin{corollary}
    \label{cor:mG-var-lb}
    For any $G$, for $W \sim \Wig$, $\Var m_G(W) \geq n^{\underline{|E(G)|}}$.
\end{corollary}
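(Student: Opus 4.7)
The plan is to lower bound the variance of $m_G(W)$ by comparing it with the centered distinct-index variant $m_G^c(W)$ via a Cauchy--Schwarz inequality for covariances, leveraging the exact computations of Propositions~\ref{prop:distinct-zero}, \ref{prop:distinct-variance}, and~\ref{prop:mGc-mG-inner}.

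First I would collect the ingredients. By Proposition~\ref{prop:distinct-zero}, $\EE_{W \sim \Wig} m_G^c(W) = 0$, so $\Var m_G^c(W) = \EE m_G^c(W)^2$. By Proposition~\ref{prop:distinct-variance} (equivalently the diagonal case of Proposition~\ref{prop:mGc-mG-inner}),
\begin{equation*}
\Var m_G^c(W) = n^{\underline{b}} \cdot |\eAut(G)|,
\end{equation*}
where $b = |E(G)|$. Moreover, Proposition~\ref{prop:mGc-mG-inner} also gives the cross inner product
\begin{equation*}
\Ex_{W \sim \Wig} m_G^c(W) \, m_G(W) = n^{\underline{b}} \cdot |\eAut(G)|,
\end{equation*}
and since $\EE m_G^c(W) = 0$, this cross inner product equals $\Cov(m_G(W), m_G^c(W))$.

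Next I would apply the Cauchy--Schwarz inequality for covariance,
\begin{equation*}
\Cov(m_G(W), m_G^c(W))^2 \leq \Var m_G(W) \cdot \Var m_G^c(W),
\end{equation*}
and rearrange to obtain
\begin{equation*}
\Var m_G(W) \geq \frac{\Cov(m_G, m_G^c)^2}{\Var m_G^c} = \frac{(n^{\underline{b}} \cdot |\eAut(G)|)^2}{n^{\underline{b}} \cdot |\eAut(G)|} = n^{\underline{b}} \cdot |\eAut(G)| \geq n^{\underline{b}},
\end{equation*}
using $|\eAut(G)| \geq 1$ for the final inequality. That is exactly the claim.

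There is no genuine obstacle here: all three of the needed statistics (the mean of $m_G^c$, the second moment of $m_G^c$, and the cross-moment $\EE m_G^c \cdot m_G$) are already computed in the paper, and Cauchy--Schwarz does the rest. The only mild subtlety worth flagging in the write-up is that $\Var m_G^c > 0$ (so division is legitimate) whenever $G$ is nonempty; if $G$ is empty, then $m_G(W) = 1$ and $n^{\underline{0}} = 1$, making the inequality trivial (interpreting $\Var \equiv 0 \geq 1$ requires the convention that one treats $G = \emptyset$ separately, or simply restricts to nonempty $G$, which is the case of interest).
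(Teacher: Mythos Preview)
Your proof is correct and takes a genuinely different route from the paper's. The paper argues directly and combinatorially: it expands $\EE m_G(W)^2 = \EE m_{G \sqcup G}(W)$ as a sum over even edge colorings of $G \sqcup G$, observes that $(\EE m_G(W))^2$ is accounted for by those colorings in which every colored neighborhood already appears twice within a single copy of $G$, and then exhibits one surviving term---the coloring that assigns distinct colors to the edges of one copy and mirrors them on the other---which contributes exactly $n^{\underline{|E(G)|}}$ to the variance. Your argument instead leverages the distinct-index machinery from Section~\ref{sec:cumulants}: you note that $m_G^c$ is centered, compute $\Cov(m_G, m_G^c) = \Var m_G^c = n^{\underline{b}}\,|\eAut(G)|$ via Propositions~\ref{prop:distinct-zero}, \ref{prop:distinct-variance}, and~\ref{prop:mGc-mG-inner}, and apply Cauchy--Schwarz. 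This is cleaner once that machinery is in place, and in fact yields the slightly sharper bound $\Var m_G(W) \geq n^{\underline{b}}\,|\eAut(G)|$. The paper's argument, by contrast, is self-contained within the warmup section and does not depend on the later development of $m_G^c$, which is pedagogically appropriate for where the corollary sits. Your remark about the empty-graph edge case is apt and applies equally to the paper's proof; the statement should be read for nonempty $G$.
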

\begin{proof}
    First, note that, in general, $m_G(T) m_H(T) = m_{G \sqcup H}(T)$, where $\sqcup$ denotes disjoint union of graphs.
    Thus
    \begin{equation}
        \Ex_{W \sim \Wig} m_G(W)^2 = \Ex_{W \sim \Wig} m_{G \sqcup G}(W) = \sum_{\sigma \text{ even coloring of } G \sqcup G} n^{\underline{|\sigma(E)|}} \, w(\sigma).
        \label{eq:mGG}
    \end{equation}
    On the other hand, we have
    \begin{equation}
        \left(\Ex_{W \sim \Wig} m_G(W)\right)^2 = \left(\sum_{\sigma \text{ even coloring of } G} n^{\underline{|\sigma(E)|}} \, w(\sigma)\right)^2
    \end{equation}
    which is at most the sum of the terms in \eqref{eq:mGG} where each colored vertex neighborhood occurs at least twice within one of the two copies of $G$.
    There is another coloring in the summation in \eqref{eq:mGG}, which is not of this kind: it is the coloring that colors every edge in one copy of $G$ with a different color, and then repeats the same coloring on the other copy of $G$.
    This is an even coloring which uses $|E(G)|$ colors, and which is not cancelled in computing $\Var m_G(W) = \Ex_{W \sim \Wig} m_G(W)^2 - (\Ex_{W \sim \Wig} m_G(W))^2$.
    See Figure~\ref{fig:single-variance} for an illustration.
\end{proof}

For the planted model, we are interested in the special case of \emph{rank one} tensors.
On these, evaluating the graph moments is easy:
\begin{proposition}
\label{prop:all-spike}
    Let $v^{\otimes p}$ be the $p$th tensor power of $v$: that is, $v^{\otimes p}_{i_1,\ldots,i_p} = v_{i_1} v_{i_2} \cdots v_{i_p}$. Then if $G=(V,E)$, we have $m_G(v^{\otimes p}) = \|v\|^{2|E|} = \|v\|^{p|V|}$.
\end{proposition}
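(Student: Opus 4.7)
The plan is to directly expand both sides and use the fact that each edge of $G$ is incident to exactly two half-edges (counting a self-loop as contributing two half-edges to its single endpoint), so that in the product $\prod_{v \in V} T_{i(\partial v)}$ each edge-index will appear with multiplicity exactly two.

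More concretely, I would first observe that for $T = v^{\otimes p}$, the entry $T_{i_1, \dots, i_p} = v_{i_1} \cdots v_{i_p}$ is multiplicative in its indices, so for any vertex $u \in V$ of degree $p$,
\begin{equation}
T_{i(\partial u)} = \prod_{e \in \partial u} v_{i(e)},
\end{equation}
where the product over $e \in \partial u$ counts a self-loop at $u$ twice. Substituting this into the definition of $m_G$, I would next swap the order of the products and collect contributions edge by edge. Since each edge $e$ of $G$ appears in $\partial u$ for exactly two incidences across $V$ (one for each of its endpoints, or twice for its sole endpoint if $e$ is a self-loop), the combined product becomes
\begin{equation}
\prod_{u \in V} T_{i(\partial u)} = \prod_{e \in E} v_{i(e)}^2.
\end{equation}

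Then the sum over $i \in [n]^E$ factorizes over edges:
\begin{equation}
m_G(v^{\otimes p}) = \sum_{i \in [n]^E} \prod_{e \in E} v_{i(e)}^2 = \prod_{e \in E} \left( \sum_{j \in [n]} v_j^2 \right) = \|v\|^{2|E|}.
\end{equation}
Finally, I would invoke the $p$-regularity of $G$, which gives the handshake identity $2|E| = p|V|$, so $\|v\|^{2|E|} = \|v\|^{p|V|}$, completing the claim.

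There is no real obstacle here; the only subtle bookkeeping point is to make sure self-loops are handled consistently, namely that a self-loop at $u$ contributes its single index $i(e)$ twice to the multiset $i(\partial u)$ (matching Definition~\ref{def:moments}), which is exactly what makes the combined product pick up $v_{i(e)}^2$ for a self-loop as well. This is the same bookkeeping used elsewhere in the paper (e.g., in the factor $2^{\#\{\text{self-loops}\}}$ appearing in $|\eAut(G)|$), so the computation is clean.
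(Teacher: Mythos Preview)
Your proof is correct and follows essentially the same approach as the paper: both observe that each edge contributes a factor of $\sum_j v_j^2 = \|v\|^2$ to the sum, yielding $\|v\|^{2|E|}$, and then use $p$-regularity to rewrite this as $\|v\|^{p|V|}$. Your treatment is simply more explicit about the self-loop bookkeeping.
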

\begin{proof}
The summation over the index associated to each edge gives a factor of $\langle v, v \rangle = \|v\|^2$ on each edge, for a total of $\|v\|^{2|E|}$.
Since $G$ is $p$-regular, $|E| = p|V|/2$.
\end{proof}
We now sketch an argument of~\cite{Ouerfelli-Tamaazousti-Rivasseau-2022} that no single graph moment can solve the detection problem for tensor PCA below the conjectured threshold.

\begin{proposition}
\label{prop:single-variance}
Let $G$ be a $p$-regular graph. If $\lambda \ll n^{-p/4}$, then
\begin{equation}
    \left| \Ex_{Y \sim \PP} m_G(Y) - \Ex_{Y \sim \QQ} m_G(Y)\right| \ll \sqrt{\Varx_{Y \sim \QQ} m_G(Y)}.
\end{equation}
\end{proposition}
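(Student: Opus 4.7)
The plan is to proceed by the natural multilinear expansion of $m_G$. Since $m_G$ places one copy of its argument at each vertex, multilinearity in the vertex tensors gives
\begin{equation*}
m_G(\lambda v^{\otimes p} + W) = \sum_{S \subseteq V} \lambda^{|S|}\, m_{G, S}(v^{\otimes p}, W),
\end{equation*}
where $m_{G,S}$ is the mixed moment (Definition~\ref{def:mixed-moments}) with $v^{\otimes p}$ labelling vertices in $S$ and $W$ labelling vertices in $V \setminus S$. The $S = \emptyset$ term is exactly $m_G(W)$, so taking $\EE_{v, W}$ yields
\begin{equation*}
\EE_{\PP} m_G(Y) - \EE_{\QQ} m_G(Y) = \sum_{\emptyset \neq S \subseteq V} \lambda^{|S|}\, \EE_{v, W}\, m_{G, S}(v^{\otimes p}, W).
\end{equation*}

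The dominant term is $S = V$: since $v$ is deterministic in norm ($\|v\|^2 = n$), Proposition~\ref{prop:all-spike} gives $\EE_v m_G(\lambda v^{\otimes p}) = \lambda^{|V|} n^{|E|}$. I then handle mixed terms $0 < |S| < |V|$ by partitioning $E = E_S \sqcup E_c \sqcup E_{V \setminus S}$ into edges inside $S$, cut edges, and edges inside $V \setminus S$. Summing the index $i(e)$ on each $e \in E_S$ produces the factor $\langle v, v\rangle = n$, giving an overall factor $n^{|E_S|}$. On the remaining indices I apply Wick's theorem to $\EE_W \prod_{u \in V \setminus S} W_{i(\partial u)}$: this is a sum over perfect matchings $\pi$ of $V \setminus S$ with further matchings of the incident half-edges, and each term imposes index identifications that reduce the count of free indices on $E_c \cup E_{V \setminus S}$ by $p|V \setminus S|/2$. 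A direct calculation using $|E_c| = p|S| - 2|E_S|$ and $2|E_{V\setminus S}| = p|V \setminus S| - |E_c|$ gives at most $|E_c|/2$ free indices. Since the remaining spike factors $\prod_{e \in E_c} v_{i(e)}$ have $v$-moment of order $O_G(1)$ (here using the uniform distribution on $\SS^{n - 1}(\sqrt{n})$), each mixed term is bounded by
\begin{equation*}
\lambda^{|S|}\, n^{|E_S| + |E_c|/2}\, O_G(1) \;=\; O_G\bigl(\lambda^{|S|} n^{p|S|/2}\bigr),
\end{equation*}
using $|E_S| + |E_c|/2 = p|S|/2$.

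Finally I invoke Corollary~\ref{cor:mG-var-lb} to obtain $\sqrt{\Var_{\QQ} m_G(W)} \geq \sqrt{n^{\underline{|E|}}} = (1 - o(1))\, n^{|E|/2} = (1 - o(1))\, n^{p|V|/4}$. Combining, the ratio of the $S$-contribution to the standard deviation is bounded by $\lambda^{|S|} n^{p|S|/2 - p|V|/4}$; under $\lambda \ll n^{-p/4}$ this is at most $n^{p(|S| - |V|)/4}$, which is $(1)$ for $|S| = |V|$ and $o(1)$ for $|S| < |V|$, and in either case it tends to $0$ as $\lambda n^{p/4} \to 0$. Summing over $S$ (a $|V|$-dependent constant number of terms) gives the claim.

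The main obstacle is Step~4: carefully counting free indices in each mixed-moment summand after applying Wick's theorem. The bookkeeping over matchings of half-edges could become intricate, and care is needed with the mild corrections from repeated-index entries of $W$ (whose variances are boosted per Appendix~\ref{app:wigner}) and with bounding higher moments of $v$ uniform on the sphere rather than Gaussian. Both of these affect only $G$-dependent constants and not the exponents of $n$ or $\lambda$, so they do not change the final scaling; but they are where the most care is required to make the argument rigorous.
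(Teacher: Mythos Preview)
Your overall structure matches the paper's: expand $m_G(\lambda v^{\otimes p}+W)$ multilinearly over vertex labelings, isolate the all-spike term $\lambda^{|V|}n^{|E|}$ via Proposition~\ref{prop:all-spike}, and compare against the variance lower bound from Corollary~\ref{cor:mG-var-lb}. The paper's argument is explicitly labeled a proof \emph{sketch}: it treats only the all-spike contribution and defers the cross-terms to the cited reference, so on the part the paper actually carries out, you are doing the same thing.

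Where you go further---bounding the mixed terms---your index counting has a gap. The assertion that each Wick term ``reduces the count of free indices on $E_c\cup E_{V\setminus S}$ by $p|V\setminus S|/2$'' is not correct in general. When two paired vertices $u,u'\in V\setminus S$ share an edge $e=\{u,u'\}$, matching the $u$-half of $e$ with the $u'$-half imposes only the tautology $i_e=i_e$ and does not eliminate a free index. More broadly, half-edge identifications can be redundant, so the reduction can be strictly smaller than $p|V\setminus S|/2$, leaving more free indices than your arithmetic $|E_c|/2$ allows---and some of those free indices lie on $E_{V\setminus S}$ edges carrying no $v$-factor, so summing over them contributes a full factor of $n$ each rather than $O_G(1)$.

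A concrete counterexample: take $p=2$, $G$ the $4$-cycle $s\text{--}u\text{--}u'\text{--}s'\text{--}s$, and $S=\{s,s'\}$. Then $|E_S|=1$, $|E_c|=2$, $|E_{V\setminus S}|=1$, and your bound predicts $\EE\,m_{G,S}=O(n^{p|S|/2})=O(n^2)$. But the dominant Wick contribution pairs $u$ with $u'$ via the matching that identifies the two half-edges of the shared edge $uu'$ (trivial) and identifies $i_{su}=i_{u's'}$; summing then gives
\[
\sum_{i_{ss'}}v_{i_{ss'}}^2\;\cdot\;\sum_{i_{uu'}}1\;\cdot\;\sum_{j}v_j^2\;=\;n\cdot n\cdot n\;=\;n^3,
\]
not $n^2$. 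The same phenomenon is worse on a $6$-cycle with $|S|=2$, where internal noise edges leave two unconstrained indices and the cross-term is $\Theta(n^4)$ against your bound of $n^2$. In these examples the proposition still holds (e.g.\ $\lambda^2 n^3/n^2=\lambda^2 n\to 0$ when $\lambda\ll n^{-1/2}$), but the correct exponent is larger than $p|S|/2$, and a rigorous cross-term bound needs a more careful combinatorial argument than the one you sketch. The paper does not supply one either; it simply acknowledges the issue and cites prior work.
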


\begin{proof}[Proof Sketch]
Suppose $G$ has $d$ vertices and $b$ edges. First we consider the expected difference in $m_G$ between the two models. If $Y = \lambda v^{\otimes p} + W$, then expanding $m_G(Y)$ creates $2^d$ terms, including cross-terms where the spike $\lambda v^{\otimes p}$ appears at some vertices and the noise $W$ appears at others. For some graphs these cross-terms can be neglected~\cite{Ouerfelli-Tamaazousti-Rivasseau-2022}. In this proof sketch we look only at the contribution of the all-spike term. By Proposition~\ref{prop:all-spike} this is $\lambda^d n^b$, since we get a factor of $\lambda$ at each vertex and a factor of $|v|^2=n$ on each edge.

On the other hand, by Corollary~\ref{cor:mG-var-lb}, we have $\Var m_G(Y) \gtrsim n^{b}$.
Comparing the spike's contribution to the expectation with the standard deviation in the null model, we see that $m_G$ fails as a test statistic whenever
\begin{equation}
\label{eq:ll}
\lambda^a n^b \ll n^{b/2} \, .
\end{equation}
But since $G$ is $p$-regular, we have $b=pa/2$. Then~\eqref{eq:ll} becomes
\[
\lambda^a n^{pa/2} \ll n^{pa/4} \, ,
\]
and solving for $\lambda$ gives $\lambda \ll n^{-p/4}$ as stated.
\end{proof}

\begin{figure}
    \centering
\includegraphics[width=3in]{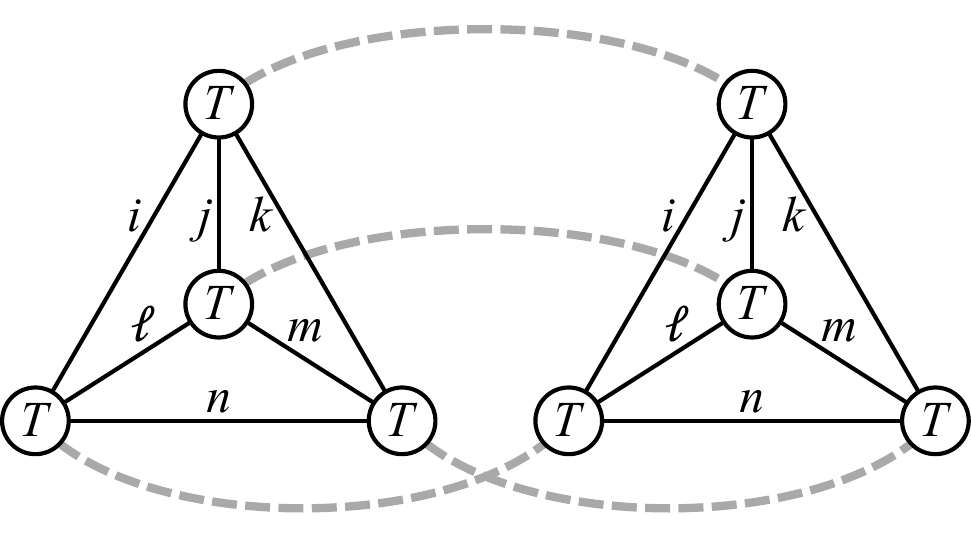}
    \caption{Even colorings of the disjoint union of two copies of $K_4$ where each vertex is matched to its counterpart. The colors or indices $i,j,k,\ell,m,n$ in one copy must match their counterparts, but within each copy these indices can range freely over $[n]$. Thus they contribute $n^6$ to the variance of $T^{K_4}$ when $T$ is a Wigner random tensor.}
\label{fig:single-variance}
\end{figure}

This argument is tight for some graphs~\cite{Ouerfelli-Tamaazousti-Rivasseau-2022}.
However, while graph moments are fascinating, for general $G$ they are not ideal quantities to prove upper and lower bounds on problems such as tensor PCA, for three reasons:

\begin{enumerate}
\item By Theorem~\ref{thm:wigner-moments-hard}, even computing how the expectations of $m_G(W)$ scales with $n$ in the null model $W \sim \Wig$ is computationally hard.
\item As discussed above, the expectation of $m_G(Y)$ in spiked models involves cross-terms where the spike appears at some vertices and the noise appears at others. These mixed moments are often nonzero, making precise calculations combinatorially challenging.
\item Different graph moments $m_G$ and $m_H$ can be correlated in the Wigner model, even when $G$ and $H$ are non-isomorphic. This raises the possibility that, while the variance of any one graph moment is too large to beat the threshold, it might be possible to construct linear combinations of small graphs that cancel out much of each others' variance. Indeed, Proposition~\ref{prop:single-variance} applies to graphs of any size, even where $a$ grows with $n$, but there are subexponential-time algorithms of growing degree that succeed even when $\lambda \ll n^{-p/4}$~\cite{BhattiproluGGLT16,BhattiproluGL17,wein-elalaoui-moore} (see also~\cite{raghavendra-rao-schramm} for analogous algorithms for random XOR-SAT). Low-degree algorithms matching this performance must somehow involve linear combinations of many graph moments.
\end{enumerate}

We therefore proceed to full lower bounds against low-degree polynomials, now leaning on the machinery we have developed based on finite free cumulants, which will allow us to circumvent all of these difficulties.

\subsection{Detection with General Low-Degree Polynomials: Proof of Theorem~\ref{thm:tensor-pca-detection}}

Before proceeding, let us state the formal version of Theorem~\ref{thm:tensor-pca-detection} that we will prove here. Recall we are in the setting where $\QQ = \Wig$ and $\PP$ the spiked tensor model $Y = \lambda v^{\otimes p} + W$.
\begin{theorem}[Low-degree analysis for tensor PCA detection]
    \label{thm:tensor-pca-detection-formal}
    Let $D = D(n) \in \NN$ have $D \leq \sqrt{n / 2p^2}$.
    There are constants $a_p, b_p > 0$ such that:
    \begin{enumerate}
    \item If $\lambda \leq a_p n^{-p/4} D^{-(p - 2)/4}$, then $\Adv_{\leq D}(\QQ = \Wig, \PP) = O(1)$.
    \item If $\lambda \geq b_p n^{-p/4} D^{-(p - 2)/4}$ and $D = \omega(1)$, then $\Adv_{\leq D}(\QQ = \Wig, \PP) = \omega(1)$.
    \end{enumerate}
\end{theorem}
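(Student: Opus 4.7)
The plan is to apply the master bound of Corollary~\ref{cor:invariant-advantage-bound}, which expresses
\[
\Adv_{\leq D}(\Wig,\PP)^2 \asymp \sum_{d=0}^{D} \frac{1}{n^{pd/2}} \sum_{G \in \sG_{d,p}} \frac{(\EE_{Y \sim \PP} \kappa_G^c(Y))^2}{|\eAut(G)|}.
\]
If I can evaluate each cumulant $\EE_{Y \sim \PP} \kappa_G^c(Y)$ in closed form and then sum over $G$, both parts of the theorem reduce to reading off a threshold in a single geometric-type sum in $d$.

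The first step is a cancellation argument that leverages both the additivity of finite free cumulants and the fact that centered cumulants vanish on Wigner noise. Since $W \sim \Wig$ is orthogonally invariant, I may insert an independent Haar average $Q$ without changing the expectation, writing
\[
\Ex_{Y \sim \PP} \kappa_G^c(Y) = \Ex_{v, W, Q} \kappa_G^c(\lambda v^{\otimes p} + Q \cdot W).
\]
Proposition~\ref{prop:general-additivity}, applied with $x_A = 0$ on the spike and the Frobenius-pair centering on the Wigner side, expands the inner $Q$-expectation as a sum over splittings $G = G_A \sqcup G_B$ of $\frac{n^{\underline{b}}}{n^{\underline{b_A}} n^{\underline{b_B}}}\, \kappa_{G_A}(\lambda v^{\otimes p}) \, \kappa_{G_B}^c(W)$. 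Taking expectations over $v$ and $W$ and using Proposition~\ref{prop:wigner-centered-cumulants-zero} to kill every term with $G_B \neq \emptyset$ leaves only the all-spike term, $\EE_{Y \sim \PP} \kappa_G^c(Y) = \EE_v \kappa_G(\lambda v^{\otimes p})$.

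Next I evaluate $\EE_v \kappa_G(\lambda v^{\otimes p})$. Invariance of $\kappa_G$ together with spherical symmetry makes $\kappa_G(\lambda v^{\otimes p})$ the same constant for every $v \in \SS^{n-1}(\sqrt n)$, and unwinding the definition while using that all distinct-index sphere moments are equal gives
\[
\Ex_v \kappa_G(\lambda v^{\otimes p}) = \lambda^d \cdot n^{\underline{b}} \cdot \Ex_{u \sim \Unif(\SS^{n-1}(\sqrt n))} \prod_{e=1}^{b} u_{i_e}^2
\]
for any fixed distinct $i_1, \dots, i_b$, where $b = pd/2$. The sphere moment equals $\prod_{k=0}^{b-1} n/(n+2k)$, and combined with $n^{\underline{b}} = n^b \prod_{k=0}^{b-1}(1 - k/n)$ this yields $\EE_v \kappa_G(\lambda v^{\otimes p}) = \Theta(\lambda^d \, n^{pd/2})$ uniformly over $G \in \sG_{d,p}$, with the implicit constants absolute throughout the regime $d \leq D \leq \sqrt{n/(2p^2)}$ (where $b^2/n$ is bounded).

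Substituting into the master bound and summing Proposition~\ref{prop:number-realizing-matchings}'s identity $\#\{\mu\text{ realizes }G\} = p!^d d!/|\eAut(G)|$ over $G \in \sG_{d,p}$ against the total matching count $(pd-1)!!$ of $[pd]$ gives
\[
\Adv_{\leq D}^2 \asymp \sum_{d=0}^{D} \lambda^{2d} n^{pd/2} \cdot \frac{(pd-1)!!}{p!^{d} \, d!}.
\]
Stirling reduces the rightmost factor to $\Theta\bigl(K_p^d \, d^{d(p-2)/2}\bigr)$ up to polynomial corrections, so each summand has the form $\bigl(K_p' \cdot \lambda^2 n^{p/2} d^{(p-2)/2}\bigr)^{d}$. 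Since the base is monotone in $d$, the series is $O(1)$ once $\lambda^2 n^{p/2} D^{(p-2)/2}$ sits below a small enough constant, yielding part~1 with some $a_p$; conversely, the single term at $d = D$ already blows up when $\lambda^2 n^{p/2} D^{(p-2)/2}$ exceeds a larger constant and $D \to \infty$, yielding part~2 with some $b_p$. The main obstacle is the combinatorial bookkeeping in this last step: it is the careful Stirling analysis of $(pd-1)!!/(p!^d d!)$ that produces the exponent $d(p-2)/2$, and hence the $(p-2)/4$ exponent of $D$ in the threshold; the degree ceiling $D \leq \sqrt{n/(2p^2)}$ inherited from Lemma~\ref{lem:kappaGc-gram} is exactly what is needed to make both the $\asymp$ in Corollary~\ref{cor:invariant-advantage-bound} and the sphere-moment estimates above tight simultaneously.
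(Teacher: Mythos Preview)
Your proposal is correct and follows essentially the same approach as the paper: apply Corollary~\ref{cor:invariant-advantage-bound}, reduce $\EE_\PP \kappa_G^c$ to the spike-only cumulant via additivity and the vanishing of Wigner centered cumulants, evaluate the spike cumulant as in Proposition~\ref{prop:all-spike-cumulant}, and then analyze the resulting sum in $d$. Your handling of the graph sum is in fact slightly cleaner than the paper's: you use the exact identity $\sum_{G\in\sG_{d,p}} 1/|\eAut(G)| = (pd-1)!!/(p!^d d!)$ coming from Proposition~\ref{prop:number-realizing-matchings}, which gives matching upper and lower bounds simultaneously, whereas the paper bounds $1/|\eAut(G)|\le 1$ for the upper bound and separately invokes that most regular graphs are asymmetric for the lower bound.
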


First we compute the expectation of a graph cumulant in the spiked model.
In fact, the cumulant of $v^{\otimes p}$ is the same as its ordinary graph moment $m_G(v^{\otimes p})$ given by Proposition~\ref{prop:all-spike} with a correction term.

\begin{proposition}
\label{prop:all-spike-cumulant}
Let $v \in \RR^n$. If $G=(V,E)$ with $|V|=a$ and $|E|=b$, then
\[
\kappa_G(v^{\otimes p}) = \frac{n^{\underline{b}}}{(n + 2b - 2)^{\dunderline{b}}}\|v\|^{2b}.
\]
\end{proposition}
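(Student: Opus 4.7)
The plan is to reduce this to a direct calculation of a sphere integral. First I would observe that, by the definition of the action, $Q \cdot v^{\otimes p} = (Q^\top v)^{\otimes p}$, so since $v$ is deterministic and $Q \sim \Haar(n)$, the vector $u \colonequals Q^\top v$ is uniformly distributed on the sphere of radius $\|v\|$ in $\RR^n$. Thus
\begin{equation}
\kappa_G(v^{\otimes p}) = \Ex_Q m^!_G\bigl((Q^\top v)^{\otimes p}\bigr) = \Ex_u m^!_G(u^{\otimes p}),
\end{equation}
where $u$ is uniform on the sphere of radius $\|v\|$.

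Next I would unpack $m^!_G(u^{\otimes p})$ using $p$-regularity: each edge $e$ has exactly two endpoints, and the vertex factors $u^{\otimes p}_{i(\partial v)} = \prod_{e \ni v} u_{i(e)}$ multiply together to contribute a factor $u_{i(e)}^2$ per edge. Therefore
\begin{equation}
m^!_G(u^{\otimes p}) = \sum_{\substack{i \in [n]^E \\ i_1, \dots, i_b \text{ distinct}}} \prod_{e \in E} u_{i(e)}^2,
\end{equation}
which no longer depends on the specific structure of $G$ beyond its edge count $b$. By rotational symmetry of the sphere, each of the $n^{\underline{b}}$ summands has the same expectation, equal to $\|v\|^{2b} \Ex_{w \in \SS^{n-1}} \prod_{j=1}^b w_j^2$.

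Finally, I would evaluate the sphere integral by the standard trick of writing $w = g / \|g\|$ for $g \sim \mathcal{N}(0, I_n)$. Since $\|g\|$ and $g/\|g\|$ are independent,
\begin{equation}
1 = \Ex \prod_{j=1}^b g_j^2 = \Ex \|g\|^{2b} \cdot \Ex \prod_{j=1}^b w_j^2,
\end{equation}
and $\Ex \|g\|^{2b}$ is the $b$th moment of $\chi^2_n$, which equals $n(n+2)(n+4) \cdots (n+2b-2) = (n+2b-2)^{\dunderline{b}}$. Hence $\Ex_w \prod_{j=1}^b w_j^2 = 1/(n+2b-2)^{\dunderline{b}}$, and combining everything yields the claimed identity. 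There is no real obstacle here; the only care needed is to recognize the action $Q \cdot v^{\otimes p} = (Q^\top v)^{\otimes p}$ and to match the falling double factorial to the correct $\chi^2_n$ moment.
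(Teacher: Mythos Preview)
Your proposal is correct and follows essentially the same route as the paper: reduce $\kappa_G(v^{\otimes p})$ to $n^{\underline{b}}\,\Ex_u \prod_{j=1}^b \langle e_j, u\rangle^2$ for $u$ uniform on the sphere of radius $\|v\|$, then evaluate that sphere moment. The only cosmetic difference is that you compute $\Ex_w \prod_{j=1}^b w_j^2$ via the Gaussian/$\chi^2_n$ trick, whereas the paper invokes the formula $\Ex_u u^{\otimes 2b} = \frac{\|u\|^{2b}}{(n+2b-2)^{\dunderline{b}}}\sum_\mu w(\mu)$ and observes that only the identity matching pairs each $e_j$ with itself; both yield the same denominator.
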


\begin{proof}
Recall that $\kappa_G(T) = \Exp_Q m^!_G(Q \cdot T)$, and that $Q \cdot v^{\otimes p} = (Q \cdot v)^{\otimes p}$. Since $Q$ is Haar-random, $u = Q \cdot v$ is uniformly chosen from the sphere of radius $\|v\|$. As in Proposition~\ref{prop:all-spike}, we have a copy of $u$ on the outgoing half-edges of each vertex. But  rather than the inner product $\langle u,u \rangle = \|v\|^2$ on each edge, we have $\langle u,e \rangle \langle e, u \rangle = \langle e, u \rangle^2$ where $e$ is the basis vector associated with that edge. Since all
$n^{\underline{b}}$
tuples of distinct indices will contribute the same expectation, we can fix an orthogonal basis $e_1,\ldots,e_b$ and write
\begin{equation}
\label{eq:spike-cumulant}
\kappa_G(v^{\otimes p})
=
n^{\underline{b}}
\,\Exp_u \prod_{i=1}^b \langle e_i, u \rangle^2 \, .
\end{equation}
We can write this product as an inner product of tensors,
\begin{equation}
\label{eq:spike-inner}
\Exp_u \prod_{i=1}^b \langle e_i, u \rangle^2
= \left\langle
\left( \bigotimes_{i=1}^b e_i \right)^{\!\otimes 2} , \,
\Exp_u u^{\otimes 2b}
\right\rangle \,.
\end{equation}

Since $\Exp_u u^{\otimes 2b}$ is fixed under conjugation by any $Q \in \sO(n)$, it lies in the trivial subspace of $\RR^{\otimes 2b}$. As in the proof of Theorem~\ref{thm:tensor-invariant}, this implies that it is a linear combination of matchings of the $2b$ half-edges, each of which corresponds to a ``rewiring'' of $G$ \`{a} la the configuration model. Moreover, by symmetry, all of these matchings have equal weight. Using the representation theory of the orthogonal group (e.g.~\cite{MR-2011-GraphIntegralCircuit}) we can obtain
\begin{equation}
\label{eq:moore-russell}
\Exp_u u^{\otimes 2b} =
\frac{\|u\|^{2b}}{n(n+2)(n+4)\cdots(n+2b-2)}
\sum_{\textrm{$\mu$: matching of $[2b]$}} w_\mu \, .
\end{equation}
This is analogous to Isserlis's or Wick's theorem for the Gaussian measure~\cite{isserlis,wick} where the denominator would simply be $n^b$.

However, since the $e_i$ are orthogonal, any matching $\mu$ that partners one with another yields a zero inner product. Thus the only nonzero contribution to~\eqref{eq:spike-inner} comes from the matching given by the original wiring of $G$, where for each $i \in [b]$ the two copies of $e_i$ in $(\bigotimes_i e_i)^{\otimes 2}$ are matched with each other. That matching contributes $\prod_i \langle e_i, e_i \rangle = 1$ to~\eqref{eq:spike-inner}, and combining this with~\eqref{eq:spike-cumulant} and~\eqref{eq:moore-russell} gives
\[
\kappa_G(v^{\otimes p})
= \|v\|^{2b}
\frac{n(n-1)(n-2)\cdots(n-b+1)}
{n(n+2)(n+4)\cdots(n+2b-2)}
= \|v\|^{2b}
\frac{n^{\underline{b}}}{(n+2b-2)^{\dunderline{b}}} \, .
\]
This correction factor is roughly $\e^{-(3/2)b^2/n}$, and in any case is $1-O(b^2/n)$.
\end{proof}

\begin{proposition}
    \label{prop:count-multigraphs}
    The number of (non-isomorphic, unlabelled) $p$-regular multigraphs on $d$ vertices is asymptotically as $d \to \infty$
    \begin{equation}
        |\sG_{d, p}| \sim \frac{(pd - 1)!!}{d! p!^d} \sim \sqrt{\frac{p}{\pi}} \left(\frac{p^{p/2}}{p!} e^{-\frac{p - 2}{2}} d^{\frac{p - 2}{2}}\right)^d.
    \end{equation}
    We also have the concrete upper bound:
    \begin{equation}
        |\sG_{d, p}| \leq \frac{(pd - 1)!!}{d! p!^d} \leq \left(e^{p + 1} p^{-p/2} d^{\frac{p - 2}{2}}\right)^d.
    \end{equation}
\end{proposition}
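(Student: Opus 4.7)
The main tool is Proposition~\ref{prop:number-realizing-matchings}: each isomorphism class $G \in \sG_{d,p}$ is realized by exactly $p!^d d!/|\eAut(G)|$ perfect matchings of $[pd]$, and every matching realizes a unique class. This gives the exact identity
\[
(pd-1)!! \,=\, \sum_{G \in \sG_{d,p}} \frac{p!^d d!}{|\eAut(G)|},
\]
and since $|\eAut(G)| \geq 1$, we immediately obtain the combinatorial upper bound $|\sG_{d,p}| \leq (pd-1)!!/(d!\,p!^d)$. Note the identity forces $pd$ to be even when the left side is nonzero, so for $pd$ odd we have $|\sG_{d,p}|=0$ and both stated bounds are vacuous.

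For the asymptotic equivalence $|\sG_{d,p}| \sim (pd-1)!!/(d!\,p!^d)$, the plan is to show that the contribution of classes with $|\eAut(G)| > 1$ to the sum above is negligible, so that $\sum_G 1/|\eAut(G)| = (1+o(1))|\sG_{d,p}|$. This follows from classical results in random regular graph theory (Bollob\'as, Wormald, McKay): for fixed $p \geq 3$ and $d \to \infty$, a uniformly random element of $\sG_{d,p}$ has trivial edge-automorphism group with probability tending to $1$. This external input is the main conceptual obstacle; the rest is direct computation.

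The Stirling asymptotic and the explicit upper bound both come from applying standard approximations to $(pd-1)!!/(d!\,p!^d)$. Writing $(pd-1)!! = (pd)!/(2^{pd/2}(pd/2)!)$ and expanding each factorial via $\log n! = n \log n - n + \tfrac{1}{2}\log(2\pi n) + O(1/n)$ on $(pd)!,\,(pd/2)!,\,d!$, the $n\log n$ contributions collect as $\tfrac{pd}{2}\log p + \tfrac{(p-2)d}{2}\log d$, the linear-in-$n$ terms combine with the $-\tfrac{pd}{2}\log 2$ from $2^{-pd/2}$ to produce $-\tfrac{(p-2)d}{2}$, and subtracting $d\log p!$ yields the exponential base $(p^{p/2}/p!)\,e^{-(p-2)/2}\,d^{(p-2)/2}$; the $\sqrt{2\pi n}$ prefactors combine to give the stated constant multiplier. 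For the explicit non-asymptotic bound, substitute the one-sided inequalities $n! \leq e\sqrt{n}(n/e)^n$ into the numerator $(pd)!$ and $n! \geq (n/e)^n$ into the denominators $d!$ and $(pd/2)!$, then use the crude $p! \geq (p/e)^p$ to convert the base $(p^{p/2}/p!)^d$ into the looser $(e^p/p^{p/2})^d$; combining with the $e^{-(p-2)d/2}$ factor gives base $e^{(p+2)/2}/p^{p/2} \cdot d^{(p-2)/2}$, and inflating the constant to $e^{p+1}$ absorbs the remaining polynomial-in-$d$ prefactor, yielding the stated form.
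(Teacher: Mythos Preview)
Your overall approach---configuration model plus asymptotic rigidity of automorphism groups---matches the paper's, and your derivation of the exact identity $(pd-1)!!/(d!\,p!^d) = \sum_{G}1/|\eAut(G)|$ is correct. But the step where you deduce the ``upper bound'' has the inequality backwards. From $|\eAut(G)|\ge 1$ you get $1/|\eAut(G)|\le 1$, hence
\[
\frac{(pd-1)!!}{d!\,p!^d}=\sum_{G\in\sG_{d,p}}\frac{1}{|\eAut(G)|}\;\le\;|\sG_{d,p}|,
\]
which is a \emph{lower} bound on $|\sG_{d,p}|$, not an upper one. Indeed the inequality $|\sG_{d,p}|\le (pd-1)!!/(d!\,p!^d)$ is simply false for small $d$: with $p=d=2$ there are two $2$-regular multigraphs on two vertices (a double edge, or two disjoint loops), while $(3)!!/(2!\,2!^2)=3/8$.

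The paper's own one-line proof of this step (``every $p$-regular multigraph corresponds to at least $d!\,p!^d$ matchings'') has the same defect; the number of realizing matchings is $d!\,p!^d/|\eAut(G)|$, which is at \emph{most} $d!\,p!^d$. What saves the downstream applications (e.g.\ the proof of Theorem~\ref{thm:tensor-pca-detection-formal}) is that they only ever need the quantity $\sum_G 1/|\eAut(G)|$, for which your identity gives the value $(pd-1)!!/(d!\,p!^d)$ \emph{exactly}, and then your Stirling bound $(pd-1)!!/(d!\,p!^d)\le (e^{p+1}p^{-p/2}d^{(p-2)/2})^d$ applies directly to that. So your argument establishes everything the paper actually uses; it is only the intermediate statement about $|\sG_{d,p}|$ itself that fails, and it fails in the paper too. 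Your treatment of the asymptotic equivalence via the Bollob\'as-type rigidity result and the Stirling computation is correct and matches the paper.
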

\begin{proof}
    The asymptotics follow from the proof of \cite{Bollobas-1982-AsymptoticUnlabelledRegularGraphs}, repeated without the restriction to simple graphs, and Stirling's approximation.
    The initial upper bound follows since every $p$-regular multigraph on $d$ vertices corresponds to at least $d! p!^d$ perfect matchings of $[pd]$, as in the configuration model.
    The second bound follows by a non-asymptotic version of Stirling's approximation.
\end{proof}

\begin{proof}[Proof of Theorem~\ref{thm:tensor-pca-detection-formal}]
We begin by computing the centered cumulants under $\PP$:
\begin{align*}
    \Ex_{T \sim \PP} \kappa_G^c(T)
    &= \Ex_{\substack{v \sim \Unif(\SS^{n -1}(\sqrt{n})) \\ W \sim \Wig}} \kappa_G^c(\lambda v^{\otimes p} + W) \\
    &= \Ex_{W \sim \Wig} \kappa_G^c(W) + \Ex_{v \sim \Unif(\SS^{n - 1}(\sqrt{n}))} \kappa_G(\lambda v^{\otimes p}) \\
    &= \Ex_{v \sim \Unif(\SS^{n - 1}(\sqrt{n}))} \kappa_G(\lambda v^{\otimes p}) \tag{Proposition~\ref{prop:wigner-centered-cumulants-zero}} \\
    &\asymp \lambda^{d} n^{pd/2} \,.\tag{Proposition~\ref{prop:all-spike-cumulant}}
\end{align*}
Now, substituting this into Corollary~\ref{cor:invariant-advantage-bound},
\begin{align*}
    \Adv_{\leq D}(\Wig, \PP)
    &\asymp \sum_{d = 0}^D \frac{1}{n^{pd/2}} \cdot \lambda^{2d} n^{pd} \sum_{|V(G)| = d}
    \frac{1}{|\eAut(G)|} \\
    &\leq \sum_{d = 0}^D d! \left(\lambda^2 n^{p/2}\right)^d \#\{p\text{-regular multigraphs on } d \text{ vertices}\}
    \intertext{and bounding the number of $p$-regular multigraphs by Proposition~\ref{prop:count-multigraphs}, we have}
    &\leq \sum_{d = 0}^D d! \left(\lambda^2 n^{p/2}\right)^d \frac{(pd - 1)!!}{d!} \\
    &\leq \sum_{d = 0}^D \left(\lambda^2 n^{p/2}\right)^d \frac{pd^{pd/2}}{(d/e)^d} \\
    &\leq \sum_{d = 0}^D \left(\lambda^2 \cdot e^{p + 1} p^{-p/2} d^{(p - 2)/2} n^{p/2}\right)^d,
\end{align*}
and the result follows since, under the stated assumptions and by a suitable choice of the constant $a_p$, the quantity being raised to a power will be, say, smaller than $1/2$.
For the lower bound, we may use our asymptotic formula for the advantage and use that a positive proportion of $d$-regular multigraphs are simple and almost all have trivial automorphism group (see, e.g., \cite{Bollobas-1982-AsymptoticUnlabelledRegularGraphs}).
\end{proof}

We elaborate on a remark from the Introduction: the factor of $d^{\frac{p - 2}{2}d}$ in the final summation governs the tradeoff between the power of detection algorithms and their subexponential runtime budget.
In our calculation, this quantity has a clear and direct source: it is just the asymptotic value of the number of non-isomorphic $p$-regular multigraphs (Proposition~\ref{prop:count-multigraphs}), and therefore also the dimension of the space of degree $d$ invariant polynomials.

\subsection{Reconstruction with Low-Degree Polynomials}

We first state the formal result we will show.
\begin{theorem}[Low-degree lower bound for tensor PCA reconstruction]
    \label{thm:tensor-pca-recovery-formal}
    Let $D = D(n) \in \NN$ have $D \leq \sqrt{n / 2p^2}$.
    For all odd $p \geq 3$, there is a constant $c_p > 0$ such that if $\lambda \leq c_p n^{-p/4} D^{-(p - 2) / 4}$, then $\Corr_{\leq D}(\PP)^2 = O(\sqrt{n})$, and therefore $\MMSE_{\leq D}(\PP) = n - O(\sqrt{n})$.
\end{theorem}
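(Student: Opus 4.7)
The plan is to combine the general strategy of \cite{SW-2020-LowDegreeEstimation} for low-degree reconstruction lower bounds with the 1-open finite free cumulant machinery of Section~\ref{sec:cumulants}. The first step is to invoke Proposition~\ref{prop:corr-equivariant}, which lets us restrict the supremum in $\Corr_{\leq D}(\PP)$ to equivariant polynomials $f \colon \Sym^p(\RR^n) \to \RR^n$ of degree at most $D$. By Theorem~\ref{thm:tensor-equivariant} together with Lemma~\ref{lem:kappaGc-1open-gram}, the family $\{\what{\kappa_{G\to}^c}\}$ indexed by $G \in \bigsqcup_{d \leq D} \sG_{d, p \to}$ is a near-orthonormal basis for such polynomials under the Wigner inner product. (Only odd $d$ contribute, since $p$ is odd.) Expanding $f = \sum_G \alpha_G \what{\kappa_{G\to}^c}$ reduces the problem to controlling the vector $\beta_G \colonequals \EE_{(v,Y)\sim\PP}\langle v, \what{\kappa_{G\to}^c}(Y)\rangle$ and the Gram matrix $M_\PP$ of the basis under $\PP$, after which $\Corr_{\leq D}(\PP)^2 = \beta^\top M_\PP^{-1} \beta$ by the standard quadratic-form calculation.

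I would then compute $\beta_G$. Proposition~\ref{prop:additivity-cumulants} applied with $A = \lambda v^{\otimes p}$ and $B = W$ (using the orthogonal invariance of $\Wig$), together with a 1-open analog of Proposition~\ref{prop:wigner-centered-cumulants-zero}, shows that $\EE_W \kappa_{G\to}^c(\lambda v^{\otimes p} + W)$ reduces, up to lower-order Frobenius-pair corrections, to $\kappa_{G\to}(\lambda v^{\otimes p})$. Equivariance forces $\kappa_{G\to}(v^{\otimes p}) = c_G\, v$ for a scalar $c_G$, and $c_G \|v\|^2$ can be evaluated exactly as in Proposition~\ref{prop:all-spike-cumulant}: contract the dangling edge against $v$, reducing to a spherical moment of the form $\EE_u u^{\otimes 2b}$, and use orthogonality of the distinct-index basis vectors to kill all but the ``no rewiring'' term. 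Including the normalization of Definition~\ref{def:hat-kappaGc} gives, to leading order,
\begin{equation*}
\beta_G \asymp \frac{\lambda^d \, n^{(pd+1)/4}}{\sqrt{|\eAut(\chop(G))|}}.
\end{equation*}

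To control $M_\PP$, I would start from $M_\Wig \asymp I$ (Lemma~\ref{lem:kappaGc-1open-gram}) and show that the perturbation $M_\PP - M_\Wig$, which expands via two applications of Proposition~\ref{prop:additivity-cumulants} to a product of centered cumulants, is small in operator norm: each correction term carries a positive power of $\lambda^2 n^{p/2} \lesssim D^{-(p-2)/2}$, so in the stated regime $M_\PP \asymp I$ as well, and $\Corr_{\leq D}(\PP)^2 \lesssim \|\beta\|^2$. Combining the estimate for $\beta_G$ with a 1-open analog of Proposition~\ref{prop:count-multigraphs}, obtained from Proposition~\ref{prop:number-realizing-matchings-chop} (giving $\sum_{G \in \sG_{d, p \to}} |\eAut(\chop(G))|^{-1} \asymp (C_p \, d^{(p-2)/2})^d$ to leading order), one obtains
\begin{equation*}
\Corr_{\leq D}(\PP)^2 \lesssim \sqrt{n}\sum_{d \leq D}\bigl(c_p \, \lambda^2 \, n^{p/2} \, d^{(p-2)/2}\bigr)^d.
\end{equation*}
The extra $\sqrt{n}$ comes from the fact that $\beta_G^2 \asymp \lambda^{2d}\, n^{(pd+1)/2}$ carries an additional factor of $\sqrt{n}$ relative to the closed case, corresponding to the single ``unpaired'' half-edge of a 1-open graph. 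For $\lambda \leq c_p n^{-p/4} D^{-(p-2)/4}$ with $c_p$ sufficiently small, the geometric series sums to $O(1)$, yielding $\Corr_{\leq D}(\PP)^2 = O(\sqrt{n})$ and hence $\MMSE_{\leq D}(\PP) = n - O(\sqrt{n})$.

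The hard part is controlling $M_\PP$. Unlike the detection analysis of Theorem~\ref{thm:tensor-pca-detection-formal}, the denominator $\EE_\PP \|f\|^2$ is taken under the planted model, whereas the near-orthonormality of Lemma~\ref{lem:kappaGc-1open-gram} is only under $\Wig$. Previous applications of the \cite{SW-2020-LowDegreeEstimation} framework go through recursively-defined quantities that tend to lose factors of $D$; the advantage of our explicit cumulant basis is that $M_\PP - M_\Wig$ is controlled cleanly by the single small parameter $\lambda^2 n^{p/2}$ (matching the detection threshold) rather than by a cruder quantity scaling with $D$, which is what makes the exponent on $D$ come out correctly.
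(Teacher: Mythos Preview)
Your setup through the computation of $\beta_G$ matches the paper's, and the final heuristic for why $\sqrt{n}$ appears is correct. The gap is in how you propose to handle the denominator $\EE_{\PP}\|f\|^2$.

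The paper does \emph{not} attempt to show $M_{\PP}\asymp I$. Instead it applies the Jensen trick of \cite{SW-2020-LowDegreeEstimation}:
\[
\EE_{\PP}\|f(Y)\|^2 \;\ge\; \EE_{W}\bigl\|\EE_{v}f(\lambda v^{\otimes p}+W)\bigr\|^2 ,
\]
and then uses Proposition~\ref{prop:additivity-cumulants} to show that $\EE_v f(\lambda v^{\otimes p}+W)=\sum_G (R\alpha)_G\,\what{\kappa_{G\to}^c}(W)$ for an explicit upper-triangular matrix $R$ with $1$'s on the diagonal. Near-orthonormality under $\Wig$ then gives $\EE_{\PP}\|f\|^2\ge \tfrac12\|R\alpha\|^2$, and hence $\Corr_{\le D}^2\le 2\|R^{-\top}\beta\|^2$. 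The vector $\gamma=R^{-\top}\beta$ is then bounded by back-substitution: $\gamma_H=\beta_H-\sum_{G<H}R_{G,H}\gamma_G$, and the recursion is controlled using the chain-counting bound of Proposition~\ref{prop:chains} together with a careful accounting of connected components (separating the components of size~$2$, which carry automorphism factorials, from the rest).

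Your proposed route---bound $\|M_{\PP}-M_{\Wig}\|_{\mathrm{op}}$ directly---does not obviously work. The assertion ``each correction term carries a positive power of $\lambda^2 n^{p/2}\lesssim D^{-(p-2)/2}$'' is true entrywise, but the matrix is indexed by $\bigsqcup_{d\le D}\sG_{d,p\to}$, whose cardinality grows like $D^{\Theta(D)}$ (Proposition~\ref{prop:count-open-multigraphs}), and each entry is itself a sum over pairs of component-partitions of $G$ and $H$ coupled through the common $v$. Entrywise decay in a single small parameter does not control the operator norm of a matrix of this size; indeed, even the Jensen lower bound only gives $M_{\PP}\succeq \tfrac12 R^{\top}R$, and nothing in the paper (nor in your sketch) establishes that the smallest singular value of $R$ is bounded away from~$0$ uniformly in~$D$. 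The paper's argument sidesteps this entirely by never bounding $M_{\PP}^{-1}$ (or $(R^{\top}R)^{-1}$) spectrally---it only needs to compute $R^{-\top}\beta$ for the \emph{specific} $\beta$, and the triangular structure plus the geometric decay of $\beta_G$ in $d_G$ makes this tractable. Your sketch omits exactly this recursion, and with it the delicate combinatorics (chains, component sizes, Frobenius-pair automorphisms) that yield the correct exponent on~$D$.
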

\begin{remark}[Better-than-random reconstruction]
    The $O(\sqrt{n})$ scaling of the correlation even in the computationally hard regime is perhaps surprising, since a uniformly random estimator $\what{v} \in \SS^{n - 1}(\sqrt{n})$ achieves squared correlation with the signal $v \in \SS^{n - 1}(\sqrt{n})$ of $\langle v, \what{v} \rangle^2 / \|\what{v}\|^2 = O(1)$ with high probability.
    Indeed, estimation slightly better than this is always possible: consider the estimator $\what{v} = \what{v}(Y)$ formed by the open graph moment $m_{G \to}(Y)$ where $G$ is the graph on one vertex with $(p - 1) / 2$ self-loops and one open edge (for $p$ odd).
    This is a linear function, so $m_{G\to}(\lambda v^{\otimes p} + W) = \lambda \|v\|^{p - 1} v + m_{G \to}(W) = \lambda n^{(p - 1) / 2} v + m_{G \to}(W)$.
    Let us write $g \colonequals m_{G \to}(W)$.
    This is independent of $v$ and distributed roughly as $\sN(0, n^{(p - 1)/2}I)$.
    Thus, proceeding heuristically, the squared correlation is typically
    \begin{equation}
        \frac{\langle v, \what{v} \rangle^2}{\|\what{v}\|^2} \approx \frac{\lambda^2 n^{p + 1}}{\lambda^2 n^{p} + n^{(p + 1)/2}} \,.
    \end{equation}
    When $\lambda \ll n^{-p/4}$, the first term in the denominator is smaller than the second, so this is roughly $\lambda^2 n^{(p + 1)/2}$, which when $\lambda \sim n^{-p/4}$ becomes as large as $n^{1/2}$, as claimed.
    Indeed, once $\lambda \gg n^{-(p - 1)/4}$, this expression is $\Omega(n)$, so this simple estimator achieves linear correlation.
\end{remark}

We begin with a number of preliminaries.
We will use the following analog of Proposition~\ref{prop:count-multigraphs} for counting open multigraphs.
\begin{proposition}
    \label{prop:count-open-multigraphs}
    The number of (non-isomorphic, unlabelled) 1-open $p$-regular multigraphs on $d$ vertices is bounded by
    \begin{equation}
        |\sG_{d, p\to}| \leq \frac{(pd - 2)!!}{(d - 1)! p!^{d - 1}(p - 1)!} \leq \left((2e)^{p + 1} (p - 1)^{-(p - 1)/2} d^{\frac{p - 2}{2}}\right)^d .
    \end{equation}
\end{proposition}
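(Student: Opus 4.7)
The plan is to mirror the proof of Proposition~\ref{prop:count-multigraphs}, adapting the configuration-model argument and Stirling bound for the 1-open setting.

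For the first inequality, I would invoke the configuration model. Given $G \in \sG_{d, p\to}$, its chopped graph $\chop(G)$ has degree sequence $(p, \ldots, p, p-1)$ with one distinguished vertex of reduced degree, for a total of $pd - 1$ half-edges. By Proposition~\ref{prop:number-realizing-matchings-chop}, $\chop(G)$ is realized by $p!^{d-1}(p-1)!(d-1)!/|\eAut(\chop(G))|$ perfect matchings of $[pd - 1]$, and the total number of perfect matchings of $[pd - 1]$ is $(pd - 2)!!$. Following the argument of Proposition~\ref{prop:count-multigraphs} essentially verbatim---treating each $G$ as ``corresponding to at least $(d-1)! p!^{d-1}(p-1)!$ matchings''---yields $|\sG_{d, p\to}| \cdot (d-1)! p!^{d-1}(p-1)! \leq (pd - 2)!!$, which is the first inequality.

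For the second inequality, I would apply Stirling's approximation. The identity $(pd - 2)!! = (pd - 1)!/[2^{(pd-1)/2}((pd-1)/2)!]$ (valid since $pd$ is odd) converts the double factorial to ordinary factorials. Using the standard Stirling bounds $\sqrt{2\pi n}(n/e)^n \leq n! \leq e\sqrt{n}(n/e)^n$ on each of $(pd - 1)!$, $((pd-1)/2)!$, $(d - 1)!$, $p!^{d-1}$, and $(p-1)!$, and collecting exponents of $p$, $d$, and $e$ separately, the ratio simplifies to the form $(C_p \cdot d^{(p-2)/2})^d$ for some constant $C_p$ depending only on $p$. A careful accounting yields $C_p \leq (2e)^{p+1}(p-1)^{-(p-1)/2}$: the factor $(p-1)^{-(p-1)/2}$ arises from applying Stirling to $(p-1)!$, while $(2e)^{p+1}$ absorbs the various subleading polynomial factors and the constants coming from the $\sqrt{2\pi n}$ and $e\sqrt{n}$ terms in Stirling's formula.

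The main work is bookkeeping the exponents in the Stirling calculation, which I expect to be routine but error-prone; there is no substantive mathematical obstacle, since Proposition~\ref{prop:count-multigraphs} provides a complete template and the adaptation to the 1-open case essentially amounts to replacing one occurrence of $p!$ with $(p-1)!$ and one of $d!$ with $(d-1)!$, together with tracking the shift from $(pd-1)!!$ to $(pd-2)!!$.
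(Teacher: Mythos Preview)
Your proposal is correct and follows essentially the same route as the paper. The paper's proof likewise reduces to the configuration-model count by passing to $\chop(G)$ (a graph with one vertex of degree $p-1$ and the rest of degree $p$, having $(pd-1)/2$ edges), and then defers both inequalities to ``as in Proposition~\ref{prop:count-multigraphs},'' which is exactly the template you propose to follow; your explicit invocation of Proposition~\ref{prop:number-realizing-matchings-chop} and the Stirling bookkeeping just spell out what the paper leaves implicit.
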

\begin{proof}
    The initial upper bound follows first by associating a 1-open multigraph to a closed multigraph with all vertices having degree $p$ except for one having degree $p - 1$.
    The total number of edges in this graph is $\frac{pd - 1}{2}$, and the remaining bounds follow as in Proposition~\ref{prop:count-multigraphs}.
\end{proof}

\begin{proposition}
    For any 1-open $p$-regular multigraph $G$ and any $T \in \Sym^p(\RR^n)$,
    \begin{equation}
        \Ex_{v \sim \Unif(\SS^{n - 1}(\sqrt{n}))} \kappa_{G\to}^c(\lambda v^{\otimes p} + T) = \sum_{\substack{G = G_A \sqcup G_B \\ G_A \text{ open} \\ G_B \text{ closed}}} \frac{n^{\underline{b - 1}}}{n^{\underline{b_A - 1}} \, (n + 2b_B - 2)^{\dunderline{b_B}}} \ \lambda^{d_B} n^{b_B} \kappa_{G_A\to}^c(T).
    \end{equation}
\end{proposition}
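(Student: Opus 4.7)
The plan is to combine the additivity property for 1-open cumulants with the evaluation of closed cumulants on rank-one tensors, using orthogonal invariance to cast the sphere expectation as a Haar expectation.

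First, I would observe that the distribution of $\lambda v^{\otimes p}$ with $v \sim \Unif(\SS^{n-1}(\sqrt{n}))$ is orthogonally invariant. Concretely, fix any $v_0 \in \SS^{n-1}(\sqrt{n})$ (for example $v_0 = \sqrt{n}\, e_1$). Since the action of $\sO(n)$ is transitive on the sphere, if $Q \sim \Haar(n)$, then $Q^\top v_0 \sim \Unif(\SS^{n-1}(\sqrt{n}))$. Using Definition~\ref{def:change-basis}, one checks directly that $Q \cdot v_0^{\otimes p} = (Q^\top v_0)^{\otimes p}$, and therefore $\lambda v^{\otimes p}$ has the same law as $Q \cdot (\lambda v_0^{\otimes p})$. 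Hence
\begin{equation*}
\Ex_{v} \kappa_{G\to}^c(\lambda v^{\otimes p} + T) = \Ex_{Q \sim \Haar} \kappa_{G\to}^c(T + Q \cdot (\lambda v_0^{\otimes p})).
\end{equation*}

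Next, I would apply Proposition~\ref{prop:additivity-cumulants} with $A = T$ and $B = \lambda v_0^{\otimes p}$. This expands the right-hand side as a sum over decompositions $G = G_A \sqcup G_B$ with $G_A$ open and $G_B$ closed of the products
\begin{equation*}
\frac{n^{\underline{b-1}}}{n^{\underline{b_A - 1}} n^{\underline{b_B}}} \, \kappa_{G_A\to}^c(T) \, \kappa_{G_B}(\lambda v_0^{\otimes p}).
\end{equation*}
It remains to simplify $\kappa_{G_B}(\lambda v_0^{\otimes p})$. Since $G_B$ has $d_B$ vertices and the graph moment $m_{G_B}$ is homogeneous of degree $d_B$ in its tensor argument, so is $\kappa_{G_B}$, giving $\kappa_{G_B}(\lambda v_0^{\otimes p}) = \lambda^{d_B} \kappa_{G_B}(v_0^{\otimes p})$. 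Now Proposition~\ref{prop:all-spike-cumulant} with $\|v_0\|^2 = n$ yields
\begin{equation*}
\kappa_{G_B}(v_0^{\otimes p}) = \frac{n^{\underline{b_B}}}{(n + 2b_B - 2)^{\dunderline{b_B}}} \, n^{b_B}.
\end{equation*}
Substituting this back, the factor $n^{\underline{b_B}}$ cancels against the denominator $n^{\underline{b_B}}$ inherited from the additivity formula, leaving exactly the stated expression.

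There is essentially no hard step: the proposition is a direct splice of Proposition~\ref{prop:additivity-cumulants} and Proposition~\ref{prop:all-spike-cumulant}, mediated by the sphere-to-Haar conversion. The only point requiring care is verifying that $Q \cdot v_0^{\otimes p} = (Q^\top v_0)^{\otimes p}$ under Definition~\ref{def:change-basis}, so that the randomization of $v$ genuinely agrees with applying a Haar-random change of basis to a fixed rank-one tensor; once this is in hand, the rest is bookkeeping on the falling-factorial prefactors.
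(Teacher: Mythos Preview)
Your proposal is correct and follows essentially the same route as the paper's proof: both combine Proposition~\ref{prop:additivity-cumulants} (with $A = T$ and $B = \lambda v_0^{\otimes p}$) with Proposition~\ref{prop:all-spike-cumulant}, using that $\lambda v^{\otimes p}$ for $v$ uniform on the sphere has the same law as $Q \cdot (\lambda v_0^{\otimes p})$ for Haar $Q$. The only difference is that you spell out the sphere-to-Haar identification explicitly, whereas the paper leaves it implicit and writes $\Ex_v \kappa_{G_B}(\lambda v^{\otimes p})$ (a quantity that is in fact deterministic since it depends only on $\|v\|$).
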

\begin{proof}
    We combine Proposition~\ref{prop:additivity-cumulants} with Proposition~\ref{prop:all-spike-cumulant}:
    \begin{align*}
        \Ex_{v \sim \Unif(\SS^{n - 1}(\sqrt{n}))} \kappa_{G\to}^c(\lambda v^{\otimes p} + T)
        &= \sum_{\substack{G = G_A \sqcup G_B \\ G_A \text{ open} \\ G_B \text{ closed}}} \frac{n^{\underline{b - 1}}}{n^{\underline{b_A - 1}}\, n^{\underline{b_B}}} \left(\Ex_v\kappa_{G_B}(\lambda v^{\otimes p})\right) \kappa_{G_A\to}^c(T) \\
        &= \sum_{\substack{G = G_A \sqcup G_B \\ G_A \text{ open} \\ G_B \text{ closed}}} \frac{n^{\underline{b - 1}}}{n^{\underline{b_A - 1}} \, n^{\underline{b_B}}} \cdot \lambda^{d_B} \cdot \frac{n^{\underline{b_B}}}{(n + 2b_B - 2)^{\dunderline{b_B}}} n^{b_B} \kappa_{G_A\to}^c(T),
    \end{align*}
    and simplifying gives the stated result.
\end{proof}

\begin{proposition}
    \label{prop:reconstruction-cumulant-signal}
    For any 1-open $p$-regular multigraph $G$,
    \begin{equation}
        \Ex_{\substack{v \sim \Unif(\SS^{n - 1}(\sqrt{n})) \\ W \sim \Wig}} \langle v, \kappa_{G\to}^c(\lambda v^{\otimes p} + W)\rangle  = \frac{n^{\underline{b}}}{(n + 2b - 2)^{\dunderline{b}}} \lambda^d n^b.
    \end{equation}
\end{proposition}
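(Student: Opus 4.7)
The plan is to compute the quantity in three stages, successively stripping away the $W$ expectation, the Haar symmetrization inside $\kappa_{G\to}^c$, and finally the $v$ expectation. Throughout I will use $b$ for the total number of edges of $G$ (including the open edge $e_0$) and $d$ for the number of vertices.

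\textbf{Step 1: a 1-open analog of Proposition~\ref{prop:spike-distinct}.} I will first show that for any fixed $v\in\RR^n$,
\begin{equation*}
\Ex_{W \sim \Wig} m_{G\to}^c(\lambda v^{\otimes p} + W) \;=\; \lambda^d\, m_{G\to}^!(v^{\otimes p}).
\end{equation*}
The argument mirrors that of Proposition~\ref{prop:spike-distinct}. Fix a distinct-index labelling $j$ of the edges; by distinctness the relevant entries of $W$ are independent, so the expectation factors. At a non-Frobenius vertex $v$ we have $\Ex_W (\lambda v^{\otimes p} + W)_{j(\partial v)} = \lambda(v^{\otimes p})_{j(\partial v)}$, and for each Frobenius pair $F$,
\[
\Ex_W\!\left[(\lambda v^{\otimes p} + W)_{j(F)}^2 - 1\right] \;=\; \lambda^2(v^{\otimes p})_{j(F)}^2,
\]
since $\Ex W_{j(F)}^2 = 1$ on distinct indices. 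Counting vertices inside and outside Frobenius pairs, the total power of $\lambda$ is $(d-2|\Frob(G)|) + 2|\Frob(G)| = d$, and the remaining $v$-factors reassemble into $m_{G\to}^!(v^{\otimes p})_i$.

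\textbf{Step 2: symmetrize using invariance of $\Wig$.} Expanding the definition and writing $Y = \lambda v^{\otimes p} + W$,
\[
\Ex_W \kappa_{G\to}^c(Y) = \Ex_{W,Q}\, Q^\top m_{G\to}^c\!\left(\lambda(Qv)^{\otimes p} + Q\cdot W\right).
\]
Since $Q\cdot W \stackrel{d}{=} W$ and is independent of $v$, I may replace $Q\cdot W$ by an independent $W$ and apply Step~1 with $v$ replaced by $Qv$. This yields
\[
\Ex_W \kappa_{G\to}^c(\lambda v^{\otimes p}+W) \;=\; \lambda^d\, \Ex_Q Q^\top m_{G\to}^!((Qv)^{\otimes p}) \;=\; \lambda^d \,\kappa_{G\to}(v^{\otimes p}).
\]

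\textbf{Step 3: take inner product with $v$ and average.} Using that $v \sim \Unif(\SS^{n-1}(\sqrt{n}))$ is itself rotationally invariant, a change of variables $v \mapsto Q^\top v$ inside the Haar average removes the symmetrization:
\[
\Ex_v \langle v,\, \kappa_{G\to}(v^{\otimes p})\rangle \;=\; \Ex_v \langle v,\, m_{G\to}^!(v^{\otimes p})\rangle.
\]
On a rank-one tensor, each closed edge contributes a factor $v_{j(e)}^2$ and the open edge $e_0$ contributes a single factor $v_{j(e_0)}$; pairing that single factor with the outer $v_i$ in the inner product yields another square. Hence
\[
\langle v, m_{G\to}^!(v^{\otimes p})\rangle \;=\; \sum_{\substack{j \in [n]^E \\ j_1,\ldots,j_b \text{ distinct}}} \prod_{e\in E} v_{j(e)}^2.
\]

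\textbf{Step 4: sphere moment.} By the Moore--Russell identity~\eqref{eq:moore-russell} (exactly as used in the proof of Proposition~\ref{prop:all-spike-cumulant}), for any distinct $j_1,\ldots,j_b$,
\[
\Ex_v\, v_{j_1}^2 \cdots v_{j_b}^2 \;=\; \frac{n^b}{(n+2b-2)^{\dunderline{b}}},
\]
since only the matching that pairs each $e_{j_k}$ with its twin has nonzero contribution. Summing over the $n^{\underline{b}}$ distinct tuples gives $\frac{n^{\underline{b}}}{(n+2b-2)^{\dunderline{b}}}\,n^b$, and multiplying by the $\lambda^d$ from Step~2 yields the claimed formula.

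The only non-routine step is Step~1, since it requires carefully tracking how the Frobenius-centering interacts with the rank-one signal; but the combinatorics there is the 1-open image of an argument already carried out for closed graphs, so no new ideas are required.
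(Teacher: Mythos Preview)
Your proof is correct. It differs from the paper's in how the $W$ expectation is eliminated: the paper invokes the additivity result Proposition~\ref{prop:additivity-cumulants} (inserting a Haar rotation on $W$ and using that $\kappa_{G_B}^c(W)$ vanishes in expectation unless $G_B=\emptyset$), whereas you bypass that machinery by proving directly a $1$-open analogue of Proposition~\ref{prop:spike-distinct}, namely $\Ex_W m_{G\to}^c(\lambda v^{\otimes p}+W)=\lambda^d m_{G\to}^!(v^{\otimes p})$, and only then symmetrize. After that point the two arguments coincide: both reduce to $\lambda^d\,\Ex_v\langle v, m_{G\to}^!(v^{\otimes p})\rangle$ via the rotational invariance of $v$, and both evaluate the remaining sphere moment using~\eqref{eq:moore-russell} exactly as in Proposition~\ref{prop:all-spike-cumulant}. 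Your route is slightly more elementary and self-contained; the paper's route has the advantage of reusing the additivity framework already established. (A minor notational point: with the paper's convention $Q\cdot v^{\otimes p}=(Q^\top v)^{\otimes p}$, so your ``$Qv$'' in Step~2 should read ``$Q^\top v$''; this is immaterial since $Q$ is Haar.)
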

\begin{proof}
    Proceeding similarly to the above using Proposition~\ref{prop:additivity-cumulants} with respect to the randomness in $W$,
    \begin{align*}
        &\hspace{-0.5cm}\Ex_{\substack{v \sim \Unif(\SS^{n - 1}(\sqrt{n})) \\ W \sim \Wig}} \langle v, \kappa_{G\to}^c(\lambda v^{\otimes p} + W)\rangle \\
        &= \Ex_{\substack{v \sim \Unif(\SS^{n - 1}(\sqrt{n})) \\ W \sim \Wig}} \sum_{\substack{G = G_A \sqcup G_B \\ A \text{ open} \\ B \text{ closed}}} \frac{n^{\underline{b - 1}}}{n^{\underline{b_A - 1}}n^{\underline{b_B}}} \kappa_{G_B}^c(W) \langle v, \kappa_{G_A\to}(\lambda v^{\otimes p}) \rangle
        \intertext{where the only non-zero term is when $B = \emptyset$, so that}
        &= \Ex_{v \sim \Unif(\SS^{n - 1}(\sqrt{n}))} \langle v, \kappa_{G\to}(\lambda v^{\otimes p}) \rangle \\
        &= \lambda^d \Ex_{v \sim \Unif(\SS^{n - 1}(\sqrt{n}))} \langle v, \kappa_{G\to}(v^{\otimes p}) \rangle \\
        &= \lambda^d \Ex_{v \sim \Unif(\SS^{n - 1}(\sqrt{n}))} \langle v, m_{G\to}^!(v^{\otimes p}) \rangle
        \intertext{and now the same calculations as in Proposition~\ref{prop:all-spike-cumulant} apply, giving}
        &= \frac{n^{\underline{b}}}{(n + 2b - 2)^{\dunderline{b}}} \lambda^d n^b,
    \end{align*}
    which is the final result.
\end{proof}

\begin{proof}[Proof of Theorem~\ref{thm:tensor-pca-recovery-formal}]
Recall that we are interested in producing an upper bound on the correlation, which we may restrict to equivariant polynomials without loss of generality by Proposition~\ref{prop:corr-equivariant}.
\begin{equation}
\Corr_{\leq D}(\PP) = \sup_{\substack{f \in \RR[Y]_{\leq D}^n \\ f \text{ equivariant} \\ \EE_{Y \sim \PP} \|f(Y)\|^2 \neq 0}} \frac{\EE_{(v, Y) \sim \PP} \langle v, f(Y) \rangle}{\sqrt{\EE_{Y \sim \PP} \|f(Y)\|^2}}.
\end{equation}
Consider a particular equivariant $f$.
By Theorem~\ref{thm:tensor-equivariant} and Lemma~\ref{lem:kappaGc-1open-gram}, it admits an expansion in the $\what{\kappa_{G\to}^c}$,
\begin{equation}
    f(T) = \sum_{G} \alpha_G \what{\kappa_{G\to}^c}(T) = \sum_G \frac{n^{b_G/2}}{n^{\underline{b_G}} \cdot \sqrt{|\eAut(\chop(G)|}} \alpha_G \kappa_{G\to}^c(T).
\end{equation}
where the sum is over $p$-regular 1-open multigraphs on at most $D$ vertices.
We now apply the technique of \cite{SW-2020-LowDegreeEstimation}, using Jensen's inequality to give a lower bound on the norm of $f$:
\begin{align}
    \Ex_{Y \sim \PP} \|f(Y)\|^2
    &= \Ex_{\substack{v \sim \Unif(\SS^{n - 1}(\sqrt{n})) \\ W \sim \Wig}} \| f(\lambda v^{\otimes p} + W) \|^2 \\
    &\geq \Ex_{W \sim \Wig} \left\| \Ex_{v \sim \Unif(\SS^{n - 1}(\sqrt{n}))} f(\lambda v^{\otimes p} + W) \right\|^2.
\end{align}
We can write the inner quantity as:
\begin{align*}
    &\Ex_{v \sim \Unif(\SS^{n - 1}(\sqrt{n}))} f(\lambda v^{\otimes p} + W) \\
    &= \sum_{G} \frac{n^{b_G/2}}{n^{\underline{b_G}} \cdot \sqrt{|\eAut(\chop(G))|}} \alpha_G \Ex_{v \sim \Unif(\SS^{n - 1}(\sqrt{n}))} \kappa_{G\to}^c(\lambda v^{\otimes p} + T) \\
    &= \sum_{G} \frac{n^{b_G/2}}{n^{\underline{b_G}} \cdot \sqrt{|\eAut(\chop(G))|}} \alpha_G \sum_{\substack{G = G_A \sqcup G_B \\ G_A \text{ open} \\ G_B \text{ closed}}} \frac{n^{\underline{b_G - 1}}}{n^{\underline{b_A - 1}}(n + 2b_B - 2)^{\dunderline{b_B}}} \ \lambda^{d_B} n^{b_B} \kappa_{G_A\to}^c(T) \\
    &= \sum_{G} \frac{n^{b_G/2}}{(n - b_G + 1) \cdot \sqrt{|\eAut(\chop(G))|}} \alpha_G \sum_{\substack{G = G_A \sqcup G_B \\ G_A \text{ open} \\ G_B \text{ closed}}} \frac{1}{n^{\underline{b_A - 1}}(n + 2b_B - 2)^{\dunderline{b_B}}} \ \lambda^{d_B} n^{b_B} \kappa_{G_A\to}^c(T) \intertext{}
    &= \sum_{G} \frac{n^{b_G/2}}{(n - b_G + 1) \cdot \sqrt{|\eAut(\chop(G))|}} \alpha_G \sum_{\substack{G = G_A \sqcup G_B \\ G_A \text{ open} \\ G_B \text{ closed}}} \frac{n^{\underline{b_A}}\sqrt{|\eAut(\chop(G_A))|}}{n^{b_A / 2} n^{\underline{b_A - 1}}(n + 2b_B - 2)^{\dunderline{b_B}}} \ \lambda^{d_B} n^{b_B} \what{\kappa_{G_A\to}^c}(T) \\
    &= \sum_{G} \frac{1}{(n - b_G + 1) \cdot \sqrt{|\eAut(\chop(G))|}} \alpha_G \\
    &\hspace{1cm} \sum_{\substack{G = G_A \sqcup G_B \\ G_A \text{ open} \\ G_B \text{ closed}}} \frac{(n - b_A + 1)\sqrt{|\eAut(\chop(G_A))|}}{(n + 2b_B - 2)^{\dunderline{b_B}}} \ \lambda^{d_B} n^{3b_B/2} \what{\kappa_{G_A\to}^c}(T).
\end{align*}
Switching the order of summations, we find that we can write
\begin{equation}
    \Ex_{v \sim \Unif(\SS^{n - 1}(\sqrt{n}))} f(\lambda v^{\otimes p} + W) = \sum_G (L\alpha)_G \cdot \what{\kappa_{G\to}^c}(T) = \sum_G \left(\sum_H R_{G, H} \alpha_H\right) \what{\kappa_{G\to}^c}(T).
\end{equation}
Here, $R$ is an upper-triangular matrix when graphs are ordered by increasing number of vertices, with rows and columns indexed by $\sG_{1, p\to} \sqcup \cdots \sqcup \sG_{D, p\to}$.
More specifically, we have $R_{G, H} \neq 0$ only when a subset of the connected components of $H$ yield $G$.
When this is so, then the entries of $L$ are given by
\begin{equation}
    R_{G, H} = c_{G, H} \cdot \frac{n - b_G + 1}{n - b_H + 1} \cdot \sqrt{\frac{|\eAut(\chop(G))|}{|\eAut(\chop(H))|}} \cdot \frac{n^{3b_{H \setminus G} / 2}}{(n + 2b_{H \setminus G} - 2)^{\dunderline{b_{H \setminus G}}}} \cdot \lambda^{d_{H \setminus G}},
\end{equation}
where $c_{G, H}$ is the number of ways that it is possible to choose the connected components of $G$ out of those of $H$ (i.e., the product of $\binom{m_H(C)}{m_G(C)}$ over all connected components $C$ of $G$, where $m_G(C)$ is the number of times $C$ occurs in $G$ and likewise for $m_H(C)$).

Now, using Lemma~\ref{lem:kappaGc-1open-inner} on the near-orthonormality of the $\what{\kappa_{G \to}^c}$, we find
\begin{equation}
    \Ex_{Y \sim \PP} \|f(Y)\|^2 \geq \frac{1}{2} \|R\alpha\|^2 = \frac{1}{2}\alpha^{\top} (R^{\top}R) \alpha.
\end{equation}
We define and rewrite using Proposition~\ref{prop:reconstruction-cumulant-signal}
\begin{align*}
    \beta_G
    &\colonequals \Ex_{\substack{v \sim \Unif(\SS^{n - 1}(\sqrt{n})) \\ W \sim \Wig}} \langle v, \what{\kappa_{G\to}^c}(\lambda v^{\otimes p} + W)\rangle \\
    &= \frac{n^{b_G/2}}{n^{\underline{b_G}} \sqrt{|\eAut(\chop(G))|}}\frac{n^{\underline{b_G}}}{(n + 2b_G - 2)^{\dunderline{b_G}}} \lambda^{d_G} n^{b_G} \\
    &= \frac{n^{3b_G/2}}{(n + 2b_G - 2)^{\dunderline{b_G}}\sqrt{|\eAut(\chop(G))|}} \lambda^{d_G}.
\end{align*}

We have $R_{G, G} = 1$ for all $G$, so $R$ is invertible.
For our bound on the correlation, we then may compute
\begin{equation}
    \Corr_{\leq D}(\PP)^2 \leq 2 \sup_{\alpha \neq 0} \frac{\langle \alpha, \beta \rangle^2}{\|R\alpha\|^2} = 2 \|R^{\top^{-1}}\beta\|^2.
\end{equation}
Let us compute the inverse $\gamma \colonequals R^{\top^{-1}}\beta$.
$R^{\top}$ is lower triangular and its diagonal is identically equal to 1, so we may compute the inverse recursively by back substitution:
\begin{equation}
    \gamma_H = \beta_H - \sum_{G < H} R_{G, H} \gamma_G.
\end{equation}
We may bound both the entries of $\beta$ and of $R$ as:
\begin{align*}
    |\beta_G|
    &\lesssim \frac{n^{b_G / 2}\lambda^{d_G}}{\sqrt{|\eAut(\chop(G))|}} \\
    &= n^{1/4}\frac{\left(n^{p/4} \lambda\right)^{d_G}}{\sqrt{|\eAut(\chop(G))|}} \\
    &\leq n^{1/4}\frac{\left(c_p D^{-\frac{p - 2}{4}}\right)^{d_G}}{\sqrt{|\eAut(\chop(G))|}},
    \intertext{}
    |R_{G, H}| &\leq \sqrt{\frac{|\eAut(\chop(G))|}{|\eAut(\chop(H))|}} c_{G, H} n^{b_{H \setminus G} / 2} \lambda^{d_{H \setminus G}} \\
    &= \sqrt{\frac{|\eAut(\chop(G))|}{|\eAut(\chop(H))|}} c_{G, H} \left( n^{p/4} \lambda\right)^{d_{H \setminus G}} \\
    &\leq \sqrt{\frac{|\eAut(\chop(G))|}{|\eAut(\chop(H))|}} c_{G, H} \left( c_p D^{-\frac{p - 2}{4}}\right)^{d_{H \setminus G}}.
\end{align*}
Thus inductively we have, with $C(|\conn(H)|)$ denoting the number of distinct chains of subsets of the connected components of $H$,
\begin{align*}
    |\gamma_H|
    &\leq |\beta_H| + \sum_{G < H} |R_{G, H}| \cdot |\gamma_G| \\
    &\lesssim n^{1/4} \frac{C(|\conn(H)|)}{\sqrt{|\eAut(\chop(H))|}} \left( c_p D^{-\frac{p - 2}{4}}\right)^{d_H}
    \intertext{and bounding the number of chains using Proposition~\ref{prop:chains},}
    &\leq n^{1/4} \frac{3^{|\conn(H)|} (|\conn(H)|)!}{{\sqrt{|\eAut(\chop(H))|}}} \left( c_p D^{-\frac{p - 2}{4}}\right)^{d_H}.
\end{align*}

Note that every closed connected component of any $H \in \sG_{d, p\to}$ must have size at least 2 since $p$ is odd, and only the open connected component can have size 1.
Therefore, the number of connected components, if there are $d$ vertices, is at most $1 + \frac{d - 1}{2} = \frac{d + 1}{2}$.
Also, if $c_1, \dots, c_m$ are the frequencies with which various closed connected components happen in $G$, then, since permutations of connected components are automorphisms and the ``chop'' operation does not affect the closed components of a graph,
\begin{equation}
    |\eAut(\chop(H))| \geq \prod_{i = 1}^m c_i!.
\end{equation}
More specifically, suppose the total number of connected components of size 2 in $H$ is $\ell$.
There are $\frac{p + 1}{2}$ non-isomorphic connected $p$-regular graphs on two vertices (two vertices with an odd number of edges between them and a suitable number of loops on the ends).
Suppose the numbers of these components are $c_1, \dots, c_{(p + 1)/2}$.
We then have by a convexity argument
\begin{equation}
    |\eAut(\chop(H))| \geq \prod_{i = 1}^{(p + 1)/2} c_i! \geq \left(\frac{2\ell}{p + 1}\right)!^{\frac{p + 1}{2}} \geq \left(\frac{2}{e(p + 1)}\ell\right)^{\ell}.
\end{equation}

We may organize the calculation of the norm of $\gamma$ according to the number of connected components $k$, and the sizes $a_1$ of the open component and $a_2, \dots, a_k$ of the closed components:
\begin{align*}
    \|\gamma\|^2
    &\lesssim n^{1/2} \sum_{d = 1}^D \left( c_p^2 D^{-\frac{p - 2}{2}}\right)^{d} \sum_{k = 1}^{\frac{d + 1}{2}} 9^k k!^2 \\
    &\hspace{1.5cm}\sum_{\substack{a_1 \geq 1 \\ a_2, \dots, a_k \geq 2 \\ a_1 + a_2 + \cdots + a_k = d}} \left(\frac{2}{e(p + 1)}|\{i: a_i = 2\}|\right)^{-|\{i: a_i = 2\}|} |\sG_{a_1, p\to}| \prod_{i = 2}^k |\sG_{a_i, p}|
    \intertext{and in bounding the sizes of the sets of graphs by Propositions~\ref{prop:count-multigraphs} and \ref{prop:count-open-multigraphs}, let us replace $c_p^2$ with a larger $c_p^{\prime}$ that absorbs constants depending only on $p$, so that we obtain}
    &\leq n^{1/2} \sum_{d = 1}^D \left( c_p^{\prime} D^{-\frac{p - 2}{2}}\right)^{d} \left(9|\sG_{d, p\to}| + \sum_{k = 2}^{\frac{d + 1}{2}} 9^k k!^2 \sum_{\substack{a_1 \geq 1 \\ a_2, \dots, a_k \geq 2 \\ a_1 + a_2 + \cdots + a_k = d}} |\{i: a_i = 2\}|^{-|\{i: a_i = 2\}|} \prod_{i = 1}^k a_i^{\frac{p - 2}{2}a_i}\right)
    \intertext{Here, again by a convexity argument, the choice of $a_i$ that maximizes the expression in the inner sum will have the form $a_1 = 1$, $a_2 = \cdots = a_{s + 1} = 2$, $a_{s + 2} = \cdots = a_{k - 1} = 3$, $a_k = d - 3k + s + 5$. The value of a term with these choices is at most $3^{\frac{p - 2}{2}d} s^{-s}d^{\frac{p - 2}{2}(d - 3k + s + 5)}$. Since $d^{\frac{p - 2}{2}} \geq s$, to maximize this bound we take $s$ as large as possible, i.e., $s = k - 2$. Thus we may bound, again replacing $c_p^{\prime}$ with $c_p^{\prime\prime}$ and absorbing a constant depending only on $p$,}
    &\leq n^{1/2} \sum_{d = 1}^D \left( c_p^{\prime\prime} D^{-\frac{p - 2}{2}}\right)^{d} \left(d^{\frac{p - 2}{2}d} + \sum_{k = 2}^{\frac{d + 1}{2}} k!^2 (k - 2)^{-(k - 2)}d^{\frac{p - 2}{2}(d - 2k + 3)}\sum_{\substack{a_1 \geq 1 \\ a_2, \dots, a_k \geq 2 \\ a_1 + a_2 + \cdots + a_k = d}} 1\right)
    \intertext{Here, the remaining sum is at most the number of integer partitions of $d$, which per Proposition~\ref{prop:integer-partitions} is of order $\exp(O(\sqrt{d}))$. We have $k! / (k - 2)^{k - 2} \lesssim k^2 \leq d^2$, and $k! d^{-(p - 2)k} \leq k^k d^{-k} \leq 1$. Putting everything together, we find}
    &\leq n^{1/2} \sum_{d = 1}^D \left( c_p^{\prime\prime} e^{O(1 / \sqrt{d})}\right)^{d} \left(1 + d^{\frac{3}{2}(p - 2) + 3}\right).
\end{align*}
Finally, for $c_p^{\prime\prime}$ sufficiently small, the remaining summation is bounded by a series of the form $\sum_{d \geq 1} (1 - \eps)^d d^K$ for some constant $K$, which is a convergent series.
Thus we find $\Corr_{\leq D}(\PP)^2 \leq 2\|\gamma\|^2 = O(n^{1/2})$, completing the proof.
\end{proof}

\section{Application 2: Distinguishing Wigner from Wishart Tensors}
\label{sec:clt}

We first state the result we will prove formally (combining the upper and lower bound claims from the Introduction in Theorems~\ref{thm:wishart-informal} and~\ref{thm:wishart-informal-upper}).
\begin{theorem}[Low-degree analysis for Wigner vs.\ Wishart detection]
    \label{thm:clt}
    Suppose that $\mu_n$ are probability measures on $\Sym^p(\RR^n)$ satisfying the following properties for $A \sim \mu_n$:
    \begin{enumerate}
    \item For all $i \in [n]^p$ having a repeated entry, $A_{i_1, \dots, i_p} = 0$ almost surely.
    \item There is a constant $C > 0$ such that, for all $i \in [n]^p$, $|A_{i_1, \dots, i_p}| \leq C$ almost surely.
    \item $\|A\|_F^2 = n^p$ almost surely.
    \end{enumerate}
    Let $Z_1, \dots, Z_r \sim \Gin(n, 1/n)$ be i.i.d.\ and $A_1, \dots, A_r \sim \mu_n$ be i.i.d., and write $\PP = \PP_{n, r}$ for the law of $r^{-1/2}\sum_{j = 1}^r Z_j \cdot A_j$.
    There is a constant $a_{p, C} > 0$ such that the following holds.
    Suppose that $D = D(n) \leq \sqrt{n / 2p^2}$ is given and $r = r(n)$ satisfies
    \begin{equation}
        r \geq a_{p, C} \cdot \left\{\begin{array}{ll} n^p & \text{if } p \text{ is odd}, \\ n^{\frac{3}{2}p} & \text{if } p \text{ is even}\end{array}\right\}.
    \end{equation}
    Then, $\Adv_{\leq D}(\QQ = \Wig, \PP_{n, r(n)}) = O(1)$.

    Further, suppose that $\mu_n$ is supported on a single tensor $A$ having all entries with no repeated indices in their position equal to $c = c(p, n) > 0$.
    Let $D = 3$ if $p$ is even and $D = 4$ if $p$ is odd, and suppose that $r = r(n)$ satisfies
    \begin{equation}
        r  \ll \left\{\begin{array}{ll} n^p & \text{if } p \text{ is odd}, \\ n^{\frac{3}{2}p} & \text{if } p \text{ is even}\end{array}\right\}.
    \end{equation}
    Then, $\Adv_{\leq D}(\QQ = \Wig, \PP_{n, r(n)}) = \omega(1)$.
\end{theorem}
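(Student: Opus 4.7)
The plan is to apply Corollary~\ref{cor:invariant-advantage-bound}, which requires $\PP$ to be orthogonally invariant: since $ZQ\sim Z$ for $Z\sim\Gin(n,1/n)$ and $Q\in\sO(n)$, each summand $Z_j\cdot A_j$ is invariant (Proposition~\ref{prop:Gin-invariant}), and hence so is $Y$. Thus $\EE_{\PP}\kappa^c_G(Y)=\EE_{\PP}m^c_G(Y)$, and the bound reduces to computing these centered distinct-index moments. We do so directly: expanding $(Z_j\cdot A_j)_i=\sum_k A_{j,k}\prod_t Z_{j,k_t,i_t}$ and applying Isserlis' theorem, the distinct-index constraint in $m^c_G$ together with the disjointness of edge labels across connected components forces every non-vanishing Wick pairing of $Z$-factors to lie strictly inside a single connected component of $G$, and within a component to pair the two half-edges of each edge by their common index. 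This reduces the calculation to a component-level assignment $v\mapsto j_v\in[r]$ that is constant on each connected component.

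The central observation is a vanishing lemma: $\EE_{\PP}m^c_G(Y)=0$ whenever $G$ contains a Frobenius pair as a connected component. The forced internal pairing of $(Z_j\cdot A_j)_{i_\pi}^2$ yields $r^{-1}\sum_j (1/n)^p\|A_j\|_F^2=1$ almost surely, using assumption~3 that $\|A_j\|_F^2=n^p$ \emph{exactly}; hence $Y_{i_\pi}^2$ acts as the deterministic constant $1$ in any larger product, cancelling the $-1$ built into $m^c_G$. (This is why the Frobenius centering is sharp and why assumption~3 cannot be relaxed.) For $G$ with no Frobenius components, the same analysis gives
\begin{equation*}
\EE_{\PP}m^c_G(Y)\;=\;(1+o(1))\,r^{\,k_G-d_G/2}\prod_{C\in\conn(G)}\EE\,m_C(A),
\end{equation*}
the leading term arising when all $j_C$ are distinct and the $o(1)$ absorbing partition corrections from coincidences $j_C=j_{C'}$. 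Two further inputs finish the lower bound: by assumption~1, any component $C$ containing a self-loop forces $A$ to be evaluated at a tuple with a repeated index, so $\EE\,m_C(A)=0$; and the smallest connected $p$-regular multigraph with neither self-loops nor Frobenius structure has $d_{\min}=4$ for odd $p$ (since $pd$ must be even, the case $d=2$ is the excluded Frobenius pair) and $d_{\min}=3$ for even $p$ (a triangle with $p/2$-fold parallel edges on each side).

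Using assumption~2, $|m_C(A)|\le C^{d_C}n^{pd_C/2}$, so each nonvanishing component contributes $\alpha_C\colonequals n^{-pd_C/2}r^{2-d_C}(\EE\,m_C(A))^2\lesssim r^{2-d_C}n^{pd_C/2}$ to the summand of Corollary~\ref{cor:invariant-advantage-bound}. Organizing the sum by component multiplicities $(m_C)_C$ and using $|\eAut G|=\prod_C m_C!\,|\eAut C|^{m_C}$ yields
\begin{equation*}
\Adv_{\le D}(\Wig,\PP)^2\;\lesssim\;\sum_{(m_C)_C}\prod_C\frac{\alpha_C^{m_C}}{m_C!\,|\eAut C|^{m_C}}\;=\;\prod_C\exp\!\bigl(\alpha_C/|\eAut C|\bigr),
\end{equation*}
which is $O(1)$ provided $r\ge n^{pd_C/(2(d_C-2))}$ for every nonvanishing $C$; the binding constraint comes at $d_C=d_{\min}$, yielding the thresholds $r\gtrsim n^p$ for odd $p$ and $r\gtrsim n^{3p/2}$ for even $p$. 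For the matching upper bound with $\mu_n=\delta_A$ and $A$ the all-ones-off-diagonal tensor, we fix a single $G_*$ of size $d_{\min}$ (e.g.\ a $4$-vertex simple $p$-regular graph for odd $p$, or the $p/2$-fold triangle for even $p$); then $m_{G_*}(A)=(1+o(1))\,n^{pd_{\min}/2}$, and combining with Proposition~\ref{prop:wigner-centered-cumulants-zero} and Lemma~\ref{lem:kappaGc-gram}, the single test polynomial $\what{\kappa^c_{G_*}}$ achieves $|\EE_{\PP}\what{\kappa^c_{G_*}}(Y)|\big/\sqrt{\EE_{\Wig}\what{\kappa^c_{G_*}}(W)^2}\asymp r^{(2-d_{\min})/2}n^{pd_{\min}/4}=\omega(1)$ exactly in the claimed subthreshold regime. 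The main technical obstacle is quantifying the subleading partition corrections to $\EE m^c_G(Y)$ arising both from coincidences $j_C=j_{C'}$ and from non-factorization of $\EE\prod_C m_C(A)$ for a general $\mu_n$; using the bounded-entry assumption~2 one must show that all such corrections are dominated by the leading $r^{k_G-d_G/2}$ term, so that they do not shift the threshold or disrupt convergence of the product over components at scale $d_C$ approaching $D\le\sqrt{n/2p^2}$.
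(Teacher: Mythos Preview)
Your approach is correct and lands on the same threshold as the paper, but the route and packaging differ in two places, and the thing you flag as ``the main technical obstacle'' is actually a non-issue once you see how the paper handles it.

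\textbf{Where you and the paper diverge.} The paper does not compute $\EE_\PP m^c_G(Y)$ by a direct Isserlis expansion. Instead it invokes two prepackaged tools: first, the \emph{general additivity} of centered cumulants (Proposition~\ref{prop:general-additivity}), applied to $Y=r^{-1/2}\sum_j Z_j\cdot A_j$ with the Frobenius centering $-\One_{\Frob}$ distributed as $-r^{-1}\One_{\Frob}$ to each summand (this is exactly why the $r^{-1/2}$ scaling and the centering interact cleanly); second, the Ginibre identity $\EE_Z\,\kappa^c_H(Z\cdot A)=\tfrac{n^{\underline{b_H}}}{n^{b_H}}\,m_{H^{(0)}}(A)\,(\|A\|_F^2-n^p)^{|\Frob(H)|}$ (Corollary~\ref{cor:cumulants-after-Gin}). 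Your direct Wick argument---the distinct edge indices force the pairing to be edge-by-edge, hence $j$ is constant on components, hence the Frobenius blocks collapse to $n^{-p}\|A\|_F^2=1$ and self-loop blocks vanish---is exactly a from-scratch rederivation of both of these facts simultaneously. Either route yields the same intermediate formula
\[
\EE_\PP\,\kappa^c_G(Y)\;=\;\frac{n^{\underline{b}}}{n^b}\,r^{-d/2}\sum_{\substack{\text{assignments }\\ j:\conn(G)\to[r]}}\EE_A\!\left[\prod_{C}m_C(A_{j_C})\right]
\]
for $G$ with no Frobenii and no self-loops.

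\textbf{On your ``main obstacle.''} You aim for the sharp leading order $(1+o(1))\,r^{k_G-d_G/2}\prod_C\EE\,m_C(A)$ and then worry about controlling the corrections from collisions $j_C=j_{C'}$ and from the non-factorization of $\EE_A[\prod_C m_C(A)]$. The paper never attempts this: it simply uses the crude entrywise bound $|m_H(A)|\le C^{|V(H)|}n^{|E(H)|}$ from assumption~2 on \emph{every} block of the partition sum (not only singletons), obtaining $|\EE_\PP\kappa^c_G|\le C^{d_G}n^{b_G}r^{-d_G/2+k_G}$ uniformly. This bound already factorizes perfectly over components, so if you plug it into your exponential-product organization with $\alpha_C$ replaced by $\tilde\alpha_C\colonequals C^{2d_C}r^{2-d_C}n^{pd_C/2}$, your inequality
\[
\Adv^2_{\le D}\;\lesssim\;\prod_{C}\exp\!\bigl(\tilde\alpha_C/|\eAut(C)|\bigr)
\]
holds with no $(1+o(1))$ at all, and its finiteness reduces to $\sum_{d\ge\xi}(C^2 n^{p/2}r^{-1})^d\cdot|\sG^{\mathrm{conn}}_{d,p}|<\infty$, which is governed by the $d=\xi$ term. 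The paper organizes the same sum differently---by $d$, then number of components $\ell$, then a convexity argument on component sizes---but your exponential product is a perfectly good (arguably cleaner) alternative. For the lower bound with the specific $A$, your single-graph test at $G_*$ of size $\xi$ matches the paper exactly; since $G_*$ is connected there is only one partition block and no correction to worry about.
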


We establish some preliminary facts about the Ginibre ensemble.
\begin{proposition}
    \label{prop:Gin-invariant}
    $\Gin(n, \sigma^2)$ is orthogonally invariant: if $Z \sim \Gin(n, \sigma^2)$ and $Q \in \sO(n)$, then $QZ$ and $ZQ$ both again have the law $\Gin(n, \sigma^2)$.
\end{proposition}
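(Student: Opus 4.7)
The plan is to use the rotational invariance of the density of $Z \sim \Gin(n, \sigma^2)$ on the ambient space $\RR^{n \times n}$. Concretely, view $Z$ as an element of $\RR^{n^2}$ with density proportional to $\exp(-\|Z\|_F^2 / (2\sigma^2))$ with respect to Lebesgue measure; this density only depends on $Z$ through $\|Z\|_F$.

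First I would establish the two necessary invariances. For any fixed $Q \in \sO(n)$, the map $L_Q : Z \mapsto QZ$ is a linear endomorphism of $\RR^{n \times n}$, and as a matrix acting on the vectorization of $Z$ it equals $\id_n \otimes Q$, which is orthogonal (its determinant is $\pm 1$). Similarly $R_Q : Z \mapsto ZQ$ corresponds to $Q^\top \otimes \id_n$, also orthogonal. Hence both maps preserve Lebesgue measure. Moreover, $\|QZ\|_F^2 = \tr(Z^\top Q^\top QZ) = \tr(Z^\top Z) = \|Z\|_F^2$, and analogously $\|ZQ\|_F = \|Z\|_F$, so the Gaussian density is also preserved. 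Applying the change-of-variables formula then yields that $QZ$ and $ZQ$ each have the same density as $Z$, hence the same law.

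As a sanity check and alternative viewpoint, I would also mention the column/row picture: the columns of $Z$ are i.i.d.\ $\sN(0, \sigma^2 \id_n)$ in $\RR^n$, and multiplying on the left by $Q$ maps each column to $Q$ times itself, which preserves the spherical Gaussian law column by column and preserves independence across columns; the argument for $ZQ$ is symmetric via rows.

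There is no serious obstacle here: the claim is essentially the statement that the standard Gaussian measure on $\RR^{n^2}$ is invariant under the orthogonal action given by left or right multiplication, which is immediate from norm preservation and the fact that the transformations are volume-preserving.
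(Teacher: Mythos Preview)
Your proof is correct; the paper states this proposition without proof, treating it as a standard fact about i.i.d.\ Gaussian matrices. Your argument via the density on $\RR^{n \times n}$ depending only on $\|Z\|_F$, together with the observation that $Z \mapsto QZ$ and $Z \mapsto ZQ$ are orthogonal (hence volume-preserving) linear maps on $\RR^{n^2}$, is exactly the standard justification and is complete as written.
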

\begin{corollary}
    \label{cor:ZT-invariant}
    For any random tensor $T$, the law of $Z \cdot T$ when $Z \sim \Gin(n, 1/n)$ independently of $T$ is orthogonally invariant.
\end{corollary}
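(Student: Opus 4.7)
The plan is to reduce the invariance of $Z \cdot T$ directly to the invariance of the Ginibre ensemble itself (Proposition~\ref{prop:Gin-invariant}), using the right-action identity from Definition~\ref{def:change-basis}. Fix any deterministic $Q \in \sO(n)$. First I would apply the identity $R \cdot (Q' \cdot T) = (Q'R) \cdot T$ with $Q' = Z$ and $R = Q$ to obtain
\begin{equation}
Q \cdot (Z \cdot T) = (ZQ) \cdot T.
\end{equation}
Next I would invoke Proposition~\ref{prop:Gin-invariant}, specifically the right-multiplication statement $ZQ \stackrel{d}{=} Z$. Since $Z$ is independent of $T$ and $Q$ is deterministic, this lifts to the joint statement $(ZQ, T) \stackrel{d}{=} (Z, T)$ as $\RR^{n \times n} \times \Sym^p(\RR^n)$-valued random variables (for instance by checking characteristic functions factor identically, or by directly observing that the conditional law of $ZQ$ given $T$ equals that of $Z$ given $T$). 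Applying the measurable contraction map $(M, S) \mapsto M \cdot S$ to both sides then gives $(ZQ) \cdot T \stackrel{d}{=} Z \cdot T$, and combining with the display above yields $Q \cdot (Z \cdot T) \stackrel{d}{=} Z \cdot T$, which is the desired orthogonal invariance.

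There is no real obstacle; the argument is a one-line calculation once the right-action convention is in hand. The only subtlety worth flagging is the distinction between the two one-sided invariances packaged in Proposition~\ref{prop:Gin-invariant}: because the action on a tensor contracts $Z$ into the ``outward'' half-edges of $T$ (as reflected in the rule $R \cdot (Z \cdot T) = (ZR) \cdot T$), it is the \emph{right}-multiplication invariance $ZQ \stackrel{d}{=} Z$, rather than $QZ \stackrel{d}{=} Z$, that is the relevant one. This also explains why the normalization $1/n$ is immaterial for this statement---any variance would do---since the proof uses nothing about $Z$ beyond its right-orthogonal invariance and its independence from $T$.
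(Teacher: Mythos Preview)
Your argument is correct and is exactly the intended one: the paper states this as an immediate corollary of Proposition~\ref{prop:Gin-invariant} without further proof, and the right-action identity $Q \cdot (Z \cdot T) = (ZQ) \cdot T$ together with $ZQ \stackrel{d}{=} Z$ and independence is precisely what underlies it. Your observation that only the right-multiplication invariance of $\Gin$ is needed is a nice clarification.
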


A version of the Weingarten formula for $\EE_{Q \sim \Haar(n)} Q^{\otimes \ell}$ (see Appendix~\ref{app:weingarten}) also holds for tensor powers of $Z \sim \Gin(n, \sigma^2)$.
In fact, the formula is simpler: it only contains the leading order terms of the Weingarten formula.
\begin{proposition}
    \label{prop:Gin-wick}
    $\EE_{Z \sim \Gin(n, \sigma^2)} Z^{\otimes \ell} = \sigma^{\ell} \sum_{\mu} w(\mu) \otimes w(\mu)$.
\end{proposition}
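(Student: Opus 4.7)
The statement is a direct application of Isserlis' (Wick's) theorem, so the plan is simply to unpack definitions and match both sides entry by entry. First I would fix the identification $(\RR^n)^{\otimes 2\ell} \cong ((\RR^n)^{\otimes \ell})^{\otimes 2}$ under which $Z^{\otimes \ell}$ is indexed by $(i_1,\ldots,i_\ell, j_1,\ldots,j_\ell)$, with row-indices grouped first and column-indices grouped second, so that the entry at this multi-index equals the scalar $\prod_{t=1}^\ell Z_{i_t,j_t}$. Under the same identification, the RHS entry is $\sigma^\ell\sum_\mu w(\mu)_{i_1,\ldots,i_\ell}\, w(\mu)_{j_1,\ldots,j_\ell}$.

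Next I would apply Isserlis' theorem to the centered jointly Gaussian family $\{Z_{i_t,j_t}\}_{t=1}^\ell$, giving
\begin{equation*}
\EE \prod_{t=1}^\ell Z_{i_t,j_t} \;=\; \sum_{\mu \in \sM([\ell])} \prod_{(s,t) \in \mu} \EE[Z_{i_s,j_s}\, Z_{i_t,j_t}],
\end{equation*}
which vanishes when $\ell$ is odd (as does the RHS, since $\sM([\ell]) = \emptyset$). Using that the entries of $Z$ are i.i.d.\ with variance $\sigma^2$ gives $\EE[Z_{i_s,j_s}\, Z_{i_t,j_t}] = \sigma^2\, \delta_{i_s, i_t}\, \delta_{j_s, j_t}$, so the product along $\mu$ equals $\sigma^\ell\,\prod_{(s,t)\in\mu}\delta_{i_s,i_t}\delta_{j_s,j_t}$. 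Recognizing from \eqref{eq:matching-vector} that $\prod_{(s,t)\in\mu}\delta_{i_s,i_t} = w(\mu)_{i_1,\ldots,i_\ell}$ and analogously for the $j$-indices, then pulling $\sigma^\ell$ out of the sum over $\mu$, yields precisely the claimed identity.

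The only subtlety---hardly an obstacle---is matching conventions: the same matching $\mu$ appears on both tensor factors because each Wick pair $\{Z_{i_s,j_s}, Z_{i_t,j_t}\}$ ties together \emph{both} coordinates simultaneously, rather than pairing row and column indices independently. This is also the combinatorial reason the Ginibre formula is simpler than the Haar formula of Appendix~\ref{app:weingarten}: the covariance structure of i.i.d.\ Gaussian entries is diagonal, so only ``trivial'' diagonal pairings $w(\mu)\otimes w(\mu)$ survive with a constant coefficient $\sigma^\ell$, in place of the genuine Weingarten weights $\Wg_{\mu,\nu}$ that appear for $Q \sim \Haar(n)$.
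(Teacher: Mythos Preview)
Your proposal is correct and is exactly the approach the paper takes: the paper's proof is the single line ``The result follows directly from an application of the Isserlis--Wick formula,'' and you have simply supplied the entrywise details of that application. Your observation that the same matching $\mu$ appears on both tensor factors because the i.i.d.\ covariance ties row and column indices simultaneously is precisely the point, and your remark contrasting this with the Haar case is a nice bonus.
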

\begin{proof}
    The result follows directly from an application of the Isserlis--Wick formula.
\end{proof}
\noindent
This simple fact implies remarkable simplifications for the cumulants of a tensor formed as $Z \cdot T$.
\begin{corollary}
    \label{cor:cumulants-after-Gin}
    Let $G$ be a $p$-regular multigraph on $d$ vertices.
    Then,
    \begin{equation}
        \Ex_{Z \sim \Gin(n, 1/n)} \kappa_G(Z \cdot T) = \frac{n^{\underline{pd/2}}}{n^{pd/2}} m_G(T).
    \end{equation}
    Moreover, let $G^{(0)} \colonequals G \setminus \Frob(G)$ be $G$ with all Frobenii removed.
    Then,
    \begin{equation}
        \Ex_{Z \sim \Gin(n, 1/n)} \kappa_G^c(Z \cdot T) = \frac{n^{\underline{pd/2}}}{n^{pd/2}} m_{G^{(0)}}(T) (\|T\|_F^2 - n^p)^{|\Frob(G)|}.
    \end{equation}
\end{corollary}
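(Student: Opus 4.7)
The proof is a direct computation with the matching-vector machinery from Lemma~\ref{lem:kappa-expansion}, leveraging that the Ginibre ``Weingarten'' formula in Proposition~\ref{prop:Gin-wick} is dramatically simpler than its Haar counterpart.

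For the first identity, the plan is as follows. Since $Q \cdot (Z \cdot T) = (ZQ) \cdot T$, we have $\kappa_G(Z \cdot T) = \EE_{Q \sim \Haar} m_G^!((ZQ) \cdot T)$, and the right-orthogonal invariance of $\Gin(n, 1/n)$ (Proposition~\ref{prop:Gin-invariant}) collapses the Haar average to give $\EE_Z \kappa_G(Z \cdot T) = \EE_Z m_G^!(Z \cdot T)$. Now fix a matching $\mu$ of $[pd]$ realizing $G$ and write $m_G^!(Z \cdot T) = \langle T^{\otimes d}, Z^{\otimes pd} w^!(\mu) \rangle$ as in the proof of Lemma~\ref{lem:kappa-expansion}. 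Applying Proposition~\ref{prop:Gin-wick} with $\sigma^2 = 1/n$,
\begin{equation*}
\EE_Z Z^{\otimes pd} w^!(\mu) = n^{-pd/2} \sum_\nu w(\nu) \, \langle w(\nu), w^!(\mu) \rangle.
\end{equation*}
The crucial combinatorial observation is that $\langle w(\nu), w^!(\mu) \rangle$ vanishes unless $\nu = \mu$: any index tuple $i$ contributing to this inner product must be both compatible with $\nu$ (constant on each pair of $\nu$) and \emph{distinctly} compatible with $\mu$ (so its level sets are exactly the pairs of $\mu$), forcing each pair of $\nu$ to lie in a pair of $\mu$, and hence $\nu = \mu$ since both are perfect matchings. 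The surviving $\nu = \mu$ term contributes $\langle w(\mu), w^!(\mu) \rangle = n^{\underline{pd/2}}$, yielding $\EE_Z m_G^!(Z \cdot T) = \frac{n^{\underline{pd/2}}}{n^{pd/2}} m_G(T)$ as claimed.

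For the second identity, I would apply Proposition~\ref{prop:kappa-centered-expansion} and then the first identity to each term. The falling factorials telescope via $(n - b + p|S|)^{\underline{p|S|}} \cdot n^{\underline{b - p|S|}} = n^{\underline{b}}$ (with $b = pd/2$), producing a clean overall prefactor of $\frac{n^{\underline{pd/2}}}{n^{pd/2}} \cdot n^{p|S|}$ on each term. Since removing a Frobenius factors the graph moment as $m_{G \setminus S}(T) = m_{G^{(0)}}(T) \cdot \|T\|_F^{2(|\Frob(G)| - |S|)}$ (using $m_F(T) = \|T\|_F^2$ and multiplicativity on disjoint unions), the remaining sum over $S \subseteq \Frob(G)$ is exactly the binomial expansion of $(\|T\|_F^2 - n^p)^{|\Frob(G)|}$, completing the proof. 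The main delicate step is the vanishing-unless-$\nu = \mu$ reduction in the first identity; once that is in hand, the second identity is just bookkeeping with falling factorials and a single binomial identity.
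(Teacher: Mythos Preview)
Your proposal is correct and follows essentially the same approach as the paper's proof: both collapse the Haar average via Ginibre invariance to reduce to $\EE_Z m_G^!(Z \cdot T)$, then use Proposition~\ref{prop:Gin-wick} together with the observation that only the matching $\nu = \mu$ survives against distinct indices, and for the centered version both invoke Proposition~\ref{prop:kappa-centered-expansion}, telescope the falling factorials, and close with the binomial theorem. Your formulation via $\langle w(\nu), w^!(\mu)\rangle = n^{\underline{pd/2}}\One\{\nu=\mu\}$ is a clean packaging of the same step the paper carries out by summing over distinct index tuples explicitly.
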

\begin{proof}
    Write $b \colonequals pd/2$ as usual.
    For the first claim, we observe by Corollary~\ref{cor:ZT-invariant} that
    \begin{align*}
        \Ex_{Z \sim \Gin(n, 1/n)} \kappa_G(Z \cdot T)
        &= \Ex_{Z \sim \Gin(n, 1/n)} m_G^!(Z \cdot T) \\
        &= \Ex_{Z \sim \Gin(n, 1/n)} \sum_{i_1, \dots, i_{b} \text{ distinct}} \langle e_{i_1}^{\otimes 2} \otimes \cdots \otimes e_{i_b}^{\otimes 2}, T^{\otimes d} Z^{\otimes pd} \rangle \\
        &= \frac{1}{n^b} \sum_{i_1, \dots, i_{b} \text{ distinct}} \sum_{\mu} \langle e_{i_1}^{\otimes 2} \otimes \cdots \otimes e_{i_b}^{\otimes 2}, T^{\otimes d} w(\mu) \otimes w(\mu) \rangle \tag{Proposition~\ref{prop:Gin-wick}}
        \intertext{and here only the matching that corresponds to the one of equal indices in the first term in the inner product contributes, whereby}
        &=  \frac{1}{n^b} \sum_{i_1, \dots, i_{b} \text{ distinct}} m_G(T) \\
        &= \frac{n^{\underline{b}}}{n^b} m_G(T),
    \end{align*}
    as claimed.
    For the second result, we first expand by definition over subsets of Frobenii, and then use the first result on each term:
    \begin{align*}
        \Ex_{Z \sim \Gin(n, 1/n)} \kappa_G^c(Z \cdot T)
        &= \sum_{S \subseteq \Frob(G)} (-1)^{|S|} (n - b + p|S|)^{\underline{p|S|}} \Ex_{Z \sim \Gin(n, 1/n)} \kappa_{G \setminus S}(Z \cdot T) \\
        &= \sum_{S \subseteq \Frob(G)} (-1)^{|S|} (n - b + p|S|)^{\underline{p|S|}} \frac{n^{\underline{b - p|S|}}}{n^{b - p|S|}} m_{G \setminus S}(T)
        \intertext{and by multiplicativity of the ordinary graph moments,}
        &= m_{G^{(0)}}(T) \sum_{S \subseteq \Frob(G)} (-1)^{|S|} (n - b + p|S|)^{\underline{p|S|}} \frac{n^{\underline{b - p|S|}}}{n^{b - p|S|}} (\|T\|_F^2)^{|\Frob(G)| - |S|}
        \intertext{and writing $f \colonequals |\Frob(G)|$ and introducing $s \colonequals |S|$, we have}
        &= m_{G^{(0)}}(T) \sum_{s = 0}^f \binom{f}{s}(-1)^{s} (n - b + ps)^{\underline{ps}} \frac{n^{\underline{b - ps}}}{n^{b - ps}} (\|T\|_F^2)^{f - s} \\
        &= m_{G^{(0)}}(T) \sum_{s = 0}^f \binom{f}{s}(-1)^{s} \frac{(n - b + ps)!}{(n - b)!} \frac{n!}{(n - b + ps)!} \frac{1}{n^{b - ps}} (\|T\|_F^2)^{f - s} \\
        &= \frac{n^{\underline{b}}}{n^b} m_{G^{(0)}}(T) \sum_{s = 0}^f \binom{f}{s}(-n^p)^{s} (\|T\|_F^2)^{f - s} \\
        &= \frac{n^{\underline{b}}}{n^b} m_{G^{(0)}}(T) (\|T\|_F^2 - n^p)^f
    \end{align*}
    by the binomial theorem, completing the proof.
\end{proof}

\begin{proof}[Proof of Theorem~\ref{thm:clt}]
    We first express the expectations of the centered cumulants of $\PP$ in terms of those of $\Gin(n, 1/n) \cdot \mu$, and evaluate the latter using Corollary~\ref{cor:cumulants-after-Gin}:
    \begin{align*}
    \Ex_{T \sim \PP} \kappa_G^c(T)
    &= \Ex_{\substack{Z_i \sim \Gin(n, 1/ n) \\ A_i \sim \mu}} \kappa_G^c \left(\frac{1}{\sqrt{r}} \sum_{i = 1}^r Z_i \cdot A_i \right) \\
    &= \sum_{G_1 \sqcup \cdots \sqcup G_r = G} \frac{n^{\underline{b}}}{n^{\underline{b_1}} \cdots n^{\underline{b_r}}} \Ex_{\substack{Z_i \sim \Gin(n, 1/ n) \\ A_i \sim \mu}} \prod_{i = 1}^r \kappa_{G_i}^c\left(\frac{1}{\sqrt{r}}Z_i \cdot A_i; -\frac{1}{r}\One_{\Frob}\right) \\
    &= r^{-d/2} \sum_{G_1 \sqcup \cdots \sqcup G_r = G} \frac{n^{\underline{b}}}{n^{\underline{b_1}} \cdots n^{\underline{b_r}}} \prod_{i = 1}^r \Ex_{\substack{Z \sim \Gin(n, 1/ n) \\ A \sim \mu}} \kappa_{G_i}^c(Z \cdot A) \\
    &= \frac{n^{\underline{b}}}{n^b} r^{-d/2} \sum_{G_1 \sqcup \cdots \sqcup G_r = G}  \prod_{i = 1}^r \Ex_{A \sim \mu} m_{G_i^{(0)}}(A) (\|A\|_F^2 - n^p)^{|\Frob(G_i)|}.
    \end{align*}
    We make a few observations.

    First, if $G$ contains any self-loops, then some $G_i^{(0)}$ contains a self-loop in every term of the sum.
    For this $i$, by our assumption that $A$ is zero on entries with repeated indices, we have $m_{G_i^{(0)}}(A) = 0$ almost surely.
    Thus the entire sum is zero, so $\Ex_{T \sim \PP} \kappa_G^c(T) = 0$ whenever $G$ has self-loops.

    Second, and similarly, if $G$ contains any Frobenii, then some exponent of $\|A\|_F^2 - n^p$ is positive in each term in the sum.
    By our assumption we have $\|A\|_F^2 - n^p = 0$ almost surely when $A \sim \mu$, so $\Ex_{T \sim \PP} \kappa_G^c(T) = 0$ whenever $G$ has any Frobenii as well.

    Lastly, let us consider the case where $G$ has neither self-loops nor Frobenii.
    We may bound the ordinary graph moments naively by using our assumption that the entries of $A \sim \mu$ are almost surely uniformly bounded by a constant $C > 0$.
    This implies
    \begin{equation}
        |m_{G}(A)| \leq C^{|V(G)|} n^{|E(G)|},
    \end{equation}
    since $m_G(A)$ is a sum of $n^{|E(G)|}$ terms, each of which is a product of $|V(G)|$ factors of size at most $C$.
    Using this, we find
    \begin{align*}
       |\Ex_{T \sim \PP} \kappa_G^c(T)|
       &\leq C^d n^{\underline{b}} r^{-d/2} \sum_{G_1 \sqcup \cdots \sqcup G_r = G} 1 \\
       &\leq C^d n^{b} r^{-d/2 + |\conn(G)|}
    \end{align*}
    where we have used that the number of partitions $G_1 \sqcup \cdots \sqcup G_r = G$ is just the number of assignments of each connected component of $G$ to one of $r$ bins, or $r^{|\conn(G)|}$.

    Since $G$ has no self-loops or Frobenii, it has no connected components on two vertices.
    Note that, depending on the parity of $p$, the smallest possible size of a connected component in $G$ will differ: when $p$ is even then it is 3, but when $p$ is odd then it is 4.
    This will ultimately lead to the different thresholds depending on the parity of $p$ in our result.
    Let us give this number a name:
    \begin{equation}
        \xi = \xi(p) \colonequals \left\{\begin{array}{ll} 3 & \text{if } p \text{ is even}, \\ 4 & \text{if } p \text{ is odd}\end{array}\right\}.
    \end{equation}

    Substituting the above bound into Corollary~\ref{cor:invariant-advantage-bound}, we find
    \begin{align*}
        \Adv_{\leq D}(\Wig, \PP)
        &\lesssim \sum_{d = 0}^D (C^2 r^{-1}n^{p/2})^d \sum_{\substack{G \in \sG_{d, p} \\ |V(K)| \geq \xi \text{ for all } K \in \conn(G)}} r^{2|\conn(G)|} \\
        &\leq \sum_{d = 0}^D (C^2 r^{-1}n^{p/2})^d \sum_{\ell = 1}^{d / \xi} \sum_{\substack{\xi \leq a_1 \leq \cdots \leq a_{\ell} \\ a_1 + \cdots + a_{\ell} = d}} r^{2\ell} \prod_{i = 1}^{\ell} |\sG_{a_i, p}|
        \intertext{and using Proposition~\ref{prop:count-multigraphs}, we have}
        &\leq \sum_{d = 0}^D (C^2 e^{p + 1} p^{-p/2} r^{-1}n^{p/2})^d \sum_{\ell = 1}^{d / \xi}  \sum_{\substack{\xi \leq a_1 \leq \cdots \leq a_{\ell} \\ a_1 + \cdots + a_{\ell} = d}} r^{2\ell} \prod_{i = 1}^{\ell} a_i^{\frac{p - 2}{2}a_i}
        \intertext{and a convexity argument shows that the inner product over $a_i$ is maximized when $a_1 = \cdots = a_{\ell - 1} = \xi$ and $a_{\ell} = d - (\ell - 1)\xi$. We may therefore continue bounding}
        &\leq \sum_{d = 0}^D (C^2 e^{p + 1} p^{-p/2} \xi^{\frac{p - 2}{2}} r^{-1}n^{p/2})^d \sum_{\ell = 1}^{d / \xi}  \sum_{\substack{\xi \leq a_1 \leq \cdots \leq a_{\ell} \\ a_1 + \cdots + a_{\ell} = d}} r^{2\ell} d^{d - (\ell - 1)\xi}
        \intertext{Now, since $d \leq D \leq n^{1/2}$ by assumption, while $r \geq n^3$ (provided we choose $a_{p, C} \geq 1)$, we have $r^2 \geq n^6$ while $d^{\xi} \leq d^4 \leq n^2$, so $r^2 \geq d^{\xi}$ and the largest term of the remaining sum is the one where $\ell = d / \xi$. Using Proposition~\ref{prop:integer-partitions} to bound the number of terms in the sum, we find}
        &\leq \sum_{d = 0}^D (C^2 e^{p + 1} p^{-p/2} \xi^{\frac{p - 2}{2}} r^{-1}n^{p/2})^d \cdot \exp(O(\sqrt{d})) r^{2d / \xi} d^{\xi} \\
        &\leq \sum_{d = 0}^{\infty} d^{\xi} \left(C^2 e^{p + 1} p^{-p/2} \xi^{\frac{p - 2}{2}} \exp(O(1 / \sqrt{d})) \cdot r^{-(1 - \frac{2}{\xi})}n^{p/2}\right)^d.
    \end{align*}

    Thus we find that, provided that $r \geq a_p n^{\frac{p}{2} \cdot \frac{\xi}{\xi - 2}}$, the base of the exponent in the above series will be strictly smaller than 1, so the sum will converge and and the proof will be complete.
    When $p$ is even, then $\xi = 3$ and the exponent of $n$ in the condition above is $3p/2$, while when $p$ is odd, then $\xi = 4$ and the exponent is $p$, so this gives precisely the stated result in both cases.

    Finally, for the upper bound, note that for this choice of deterministic $A$ we have $m_G(A) \asymp n^{|E(G)|}$ for each $G$, since each term in the sum in $m_G(A)$ is equal and close to 1.
    We thus have for any $G$ on $d$ vertices that
    \begin{equation}
        \Ex_{T \sim \PP} \kappa_G^c(T) \asymp n^{\underline{pd / 2}} \, r^{-d/2 + |\conn(G)|}
    \end{equation}
    by the same calculations as above.
    Let us take $d = \xi \in \{3, 4\}$ and $G$ any connected $p$-regular graph on $\xi$ vertices (say, a triangle with every edge repeated $p / 2$ times when $p$ is even, or a complete graph on 4 vertices with each edge of one perfect matching repeated $p - 2$ times and the other edges occurring once when $p$ is odd).
    We have $|\conn(G)| = 1$.
    By Corollary~\ref{cor:invariant-advantage-bound}, we may lower bound the advantage by the contribution of just the term corresponding to this graph $G$, which, since it is of constant size, has $|\eAut(G)| = O(1)$, so
    \begin{align*}
        \Adv_{\leq D}(\Wig, \PP)
        &\gtrsim n^{-p\xi/2} \left(\Ex_{T \sim \PP} \kappa_G^c(T)\right)^2 \\
        &\gtrsim n^{-p\xi / 2} (n^{\underline{p\xi / 2}})^2 r^{-\xi + 2} \\
        \intertext{and using Proposition~\ref{prop:falling-factorial} on the falling factorials gives}
        &\gtrsim r^{-\xi + 2} n^{p \xi / 2},
    \end{align*}
    which diverges by our assumption both when $p$ is even and when $p$ is odd.
\end{proof}

\begin{remark}
    It appears difficult to substantially relax the assumption that $\|A\|_F^2 = n^p$ exactly.
    Indeed, considering terms corresponding to $G$ consisting \emph{only} of Frobenii, if we had a bound $|\|A\|_F^2 - n^p| \leq K$ (or, speaking more roughly, if the typical scale of this difference is $K$) then we see that our bounds would yield $|\EE \kappa_G^c(T)| \leq C^d n^{pd/2} K^{d/2}$, and this would be the best bound we can achieve regardless of the value of $r$.
    Thus even if $K$ is a constant this would not give an $O(1)$ bound on the advantage.
    It would be more natural to assume that $\|A\|_F^2 - n^p$ is centered and $O(n^p)$-subgaussian, say, in our result, but (at least with a naive bounding strategy as we have taken) such an assumption would therefore not suffice to control the above terms when $D = \omega(1)$.
    The same issue arises with relaxing the assumption that the diagonal entries of $A \sim \mu$ are exactly zero.
\end{remark}

\section*{Acknowledgments}
\addcontentsline{toc}{section}{Acknowledgments}

Some of this work was carried out at the Simons Institute for the Theory of Computing and the Banff International Research Station for Mathematical Innovation and Discovery. We are grateful to Denis Bernard, Guy Bresler, Josh Grochow, Sam Hopkins, Nick Read, Tselil Schramm, Guilhem Semerjian, Piotr \'{S}niady, and Dan Spielman for helpful discussions. C.M.\ is especially grateful to Alex Russell for his hospitality during a sabbatical long ago when they learned about free probability and explored diagrammatic methods.

\addcontentsline{toc}{section}{References}
\bibliographystyle{alpha}
\bibliography{main}

\clearpage

\appendix

\section{Characterizations of Invariants}
\label{app:invariants}

Our goal in this Appendix is to give diagrammatic proofs of Theorems~\ref{thm:tensor-invariant}, \ref{thm:tensor-invariant-multi}, and \ref{thm:tensor-equivariant}.
We first discuss the first two Theorems on invariant polynomials.
Recall that we say that a function $f:\Sym^p(\RR^n) \to \RR$ is \emph{invariant} if, for all $T \in \Sym^p(\RR^n)$ and all $Q \in \sO(n)$, $f(Q \cdot T) = f(T)$.
Similarly, a function $f: \Sym^{p_1}(\RR^n) \times \Sym^{p_2}(\RR^n) \times \ldots \times \Sym^{p_m}(\RR^n) \to \RR$ is invariant if, for any symmetric tensors $T_1,\ldots,T_m$ of arity $p_1,\ldots,p_m$ respectively, $f(T_1,\ldots,T_m) = f(Q \cdot T_1,\ldots,Q \cdot T_m)$ for all $Q \in \sO(n)$.

A priori, restricting $T$ to $\Sym^p (\RR^n)$ could weaken the definition of invariance. There are indeed polynomials that are invariant on symmetric tensors but not on general tensors: for instance, for $2 \times 2$ matrices, the function $m_{11}^2 + 2 m_{12}^2 + m_{22}^2$ is the Frobenius norm $\|M\|_F^2$ and hence invariant, but only if $M$ is symmetric. However, the following proposition shows that we can always replace such a polynomial with one that is invariant on general tensors by symmetrizing over $\sO(n)$. In what follows we will identify invariant polynomials with this symmetrized version.

\begin{proposition}
\label{prop:sym-invariant-extension}
Suppose $f$ is an invariant polynomial. Then there is a polynomial $g$ of the same degree which coincides with $f$ on $\Sym^p(\RR^n)$ and which is invariant on all tensors, i.e., $g(Q \cdot T)=g(T)$ for all $T \in (\RR^n)^{\otimes p}$.
\end{proposition}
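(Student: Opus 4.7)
The plan is to obtain $g$ from $f$ by symmetrizing the input before applying $f$. Concretely, I would define the symmetrization map $\Sym: (\RR^n)^{\otimes p} \to \Sym^p(\RR^n)$ by
\begin{equation}
\Sym(T)_{i_1,\ldots,i_p} = \frac{1}{p!}\sum_{\sigma \in S_p} T_{i_{\sigma(1)},\ldots,i_{\sigma(p)}},
\end{equation}
and then set $g(T) \colonequals f(\Sym(T))$. This definition is forced in the sense that it is the most naive averaging operation that sends general tensors to symmetric ones, and since $f$ is only assumed to be defined (or at least only assumed to be invariant) on symmetric tensors, we must land in $\Sym^p(\RR^n)$ before applying $f$.

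From this definition I would verify the three required properties in order. First, coincidence on symmetric tensors: if $T \in \Sym^p(\RR^n)$, then every summand in $\Sym(T)$ equals $T$, so $\Sym(T) = T$ and $g(T) = f(T)$. Second, preservation of degree: $\Sym$ is a linear map on $(\RR^n)^{\otimes p}$ (in fact an orthogonal projection onto $\Sym^p(\RR^n)$), so composing a polynomial of degree $d$ with a linear change of input gives a polynomial of degree at most $d$, and equality of degrees follows from the first property applied to any symmetric $T$ where $f$ has top-degree contribution.

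Third, and the only step that requires any thought, is checking $\sO(n)$-invariance of $g$ on all of $(\RR^n)^{\otimes p}$. For this I would show that $\Sym$ commutes with the $\sO(n)$-action: $\Sym(Q \cdot T) = Q \cdot \Sym(T)$ for all $Q \in \sO(n)$ and all $T \in (\RR^n)^{\otimes p}$. This is because, writing the action as $(Q^\top)^{\otimes p}$ on the vectorization of $T$, the $\sO(n)$-action and the $S_p$-action on tensor indices act on disjoint ``sides'': $Q^\top$ mixes the $n$-dimensional index values at each tensor slot, while $S_p$ permutes which slot holds which value. Formally this is the statement that $(Q^\top)^{\otimes p}$ commutes with any permutation of the tensor factors, which is immediate from the definition of the tensor product. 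Given this commutation, $g(Q \cdot T) = f(\Sym(Q \cdot T)) = f(Q \cdot \Sym(T)) = f(\Sym(T)) = g(T)$, where the third equality uses that $\Sym(T) \in \Sym^p(\RR^n)$ together with the hypothesized invariance of $f$ on $\Sym^p(\RR^n)$.

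There is no real obstacle here; the only mild subtlety is the commutation of $(Q^\top)^{\otimes p}$ with index permutations, which is what makes the naive symmetrization work and would fail if the group acted differently on different ``axes.''
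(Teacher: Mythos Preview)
Your proof is correct, but it takes a genuinely different route from the paper's. The paper's one-line proof sets $g(T) = \Exp_{Q \sim \Haar} f(Q \cdot T)$, i.e., it averages over the orthogonal group acting on the input. You instead average over the symmetric group $S_p$ acting on the tensor slots, setting $g(T) = f(\Sym(T))$.

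Both constructions work, and they produce different polynomials $g$ in general. The paper's approach is the standard Reynolds-operator trick for the group $\sO(n)$ whose invariants are being studied; it requires that $f$ already be given (or first be arbitrarily extended) as a polynomial on all of $(\RR^n)^{\otimes p}$, so that $f(Q \cdot T)$ makes sense for non-symmetric $T$. Your approach is more elementary---a finite average rather than a Haar integral---and has the advantage that it works even when $f$ is only specified on $\Sym^p(\RR^n)$, since you land in $\Sym^p(\RR^n)$ before ever evaluating $f$. The key observation you isolate, that the $\sO(n)$-action commutes with the $S_p$-action on tensor factors, is exactly what makes this shortcut available. Either argument suffices for the proposition's purpose, which is only to justify passing freely between invariants of symmetric tensors and invariants of general tensors when invoking the classical First Fundamental Theorem.
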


\begin{proof}
Let $g(T) = \Exp_Q f(Q \cdot T)$, where the expectation is over the Haar measure on $\sO(n)$.
\end{proof}

Next, we show Theorem~\ref{thm:tensor-invariant}, which says that any invariant polynomial is a linear combination of graph moments. This generalizes a similar fact for matrices: the invariant polynomials of a matrix are precisely the symmetric polynomials of its eigenvalues. These in turn can be written in terms of spectral moments, or equivalently traces of $T$'s matrix powers.

\begin{theorem}
\label{thm:matrix-invariant}
    Let $R$ be the ring of polynomials $f: \Sym^2(\RR^n) \to \RR$ in the entries of a matrix so that, for any orthogonal matrix $Q$, $f(T) = f(Q^\top T Q)$.
    Then $R$ is generated by the polynomials $m_\ell(T) = \tr(T^\ell)$ for $\ell \geq 0$. In particular, any invariant homogeneous polynomial $f(T)$ of degree $d$ is a linear combination of graph moments $\sum_i \alpha_i m_{G_i}(T)$ where each $G_i$ is a disjoint union of cycles with a total of $d$ vertices.
\end{theorem}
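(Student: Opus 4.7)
The plan is to use the spectral theorem together with the fundamental theorem of symmetric polynomials to reduce the problem first to symmetric polynomials in the eigenvalues, and then to power sums, which are exactly the cycle graph moments $m_{C_\ell}(T) = \tr(T^\ell)$.

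First, I would invoke the spectral theorem: any symmetric $T \in \Sym^2(\RR^n)$ can be written as $T = Q \cdot D$ for some $Q \in \sO(n)$ and some diagonal $D = \diag(\lambda_1, \dots, \lambda_n)$ whose entries are the eigenvalues of $T$. Invariance forces $f(T) = f(D)$, so $f$ is completely determined by its restriction to diagonal matrices. This restriction is automatically a polynomial $\tilde f(\lambda_1, \dots, \lambda_n)$ in $n$ variables, since $f$ is polynomial in the matrix entries and the diagonal entries of $D$ are simply $\lambda_1, \dots, \lambda_n$. Next, I would observe that permutation matrices lie in $\sO(n)$ and act on $D$ by permuting its diagonal entries, so $\tilde f$ must be symmetric in its arguments.

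By the fundamental theorem of symmetric polynomials, combined with Newton's identities (which are valid over $\RR$), $\tilde f$ can be expressed as a polynomial in the power sums $p_\ell(\lambda) = \sum_{i=1}^n \lambda_i^\ell$. Since $p_\ell(\lambda) = \tr(D^\ell) = \tr(T^\ell) = m_{C_\ell}(T)$ where $C_\ell$ is the $\ell$-cycle, this gives the first claim: the ring $R$ is generated by the cycle moments $m_{C_\ell}(T)$. For the homogeneous refinement, note that if $f$ is homogeneous of degree $d$ in the entries of $T$, then $\tilde f$ is a homogeneous symmetric polynomial of degree $d$ in the $\lambda_i$, and hence a linear combination of monomials $p_{\ell_1}(\lambda) \cdots p_{\ell_k}(\lambda)$ with $\ell_1 + \cdots + \ell_k = d$. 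Each such monomial is $m_G(T)$ for $G = C_{\ell_1} \sqcup \cdots \sqcup C_{\ell_k}$ by the multiplicativity $m_G(T) m_H(T) = m_{G \sqcup H}(T)$, which gives the desired disjoint union of cycles on $d$ total vertices.

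There is no genuine obstacle here: the proof is classical, and the only subtleties are notational. One must verify that the restriction of $f$ to diagonal matrices is truly a polynomial in $\lambda_1, \dots, \lambda_n$ (immediate from $f$ being polynomial in matrix entries) and that Newton's identities apply over a field of characteristic zero (they do). The deeper work in the paper lies in generalizing this result to tensors, as in Theorem~\ref{thm:tensor-invariant}, where there is no spectral decomposition to diagonalize the object and one must instead appeal to Weyl's First Fundamental Theorem for the orthogonal group.
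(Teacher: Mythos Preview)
Your proof is correct and follows the classical route via the spectral theorem and the theory of symmetric functions. The paper does not give a separate proof of Theorem~\ref{thm:matrix-invariant}; instead, it is obtained as the $p=2$ special case of Theorem~\ref{thm:tensor-invariant}, whose proof writes $f(T) = \langle T^{\otimes d}, C \rangle$ for a coefficient tensor $C$, symmetrizes $C$ by the projection $\Pi_{dp} = \EE_Q Q^{\otimes dp}$ onto the Brauer space of $\sO(n)$-invariant tensors, and then uses that this space is spanned by matching vectors $w(\mu)$, each of which pairs with $T^{\otimes d}$ to give a graph moment. For $p=2$ the resulting $2$-regular multigraphs are exactly disjoint unions of cycles, recovering the statement.

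Your argument is more elementary and entirely self-contained---no representation theory or Weingarten calculus is needed---but it is specific to matrices: the spectral theorem has no tensor analogue, which is precisely why the paper develops the Brauer-space machinery. The paper's approach, while heavier, treats all arities $p$ uniformly and is what actually does the work for the tensor results you cite at the end. You correctly identified this trade-off in your closing paragraph.
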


\begin{example} For instance, $f(T) = (\tr T)^2 \tr (T^2) \tr (T^3)$, which is a polynomial of degree $7$, is $m_G(T)$ where $G$ is the 2-regular multigraph \raisebox{-7pt}{\includegraphics[height=18pt]{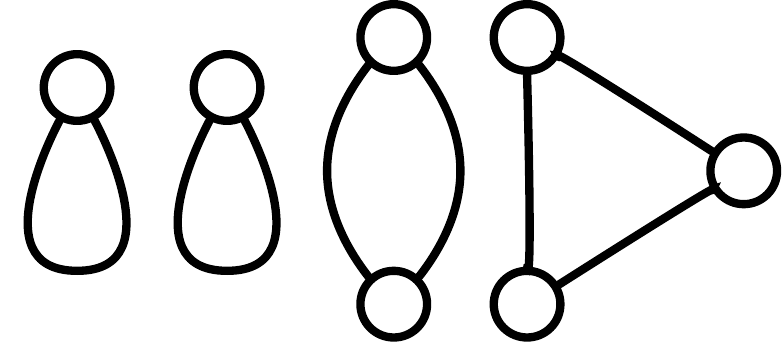}}.
\end{example}

\begin{proof}[Proof of Theorem~\ref{thm:tensor-invariant}]
We illustrate the proof in Figure~\ref{fig:invariant}. First note that any polynomial $f$ of degree $d$ in a $p$-ary tensor $T$ can be written as an inner product between $T^{\otimes d}$ and a vector of coefficients $C$, which we can also view as a $dp$-ary tensor.

Now, if $f$ is invariant, it remains the same if we place a copy of any orthogonal matrix $Q^\top$ on each of the $dp$ edges of $T^{\otimes d}$. But this is equivalent to applying $Q^{\otimes dp}$ to $C$: that is,
\[
f(Q^\top \cdot T)
= \langle Q^\top \cdot T^{\otimes d}, C \rangle
= \langle T^{\otimes d}, Q \cdot C \rangle
= \langle T^{\otimes d}, Q^{\otimes dp} C \rangle
= \langle T^{\otimes d}, C \rangle
= f(T) \, .
\]
Since $f(T) = \langle T^{\otimes d}, Q^{\otimes dp} C \rangle$ for any $Q \in \sO(n)$, we can symmetrize $C$ by taking the expectation over the Haar measure, obtaining
\begin{equation}
\label{eq:symmetrized}
f(T) = \langle T^{\otimes d}, \Pi_{dp} C \rangle \, ,
\end{equation}
where
\begin{equation}
\label{eq:pi-ell}
\Pi_\ell = \Exp_{Q \in \sO(n)} Q^{\otimes \ell} \, ,
\end{equation}
the projection operator that we discussed in Section~\ref{sec:prelim:weingarten}.
Specifically, $\Pi_{\ell}$ projects onto the \emph{trivial subspace} under the action of $\sO(n)$, i.e., the set of vectors $w \in (\RR^n)^{\otimes \ell}$ such that $Q^\top \cdot w = Q^{\otimes \ell} w = w$ for all $Q \in \sO(n)$.
As we present there, it follows from representation theory that $\Pi_{\ell}$ may be written $\Pi_{\ell} = \sum_{\mu, \nu} \Wg_{\mu, \nu} w(\mu) \otimes w(\nu)$ for some $\Wg_{\mu, \nu} \in \RR$, $\mu, \nu$ perfect matchings of $[\ell]$, and $w(\mu)$ the indicator vector of pairs of indices being equal under the matching $\mu$.

Since the image of $\Pi_\ell$ is spanned by the $w(\mu)$, the symmetrized coefficients $\Pi_{dp} C$ in~\eqref{eq:symmetrized} are a linear combination $\sum_\mu \alpha_\mu w(\mu)$. But for each $\mu$, $\langle T^{\otimes d}, w(\mu) \rangle$ is exactly a graph moment $m_{G(\mu)}(T)$, where $G(\mu)$ is the multigraph $G(\mu)$ formed by matching the $dp$ half-edges of its vertices according to $\mu$. (The reader may be familiar with the configuration model of random graphs~\cite{molloy-reed} where $\mu$ is uniformly random.) Thus
\[
f(T) = \sum_\mu \alpha_\mu m_{G(\mu)}(T) \, .
\]
as illustrated in Figure~\ref{fig:invariant}.
This completes the proof for invariant polynomials $f(T)$ of a single $p$-ary tensor.
\end{proof}

\begin{proof}[Proof of Theorem~\ref{thm:tensor-invariant-multi}]
For invariant polynomials of multiple tensors $f(T_1,\ldots,T_m)$, if $p$ is homogeneous of degree $d_i$ in each $T_i$ and each $T_i$ has arity $k_i$, we can write it as an inner product
\[
f(T_1,\ldots,T_m)
= \left\langle \bigotimes_{i=1}^m T_i^{\otimes d_i} ,
C \right\rangle \, ,
\]
where the tensor of coefficients $C$ has total arity $\ell = \sum_i d_i k_i$. The argument then goes through as before; see Figure~\ref{fig:mixed-example-matching} for an example.
\end{proof}

\begin{figure}
    \centering
\includegraphics[width=\columnwidth]{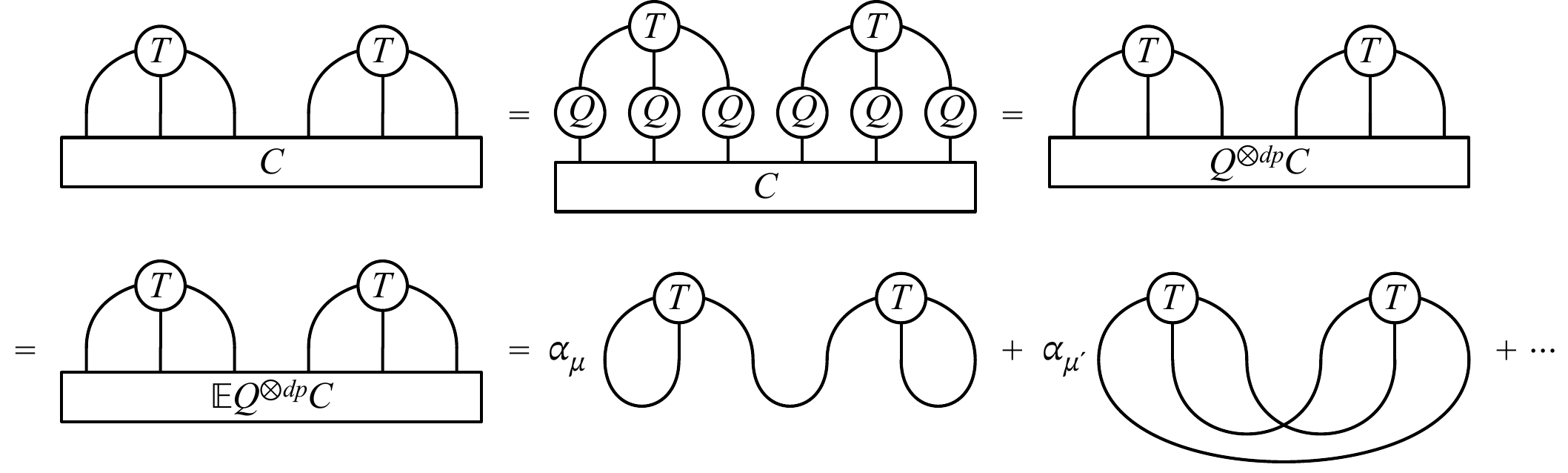}
\caption{The proof of Theorem~\ref{thm:tensor-invariant}. Any homogeneous polynomial of degree $d$ can be written as the inner product of $T^{\otimes d}$ with a $dp$-ary tensor of coefficients $C$. By hypothesis the function is unchanged if we symmetrize these coefficients by conjugating them with a Haar-random orthogonal matrix $Q$. But their image under the projection operator $\Exp Q^{\otimes dp}$ is a linear combination of matching vectors, each of which induces a multigraph connecting the copies of $T$.}
\label{fig:invariant}
\end{figure}

\begin{figure}
    \centering
\includegraphics[width=1.8in]{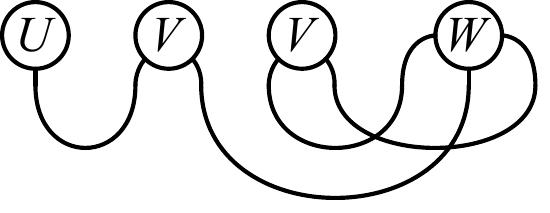}
    \caption{Generalizing the argument of Theorem~\ref{thm:tensor-invariant} to mixed moments. The inner product of $U \otimes V \otimes V \otimes W$ with the matching vector shown gives the multigraph on the right in Figure~\ref{fig:mixed-example} and the mixed moment~\eqref{eq:mixed-example}.}
    \label{fig:mixed-example-matching}
\end{figure}

\noindent
Again by similar arguments, we may also prove Theorem~\ref{thm:tensor-equivariant} on equivariant polynomials and open graph moments.

\begin{proof}[Proof of Theorem~\ref{thm:tensor-equivariant}]
Let $W$ be an $\ell$-ary tensor of indeterminates. Then if $f(T)$ is $\ell$-ary and equivariant, the inner product $P = \langle f(T), W \rangle$ is an invariant function of $T$ and $W$. Similarly, if $f(T_1,\ldots,T_m)$ is equivariant, then $F = \langle f(T_1,\ldots,T_m), W \rangle$ is an invariant function of $\{ T_1,\ldots,T_m,W \}$.

If $f(T)$ is a homogeneous polynomial, then $F(T,W)$ is a homogeneous polynomial of degree $1$ in $W$, i.e., which is multilinear in $W$'s entries. By Theorem~\ref{thm:tensor-invariant}, $F$ is a linear combination of mixed moments $m_G(T,W)$ where $W$ corresponds to a single vertex $w$ of degree $\ell$ and $G$'s other vertices have degree $p$. Then each entry of $f(T)$ is a partial derivative of $F(T,W)$ by the corresponding entry of $W$,
\[
f(T)_{i(E')} =
\frac{\partial F(T,W)}
{\partial W_{i(E')}}  \, .
\]
Taking this partial derivative removes $W$'s vertex from $G$, leaving a $p$-regular open multigraph $G \setminus \{w\}$ with $|F|=\ell$ open edges. This completes the proof for $f(T)$, and mutatis mutandis for $f(T_1,\ldots,T_m)$.
\end{proof}

\section{Properties of Wigner Tensors}
\label{app:wigner}

\subsection{Basic Properties}

We give some further properties of $T \sim \Wig(p, n, \sigma^2)$.
As a result of symmetrizing over all permutations $\pi$, the entries of $T$ have different variances depending on the pattern of repetitions in their indices. Specifically, their variance is equal to the size of their stabilizer subgroup, i.e., the number of permutations in $S_k$ which leave them unchanged.

\begin{proposition}
\label{prop:wig-variance}
Let $T \sim \Wig(p,n,\sigma^2)$. For a given sequence of indices $i=(i_1,\ldots,i_p) \in [n]^p$ and $j \in [n]$, let $c_j(i)$ be the number of times $j$ appears in $i$. Then
\[
T_i \sim \sN\left( 0, \sigma^2 \prod_{j=1}^n c_j(i)! \right) \, .
\]
\end{proposition}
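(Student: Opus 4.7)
The plan is to directly unfold the definition \eqref{eq:wig-def} and compute the variance as a double sum over $S_p \times S_p$, counting the pairs that produce equal $G$-entries via the stabilizer of $i$ under the natural $S_p$-action on tuples.

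First I would note that $T_i$ is a linear combination of the jointly Gaussian entries of $G$, hence Gaussian with mean $0$. The content is in the variance. Expanding,
\begin{equation}
\Var(T_i) = \frac{1}{p!} \sum_{\pi, \pi' \in S_p} \Ex\!\left[ G_{i_{\pi(1)},\ldots,i_{\pi(p)}} \, G_{i_{\pi'(1)},\ldots,i_{\pi'(p)}} \right],
\end{equation}
and, since $G$ has independent $\sN(0, \sigma^2)$ entries, the summand equals $\sigma^2$ when the two index tuples $(i_{\pi(t)})_t$ and $(i_{\pi'(t)})_t$ coincide and $0$ otherwise.

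Next I would identify the pairs $(\pi, \pi')$ giving a nonzero contribution. Setting $\tau \colonequals \pi' \pi^{-1}$ and $s \colonequals \pi(t)$, the condition $i_{\pi(t)} = i_{\pi'(t)}$ for all $t$ becomes $i_s = i_{\tau(s)}$ for all $s \in [p]$, i.e., $\tau$ lies in the stabilizer $\mathrm{Stab}(i) \leq S_p$ of the tuple $i$ under the permutation action on positions. For each fixed $\pi$, the admissible $\pi'$ are exactly $\tau \pi$ with $\tau \in \mathrm{Stab}(i)$, so the number of contributing pairs is $p! \cdot |\mathrm{Stab}(i)|$.

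The stabilizer $\mathrm{Stab}(i)$ decomposes as a direct product of symmetric groups over the preimages $\{s : i_s = j\}$ for each value $j \in [n]$, giving $|\mathrm{Stab}(i)| = \prod_{j=1}^n c_j(i)!$. Substituting yields
\begin{equation}
\Var(T_i) = \frac{\sigma^2}{p!} \cdot p! \prod_{j=1}^n c_j(i)! = \sigma^2 \prod_{j=1}^n c_j(i)!,
\end{equation}
as claimed. I do not foresee any real obstacle here; the only thing to be careful about is the bookkeeping in passing from the pair $(\pi, \pi')$ to the stabilizer element $\tau = \pi' \pi^{-1}$, and confirming that the stabilizer factors as the Young subgroup $\prod_j S_{c_j(i)}$.
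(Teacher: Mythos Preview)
Your proof is correct and follows essentially the same approach as the paper: both identify $|\mathrm{Stab}(i)| = \prod_j c_j(i)!$ as the key quantity. The paper phrases the computation via orbit--stabilizer (counting $p!/m$ distinct entries each appearing with multiplicity $m$), while you expand the covariance double sum directly and substitute $\tau = \pi'\pi^{-1}$; these are the same calculation.
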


\begin{proof}
Write $m = \prod_{j=1}^n c_j(i)!$. Then $m$ is the number of permutations $\pi$ such that $\pi(i)=i$, and the number of distinct $\pi(i)$ contributing to $T_i$ in~\eqref{eq:wig-def} is $p!/m$. For each of these the same $G_{\pi(i)}$ appears $m$ times in~\eqref{eq:wig-def}, multiplying its variance by $m^2$. Since the $G_{\pi(i)}$ are independent, we then have $\Var T_i = (\sigma^2 / p!) \times (p! / m) \times m^2 = \sigma^2 m$.
\end{proof}

This ensemble is orthogonally invariant, and for $p=2$ we recover the Gaussian orthogonal ensemble of random matrix theory.

\begin{proposition}
    For $T \sim \Wig(p, n, \sigma^2)$, for any $Q \in \sO(n)$, $Q \cdot T$ has the same law as $T$. In particular, $T \sim \Wig(2, n, 1)$ has the law of the standard $n \times n$ Gaussian orthogonal ensemble, with $T_{ij} = T_{ji} \sim \sN(0, 1 + \delta_{ij})$.
\end{proposition}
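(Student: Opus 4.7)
The plan is to reduce both claims to the basic fact that an i.i.d.\ Gaussian tensor is orthogonally invariant, leveraging the fact that the symmetrization operator commutes with the diagonal action of $\sO(n)$.

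First, I would rewrite the definition in a more structural way. Let $\Sym: (\RR^n)^{\otimes p} \to \Sym^p(\RR^n)$ be the symmetrization operator $\Sym(G)_{i_1, \dots, i_p} = \frac{1}{p!} \sum_{\pi \in S_p} G_{i_{\pi(1)}, \dots, i_{\pi(p)}}$. Then \eqref{eq:wig-def} says $W = \sqrt{p!}\,\Sym(G)$ with $G \in (\RR^n)^{\otimes p}$ having i.i.d.\ $\sN(0,\sigma^2)$ entries. The key observation is that $\Sym$ commutes with the change-of-basis action of Definition~\ref{def:change-basis}: since $Q \cdot T = (Q^{\top})^{\otimes p} T$ acts identically on every axis while $\Sym$ averages over permutations of those axes, one has $Q \cdot \Sym(G) = \Sym(Q \cdot G)$ for any $G$ and any $Q$.

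Given this, I would next observe that the asymmetric Gaussian tensor $G$ is itself orthogonally invariant: its law is the Gaussian on $(\RR^n)^{\otimes p}$ with covariance $\sigma^2 I_{n^p}$, and the push-forward of this law under $T \mapsto Q \cdot T = (Q^\top)^{\otimes p} T$ has covariance $(Q^\top)^{\otimes p}(\sigma^2 I)((Q^\top)^{\otimes p})^\top = \sigma^2 (Q^\top Q)^{\otimes p} = \sigma^2 I$, so $Q \cdot G \stackrel{d}{=} G$. Combining, $Q \cdot W = \sqrt{p!}\,Q \cdot \Sym(G) = \sqrt{p!}\,\Sym(Q \cdot G) \stackrel{d}{=} \sqrt{p!}\,\Sym(G) = W$.

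For the GOE identification at $p=2$: expanding gives $W_{ij} = \frac{1}{\sqrt{2}}(G_{ij} + G_{ji})$ with $G_{ij}\sim\sN(0,1)$ independent. For $i<j$, $W_{ij}$ is a sum of two independent centered Gaussians of variance $\tfrac{1}{2}$, giving $\sN(0,1)$, while $W_{ii} = \sqrt{2}\,G_{ii} \sim \sN(0,2)$, matching the claim $W_{ij} \sim \sN(0, 1+\delta_{ij})$. Independence of the entries $\{W_{ij}: i\le j\}$ is immediate since distinct such pairs depend on disjoint sets of the underlying $G_{ij}$'s.

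There is no real obstacle here; the only thing to be careful about is the compatibility of the symmetrization with the change-of-basis action, which is a one-line check once one writes out indices. One could also give a slightly slicker proof by observing that the restriction of the i.i.d.\ Gaussian measure on $(\RR^n)^{\otimes p}$ to the subspace $\Sym^p(\RR^n)$, with respect to the Frobenius inner product, is precisely $\Wig(p,n,\sigma^2)$ up to the scaling $\sqrt{p!}$; invariance then follows because the subspace $\Sym^p(\RR^n)$ is stable under the action of $\sO(n)$ and the ambient Gaussian measure is itself invariant.
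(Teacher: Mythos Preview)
Your proof is correct. The paper actually states this proposition without proof, so there is nothing to compare against; your argument via the commutation of the symmetrization operator with the diagonal $\sO(n)$ action, combined with the invariance of the isotropic Gaussian on $(\RR^n)^{\otimes p}$, is exactly the natural route and would fill the gap cleanly.
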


\subsection{Hardness of Computing Moments}

In this section, we give the proof of Theorem~\ref{thm:wigner-moments-hard} on the hardness of computing the expected graph moments of a Wigner tensor.
In the course of developing the tools for the proof, we will find various useful reinterpretations of the quantities involved in the moments.

\begin{definition}
    Let $V$ be a finite set, $p: V \to \NN$ a \emph{degree sequence}, and $\sG(V, p)$ the set of graphs (with loops and parallel edges allowed) on $V$ such that each $v \in V$ has degree $p(v)$.
    $G, H \in \sG(V, p)$ are related by a \emph{switching} if there is a pair of distinct edges $\{v, w\}, \{v^{\prime}, w^{\prime}\} \in E(G)$ such that $H$ is formed by replacing both of these edges with the edges $\{v, v^{\prime}\}, \{w, w^{\prime}\}$.
    We view $\sG(V, p)$ as a graph, where $G \sim H$ if $G$ and $H$ are related by a switching.

    The \emph{switching distance} between $G$ and $H$, denoted $d_{\switch}(G, H)$, is the distance between $G$ and $H$ in $\sG(V, p)$, or equivalently the minimum number of switchings required to reach $H$ from $G$.
\end{definition}
\noindent
We note that the degree sequence is preserved by the switching operation, so we must restrict our attention to a fixed degree sequence for the switching distance to be defined.
Even so, it is not obvious that the switching distance is finite, but this is indeed the case.
Also, we emphasize that, unlike with the sets $\sG_{d, p}$ in the main text, we are working with \emph{labelled} graphs here.

\begin{proposition}[Lemma 1 of \cite{Hakimi-1963-RealizabilityDegreesGraph2}]
    Whenever $\sG(V, p)$ is non-empty, then it is connected.
    Thus, $d_{\switch}(G, H) < \infty$ for any $G, H \in \sG(V, p)$.
\end{proposition}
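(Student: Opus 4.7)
\noindent
The plan is to lift the problem to the \emph{configuration model} and exploit the classical connectivity of perfect matchings under $2$-swaps. For each vertex $v \in V$, introduce $p(v)$ distinguishable half-edges, and let $\Omega = \bigsqcup_{v \in V} \{v\} \times [p(v)]$. A \emph{configuration} is a perfect matching of $\Omega$, and each configuration projects to a graph in $\sG(V, p)$ by forgetting the labels on the half-edges at each vertex. A $2$-swap on a matching replaces two matched pairs $\{a,b\}, \{c,d\}$ by $\{a,c\}, \{b,d\}$. The key observation I will verify first is that every $2$-swap on configurations projects either to a switching of the underlying graph, in the sense of the excerpt, or to the identity (in the degenerate case where the swap merely permutes half-edges within a single bundle of parallel edges or within a single loop).

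With this setup the proof splits into two standard steps. First, every graph $G \in \sG(V,p)$ admits some lift to a configuration, which is immediate since one may label half-edges arbitrarily. Second, I will establish configuration-level connectivity: given two perfect matchings $M_1, M_2$ of $\Omega$, the symmetric difference $M_1 \triangle M_2$ decomposes into disjoint alternating even cycles, and one $2$-swap on two $M_1$-edges lying in the same alternating cycle strictly reduces the total length of the uncovered cycles. Therefore, finitely many $2$-swaps transform $M_1$ into $M_2$. Combining these, given $G, H \in \sG(V,p)$, one lifts to arbitrary configurations $M_G, M_H$, connects them by a sequence of $2$-swaps, and projects down to obtain a sequence of graph switchings (with any identity steps dropped) joining $G$ to $H$; this gives $d_{\switch}(G, H) < \infty$ and proves connectedness.

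The main obstacle is not conceptual but bookkeeping: one must verify that the projection of every $2$-swap is either a legal graph switching in the sense of the excerpt or the identity. This requires a small case analysis over whether the four half-edges involved sit at four, three, or two distinct vertices, with special attention to cases where a loop is consumed or produced. In each non-degenerate case the projected operation is of the precise form ``delete edges $\{v,w\}, \{v',w'\}$ and insert $\{v,v'\}, \{w,w'\}$,'' allowing some of $v,w,v',w'$ to coincide so that loops and parallel edges are handled correctly; the genuinely degenerate cases collapse to the identity on graphs and can be deleted from the sequence. No extremal or counting argument is required, only careful unpacking of the definitions.
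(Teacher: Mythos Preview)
Your argument is correct. The paper does not give its own proof of this proposition but simply cites Hakimi; there is therefore no in-paper proof to compare against. Your configuration-model lift is exactly the device the paper itself invokes in the very next statement (Proposition~\ref{prop:switch-matching}), so your approach is fully consonant with the surrounding text. The alternating-cycle reduction you describe is the standard way to see that any two perfect matchings of the same set are connected by $2$-swaps, and your observation that each configuration-level $2$-swap projects either to a graph switching in the paper's sense or to the identity is correct (the identity case occurring precisely when the swap re-pairs half-edges within the same multi-edge bundle without changing the underlying endpoint multiset).
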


\begin{remark}[Loops and parallel edges]
    We will rely on the results of the works \cite{Hakimi-1963-RealizabilityDegreesGraph2,Will-1999-SwitchingDistanceGraphs,BM-2018-ReconfigurationConnectivityConstraints}, which variously work with not $\sG(V, p)$ but the induced subgraph on the vertex set consisting either of simple graphs or loopless multigraphs (with parallel edges allowed).
    However, implicit in their results are the following facts:
    \begin{enumerate}
        \item The induced subgraphs of $\sG(V, p)$ on both simple graphs and on loopless multigraphs are connected.
        \item There is a path between simple graphs (respectively, loopless multigraphs) $G, H \in \sG(V, p)$ of minimum length passing through only simple graphs (respectively, loopless multigraphs); that is, the distance between $G$ and $H$ in the induced subgraph on simple graphs (respectively, loopless multigraphs) equals the distance in $\sG(V, p)$.
    \end{enumerate}
    Thus we will rephrase their results over $\sG(V, p)$ without further comment.
\end{remark}

Moreover, we may always reduce switching distance computations from a sequence of possibly large degrees to the case $p \equiv 1$, which corresponds to sets of \emph{perfect matchings}.

\begin{definition}
    For $G \in \sG(V, p)$, let $V^{\prime}$ be a set of size $\sum_{v \in V} p(v)$, which we view as consisting of $v^{\prime}_{a,b}$ for $a \in V$ and $b \in [p(a)]$.
    We say that a perfect matching $\mu \in \sG(V^{\prime}, 1)$ \emph{realizes} $G$ if $G$ is obtained from identifying $v^{\prime}_{a,1}, \dots, v^{\prime}_{a, p(a)}$ into a single vertex for each $a$.
\end{definition}
\noindent
In other words, $\mu$ realizes $G$ if $G$ would be obtained from $\mu$ under the configuration model; this is in somewhat different language the same definition used in the main text in constructing the graph Weingarten function.

\begin{proposition}
    \label{prop:switch-matching}
    $d_{\switch}(G, H) = \min\{d_{\switch}(\mu_G, \mu_H): \mu_G \text{ realizes } G, \mu_H \text{ realizes } H\}$.
\end{proposition}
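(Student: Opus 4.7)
The plan is to prove the equality by separately establishing both inequalities, using a ``projection'' map from matchings to graphs. Let $\pi: \sG(V', 1) \to \sG(V, p)$ be the map that identifies $v'_{a,1}, \ldots, v'_{a,p(a)}$ into the single vertex $a \in V$, so that by definition $\mu$ realizes $G$ iff $\pi(\mu) = G$. The key observation is that $\pi$ is compatible with switchings on both sides, in the sense that a matching switching on $\mu$ either descends to a graph switching on $\pi(\mu)$ or leaves $\pi(\mu)$ unchanged.

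For the inequality $d_{\switch}(G, H) \leq \min d_{\switch}(\mu_G, \mu_H)$, I would fix realizations $\mu_G, \mu_H$ achieving the minimum and take a shortest switching sequence $\mu_G = \mu_0 \to \mu_1 \to \cdots \to \mu_k = \mu_H$. A single matching switching replaces matching-edges $\{a,b\}, \{a',b'\}$ by $\{a,a'\}, \{b,b'\}$; projecting via $\pi$ replaces the two graph-edges $\{\pi(a),\pi(b)\}, \{\pi(a'),\pi(b')\}$ of $\pi(\mu_i)$ by $\{\pi(a),\pi(a')\}, \{\pi(b),\pi(b')\}$, which is either a genuine graph switching or leaves the multiset of edges unchanged. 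Discarding the ``null'' steps yields a switching sequence from $G$ to $H$ of length at most $k$.

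For the reverse inequality, I would lift a shortest graph switching sequence $G = G_0 \to G_1 \to \cdots \to G_k = H$ to a matching sequence of the same length. Starting from any $\mu_0$ realizing $G_0$, suppose inductively $\mu_i$ realizes $G_i$ and that the next graph switching replaces $\{v,w\}, \{v',w'\} \in E(G_i)$ by $\{v,v'\}, \{w,w'\}$. Since $\mu_i$ realizes $G_i$, one may pick specific half-edges $v'_{v,\alpha}, v'_{w,\beta}, v'_{v',\alpha'}, v'_{w',\beta'}$ of $V'$ so that $\{v'_{v,\alpha}, v'_{w,\beta}\}$ and $\{v'_{v',\alpha'}, v'_{w',\beta'}\}$ are edges of $\mu_i$ realizing the two chosen edges of $G_i$. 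Apply the matching switching that replaces these by $\{v'_{v,\alpha}, v'_{v',\alpha'}\}$ and $\{v'_{w,\beta}, v'_{w',\beta'}\}$. By construction, the resulting matching $\mu_{i+1}$ realizes $G_{i+1}$, and after $k$ steps we reach a realization $\mu_H$ of $H$, so $d_{\switch}(\mu_0, \mu_H) \leq k = d_{\switch}(G, H)$, from which the minimum bound follows.

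The main thing to be careful about is the case analysis underlying the projection claim in the first direction: the four half-edges involved in a matching switching can project to fewer than four distinct elements of $V$, producing loops or parallel edges rather than a ``generic'' switching. This is where the flexibility of allowing loops and parallel edges in $\sG(V, p)$ is essential --- in every subcase the projected move either coincides with the identity or fits the definition of a graph switching with its loops/parallel-edges interpretation, so no extra accounting is needed. Once this is verified, both inequalities combine to give the stated equality.
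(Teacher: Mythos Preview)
The paper states this proposition without proof, treating it as an elementary observation; your argument supplies exactly the natural two-inequality proof via the projection $\pi:\sG(V',1)\to\sG(V,p)$, and it is correct. The only point worth a small clarification is that in the first direction, since the two matching edges $\{a,b\},\{a',b'\}$ involve four distinct vertices of $V'$, their projections are always two distinct elements of the edge multiset of $\pi(\mu_i)$, so the projected move is \emph{always} a valid graph switching in the paper's sense (possibly with $G_{i+1}=G_i$); your ``discard the null steps'' handling then covers precisely the trivial case.
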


\begin{definition}
    For $G$ and $H$ defined on the same vertex set, we write $G \triangle H$ for the graph containing the symmetric difference of the edge sets $G$ and $H$, where if $G$ has $a$ edges between $i$ and $j$ and $H$ has $b$ edges between $i$ and $j$, then $G \triangle H$ has $|a - b|$ edges between $i$ and $j$.
    We view this graph as being colored, where, in the above setting, if $a > b$ then the edges between $i$ and $j$ are blue, and otherwise they are red.

    A \emph{symmetric circuit} in such a colored graph is a closed walk of even length alternating between red and blue edges.
    A \emph{symmetric circuit partition} is a partition of the edges into symmetric circuits.
    We denote the number of circuits in the largest symmetric circuit partition by $\circuit(G, H)$.
\end{definition}

The following useful fact shows that the (minimum) switching distance is essentially equivalent to the maximum circuit partition.

\begin{proposition}[Theorem 2.5 of \cite{Will-1999-SwitchingDistanceGraphs}, Theorem 22 of \cite{BM-2018-ReconfigurationConnectivityConstraints}]
    \label{prop:dswitch-circ}
    $d_{\switch}(G, H) = \frac{1}{2}|E(G \triangle H)| - \circuit(G, H)$.
\end{proposition}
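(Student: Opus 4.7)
The plan is to prove $d_{\switch}(G, H) = \frac{1}{2}|E(G \triangle H)| - \circuit(G, H)$ by separately establishing matching upper and lower bounds.

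For the upper bound, I would exhibit an explicit sequence of switchings whose length realizes the right-hand side. Start from an optimal symmetric circuit partition $\mathcal{C}$ of $G \triangle H$ into $k = \circuit(G, H)$ circuits, and resolve each circuit independently. I claim a circuit of length $2\ell$ with alternating blue edges $b_1, \ldots, b_\ell$ and red edges $r_1, \ldots, r_\ell$ along a closed walk $v_1 v_2 \cdots v_{2\ell} v_1$ can be resolved via $\ell - 1$ switchings by induction: perform the switching that replaces $b_1 = \{v_1, v_2\}$ and $b_2 = \{v_3, v_4\}$ by $\{v_2, v_3\} = r_1$ and a new edge $\{v_1, v_4\}$. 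This consumes $b_1, b_2, r_1$ from the symmetric difference and leaves a circuit of length $2(\ell - 1)$ through $v_1, v_4, v_5, \ldots, v_{2\ell}$ to which the inductive hypothesis applies. Summing over circuits gives $\sum_{C \in \mathcal{C}} (\ell_C - 1) = \frac{1}{2}|E(G \triangle H)| - k$ switchings; since circuits in $\mathcal{C}$ are edge-disjoint and each resolution only touches edges of its own circuit plus the composite edges it creates internally, the resolutions do not interfere.

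For the lower bound, I would use the potential function $\Phi(G') \colonequals \frac{1}{2}|E(G' \triangle H)| - \circuit(G', H)$. Since $G'$ and $H$ share a degree sequence, at every vertex of $G' \triangle H$ the blue and red degrees coincide, so an Eulerian-type argument ensures a symmetric circuit partition exists and every circuit has length at least $2$, whence $\Phi(G') \geq 0$, with $\Phi(H) = 0$. If I can show that a single switching decreases $\Phi$ by at most $1$, then $\Phi(G) \leq d_{\switch}(G, H)$, completing the proof.

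The heart of the argument is therefore a case analysis on the effect of one switching. The switching modifies four edge slots, so $|E(G' \triangle H)|$ changes by an even integer in $\{-4, -2, 0, 2, 4\}$ depending on the statuses (blue, red, or neutral) of the two removed and two added edges. The delicate case is a decrease of $4$: this occurs precisely when the switching exchanges two blue edges for two red edges, so the four involved edges form an alternating $4$-cycle $C$ inside $G' \triangle H$. The key claim I would need is that any maximum circuit partition of $G' \triangle H$ can be locally rerouted, without losing circuits, to contain $C$ as one of its members; once this holds, removing $C$ after the switching yields a partition of $G'' \triangle H$ with exactly one fewer circuit, giving a $\Phi$-change of $-2 - (-1) = -1$. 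Analogous but strictly easier rerouting arguments handle $\Delta|E| \in \{-2, 0, 2, 4\}$, each giving $\Phi$-change $\geq -1$. Verifying these rerouting claims — essentially that an optimal symmetric circuit partition admits enough local flexibility to absorb any single switching's edges into a single circuit — is where I expect the main technical difficulty, and I would handle it by analyzing how the symmetric difference of $G'$ with $H$ splits into blue-red alternating walks near the four affected vertices.
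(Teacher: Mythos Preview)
The paper does not prove this proposition itself; it cites two external references and only remarks that the result extends to graphs with loops via the matching-realization reduction of Proposition~\ref{prop:switch-matching}. Your two-sided strategy---construct a switching sequence from an optimal circuit decomposition for the upper bound, and track the potential $\Phi(G') = \tfrac{1}{2}|E(G'\triangle H)| - \circuit(G',H)$ for the lower bound---is indeed the standard one in those references, and your upper-bound sketch is essentially correct.

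The lower-bound argument, however, has its central inequality running the wrong way. In the $\Delta|E| = -4$ case you need $\Phi(G') - \Phi(G'') \le 1$, which unwinds to the \emph{upper} bound $\circuit(G'',H) \le \circuit(G',H) - 1$. That bound is immediate: take any maximum symmetric circuit partition of $G''\triangle H$ and adjoin the alternating $4$-cycle $C$ to obtain a partition of $G'\triangle H$ with one more circuit. Your rerouting claim instead starts from a maximum partition of $G'\triangle H$, isolates $C$, and deletes it, which yields only the \emph{lower} bound $\circuit(G'',H) \ge \circuit(G',H) - 1$; substituting this into $\Phi(G'') - \Phi(G') = -2 - (\circuit(G'',H)-\circuit(G',H))$ gives $\le -1$, i.e., $\Phi$ drops by \emph{at least} one---the opposite of what you need. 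The same reversal infects your dismissal of the remaining cases as ``strictly easier'': for each value of $\Delta|E|$ what is actually required is an upper bound on $\circuit(G'',H)$, obtained by starting from a maximum partition of $G''\triangle H$ and resplicing the circuits that touch the affected edge-slots into a valid partition of $G'\triangle H$. With the direction corrected, the $\Delta|E| = -4$ case is in fact the easiest, not the delicate one.
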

\noindent
The proofs cited above treat simple and loopless graphs, respectively, but the same argument extends straightforwardly to graphs with loops as well.
One way to see this is to use Proposition~\ref{prop:switch-matching} to rephrase the computation of $d_{\switch}(G, H)$ as a similar computation of matchings realizing $G$ and $H$, which are simple graphs and to which the results of \cite{Will-1999-SwitchingDistanceGraphs} apply.

\begin{corollary}
    \label{cor:switching-distance-bound}
    $d_{\switch}(G, H) \leq |E(G)| = |E(H)|$.
\end{corollary}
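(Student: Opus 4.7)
The plan is to derive this directly from Proposition~\ref{prop:dswitch-circ}, which expresses the switching distance as
\begin{equation*}
d_{\switch}(G, H) = \tfrac{1}{2}|E(G \triangle H)| - \circuit(G, H).
\end{equation*}
First I would observe that since $G, H \in \sG(V, p)$ share the same degree sequence, they have the same total number of edges: $|E(G)| = |E(H)| = \tfrac{1}{2}\sum_{v \in V} p(v)$. Then the symmetric difference satisfies the trivial bound $|E(G \triangle H)| \leq |E(G)| + |E(H)| = 2|E(G)|$, so that $\tfrac{1}{2}|E(G \triangle H)| \leq |E(G)|$.

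Next I would note that $\circuit(G, H) \geq 0$, as it is defined as the cardinality of a partition (vacuously zero if $G = H$, in which case $d_{\switch}(G,H) = 0$ and the bound holds trivially; otherwise, one still only needs non-negativity). Substituting both bounds into the formula from Proposition~\ref{prop:dswitch-circ} gives
\begin{equation*}
d_{\switch}(G, H) = \tfrac{1}{2}|E(G \triangle H)| - \circuit(G, H) \leq |E(G)| - 0 = |E(G)|,
\end{equation*}
which is exactly the claimed inequality.

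There is essentially no obstacle here; this is a two-line consequence of the preceding proposition combined with the observation that the symmetric difference of two edge sets of equal cardinality has at most twice that cardinality. The only mild point worth flagging is that one should confirm $\circuit(G, H)$ is well-defined (i.e., that $G \triangle H$ admits a symmetric circuit partition), which holds because each vertex has equal red- and blue-degree in $G \triangle H$ whenever $G$ and $H$ have the same degree sequence, so the standard Eulerian argument produces the required partition.
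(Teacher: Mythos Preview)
Your proof is correct and follows essentially the same route as the paper: both apply Proposition~\ref{prop:dswitch-circ}, bound $|E(G \triangle H)| \leq 2|E(G)|$, and use $\circuit(G,H) \geq 0$. The paper's version is just the one-line observation $|E(G \triangle H)| = 2|E(G - H)| \leq 2|E(G)|$, while you spell out a bit more detail.
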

\begin{proof}
    The result follows since $|E(G \triangle H)| = 2|E(G - H)| \leq 2|E(G)|$.
\end{proof}

We now move towards relating the switching distance to the quantities involved in the Wigner moments; in particular, we will relate it to the exponent $c_{\max}(G)$, which we recall satisfied:
\begin{equation}
    \Ex_{W \sim \Wig} m_G(W) \sim n^{c_{\max}(G)}.
\end{equation}

\begin{definition}
    For $p$ a constant and $|V|$ even, denote by $\sF = \sF(V, p) \subset \sG(V, p)$ the subset of multigraphs that consist of a disjoint union of Frobenii of degree $p$.
\end{definition}

Our first main result is that the exponent $c_{\max}(G)$ may be computed through the minimum switching distance to a set of Frobenii.
\begin{lemma}
    \label{prop:cmax-dswitch}
    For any $p$-regular $G$, $c_{\max}(G) = |E(G)| - d_{\switch}(G, \sF)$.
\end{lemma}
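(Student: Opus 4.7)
The plan is to rewrite the target identity combinatorially and then prove it by a dictionary between even colorings of $G$ and Frobenius graphs equipped with symmetric circuit partitions of their symmetric differences with $G$. Using Proposition~\ref{prop:dswitch-circ} together with $|E(F)|=|E(G)|$ and $|E(G\triangle F)|=2(|E(G)|-|E(G)\cap E(F)|)$ for any $F\in\sF$, the identity is equivalent to
\begin{equation*}
c_{\max}(G) \;=\; \max_{F\in\sF}\bigl[\,|E(G)\cap E(F)|+\circuit(G,F)\,\bigr],
\end{equation*}
which I would prove by two matching inequalities. For a coloring $\sigma$ and color $c$, write $V_c$ for the set of vertices $v$ with $c\in\sigma(\partial v)$ and $E_c$ for the set of edges colored $c$.

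For the $\geq$ direction, I would fix any $F\in\sF$ with pairing $\pi$ and any symmetric circuit partition of $G\triangle F$ into $k$ circuits, then build a coloring of $E(G)$ by assigning each common edge its own distinct color and labeling each blue edge with the index of its circuit, for a total of $|E(G)\cap E(F)|+k$ colors. Evenness is a local check at every vertex $v$: common edges at $v$ all run to $\pi(v)$ and so contribute identical colors there; meanwhile each symmetric circuit visits $v$ using one blue and one red edge per visit, so the multiset of circuit labels on blue edges at $v$ equals the one on red edges at $v$. Since every red edge at $v$ is literally also an edge at $\pi(v)$ (both endpoints of every red edge are $v$ and $\pi(v)$), that multiset agrees with the red-label multiset at $\pi(v)$, and by the same local argument applied at $\pi(v)$, with the blue-label multiset at $\pi(v)$.

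For the $\leq$ direction, I would start from a max coloring $\sigma$ with $c=c_{\max}(G)$ colors. By the remarks after Proposition~\ref{prop:even-colorings}, every neighborhood appears exactly twice and every vertex uses distinct colors, which yields a fixed-point-free involution $\pi$ on $V(G)$; set $F=F_\pi$. Every singleton color class $\{(v,w)\}$ forces $V_c=\{v,w\}$ to be $\pi$-invariant, hence $w=\pi(v)$, so singletons are common edges. For a non-singleton class $E_c$: $V_c$ is $\pi$-invariant, $E_c$ is a matching of $V_c$, and $E_c$ cannot contain any $\pi$-pair edge $(v,\pi(v))$ --- otherwise that edge would be common and splitting it off as its own singleton would strictly increase the color count while preserving evenness, contradicting maximality. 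Hence $E_c\cup\pi|_{V_c}$ is a 2-regular graph on $V_c$ whose cycles are all $\pi$-invariant (a $\pi|_{V_c}$-step lands at the $\pi$-image of its starting point), and the key lemma is that this 2-regular graph must be a \emph{single} cycle: splitting $c$ into one color per component of $E_c\cup\pi|_{V_c}$ would again preserve evenness and strictly increase the count. Each such cycle then realizes as a symmetric circuit of $G\triangle F_\pi$ by assigning its $\pi|_{V_c}$-steps to distinct red copies of the corresponding Frobenii; at each $\pi$-pair $(v,\pi(v))$ the supply of red copies is $p-c_v$ (where $c_v$ counts common edges at $v$) and the demand is also $p-c_v$ (one per non-common edge of $v$, since each such edge lies in a unique non-singleton class whose cycle passes through this $\pi$-pair exactly once), so the assignment exists and produces a circuit partition with $c-|E(G)\cap E(F_\pi)|$ circuits.

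The step I expect to be the main obstacle is the single-cycle lemma for non-singleton color classes: this is the link that converts the algebraic ``color'' structure into the topological ``circuit'' structure, and without it the $\leq$ direction would only give an inequality with strictly more circuits than colors. Everything else is local bookkeeping at the $\pi$-pairs.
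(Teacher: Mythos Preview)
Your reformulation via Proposition~\ref{prop:dswitch-circ} is correct, and the $\geq$ direction is clean. But the $\leq$ direction has a circularity: you invoke ``the remarks after Proposition~\ref{prop:even-colorings}'' to get that a maximum even coloring has every neighborhood appearing exactly twice and distinct colors at each vertex. Those remarks refer to Propositions~\ref{prop:max-coloring-unique-within-nbd} and~\ref{prop:max-coloring-unique-across-nbd}, which in the paper are proved \emph{using} equation~\eqref{eq:cmax-cyc} from the very Lemma you are trying to establish. So as written your argument is circular. The ``exactly twice'' fact is inessential (any compatible fixed-point-free involution $\pi$ would do), but the distinct-colors fact is load-bearing: without it $E_c$ need not be a matching, $E_c\cup\pi|_{V_c}$ need not be $2$-regular, and your single-cycle lemma and the subsequent circuit realization both collapse.

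The fix is not free. Proving ``distinct colors at each vertex in a max coloring'' directly by a splitting argument essentially forces you to pass to half-edges: when $v$ has two edges of color $c$, you blow up each $u\in V_c$ into $m_u$ copies, assign blue and red half-edges, observe that a swap of two red half-edge matches at $\pi(v)$ can always break one alternating cycle into two, and then recolor by cycles. But that construction is exactly the paper's route: it lifts $G$ to a half-edge matching $\mu$, encodes the coloring data as a matching $\mu'$ realizing a Frobenius graph, and shows $c_{\max}(G)=\max_{\mu,\mu'}\cyc(\mu\sqcup\mu')$ before invoking Propositions~\ref{prop:dswitch-circ} and~\ref{prop:switch-matching}. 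So your graph-level approach is a genuinely different and pleasant packaging of the $\geq$ direction and of the conversion of each color class into a single symmetric circuit, but for the structural input it currently leans on, you either need an independent proof of Proposition~\ref{prop:max-coloring-unique-within-nbd} (which will look like the half-edge argument) or you should work at the half-edge level from the start, as the paper does.
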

\noindent
We note that, while $\sF$ and $\sG$ refer to sets of labelled graphs, the quantities being computed here do not depend on the labelling, since $\sF$ is invariant under permutations of the vertex labels.
\begin{proof}
    Let $v = |V(G)|$, identify $V(G)$ with $[v]$, and let $\mu$ be a perfect matching realizing $G$.
    View the vertex set of $\mu$ as $[v] \times [p]$, so that $(v, 1), \dots, (v, p)$ are the ``expanded'' vertex set corresponding to $v \in V$.
    To any even edge coloring of $G$ we may associate a perfect matching $\kappa$ of $[v]$ and perfect matchings $\eta_{\{i, j\}}$ of $[p]$ for each $\{i, j\} \in \kappa$, so that vertices matched in $\kappa$ have the same edge colors in their neighborhoods, and $\eta_{\{i, j\}}$ is a matching between half-edges of the same color incident with $i$ and with $j$.
    We may view $\mu^{\prime} \colonequals \bigsqcup_{\{i, j\} \in \kappa} \eta_{\{i, j\}}$ as a matching on the same vertex set as $\mu$.
    The number of colors in the given edge coloring of $G$ is at most the number of cycles into which $\mu \sqcup \mu^{\prime}$ decomposes; conversely, there is an even edge coloring of $G$ with precisely this number of edge colors, formed by assigning a different color to the edges of $G$ corresponding to the edges of $\mu$ lying in each cycle.

    But, the possible $\mu^{\prime}$ described above are precisely the matchings that realize a graph of ($p$-regular) Frobenii on $[v]$.
    Thus $c_{\max}(G)$ is equivalently the maximum number of cycles into which $\mu \sqcup \mu^{\prime}$ decomposes for any $\mu^{\prime}$ realizing a graph of Frobenii on $[v]$; moreover, the same holds for any $\mu$ realizing $G$.
    Let us write $\cyc(\mu \sqcup \mu^{\prime})$ for this quantity.
    We have shown so far that
    \begin{equation}
        \label{eq:cmax-cyc}
        c_{\max}(G) = \max\{\cyc(\mu \sqcup \mu^{\prime}): \mu \text{ realizes } G, \mu^{\prime} \text{ realizes some } F \in \sF\}.
    \end{equation}

    Now, separating the 2-cycles in $\mu \sqcup \mu^{\prime}$, which correspond to edges shared between $\mu$ and $\mu^{\prime}$, from longer cycles and using Proposition~\ref{prop:dswitch-circ}, we have
    \begin{align*}
         \cyc(\mu \sqcup \mu^{\prime})
         &= \cyc(\mu \triangle \mu^{\prime}) + |\mu \cap \mu^{\prime}| \\
         &= \circuit(M, M^{\prime}) + \left( |E(\mu)| - \frac{1}{2}|E(\mu \triangle \mu^{\prime})|\right) \\
         &= |E(G)| - \left(\frac{1}{2}|E(\mu \triangle \mu^{\prime})| - \circuit(\mu, \mu^{\prime})\right) \\
         &= |E(G)| - d_{\switch}(\mu, \mu^{\prime}).
    \end{align*}
    Finally, substituting, we find
    \begin{align*}
        c_{\max}(G)
        &= |E(G)| - \min\{d_{\switch}(\mu, \mu^{\prime}): \mu \text{ realizes } G, \mu^{\prime} \text{ realizes some } F \in \sF\} \\
        &= |E(G)| - d_{\switch}(G, \sF),
    \end{align*}
    completing the proof.
\end{proof}

We also learn some interesting structural facts about maximum even edge colorings from this proof, using the interpretation of $c_{\max}(G)$ in \eqref{eq:cmax-cyc}.
\begin{proposition}
    \label{prop:max-coloring-unique-within-nbd}
    In any maximum even edge coloring, for any vertex, all edges adjacent to that vertex have distinct colors.
\end{proposition}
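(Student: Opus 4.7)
My plan is to use the matching-level characterization established in the proof of Lemma~\ref{prop:cmax-dswitch}: a maximum even edge coloring corresponds to a pair of perfect matchings $(\mu, \mu^{\star})$ on $V \times [p]$, with $\mu$ realizing $G$ and $\mu^{\star}$ realizing a disjoint union of Frobenii $F \in \sF$, such that $\cyc(\mu \sqcup \mu^{\star})$ is maximized. Color classes correspond bijectively to the cycles of $\mu \sqcup \mu^{\star}$, and a color $\gamma$ appears in vertex $v$'s colored neighborhood with multiplicity equal to the number of $V \times [p]$-vertices of the form $(v, \cdot)$ lying in the $\gamma$-cycle. The proposition is therefore equivalent to the statement that at the maximum no cycle contains two vertices sharing a $V$-component, and I will prove this by constructing, from any offending configuration, a new Frobenius-realizing matching with strictly more cycles.

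For the contradiction, I will suppose that some cycle $C$ contains distinct vertices $(v, a), (v, b)$, let $w$ be $v$'s Frobenius partner under $\mu^{\star}$ (so $w$ also has two color-$\gamma$ incidences at $V \times [p]$-vertices $(w, c), (w, d)$, paired as $\{(v, a), (w, c)\}, \{(v, b), (w, d)\} \in \mu^{\star}$ and all four lying in $C$), and split into two cases according to the cyclic order of these four vertices in $C$. In the \emph{interleaved case} (order $a, d, b, c$), I will define $\mu^{\star\star}$ by exchanging the two relevant $\mu^{\star}$-edges for $\{(v, a), (w, d)\}, \{(v, b), (w, c)\}$ and leaving everything else fixed; this preserves the Frobenius configuration, and a direct trace around $C$ shows that it splits into two cycles in $\mu \sqcup \mu^{\star\star}$, giving the contradiction. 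In the \emph{nested case} (order $a, b, d, c$), the subpath $P_1$ of $C$ from $(v, a)$ to $(v, b)$ has odd edge-length and the two visits to $v$ have opposite alternating type, so the local swap above does not split $C$; instead I will pick a $\mu^{\star}$-edge $g = \{(u_1, k), (u_2, k')\}$ lying in $P_1$ (its Frobenius pair $\{u_1, u_2\}$ must be disjoint from $\{v, w\}$ since $P_1$ avoids $(w, c), (w, d)$) and define $\mu^{\star\star}$ by a \emph{Frobenius pair swap}: replace the pairs $\{v, w\}, \{u_1, u_2\}$ by $\{v, u_1\}, \{w, u_2\}$, with the half-edge bijections chosen so that $\{(v, a), (u_1, k)\}, \{(w, c), (u_2, k')\} \in \mu^{\star\star}$ and the remaining half-edges are paired by composing the two original Frobenius matchings in the natural way.

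The main obstacle will be the bookkeeping in the nested case, since the Frobenius pair swap modifies $4p$ edges of $\mu^{\star}$ and can alter cycles well outside the region $\{v, w, u_1, u_2\} \times [p]$. To control this I will track the alternating structure near the swapped vertices and show that the arc of $C$ from $(v, a)$ to $(u_1, k)$ closes off into a new shorter cycle via $\{(v, a), (u_1, k)\}$, while the remainder of $C$ together with the cycle(s) of $\mu \sqcup \mu^{\star}$ containing the other $\mu^{\star}$-edges of the $\{u_1, u_2\}$-Frobenius reorganize into at least one more cycle than they contained originally. Two edge cases will be handled separately: if $P_1$ contains no $\mu^{\star}$-edge, then it is a single $\mu$-edge corresponding to a self-loop at $v$ colored $\gamma$, and the proposition reduces to a direct observation about self-loop colorings; and if $|V(G)| = 2$, then no alternative Frobenius pair exists, but the graph is then a single Frobenius and its maximum coloring is easily verified directly to assign $p$ distinct colors to the $p$ parallel edges.
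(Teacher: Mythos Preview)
Your case split (interleaved vs.\ nested) is more careful than the paper's proof, which simply asserts that the local swap always breaks the cycle in two; as you correctly recognize, the Frobenius-preserving swap leaves the cycle intact in the nested configuration. But your fix for the nested case fails, and it must fail, because the proposition as stated is \emph{false} for multigraphs with self-loops. Take $p=3$ and $G$ on $\{v,w,u_1,u_2\}$ with edges $vw,\ vu_1,\ vu_2$, a double edge $u_1u_2$, and a self-loop at $w$. A short direct check over the three possible Frobenius pairings gives $c_{\max}(G)=4$, achieved only when $v$ is paired with $w$; equating the neighborhood multisets $\{c(vw),c(vu_1),c(vu_2)\}$ and $\{c(vw),c(ww),c(ww)\}$ then \emph{forces} $c(vu_1)=c(vu_2)=c(ww)$, so $v$ carries a repeated color in every maximum even coloring.

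In this very example your Frobenius-pair-swap, which moves to pairs $\{v,u_2\},\{w,u_1\}$ (or $\{v,u_1\},\{w,u_2\}$), yields at most three cycles for \emph{any} choice of half-edge bijections---strictly fewer than four. So the claim that ``the remainder of $C$ together with the cycle(s) \dots\ reorganize into at least one more cycle than they contained originally'' is simply not true: the swap alters $2p$ edges of $\mu^{\star}$ at once and can merge more cycles than it creates. Two smaller issues compound this. First, your justification that $\{u_1,u_2\}\cap\{v,w\}=\emptyset$ (``since $P_1$ avoids $(w,c),(w,d)$'') is insufficient, because $P_1$ may contain other half-edges of $w$; you would first need a pigeonhole reduction to the case where $C$ contains exactly two half-edges of $v$ (hence of $w$). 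Second, your self-loop edge case is dismissed too quickly---it is precisely a self-loop at the Frobenius \emph{partner} $w$, not at $v$ itself, that produces the obstruction above.
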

\begin{proof}
    Suppose otherwise.
    By \eqref{eq:cmax-cyc}, we then have a matching $\mu$ of $[d] \times [p]$ realizing $G$ and another matching $\mu^{\prime}$ realizing a disjoint union of Frobenii.
    Suppose without loss of generality that there are two edges incident with vertex 1 and having the same color, i.e., two edges in $\mu$, touching vertices among $(1, 1), \dots, (1, p)$, and belonging to the same cycle in $\mu \sqcup \mu^{\prime}$.
    Suppose again without loss of generality that these latter vertices are $(1, 1)$ and $(1, 2)$.
    Under $\mu^{\prime}$, all vertices with first coordinate 1 are matched to vertices with some other first coordinate $i$.
    So, suppose $(1, 1)$ is matched with $(i, j)$ and $(1, 2)$ with $(i, k)$.
    Define $\mu^{\prime\prime}$ by instead matching $(1, 1)$ with $(i, k)$ and $(1, 2)$ with $(i, j)$.
    Then, the cycle containing these edges in $\mu \sqcup \mu^{\prime}$ is broken into two cycles, while all other cycles are unchanged.
    Thus, the original coloring must not have had the maximum number of colors, and we reach a contradiction.
\end{proof}

\begin{proposition}
    \label{prop:max-coloring-unique-across-nbd}
    In any maximum even edge coloring, every colored neighborhood occurs exactly twice.
\end{proposition}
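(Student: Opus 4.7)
The plan is to argue by contradiction inside the cycle-counting framework used for Proposition~\ref{prop:cmax-dswitch}. Suppose a maximum even edge coloring of $G$ has a colored neighborhood $N$ that occurs $2k$ times, at vertices $v_1,\dots,v_{2k}$ with $k \geq 2$. Fix matchings $\mu$ realizing $G$ and $\mu'$ realizing a disjoint union of Frobenii such that the coloring is read off from the cycles of $\mu \sqcup \mu'$ and $\cyc(\mu \sqcup \mu') = c_{\max}(G)$, using the identity~\eqref{eq:cmax-cyc}. By Proposition~\ref{prop:max-coloring-unique-within-nbd} each $v_i$ has exactly one half-edge of each color $c \in N$; write $t_i^c \in [p]$ for its position.

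Since the matching $\kappa$ underlying $\mu'$ only pairs vertices with identical colored neighborhoods and no vertex outside $\{v_1,\dots,v_{2k}\}$ has neighborhood $N$, $\kappa$ must pair these $2k$ vertices among themselves. After relabeling, assume $\kappa$ contains the pairs $(v_1,v_2)$ and $(v_3,v_4)$. I will then compare $\mu'$ with two alternative matchings $\mu'_1, \mu'_2$ obtained by replacing those pairs with $(v_1,v_3),(v_2,v_4)$ and $(v_1,v_4),(v_2,v_3)$ respectively, and choosing the inner bijection $\eta$ in each new Frobenius to align half-edges by color. Because all four vertices share the neighborhood $N$, both alternatives again realize a disjoint union of Frobenii.

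The main step is a per-color local analysis. For each $c \in N$, set $a = (v_1, t_1^c)$, $b = (v_2, t_2^c)$, $a' = (v_3, t_3^c)$, $b' = (v_4, t_4^c)$. All four expanded vertices lie on the single color-$c$ cycle $C_c$ of $\mu \sqcup \mu'$, joined by the two $\mu'$-edges $\{a,b\}$ and $\{a',b'\}$; depending on the cyclic order along $C_c$, the quadruple reads either as $(a,b,a',b')$ or as $(a,b,b',a')$. A short trace of the alternating walk after each swap shows that in the first case $\mu'_2$ splits $C_c$ into two cycles while $\mu'_1$ leaves it as one, and the roles of $\mu'_1$ and $\mu'_2$ are reversed in the second case. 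By Proposition~\ref{prop:max-coloring-unique-within-nbd} every half-edge at $\{v_1,\dots,v_4\} \times [p]$ is colored by some element of $N$, so the $2p$ modified $\mu'$-edges affect only the $p$ cycles $\{C_c : c \in N\}$; no other cycle is disturbed.

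To finish: for each of the $p$ colors in $N$, exactly one of $\mu'_1, \mu'_2$ gains a cycle and the other keeps the same number. Summed across the two swaps, the total gain is exactly $p$, so at least one of $\mu'_1, \mu'_2$ yields a strictly larger value of $\cyc(\mu \sqcup \cdot)$, contradicting $\cyc(\mu \sqcup \mu') = c_{\max}(G)$ via~\eqref{eq:cmax-cyc}. The main obstacle will be the orientation and parity bookkeeping in the local splitting step; once those cases are settled, the global averaging argument that forces a strict gain from at least one swap is short.
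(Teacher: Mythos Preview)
Your argument is correct and follows essentially the same approach as the paper's proof: both rematch the Frobenius pairing among four vertices sharing the colored neighborhood $N$ and show this increases $\cyc(\mu \sqcup \mu')$. Your per-color case analysis together with the averaging over the two swap directions is a crisper way to certify that at least one alternative strictly gains a cycle, where the paper's proof simply asserts that the inner matching $\eta$ can be chosen to split one cycle without merging any others.
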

\begin{proof}
    A similar argument applies in this case as well.
    In the matching interpretation, if there is a colored neighborhood occurring four (or more) times in a coloring, then the same collection of $p$ cycles in $\mu \sqcup \mu^{\prime}$ must pass through the vertices $(i_a, 1), \dots, (i_a, p)$ for $a = 1, 2, 3, 4$ and some choice of $i_1, i_2, i_3, i_4 \in [d]$.
    Suppose without loss of generality that the $(i_1, j)$ are matched to the $(i_2, j)$ and the $(i_3, j)$ to the $(i_4, j)$ in $\mu^{\prime}$ (if the cycles only pass through four of the same collections of vertices such a pairing must occur; if there are more such collections then one may find such pairs by following the matching of $\mu^{\prime}$.
    Then, it is possible to instead match either the $(i_1, j)$ to the $(i_3, j)$ or the $(i_4, j)$ such that one of the $p$ cycles is broken into two (by matching its endpoints differently), and no two of the other cycles are merged (by matching their endpoints to one another arbitrarily without joining two distinct cycles).
    Again, the total number of cycles must increase by at least 1, contradicting the maximality of the original choice of $\mu^{\prime}$.
\end{proof}

As a corollary of this result we learn that $w_{\max}(G)$, the sum of the ``weights'' of even edge colorings with $c_{\max}(G)$ colors as defined in the main text, is actually just a counting problem of the number of non-isomorphic even edge colorings with this number of colors; the weights necessarily always equal 1.

Finally, to prove Theorem~\ref{thm:wigner-moments-hard}, we will use the connection between $c_{\max}$ and the switching distance that we have developed as well as the following result.

\begin{theorem}[Theorem 3.2 of \cite{Will-1999-SwitchingDistanceGraphs}]
    It is NP-hard to decide whether $d_{\switch}(G, H) \geq d$ given $d \geq 0$ and simple graphs $G, H$ on the same vertex set and having the same degree sequence.
\end{theorem}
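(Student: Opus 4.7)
The plan is to combine the circuit characterization in Proposition~\ref{prop:dswitch-circ} with a gadget-based reduction from a classical NP-hard problem. Under that characterization, $d_{\switch}(G,H) \ge d$ iff the maximum number of pieces in an alternating circuit partition of the 2-edge-colored multigraph $G \triangle H$ (edges of $G \setminus H$ red, edges of $H \setminus G$ blue) is at most $\frac{1}{2}|E(G \triangle H)| - d$. Because $G$ and $H$ share a degree sequence, every vertex has equal red and blue degree, so alternating circuit partitions always exist and are in bijection with choices, at each vertex $v$, of a perfect matching between the red and blue half-edges incident to $v$. Maximizing the number of circuits in such a partition becomes a combinatorial optimization over these local ``transition'' choices, and it suffices to prove this optimization NP-hard and then realize the hard instances as the symmetric difference of two genuine simple graphs with matching degree sequence.

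I would reduce from 3-SAT, using two families of gadgets built in the edge-2-colored multigraph world. A \emph{variable gadget} is a small subgraph, with a distinguished ``variable vertex'' and a few external half-edges, whose local transition choices at the variable vertex admit exactly two locally optimal configurations corresponding to $x=0$ and $x=1$; each yields a fixed number of internal circuits, while any other transition choice yields strictly fewer. A \emph{clause gadget} links three variable gadgets through their external half-edges so that its contribution to the global circuit count is larger by exactly one when at least one incident literal is set to true. Summing over all gadgets, the maximum symmetric circuit partition reaches a designated target threshold iff the 3-SAT instance is satisfiable. Converting the colored multigraph back into simple graphs is then routine bookkeeping: set $G$ to be the red edges plus a common ``padding'' edge set, set $H$ to be the blue edges plus the same padding so that $G \triangle H$ recovers the colored multigraph and both sides have the same degree sequence, and remove any loops or parallel edges inside the gadgets by subdividing each offending edge with a fresh degree-two vertex, performed symmetrically across red and blue within each gadget so that balance is preserved and each subdivision shifts the maximum circuit count by a known additive constant.

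The main obstacle is the rigidity of the variable gadget: one must design it so that in \emph{every} globally optimal circuit partition the transition at the variable vertex is exactly one of the two canonical ``true/false'' matchings, not some ``diagonal'' pairing of half-edges that achieves the same local circuit count while spuriously satisfying a downstream clause constraint. Establishing this strict rigidity, while simultaneously preserving simplicity of $G$ and $H$ and the exact equality of their degree sequences under the subdivisions and padding, is where most of the proof's technical care will go, and is also the step most likely to dictate the precise choice of source problem — one may prefer a cubic or planar 3-SAT variant, or 3-dimensional matching, if that makes the degree-sequence arithmetic easier to close.
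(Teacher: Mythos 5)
This statement is not proved in the paper at all: it is imported verbatim as Theorem~3.2 of \cite{Will-1999-SwitchingDistanceGraphs} and used as a black box in the reduction establishing Theorem~\ref{thm:wigner-moments-hard}. So there is no internal argument to compare yours against; the only question is whether your proposal actually constitutes a proof, and it does not. Your first step is fine: by Proposition~\ref{prop:dswitch-circ}, deciding $d_{\switch}(G,H)\ge d$ is equivalent to deciding whether the maximum alternating (symmetric) circuit partition of the $2$-edge-colored graph $G\triangle H$ has at most $\tfrac12|E(G\triangle H)|-d$ parts, and the transition-system viewpoint (a choice of red--blue pairing of half-edges at each vertex) is the right way to think about that optimization. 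But everything after that is a plan rather than an argument. The variable and clause gadgets are never exhibited, and you yourself identify the rigidity of the variable gadget --- forcing every globally optimal transition system to select one of exactly two canonical local pairings --- as the crux and then defer it. In an NP-hardness proof the gadgets and their correctness lemmas \emph{are} the proof; without them nothing has been established, and it is not even clear that 3-SAT is the right source problem (one could equally imagine reducing from maximum Eulerian circuit decomposition or a matching-type problem, and the ``degree-sequence arithmetic'' you wave at may genuinely constrain the choice).

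There is also a concrete error in the cleanup step. Since switchings preserve the degree of every individual vertex, $d_{\switch}(G,H)$ is only defined when $\deg_G(v)=\deg_H(v)$ for each $v$, which forces every vertex of $G\triangle H$ to have equal red and blue degree. Subdividing an offending red edge ``with a fresh degree-two vertex'' creates a vertex of red degree $2$ and blue degree $0$, which cannot occur in any symmetric difference $G\triangle H$ of admissible instances; the same objection applies to parallel blue edges. The standard fix is to replace a red edge by an odd-length alternating path (red, blue, red, \dots, red), whose internal vertices are balanced and whose transitions are forced, so the circuit count is preserved exactly --- but that is a different construction from the one you describe, and it interacts with the gadget design (the forced transitions must not create new spurious circuits). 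Until the gadgets are written down, their rigidity proved, and the simple-graph realization carried out with a balance-preserving subdivision, this remains an outline of how one might prove Will's theorem, not a proof of it; for the purposes of this paper the honest move is to keep the external citation.
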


\begin{remark}
    It may be tempting to try to use the cumulants we have defined to compute $m_H$ and its Wigner expectation, and thereby to compute or estimate $c_{\max}$ and so a switching distance.
    The issue with such a strategy appears to be that computing with the relationship between the cumulants and the graph moments requires forming the graph Weingarten function $\Wg_{G, H}$, which involves a summation over the matchings realizing $G$ and $H$.
    There are exponentially many of these matchings, and indeed enumerating them would allow one to compute the switching distance by brute force.
\end{remark}

\begin{proof}[Proof of Theorem~\ref{thm:wigner-moments-hard}]
    We will show that if it is possible to compute $c_{\max}(G)$, then it is also possible to compute switching distances between simple graphs.
    Namely, let $G, H$ be simple graphs on a shared vertex set $V$ and with a shared degree sequence $p: V \to \NN$.
    We will show that computing $d_{\switch}(G, H)$ may be encoded in the computation of $c_{\max}(J)$ for a certain graph $J = J(G, H)$.
    By Proposition~\ref{prop:cmax-dswitch}, this is equivalent to computing $d_{\switch}(J, \sF)$.
    Let $v = |V|$, identify $V$ with $[v]$, and set $e = \frac{1}{2}\sum_{v \in V}p(v) = |E(G)| = |E(H)|$.

    We form $J$ on the vertex set $\{1, \dots, v, 1^{\prime}, \dots, v^{\prime}\}$.
    For $i, j \in \{1, \dots, v\}$, we draw one edge between $i$ and $j$ in $J$ if $i \sim j$ in $G$.
    Likewise, for $i^{\prime}, j^{\prime} \in \{1^{\prime}, \dots, v^{\prime}\}$, we draw one edge between $i^{\prime}$ and $j^{\prime}$ in $J$ if $i \sim j$ in $H$.
    Finally, for each $i \in [v]$, we draw $3e - p(i)$ edges between $i$ and $i^{\prime}$ in $J$.
    Thus, $J$ is a disjoint union of one copy of $G$ and one copy of $H$, along with a ``very heavy matching'' with many repeated edges between corresponding vertices in $G$ and $H$ under their joint labelling.
    Note also that $J$ is $3e$-regular.
    The basic idea is that this heavy matching will force the nearest disjoint union of Frobenii to correspond to the matching of vertices with the same labels in $G$ and $H$.

    More formally, let $F_0$ be the graph of Frobenii on the heavy matching in $J$.
    We claim that $d_{\switch}(J, \sF) = d_{\switch}(J, F_0)$; that is, that $F_0$ is a minimizer of the switching distance of $J$ to any disjoint union of Frobenii.
    Indeed, we have $d_{\switch}(J, F_0) \leq d_{\switch}(G, H) + e \leq 2e$, because to reach $F_0$ we may first transform $G$ into (a copy of) $H$ by switchings, and then align pairs of corresponding edges in the two copies of $H$ with $F_0$ one at a time.
    (We also use the inequality $d_{\switch}(G, H) \leq e$ from Corollary~\ref{cor:switching-distance-bound}.)
    On the other hand, for any $F \in \sF \setminus \{F_0\}$, to reach $F$ from $J$ we must change at least the $3e - p(i)$ edges between $i$ and $i^{\prime}$ for some $i \in v$.
    Since $3e - p(i) \geq 3e - e \geq 2e$, we have $d_{\switch}(J, F) \geq 2e$, proving the claim.

    Finally, we claim that the inequality mentioned above is tight, that is, that
    \begin{equation}
        d_{\switch}(J, \sF) = d_{\switch}(J, F_0) = d_{\switch}(G, H) + e.
    \end{equation}
    After showing this the proof will be complete.

    By Proposition~\ref{prop:dswitch-circ}, we have $d_{\switch}(J, F_0) = \frac{1}{2}|E(J \triangle F_0)| - \circuit(J, F_0)$.
    Let us view the edges in $J \triangle F_0$ coming from $J$ as ``red'' and those coming from $F_0$ as ``blue.''
    Then, by our construction of $J$, we have that $J \triangle F_0$ consists of two disjoint red copies of $G$ and $H$, together with blue matchings between corresponding vertices in $G$ and $H$, where the edge between $i$ and $i^{\prime}$ is repeated $p(i)$ times.
    In particular, we have $|E(J \triangle F_0)| = 2e + \sum_{i = 1}^v p(i) = 4e$, so $d_{\switch}(J, F_0) = 2e - \circuit(J, F_0) = e + (e - \circuit(J, F_0))$.
    Thus it suffices to show that $d_{\switch}(G, H) = e - \circuit(J, F_0)$.
    Further, since $d_{\switch}(G, H) = |E(G - H)| - \circuit(G, H) = e - |E(G \cap H)| - \circuit(G, H)$, it also suffices to show $\circuit(J, F_0) = |E(G \cap H)| + \circuit(G, H)$.

    Given a symmetric circuit partition of $G \triangle H$, we may produce one of $J \triangle F_0$ by traversing one blue edge (between $G$ and $H$) between every edge of the given partition, and also adding circuits of length 4 including each pair of edges shared between $G$ and $H$.
    This shows $\circuit(J, F_0) \geq |E(G \cap H)| + \circuit(G, H)$.

    Conversely, we claim that, given a symmetric circuit partition of $J \triangle F_0$, there is another partition of at least the same size that contains circuits of length 4 including each pair of edges shared between $G$ and $H$.
    Suppose $\{i, j\}$ is such an edge.
    Then, edges $\{i, j\}$ and $\{i^{\prime}, j^{\prime}\}$ must belong to different circuits $C, C^{\prime}$ of the partition of $J \triangle F_0$.
    In $C$, $\{i, j\}$ must be surrounded by two blue edges $m_1, m_2$ of the heavy matching, and likewise in $C^{\prime}$, $\{i^{\prime}, j^{\prime}\}$ must be surrounded by two blue edges $m_1^{\prime}, m_2^{\prime}$.
    Then, we may form another symmetric circuit partition by replacing $C, C^{\prime}$ with $\{\{i, j\}, m_1, \{i^{\prime}, j^{\prime}\} m_2\}, (C \setminus \{\{i, j\}, m_1\}) \sqcup (C^{\prime} \setminus \{\{i^{\prime}, j^{\prime}\}, m_1^{\prime}\})$ which is of the same size.

    So, there is a maximum symmetric circuit partition of $J \triangle F_0$ that contains all circuits of length 4 including pairs of edges shared between $G$ and $H$.
    The remaining circuits in such a partition correspond to a symmetric circuit partition of $G \triangle H$.
    Thus we have the opposite inequality $\circuit(J, F_0) \leq |E(G \cap H)| + \circuit(G, H)$ as well, and the proof is complete.
\end{proof}

Despite this hardness result, it is still possible to give some tractable and general bounds on the exponent $c_{\max}(G)$.

\begin{proposition}
    For any simple $p$-regular $G$ on $d$ vertices,
    \begin{equation}
        c_{\max}(G)
        \leq \frac{(p + 1)d}{4}.
    \end{equation}
    Equality holds if and only if $G$ has a \emph{transitive perfect matching} $\mu$: a matching such that, whenever $\{v, v^{\prime}\}, \{w, w^{\prime}\} \in \mu$ and $v \sim w$, then $v^{\prime} \sim w^{\prime}$ as well.
\end{proposition}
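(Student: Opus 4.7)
The plan is to use the two structure results for maximum even edge colorings (Propositions \ref{prop:max-coloring-unique-within-nbd} and \ref{prop:max-coloring-unique-across-nbd}) to recast $c_{\max}(G)$ as an easily-bounded counting problem. Fix a maximum even edge coloring $\sigma$ with $N = c_{\max}(G)$ colors. By those propositions, every vertex $v$ has $p$ distinct colors in $\sigma(\partial v)$, and the $d$ colored neighborhoods come in $d/2$ identical pairs; this pairing defines a perfect matching $\mu$ on $V$ such that vertices in a common $\mu$-block $\pi$ share the same $p$-element color set $S_\pi$.

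For each color $c$, let $n_c$ be the number of $\sigma$-edges of color $c$. Since colors at a single vertex are distinct, the color class of $c$ is a matching. Moreover a $\mu$-block $\pi$ contributes exactly $2$ half-edges of color $c$ if $c \in S_\pi$ (one at each of its vertices) and $0$ otherwise, so $n_c$ equals the number of blocks whose color set contains $c$. When $n_c = 1$, the two half-edges must be matched within a single block $\{v,v'\}$, forcing the unique $c$-colored edge to be the intra-block edge $\{v,v'\} \in E(G)$; since $G$ is simple, at most one intra-block edge exists per block, so the number $a$ of colors with $n_c=1$ satisfies $a \leq d/2$. Writing $b$ for the number of colors with $n_c \geq 2$ and using $\sum_c n_c = |E(G)| = pd/2$, we obtain $a + 2b \leq pd/2$, hence
\[
c_{\max}(G) \;=\; a + b \;\leq\; a + \frac{pd/2 - a}{2} \;=\; \frac{pd + 2a}{4} \;\leq\; \frac{(p+1)d}{4}.
\]

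Both inequalities are tight precisely when $a = d/2$ and $n_c = 2$ for every multi-edge color. The first condition says $\mu \subseteq E(G)$, i.e., $\mu$ is a perfect matching \emph{of $G$} and every intra-block edge already has its own unique color. Each remaining color $c$ then consists of exactly two edges on the four vertices of two blocks $\pi_1 = \{v_1,v_1'\}$ and $\pi_2 = \{v_2,v_2'\}$, forming a matching that cannot use the two intra-block edges (those are monopolized by the $n_c=1$ colors). A short case check based on the distinctness of colors at each vertex shows that, if $\{v_1,v_2\}$ is one of the two $c$-colored edges, the half-edges at $v_1'$ and $v_2'$ can only be matched to each other, forcing the companion edge to be $\{v_1',v_2'\} \in E(G)$; this is exactly the transitivity condition for $\mu$. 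Conversely, given a transitive perfect matching $\mu$ of $G$, coloring each intra-block edge with its own color and giving each non-intra-block edge $\{v,w\}$ the color shared with its $\mu$-image $\{v',w'\}$ yields a legitimate even edge coloring with $d/2 + (p-1)d/4 = (p+1)d/4$ colors, saturating the bound. The main obstacle is the equality direction, where one must carefully rule out the ``both intra-block'' matching pattern on four vertices for multi-edge colors and verify that the construction in the converse direction assigns distinct colors to the $p$ edges at every vertex; both reduce to book-keeping once the $n_c$-counting is in place.
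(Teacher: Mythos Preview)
The paper states this proposition without proof, so there is nothing to compare against directly; your argument stands on its own and is correct. The counting via $n_c$ together with Propositions~\ref{prop:max-coloring-unique-within-nbd} and~\ref{prop:max-coloring-unique-across-nbd} cleanly yields the inequality, and your analysis of the equality case is sound: the condition $a=d/2$ forces $\mu\subseteq E(G)$ (so ``perfect matching'' here must mean a perfect matching \emph{of} $G$, not merely of its vertex set), simplicity then excludes the both-intra-block pattern for each $n_c=2$ color, and the remaining two cross-block patterns both witness transitivity. Your converse construction is also correct; the only detail worth spelling out explicitly is that the $\mu$-involution $e\mapsto e^\mu$ on non-$\mu$ edges is fixed-point-free (if $\{v',w'\}=\{v,w\}$ then $v'=w$, forcing $\{v,w\}\in\mu$), so the edges genuinely pair up, and that the map $u\mapsto u'$ restricts to a bijection $N(v')\setminus\{v\}\to N(v)\setminus\{v'\}$ by transitivity, which is exactly what makes $\sigma(\partial v)=\sigma(\partial v')$.
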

\noindent
It is an interesting simplification of the problem of computing $c_{\max}(G)$ to check whether the condition for this bound being saturated holds or not.
When $p = 3$, one may check that a transitive perfect matching exists if and only if each connected component of $G$ is isomorphic to one of two graphs built from a union of two cycles: if we imagine forming a cubical complex from this graph by adding a 2-cell between the ``lateral'' edges connecting edges of the transitive perfect matching, then the resulting surface must either be a cylinder without its top and bottom or the same with a ``twist,'' i.e., a M\"{o}bius band.
Once $p \geq 4$, such a simple classification seems elusive, and it is unclear whether to expect the problem to be easy or hard.

\section{Conditioning of Weingarten Matrices}
\label{app:weingarten}

We have discussed in Section~\ref{sec:prelim:weingarten} the structure of the projection operator $\Pi = \Pi_\ell = \Exp_Q Q^{\otimes \ell}$.
As stated there, $\Pi_\ell$ projects onto the space spanned by the matching vectors $w(\mu)$ defined in~\eqref{eq:matching-vector} where $\mu$ is a perfect matching of $[\ell]$.
Thus the projection may be written
\begin{equation}
    \Pi_{\mu, \nu} = \sum_{\mu, \nu} (M^{-1})_{\mu,\nu} \, w(\mu) \otimes w(\nu),
\end{equation}
where $M^{-1}$ is the inverse of the Gram matrix
\[
M_{\mu, \nu} = \langle w(\mu), w(\nu) \rangle
\, ,
\]
or the Moore--Penrose pseudoinverse if the $w(\mu)$ are overcomplete and $M$ is not of full rank. For matching vectors, the Gram matrix is given by
\begin{equation}
\label{eq:matching-gram}
M_{\mu, \nu}
= \langle w_\mu, w_\nu \rangle
= n^{\textrm{\# of cycles in $\mu \sqcup \nu$}}
= n^{\ell/2 - \Delta(\mu,\nu)} \, .
\end{equation}
Here by $\mu \sqcup \nu$ we mean the 2-regular multigraph with $\ell$ vertices whose edges are the pairs in $\mu$ and $\nu$, and each of these cycles gives a factor of $n$. We define $\Delta(\mu,\nu)$ as the minimum number of swaps it takes to transform $\mu$ to $\nu$, where a swap changes two pairs in a matching from $\{(a,b),(c,d)\}$ to $\{(a,c),(b,d)\}$ or $\{(a,d),(b,c)\}$. The relation
\[
\textrm{\# of cycles in $\mu \sqcup \nu$}
= \ell/2 - \Delta(\mu,\nu)
\]
follows because $\mu \sqcup \nu$ has $\ell/2$ cycles if and only if $\mu=\nu$, and each swap on a shortest path from $\mu$ to $\nu$ merges two cycles into one.
See Figure~\ref{fig:matching-gram} for an example.

The inverse of $M$ is called the \emph{Weingarten function} and is denoted $\Wg$. Thus
\[
\Pi = \sum_{\mu,\nu} \Wg_{\mu,\nu} \,w(\mu) \otimes w(\nu) \, ,
\]
where the sum is over all perfect matchings $\mu,\nu$ of $[\ell]$.
By permutation symmetry, $\Wg_{\mu, \nu} = (M^{-1})_{\mu, \nu}$ is a function only of the cycle structure of $\mu \sqcup \nu$, i.e., the number of cycles of each length that this graph has.
For this reason $\Wg$ is often called a \emph{function}, but for our purposes we will view it as a matrix and seek to understand its spectrum, or equivalently the spectrum of $M$.

\begin{figure}
    \centering
    \includegraphics[width=3in]{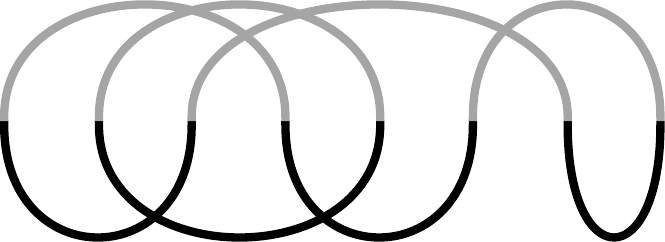}
    \caption{The inner product of two matching vectors $w(\mu), w(\nu)$ as defined in~\eqref{eq:matching-vector} is $n$ raised to the number of cycles in their disjoint union $\mu \sqcup \nu$. In this example, $\ell=8$ and there are two cycles in $\mu \sqcup \nu$, one of length $2$ and one of length $6$. Thus $\langle w(\mu), w(\nu) \rangle = n^2$ and $\mu$ and $\nu$ are $\Delta(\mu,\nu)=2$ swaps apart.}
    \label{fig:matching-gram}
\end{figure}

For sufficiently large $n$, $M$ is dominated by its diagonal, and a representation-theoretic argument~\cite{brauer-37,wenzl} shows that $M$ has full rank whenever $n \ge \ell/2$. However, we need the stronger property that its smallest eigenvalue is bounded above zero. Using a simple counting argument we will show that this holds, and moreover that the matrix is well-conditioned, whenever $n > (1 + \eps)\ell^2$.

\begin{proposition}
\label{prop:gram-bound}
Suppose that $n > \ell^2$. Then the Gram matrix $M$ defined in~\eqref{eq:matching-gram} has full rank, and all of its eigenvalues lie in the interval $[n^{\ell/2} (1 - \ell^2/n), n^{\ell/2} (1 + \ell^2/n)]$.
\end{proposition}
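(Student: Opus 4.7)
The plan is to write $M = n^{\ell/2}\,I + R$, where $R$ is the symmetric matrix with zero diagonal and off-diagonal entries $R_{\mu,\nu} = n^{\ell/2 - \Delta(\mu,\nu)}$ for $\mu \neq \nu$. Since $M$ and $R$ are simultaneously diagonalizable, every eigenvalue of $M$ equals $n^{\ell/2}$ plus an eigenvalue of $R$; hence it suffices to show $\|R\|_{\mathrm{op}} \leq \ell^2 n^{\ell/2 - 1}$, which immediately gives both full rank and the stated eigenvalue bounds.

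First I would bound $\|R\|_{\mathrm{op}}$ by the maximum row sum: since $R$ is symmetric with non-negative entries, Gershgorin (equivalently $\|R\|_{\mathrm{op}} \leq \sqrt{\|R\|_1\|R\|_\infty} = \|R\|_\infty$) yields
\begin{equation}
\|R\|_{\mathrm{op}} \leq \max_{\mu} \sum_{\nu \neq \mu} n^{\ell/2 - \Delta(\mu,\nu)} = n^{\ell/2}\max_{\mu} \sum_{k \geq 1} N_k(\mu) \, n^{-k},
\end{equation}
where $N_k(\mu)$ is the number of matchings $\nu \neq \mu$ with $\Delta(\mu,\nu) = k$. By permutation symmetry the inner sum does not depend on $\mu$, so I can drop it.

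The key combinatorial step is to count $N_k$. A single swap on a matching $\mu$ picks an unordered pair of pairs from $\mu$ (there are $\binom{\ell/2}{2}$ such choices) and replaces them in one of two non-trivial ways, giving at most $2\binom{\ell/2}{2} = \ell(\ell-2)/2 \leq \ell^2/2$ neighbors in the swap graph. Since any matching at swap distance $k$ is reachable by a walk of length $k$ in the swap graph, $N_k$ is bounded by the number of such walks starting at $\mu$, which is at most $(\ell^2/2)^k$. Under the hypothesis $n > \ell^2$, the ratio $\ell^2/(2n) < 1/2$, and summing the geometric series gives
\begin{equation}
\sum_{k \geq 1} N_k \, n^{-k} \leq \sum_{k \geq 1}\left(\frac{\ell^2}{2n}\right)^{\!k} = \frac{\ell^2/(2n)}{1 - \ell^2/(2n)} \leq \frac{\ell^2}{n}.
\end{equation}
Combining, $\|R\|_{\mathrm{op}} \leq \ell^2 n^{\ell/2 - 1}$, so every eigenvalue of $M$ lies in $[n^{\ell/2}(1 - \ell^2/n),\, n^{\ell/2}(1 + \ell^2/n)]$, and the lower endpoint is strictly positive, giving full rank.

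The main (and really only) obstacle is the counting bound $N_k \leq (\ell^2/2)^k$; the walk argument handles it cleanly, but one must be careful that each target matching is indeed reached by at least one walk of length equal to its swap distance, which follows by definition of $\Delta$. Everything else is routine linear algebra and a geometric series.
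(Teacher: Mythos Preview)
Your proposal is correct and follows essentially the same approach as the paper: write $M = n^{\ell/2}(\id + H)$, bound the off-diagonal part via a Ger\v{s}gorin row-sum estimate, count matchings at swap distance $t$ by at most $(\ell^2/2)^t$, and sum the resulting geometric series. One harmless arithmetic slip: $2\binom{\ell/2}{2} = (\ell/2)(\ell/2-1) = \ell(\ell-2)/4$, not $\ell(\ell-2)/2$, but your subsequent bound by $\ell^2/2$ is valid regardless.
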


\begin{proof}
Since there are only $\binom{\ell}{2}$ possible swaps, for any matching $\mu$ there are at most $\binom{\ell}{2}^t \le (\ell^2/2)^t$ matchings $\nu$ such that $\Delta(\mu,\nu)=t$. Therefore, if we write
\[
M = n^{\ell/2} (\id + H )
\quad \text{where} \quad
H_{\mu \nu} =
\begin{cases}
0 & \mu=\nu \\
n^{-\Delta(\mu,\nu)} & \mu \ne \nu \, ,
\end{cases}
\]
then if $n > \ell^2$ the sum of any row $H_\mu$ of $H$ gives a bound on the operator norm of $H$,
\[
|H|
\le \sum_{\nu \ne \mu} H_{\mu \nu} \le \sum_{t=1}^\infty \left(\frac{\ell^2}{2n}\right)^{\!t}
\le \ell^2/n \, .
\]
The Ger\v{s}gorin circle theorem~\cite{gershgorin}
then implies that $M$ has full rank whenever $n > \ell^2$ and that its eigenvalues lie in the stated interval.
\end{proof}

Since the eigenvalues of $\Wg$ are the reciprocals of those of $M$, Proposition~\ref{prop:gram-bound} implies that $\Wg$ is well-conditioned.
\begin{corollary}
\label{cor:wg}
Suppose that $n \ge 2 \ell^2$. Then, all of the eigenvalues of $(\Wg_{\mu, \nu})$ lie in the interval $[\frac{1}{2}n^{-\ell/2}, 2 n^{-\ell/2}]$.
\end{corollary}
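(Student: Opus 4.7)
The plan is to deduce this corollary directly from Proposition~\ref{prop:gram-bound} together with the fact that $\Wg$ is the (pseudo)inverse of the Gram matrix $M$ introduced in~\eqref{eq:matching-gram}. Since Proposition~\ref{prop:gram-bound} already requires only $n > \ell^2$ for $M$ to have full rank, under the stronger hypothesis $n \geq 2\ell^2$ the pseudoinverse is just the ordinary inverse, and so $\Wg = M^{-1}$ as matrices indexed by perfect matchings of $[\ell]$. In particular, the eigenvalues of $\Wg$ are exactly the reciprocals of those of $M$.

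I would then plug in. Proposition~\ref{prop:gram-bound} gives that every eigenvalue of $M$ lies in $[n^{\ell/2}(1-\ell^2/n),\, n^{\ell/2}(1+\ell^2/n)]$. The hypothesis $n \geq 2\ell^2$ implies $\ell^2/n \leq 1/2$, so every eigenvalue of $M$ lies in $[\tfrac{1}{2}n^{\ell/2},\, \tfrac{3}{2}n^{\ell/2}]$. Taking reciprocals, every eigenvalue of $\Wg$ lies in $[\tfrac{2}{3}n^{-\ell/2},\, 2n^{-\ell/2}] \subseteq [\tfrac{1}{2}n^{-\ell/2},\, 2n^{-\ell/2}]$, which is exactly the claimed bound.

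There is essentially no obstacle here: the whole content was already absorbed into Proposition~\ref{prop:gram-bound} via the Ger\v{s}gorin estimate on the off-diagonal perturbation $H$ with $\|H\|_{\mathrm{op}} \leq \ell^2/n$. The only thing to verify is the elementary numerical fact that $1/(1+\ell^2/n) \geq 1/2$ and $1/(1-\ell^2/n) \leq 2$ when $\ell^2/n \leq 1/2$, both of which are immediate. So the corollary is really just a packaging statement, converting a spectral bound on $M$ into the spectral bound on $\Wg$ that is actually used elsewhere (e.g.\ in Lemma~\ref{lem:kappaGc-gram} and the analogous open-graph statement).
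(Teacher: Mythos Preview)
Your proposal is correct and follows essentially the same approach as the paper, which simply notes that the eigenvalues of $\Wg$ are the reciprocals of those of $M$ and invokes Proposition~\ref{prop:gram-bound}. You have merely spelled out the elementary numerical verification that $\ell^2/n \le 1/2$ yields the stated interval, which the paper leaves implicit.
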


\begin{remark}
\label{rem:sniady}
Proposition~\ref{prop:gram-bound} and Corollary~\ref{cor:wg} are essentially tight, since when $n = o(\ell^2)$ then the largest eigenvalue of $\Wg$ is roughly $n^{-\ell/2} \,\e^{\ell^2/2n}$.
We outline the argument, which we learned of from a personal communication with Piotr \'Sniady.
Per Proposition 5 of \cite{ZinnJustin-2009-JucysMurphyElementsWeingarten} (originally due to \cite{Collins-2003-WeingartenHCIZ}), the distinct eigenvalues of $\Wg$ correspond to Young diagrams $\lambda$ of $\ell$ boxes with an even number of boxes in each row.\footnote{Actually, this correspondence could give another more explicit treatment of the conditioning results derived above, but we prefer to give a self-contained treatment using the simpler Ger\v{s}gorin circle theorem.}
We write $\lambda / 2$ for the same diagram where the number of boxes in each row is halved.
We view these diagrams as sets of $(i, j)$, the ``coordinates'' of the boxes of the diagram, with $i, j \geq 0$ so that the box in the top left corner has coordinates $(0, 0)$, the one below coordinates $(1, 0)$, the one to the right coordinates $(0, 1)$, and so forth.
Then, the eigenvalue of $\Wg$ associated to $\lambda$ is $1 / (\prod_{(i, j) \in \lambda / 2} (n + j - i))$.
The largest eigenvalue of $\Wg$ is then the one corresponding to $\lambda$ two columns of $\ell / 2$ boxes each, which gives
\begin{equation}
    \frac{1}{\prod_{i = 0}^{\ell / 2 - 1} (n - i)} = n^{-\ell / 2} \prod_{i = 0}^{\ell / 2 - 1} \frac{1}{1 - \frac{i}{n}} \approx n^{-\ell / 2} \exp\left(\frac{\ell^2}{2n}\right),
\end{equation}
as claimed.
\end{remark}

In the main text we mostly encounter not this Weingarten function over matchings, but the ``graph Weingarten function'' that we define indexed by $\sG_{d, p}$ (Definition~\ref{def:graph-weingarten}).
This has entries
    \begin{equation}
        \Wg_{G, H} = \sum_{\substack{\mu \text{ realizes } G \\ \nu \text{ realizes } H}} \Wg_{\mu, \nu}
    \end{equation}
for $G, H \in \sG_{d, p}$.
We also recall from Proposition~\ref{prop:number-realizing-matchings} that the number of summands over each axis is
\begin{equation}
    \#\{\mu \text{ realizes } G\} = \frac{p!^d d!}{|\eAut(G)|}.
\end{equation}
Using this, we state a corollary on the spectrum of this compressed version of $\Wg$ in the form that will be useful in the main text.
\begin{corollary}
    \label{cor:wg-graph}
    Suppose that $pd \leq \sqrt{n / 2}$ is even.
    Then the symmetric matrix indexed by $G, H \in \sG_{d, p}$ with entries
    \begin{equation}
        n^{pd / 2} \cdot \frac{\sqrt{|\eAut(G)| \cdot |\eAut(H)|}}{p!^d d!} \cdot \Wg_{G, H}
    \end{equation}
    has all its eigenvalues lying in the interval $[\frac{1}{2}, 2]$.
\end{corollary}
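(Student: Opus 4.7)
The plan is to exhibit the graph Weingarten matrix as an orthogonal compression of the matching Weingarten matrix $\Wg$ from Section~\ref{sec:prelim:weingarten}, and then import the spectral bounds of Corollary~\ref{cor:wg} directly.

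First I would introduce the notation $N_G \colonequals \#\{\mu : \mu \text{ realizes } G\}$. By Proposition~\ref{prop:number-realizing-matchings}, we have $N_G = p!^d d! / |\eAut(G)|$, which lets us rewrite the normalization in the statement as
\begin{equation*}
    \frac{\sqrt{|\eAut(G)| \cdot |\eAut(H)|}}{p!^d d!} = \frac{1}{\sqrt{N_G N_H}}\,.
\end{equation*}
Next, define the matrix $A \in \RR^{\sG_{d,p} \times \sM([pd])}$ whose rows are indicators of the realization relation, rescaled:
\begin{equation*}
    A_{G, \mu} \colonequals \frac{1}{\sqrt{N_G}}\, \One\{\mu \text{ realizes } G\}.
\end{equation*}
The key observation is that each matching $\mu$ realizes exactly one graph $G \in \sG_{d,p}$ (its isomorphism class under the vertex-grouping construction), so the rows of $A$ have disjoint supports. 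Thus $(AA^\top)_{G,H} = \delta_{G,H} \cdot N_G / N_G = \delta_{G,H}$, i.e.\ $A$ has orthonormal rows.

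Then, by the definition of $\Wg_{G,H}$ (Definition~\ref{def:graph-weingarten}) as a double sum over matchings realizing $G$ and $H$, a direct calculation gives
\begin{equation*}
    (A \,\Wg\, A^\top)_{G, H} = \frac{1}{\sqrt{N_G N_H}} \sum_{\substack{\mu \text{ realizes } G \\ \nu \text{ realizes } H}} \Wg_{\mu, \nu} = \frac{\Wg_{G, H}}{\sqrt{N_G N_H}}.
\end{equation*}
So the matrix in the statement is exactly $n^{pd/2} \cdot A\,\Wg\, A^\top$.

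Now I would invoke Corollary~\ref{cor:wg}. The hypothesis there is $n \geq 2\ell^2$ with $\ell = pd$, and the assumption $pd \leq \sqrt{n/2}$ is precisely $n \geq 2(pd)^2$, so the corollary applies. It says all eigenvalues of $\Wg$ lie in $[\tfrac{1}{2} n^{-pd/2}, 2 n^{-pd/2}]$, equivalently all eigenvalues of $n^{pd/2}\Wg$ lie in $[\tfrac{1}{2}, 2]$. Since $AA^\top = \id$, for any unit vector $x$ indexed by $\sG_{d,p}$ we have $\|A^\top x\|^2 = x^\top A A^\top x = 1$, and therefore
\begin{equation*}
    x^\top (n^{pd/2} A\,\Wg\, A^\top) x = (A^\top x)^\top (n^{pd/2}\Wg) (A^\top x) \in [\tfrac{1}{2}, 2].
\end{equation*}
Taking the supremum and infimum over unit $x$ gives the desired bounds on $\lambda_{\min}$ and $\lambda_{\max}$. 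There is no real obstacle here: the whole content of the corollary is the clean compression identity $A\,\Wg\,A^\top$, which is immediate once one identifies the correct normalization via Proposition~\ref{prop:number-realizing-matchings}. The only step that requires any care is verifying that realization partitions the set of matchings (so that the rows of $A$ really are orthogonal rather than merely linearly independent), but this is essentially built into the definition of ``realizes'' from Definition~\ref{def:graph-weingarten}.
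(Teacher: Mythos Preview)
Your proof is correct and essentially identical to the paper's: the paper defines the same matrix (denoted $\widetilde{J}$ rather than $A$), observes $\widetilde{J}\widetilde{J}^\top=\id$, writes the target as $\widetilde{J}(n^{\ell/2}\Wg)\widetilde{J}^\top$, and appeals to Corollary~\ref{cor:wg}. Your version is slightly more explicit about the Rayleigh quotient step and the fact that realization partitions the matchings, but the argument is the same.
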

\begin{proof}
    Write $\ell \colonequals pd$.
    Define a matrix $\widetilde{J}$ indexed by $G \in \sG_{d, p}$ and $\mu$ perfect matchings of $[pd]$ with
    \begin{equation}
        \widetilde{J}_{G, \mu} \colonequals \sqrt{\frac{|\eAut(G)|}{p!^d d!}} \One\{\mu \text{ realizes } G\} = \frac{1}{\sqrt{\#\{\mu \text{ realizes } G\}}} \One\{\mu \text{ realizes } G\}.
    \end{equation}
    By construction, we have $\widetilde{J} \, \widetilde{J}^{\top} = \id$.
    On the other hand, calling $X$ the matrix in the claim, we have
    \begin{equation}
        X = \widetilde{J} (n^{\ell / 2} \Wg) \widetilde{J}^{\top}.
    \end{equation}
    By Corollary~\ref{cor:wg}, we then have
    \begin{equation}
        X \preceq 2 \widetilde{J}\widetilde{J}^{\top} = 2\id,
    \end{equation}
    and the lower bound follows similarly.
\end{proof}

By an identical proof and using Proposition~\ref{prop:number-realizing-matchings-chop}, we also have the following variation for the Weingarten matrix appearing for open multigraphs.
\begin{corollary}
    \label{cor:wg-graph-chop}
    Suppose that $pd - 1 \leq \sqrt{n / 2}$ is even.
    Then the symmetric matrix indexed by $G, H \in \sG_{d, p \to}$ with entries
    \begin{equation}
        n^{(pd - 1) / 2} \cdot \frac{\sqrt{|\eAut(\chop(G))| \cdot |\eAut(\chop(H))|}}{p!^{d - 1}(p - 1)! (d - 1)!} \cdot \Wg_{\chop(G), \chop(H)}
    \end{equation}
    has all its eigenvalues lying in the interval $[\frac{1}{2}, 2]$.
\end{corollary}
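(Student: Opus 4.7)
The plan is to mirror the proof of Corollary~\ref{cor:wg-graph} verbatim, swapping the combinatorial counting input from Proposition~\ref{prop:number-realizing-matchings} to Proposition~\ref{prop:number-realizing-matchings-chop}. Write $\ell \colonequals pd - 1$, which is the length of the multi-set being matched when realizing $\chop(G)$ for $G \in \sG_{d, p\to}$.

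First I would define a rectangular matrix $\widetilde{J}$ with rows indexed by $G \in \sG_{d, p\to}$ and columns indexed by perfect matchings $\mu$ of $[\ell]$, setting
\begin{equation}
    \widetilde{J}_{G, \mu} \colonequals \sqrt{\frac{|\eAut(\chop(G))|}{p!^{d-1}(p-1)!(d-1)!}} \, \One\{\mu \text{ realizes } \chop(G)\}.
\end{equation}
By Proposition~\ref{prop:number-realizing-matchings-chop}, the number of matchings $\mu$ realizing $\chop(G)$ is exactly $p!^{d-1}(p-1)!(d-1)!/|\eAut(\chop(G))|$, so the row corresponding to $G$ is a unit vector, and distinct rows are supported on disjoint sets of matchings (since a given matching realizes a unique graph up to isomorphism). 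Hence $\widetilde{J}\widetilde{J}^{\top} = \id$ on $\RR^{\sG_{d, p\to}}$.

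Next, unfolding the definition of the graph Weingarten function (Definition~\ref{def:graph-weingarten}) applied to $\chop(G), \chop(H)$, the matrix $X$ in the statement may be written as
\begin{equation}
    X = \widetilde{J} \bigl(n^{\ell/2} \Wg\bigr) \widetilde{J}^{\top},
\end{equation}
where $\Wg$ here is the Weingarten matrix on perfect matchings of $[\ell]$. Under the hypothesis $\ell = pd - 1 \leq \sqrt{n/2}$, Corollary~\ref{cor:wg} gives that the eigenvalues of $n^{\ell/2} \Wg$ lie in $[\tfrac{1}{2}, 2]$, i.e., $\tfrac{1}{2} \id \preceq n^{\ell/2} \Wg \preceq 2 \id$. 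Sandwiching by $\widetilde{J}$ on the left and $\widetilde{J}^{\top}$ on the right and using $\widetilde{J}\widetilde{J}^{\top} = \id$ then yields $\tfrac{1}{2} \id \preceq X \preceq 2 \id$, as claimed.

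There is essentially no obstacle here beyond bookkeeping: the sole difference from the closed case is that the ``stabilizer'' at the open vertex contributes $(p-1)!$ rather than $p!$, and one distinguished vertex is fixed, contributing $(d-1)!$ rather than $d!$; both factors are already absorbed into the statement of Proposition~\ref{prop:number-realizing-matchings-chop}, so the normalization works out to make $\widetilde{J}\widetilde{J}^{\top} = \id$ automatically.
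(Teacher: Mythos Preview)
Your proof is correct and is essentially identical to the paper's approach: the paper simply states that Corollary~\ref{cor:wg-graph-chop} follows ``by an identical proof [to Corollary~\ref{cor:wg-graph}] and using Proposition~\ref{prop:number-realizing-matchings-chop},'' which is exactly what you have written out. The one point worth making explicit (which you do implicitly) is that $\chop$ is injective on $\sG_{d,p\to}$, so distinct rows of $\widetilde{J}$ are indeed disjointly supported.
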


As an aside, let us mention that computing $\Wg$ explicitly is a notoriously difficult problem. We can get a sense of how its entries scale by writing it as a geometric series,
\begin{equation}
\label{eq:wg-geosum}
\Wg = M^{-1}
= n^{-\ell/2} (\id+H)^{-1}
= n^{-\ell/2} \sum_{t=0}^\infty (-1)^t H^t \, ,
\end{equation}
and expand this into a sum over paths through the space of perfect matchings. For a given pair of matchings $\mu, \nu$, let $\{ \mu \sim \nu \}$ denote the set of paths $\sigma$ where $\mu = \sigma_0 \ne  \sigma_1 \ne \cdots \ne \sigma_t = \nu$ for some $t \ge 0$, and write $|\sigma|=t$. Define $\Delta(\sigma)$ as $\sigma$'s total length in swap distance,
\[
\Delta(\sigma) = \sum_{i=1}^t \Delta(\sigma_i,\sigma_{i-1}) \, ,
\]
and define $g(\sigma)$ as $\sigma$'s \emph{geodesic defect}, i.e., the difference between its length and the shortest-path distance between its endpoints,
\[
g(\sigma) = \Delta(\sigma) - \Delta(\mu,\nu) \ge 0 \, .
\]
Then (following \cite{collins-sniady,banica}) we have
\begin{align}
\Wg_{\mu \nu}
&= n^{-\ell/2}
\sum_{\sigma \in \{\mu \sim \nu\}} (-1)^{|\sigma|} \,n^{-\Delta(\sigma)} \\
&= n^{-\ell/2-\Delta(\mu,\nu)}
\sum_{\sigma \in \{\mu \sim \nu\}}(-1)^{|\sigma|} \,n^{-g(\sigma)} \, .
\end{align}
Thus for graphs of constant size we have $\Wg_{\mu \nu} = O(n^{-\ell/2-\Delta(\mu,\nu)})$.
The prefactor for a given $\Wg_{\mu \nu}$, i.e., the sum of signed geodesics from $\mu$ to $\nu$, might grow rapidly with $\ell$.

For instance, for $D=4$ where there are $3$ perfect matchings, the rows of $M$ and $\Wg$ where $\mu=\{(1,2),(3,4)\}$ are
\[
\begin{array}{c|ccc}
\nu & \{(1,2),(3,4)\} & \{(1,3),(2,4)\} & \{(1,4),(2,3)\} \\
M_{\mu,\nu} &
n^2 & n & n \\
\Wg_{\mu,\nu} & \frac{n+1}{n(n-1)(n+2)}
& \frac{-1}{n(n-1)(n+2)}
& \frac{-1}{n(n-1)(n+2)}
\end{array}
\]

\section{Combinatorial Bounds}

We gather some auxiliary combinatorial results that are used in the main text.

\begin{proposition}
    \label{prop:falling-factorial}
    Suppose $k \leq n / 2$.
    Then, $n^k \exp(-k^2 / n) \leq n^{\underline{k}} \leq n^k$.
\end{proposition}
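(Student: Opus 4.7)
The upper bound is immediate since each factor in $n^{\underline{k}} = \prod_{i=0}^{k-1}(n-i)$ is at most $n$. So the entire content is the lower bound, and the natural move is to take logarithms of the ratio and estimate termwise.

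Specifically, I would write
\begin{equation}
\log \frac{n^{\underline{k}}}{n^k} = \sum_{i=0}^{k-1} \log\!\left(1 - \frac{i}{n}\right),
\end{equation}
and then apply a linear lower bound for $\log(1-x)$ valid on the relevant range of $x$. Since $k \leq n/2$, every $i/n$ appearing in the sum lies in $[0, 1/2]$, and on this interval one has the elementary inequality $\log(1-x) \geq -2x$ (the two sides agree at $x=0$, and the derivative comparison $-1/(1-x) \geq -2$ is equivalent to $x \leq 1/2$).

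Applying this termwise gives
\begin{equation}
\log \frac{n^{\underline{k}}}{n^k} \geq -\frac{2}{n}\sum_{i=0}^{k-1} i = -\frac{k(k-1)}{n} \geq -\frac{k^2}{n},
\end{equation}
and exponentiating yields $n^{\underline{k}} \geq n^k \exp(-k^2/n)$, as claimed.

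There is no real obstacle here; the only choice to make is which linearization of $\log(1-x)$ to use, and the bound $\log(1-x) \geq -2x$ on $[0,1/2]$ is slightly wasteful (one could also use $\log(1-x) \geq -x - x^2$ and absorb the cubic remainder into $k^2/n$ using $k \leq n/2$) but produces exactly the stated constant in the exponent.
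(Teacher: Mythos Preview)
Your proof is correct and is essentially identical to the paper's: both take logarithms, use the inequality $\log(1-x)\ge -2x$ on $[0,1/2]$, and bound the resulting arithmetic sum by $k^2/n$. Your version even supplies the derivative justification for that inequality, which the paper omits.
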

\begin{proof}
    The upper bound is immediate.
    For the lower bound, we have:
    \begin{align*}
        n^{\underline{k}}
        &= n^k \cdot 1 \cdot \left(1 - \frac{1}{n}\right) \cdot \left(1 - \frac{k - 1}{n}\right) \\
        &= n^k \exp\left(\sum_{i = 0}^{k - 1} \log\left(1 - \frac{i}{n}\right)\right)
        \intertext{and here, noting that $\log(1 - x) \geq -2x$ for all $0 \leq x \leq \frac{1}{2}$, we have}
        &\geq n^k \exp\left(\frac{2}{n}\sum_{i = 0}^{k - 1} i\right) \\
        &\geq n^k \exp\left(-\frac{k^2}{n}\right),
    \end{align*}
    as claimed.
\end{proof}

\begin{proposition}
    \label{prop:chains}
    A \emph{chain} of subsets of $[n]$ is a sequence of strict inclusions $\emptyset \subsetneq A_1 \subsetneq A_2 \subsetneq \cdots \subsetneq A_k \subsetneq [n]$.
    The number of chains of subsets of $[n]$ is at most $3^n n!$.
\end{proposition}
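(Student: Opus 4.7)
The plan is to bound the number of chains by setting up a surjection from the product set $S_n \times 2^{[n-1]}$ onto the set of chains. Observe first that a chain $\emptyset \subsetneq A_1 \subsetneq A_2 \subsetneq \cdots \subsetneq A_k \subsetneq [n]$ is equivalent data to the ordered tuple of non-empty blocks $(B_1, B_2, \ldots, B_{k+1})$ where $B_i \colonequals A_i \setminus A_{i-1}$ (with the conventions $A_0 \colonequals \emptyset$ and $A_{k+1} \colonequals [n]$); that is, chains are in bijection with ordered set partitions of $[n]$ into at least two non-empty blocks.

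Next, I will define a surjection from $S_n \times 2^{[n-1]}$ onto the set of ordered set partitions of $[n]$ into any positive number of non-empty blocks (which is a superset of the chains, in the above bijective correspondence). Given $\pi \in S_n$ and $S = \{i_1 < i_2 < \cdots < i_{m-1}\} \subseteq [n-1]$, consider the sequence $(\pi(1), \pi(2), \ldots, \pi(n))$ and cut it at the positions in $S$, producing the ordered set partition with blocks $\{\pi(1), \ldots, \pi(i_1)\}, \{\pi(i_1+1), \ldots, \pi(i_2)\}, \ldots, \{\pi(i_{m-1}+1), \ldots, \pi(n)\}$. This map is surjective onto all ordered set partitions of $[n]$: given an ordered set partition $(B_1, \ldots, B_m)$, pick any permutation $\pi$ listing the elements of $B_1$, then $B_2$, and so on, and take $S$ to be the set of positions of the block boundaries.

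Combining these two observations, the number of chains is at most $|S_n \times 2^{[n-1]}| = n! \cdot 2^{n-1}$, which in turn is at most $3^n n!$ as desired (in fact the stronger bound $2^n n!$ holds).

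There is no real obstacle here; the argument is essentially an encoding argument, and the bound $3^n n!$ claimed in the proposition is quite loose (the Fubini/ordered Bell number count is sharper). The only small point to verify is that the encoding $(\pi, S) \mapsto (B_1, \ldots, B_m)$ is well-defined and surjective, and that chains with $k \geq 1$ proper intermediate sets correspond to ordered set partitions with at least two blocks, which is a subset of what we are counting.
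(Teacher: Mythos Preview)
Your proof is correct and actually yields the sharper bound $2^{n-1}\,n!$ (in fact, you are essentially counting ordered set partitions, which the paper also does, but your surjective encoding bypasses any computation). The paper takes a different route: it sets up the recurrence $C(n) = \sum_{i=1}^n \binom{n}{i} C(n-i)$ for the number of ordered set partitions and proves $C(n) \le 3^n n!$ by strong induction, verifying that
\[
3^{n-1} n! + \sum_{i=2}^n \binom{n}{i} 3^{n-i}(n-i)! \le 3^n n!\left(\tfrac{1}{3} + e^{1/3} - 1\right) < 3^n n!.
\]
Your encoding argument is more direct and more informative (it explains the bound combinatorially and shows the constant $3$ is loose), while the paper's inductive approach is the standard generating-function style estimate and does not require identifying the explicit surjection. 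One minor comment: the paper implicitly allows the empty chain ($k=0$), corresponding to the ordered set partition with a single block; your surjection already covers this case (take $S=\emptyset$), so the bound is unaffected.
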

\begin{proof}
    Call $C(n)$ the number of chains on $[n]$.
    The chains are in bijection with the ordered set partitions of $[n]$, where a chain as defined in the statement corresponds to the partition $(A_1, A_2 \setminus A_1, \cdots, [n] \setminus A_k)$.
    Every ordered set partition consists recursively of a choice of a non-empty set and an ordered set partition on the remaining elements.
    Thus, $C(0) = 1$ and for $n \geq 1$ we have
    \begin{equation}
        C(n) = \sum_{i = 1}^n \binom{n}{i} C(n - i) = n C(n - 1) + \sum_{i = 2}^n \binom{n}{i} C(n - i).
    \end{equation}
    We proceed by strong induction.
    Clearly the claim holds for $n = 0$.
    Suppose that the claim holds for all $C(m)$ with $0 \leq m \leq n - 1$.
    Then,
    \begin{align*}
        C(n)
        &\leq 3^{n - 1}n! + \sum_{i = 2}^n \binom{n}{i} 3^{n - i}(n - i)! 3^{n - i} \\
        &= 3^n n!\left(\frac{1}{3} + \sum_{i = 2}^n \frac{3^{-i}}{i!}\right) \\
        &\leq 3^n n!\left(\frac{1}{3} + \exp\left(\frac{1}{3}\right) - 1\right) \\
        &< 3^n n!,
    \end{align*}
    completing the proof.
\end{proof}

\begin{proposition}[Section 2 of \cite{HR-1918-AsymptoticFormulaeCombinatorics}]
    \label{prop:integer-partitions}
    An \emph{integer partition} of $d \geq 1$ is a sequence of $1 \leq a_1 \leq \cdots \leq a_{\ell}$ such that $a_1 + \cdots + a_{\ell} = d$.
    There is an absolute constant $C > 0$ so that the number of integer partitions of $d$ is at most $\frac{1}{d}\exp(C\sqrt{d})$ for all $d \geq 1$.
\end{proposition}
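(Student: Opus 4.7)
The plan is to prove this classical fact via the standard generating-function/saddle-point argument, which gives an upper bound essentially matching the Hardy–Ramanujan asymptotic $p(d) \sim \frac{1}{4d\sqrt{3}}\exp(\pi\sqrt{2d/3})$, up to the constant in the exponent. Recall Euler's generating function identity
\begin{equation*}
    \sum_{d \geq 0} p(d) x^d = \prod_{k \geq 1} \frac{1}{1 - x^k}
\end{equation*}
valid for $|x| < 1$. Since all $p(d) \geq 0$, for any $x \in (0,1)$ we have the Markov-type inequality $p(d) \leq x^{-d} \prod_{k \geq 1}(1-x^k)^{-1}$. The strategy is to take logarithms, bound the resulting sum uniformly in $d$, and then optimize the free parameter $x$.

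Setting $x = e^{-t}$ with $t > 0$, I would expand
\begin{equation*}
    \log \prod_{k \geq 1} \frac{1}{1 - e^{-tk}} = \sum_{k \geq 1} \sum_{m \geq 1} \frac{e^{-tkm}}{m} = \sum_{m \geq 1} \frac{1}{m} \cdot \frac{e^{-tm}}{1 - e^{-tm}} = \sum_{m \geq 1} \frac{1}{m(e^{tm} - 1)}.
\end{equation*}
Using the elementary inequality $e^{tm} - 1 \geq tm$, this last sum is at most $\sum_{m \geq 1} \frac{1}{tm^2} = \frac{\pi^2}{6t}$. Combining, $p(d) \leq \exp\!\left(td + \frac{\pi^2}{6t}\right)$ for every $t > 0$. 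Minimizing the right-hand side in $t$ yields $t = \pi/\sqrt{6d}$ and the bound $p(d) \leq \exp(\pi \sqrt{2d/3})$. In particular there is a constant $C_0 > 0$ with $p(d) \leq \exp(C_0 \sqrt{d})$ for every $d \geq 1$.

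To extract the extra factor of $1/d$ in the stated bound, I would simply absorb $d$ into the exponential. Since $\log d = o(\sqrt{d})$ as $d \to \infty$, for any $\varepsilon > 0$ we have $d \leq \exp(\varepsilon \sqrt{d})$ for all sufficiently large $d$, and hence
\begin{equation*}
    d \cdot p(d) \leq d \cdot \exp(C_0 \sqrt{d}) \leq \exp\!\left((C_0 + \varepsilon) \sqrt{d}\right)
\end{equation*}
eventually. Choosing $C$ large enough both to handle the finitely many small values of $d$ and to absorb the $\log d$ term gives $p(d) \leq \frac{1}{d} \exp(C \sqrt{d})$ for all $d \geq 1$, as claimed.

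There is no real obstacle here — the result is entirely classical — but the one place to be careful is the uniform bound on $\sum_m \tfrac{1}{m(e^{tm}-1)}$; using the crude inequality $e^{tm} - 1 \geq tm$ is already sufficient and avoids the more delicate Mellin-transform asymptotics needed for a matching lower bound or for the sharp constant $\pi\sqrt{2/3}$ in the exponent, neither of which is required for this application.
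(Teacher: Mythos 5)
Your proof is correct: the Markov/generating-function bound, the uniform estimate $\sum_{m\ge 1} \frac{1}{m(e^{tm}-1)} \le \frac{\pi^2}{6t}$ via $e^{tm}-1 \ge tm$, and the optimization $t = \pi/\sqrt{6d}$ all check out, and the factor of $1/d$ is absorbed correctly (indeed $\log d \le \sqrt{d}$ for all $d \ge 1$, so no "eventually" is even needed). The paper gives no proof of its own, citing only Section 2 of Hardy--Ramanujan, which uses essentially this same elementary generating-function argument, so your write-up is a faithful self-contained version of the cited approach.
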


\end{document}